\normalfont\fontsize{16}{19}\sffamily\bf}
\normalfont\fontsize{12}{17}\sffamily\bf\slshape}
\normalfont\fontsize{11}{15}\sffamily\bf\slshape}
\newtheorem{theorem}{Theorem}[section]
\newtheorem{lemma}{Lemma}[section]
\newtheorem{corollary}{Corollary}[section]
\newtheorem{defn}{Definition}[section]
\newtheorem{remark}{Remark}[section] 
\numberwithin{equation}{section}
\theoremstyle{definition}
\newsavebox{\@brx}
\newcommand{\llangle}[1][]{\savebox{\@brx}{\(\m@th{#1\langle}\)}%
  \mathopen{\copy\@brx\kern-0.5\wd\@brx\usebox{\@brx}}}
\newcommand{\rrangle}[1][]{\savebox{\@brx}{\(\m@th{#1\rangle}\)}%
  \mathclose{\copy\@brx\kern-0.5\wd\@brx\usebox{\@brx}}}
\journal{XXX}
\begin{document}
\begin{frontmatter}

  \title{Port-Hamiltonian formulations of the incompressible Euler equations with a free surface}
  
  \author[add1,add2]{Xiaoyu Cheng}
  \ead{cxy1014@mail.ustc.edu.cn}
  \author[add2]{J. J. W. Van der Vegt}
  \ead{j.j.w.vandervegt@utwente.nl}
  \author[add1]{Yan Xu\corref{cor1}}
  \ead{yxu@ustc.edu.cn}
  \author[add3,add4]{H. J. Zwart}
  \ead{h.j.zwart@utwente.nl}

  \cortext[cor1]{Corresponding author}
  \address[add1]{School of Mathematics, University of Science and Technology of China, Hefei, Anhui, 230026, China}
  \address[add2]{Department of Applied Mathematics, Mathematics of Computational Science Group, University of Twente, P.O. Box 217, 7500 AE, Enschede, The Netherlands}
  \address[add3]{Department of Applied Mathematics, Mathematics of Systems Theory Group, University of Twente, P.O. Box 217, 7500 AE, Enschede, The Netherlands}
  \address[add4]{Department of Mechanical Engineering, Dynamics and Control Group, Eindhoven University of Technology, P.O. Box 513, 5600 MB, Eindhoven, The Netherlands}

  \begin{abstract}
    In this paper, we present port-Hamiltonian formulations of the incompressible Euler equations with a free surface governed by surface tension and gravity forces, modelling e.g. capillary 
    and gravity waves and the evolution of droplets in air. Three sets of variables are considered, namely $(v,\Sigma)$, $(\eta,\phi_{\partial},\Sigma)$ and $(\omega,\phi_{\partial},\Sigma)$, 
    with $v$ the velocity, $\eta$ the solenoidal velocity, $\phi_{\partial}$ a potential, $\omega$ the vorticity, and $\Sigma$ the free surface, resulting in the incompressible Euler equations in primitive variables 
    and the vorticity equation. First, the Hamiltonian formulation for the incompressible Euler equations in a domain with a free surface combined with a fixed boundary surface with a homogeneous boundary condition will be derived in the proper 
    Sobolev spaces of differential forms. Next, these results will be extended to port-Hamiltonian formulations allowing inhomogeneous boundary conditions and a non-zero energy flow through the boundaries.
    Our main results are the construction and proof of Dirac structures in suitable Sobolev spaces of differential forms for each variable set, which provides the core of any port-Hamiltonian formulation. Finally, it is proven that the state dependent 
    Dirac structures are related to Poisson brackets that are linear, skew-symmetric and satisfy the Jacobi identity. 
  \end{abstract}
  
  \begin{keyword}
  Port-Hamiltonian formulation\sep Dirac structure\sep Poisson bracket\sep Incompressible Euler equations\sep Vorticity equation\sep Free surface problems
  \end{keyword}
  
  \end{frontmatter}

\section{Introduction}

The evolution of many infinite dimensional systems can be described using a Hamiltonian formulation consisting of a Hamiltonian functional,
representing the total energy of the system, and a Poisson bracket defining its dynamics. Examples are inviscid fluid mechanics, ideal magnetohydrodynamics,
classical electromagnetism and the Schr$\ddot{\mathrm{o}}$dinger equation, see e.g. \cite{abraham2008foundations,abraham2012manifolds,arnold2008topological,MR2884939,marsden2013introduction,morrison1998hamiltonian}.

In this paper, we will consider Hamiltonian formulations of the incompressible Euler equations with a free surface governed by surface tension and gravity forces. These equations model a large variety of 
fluid mechanical problems, e.g. capillary and gravity waves and the behaviour of droplets in air \cite{wehausen1960surface}.
The incompressible Euler equations with a free surface in a time-dependent domain $\Omega(t)\subset\mathbb{R}^{d}$ are given by \cite{batchelor1989fluid,wehausen1960surface}                        
\begin{subequations}\label{incomeuler}
  \begin{align}
    \frac{\partial\vec{v}}{\partial t}+(\vec{v}\cdot\nabla)\vec{v}&=-\nabla\big(\frac{p}{\rho}+\Phi\big)\quad&&\text{in }\Omega,\\
  \nabla\cdot\vec{v}&=0\quad&&\text{in }\Omega,\\
  \frac{\partial\Sigma}{\partial t}&=\langle\vec{v},\vec{n}\rangle\quad&&\text{at }\Sigma,\\
  p-\bar{p}&=\tau k\quad&&\text{at }\Sigma,\\
  \langle\vec{v},\vec{n}\rangle&=g\quad&&\text{at }\Gamma.\label{intro4}
  \end{align}
\end{subequations}
Here $\vec{v}$ denotes the fluid velocity, $p$ the pressure with $\bar{p}$ a constant atmospheric pressure, $\Phi$ the gravity potential, $\rho$ the fluid density, $\tau$ the surface tension, $t$ time and $\nabla\in\mathbb{R}^{d}$ the nabla operator.
The domain $\Omega(t)$ has a boundary $\partial\Omega$ with external normal vector $\vec{n}$ and mean curvature $k$, and consists of a nonoverlapping free surface $\Sigma$ and a fixed boundary $\Gamma$ where a normal velocity $g$ is specified.

The Hamiltonian formulation of the incompressible Euler equations can be expressed using a Hamiltonian functional and a Lie-Poisson bracket \cite{arnold1966geometrie,beris1990poisson,MR719058}. The Hamiltonian formulation of the incompressible Euler equations 
has a very rich mathematical structure and has been extensively researched, see e.g. \cite{arnold2008topological,camassa2014variational,MR2335765,morrison1998hamiltonian,MR672198}.

The Hamiltonian formulation of the incompressible Euler equations with a free surface governed by surface tension, which describes the evolution of liquid droplets in an inviscid flow, and is obtained by setting $\Sigma(t)=\partial\Omega(t)$, $\Phi=0$ in (\ref{incomeuler}), is presented in \cite{MR838352}, see also \cite{MR1040394}. The evolution of any functional $\mathcal{F}=\mathcal{F}(\vec{v},\Sigma)$ by the flow governed by the
incompressible Euler equations with a free surface with $\Sigma=\partial\Omega$ can be expressed as \cite{MR838352}
\begin{equation}\label{intro1}
  \dot{\mathcal{F}}(\vec{v},\Sigma)=\{\mathcal{F},H\}(\vec{v},\Sigma),
\end{equation}
where the dot represents differentiation with respect to time, $H$ is the Hamiltonian 
\begin{equation}\label{intro2}
  H(\vec{v},\Sigma)=\frac{1}{2}\int_{\Omega(t)}|\vec{v}|^{2}d^{3}x+\tau\int_{\Sigma(t)}ds,
\end{equation}
and the Poisson bracket $\{\cdot,\cdot\}(\vec{v},\Sigma)$ is defined as
\begin{equation}\label{intro3}
  \begin{aligned}
    \{\mathcal{F},\mathcal{G}\}(\vec{v},\Sigma)=\int_{\Omega(t)}\big\langle\nabla\times\vec{v},\frac{\delta\mathcal{F}}{\delta\vec{v}}\times\frac{\delta\mathcal{G}}{\delta\vec{v}}\big\rangle d^{3}x+\int_{\Sigma(t)}\big(\frac{\delta\mathcal{F}}{\delta\Sigma}\frac{\delta\mathcal{G}}{\delta\phi}-\frac{\delta\mathcal{G}}{\delta\Sigma}\frac{\delta\mathcal{F}}{\delta\phi}\big)ds,
  \end{aligned}
\end{equation}
where $\frac{\delta\mathcal{F}}{\delta\vec{v}},\frac{\delta\mathcal{F}}{\delta\Sigma},\frac{\delta\mathcal{F}}{\delta\phi}$ denote, respectively, the functional derivatives with respect to the velocity, free surface and surface potential.

The Hamiltonian formulation (\ref{intro1})--(\ref{intro3}) describes a closed system without energy exchange with the external environment, namely $H$ in (\ref{intro2}) is constant. In the general case, with $\partial\Omega=\Sigma\cup\Gamma$, with $\mathrm{meas}(\Gamma)>0$ and $g\neq 0$ in (\ref{intro4}),
the incompressible Euler equations with a free surface exchange energy with the external environment. In order to describe the Hamiltonian structure of (\ref{incomeuler}) we need to use the more general concept of port-Hamiltonian systems. The main objective of this article is to formulate the Euler equations
with a free surface (\ref{incomeuler}) as a port-Hamiltonian system with special emphasis on formulating the port-Hamiltonian system in the proper Sobolev spaces of differential forms. This will significantly extend the number of applications, modelled by (\ref{incomeuler}), that can be described as a Hamiltonian system.

A port-Hamiltonian system describes a non-conservative open dynamical system that interacts with the environment through energy flow over boundary ports. Currently, port-Hamiltonian systems are a very active field of research.
We refer to \cite{duindam2009modeling,MR2952349,rashad2020twenty,MR3076035,van2014port,MR4180029} for an overview of the theory and applications of port-Hamiltonian systems.

From a mathematical point of view, the distributed-parameter port-Hamiltonian system is 
constructed based on the existing Hamiltonian framework by using the notion of a Dirac structure \citep{van2002hamiltonian,zwart2009distributed},
which is a geometric structure generalizing both symplectic and Poisson structures, and was originally introduced in \citep{courant1990dirac,duindam2009modeling}.
In \citep{van2002hamiltonian,zwart2009distributed}, a Dirac structure $D$ is defined on $\mathcal{F}\times\mathcal{E}$, with  $\mathcal{F}$ and $\mathcal{E}$, respectively, the dual flow and effort spaces.             
The spaces $\mathcal{F}$ and $\mathcal{E}$ are formalized using Sobolev spaces of differential forms. The proper choice of these Sobolev spaces, their duality relations, and the formulation of a bilinear form and Dirac structure for the Euler equations with a free surface will be the main topic of this article.

Port-Hamiltonian formulations of the compressible Euler equations, both in terms of the velocity-pressure and vorticity-streamfunction formulations are proposed in \citep{polner2014hamiltonian}.
Recently, in \citep{rashad2021port1,rashad2021port2}, the authors discuss the port-Hamiltonian formulation of the incompressible Euler equations in terms of the velocity and pressure, but without a free surface.

In this article, we will consider the port-Hamiltonian formulation for the incompressible Euler equations in a domain with a free surface combined with a fixed boundary surface where an inhomogeneous boundary condition is imposed. Three sets of variables will be considered, namely $(v,\Sigma)$, $(\eta,\phi_{\partial},\Sigma)$ and $(\omega,\phi_{\partial},\Sigma)$, with $v$ the velocity, $\eta$ the solenoidal velocity, 
$\phi_{\partial}$ a surface potential, $\omega$ the vorticity and $\Sigma$ the free surface. Using these variables several important formulations of the incompressible Euler equations with a free surface can be obtained such as the primitive variable formulation (\ref{incomeuler}) and the vorticity equation,
with their suitability depending on the type of application. For each of these formulations of (\ref{incomeuler}) we will first derive the Hamiltonian formulation with homogeneous boundary conditions at $\Gamma$ ($g=0$ in (\ref{intro4})) in the proper setting of Sobolev spaces of differential forms. 
Next, we will define a bilinear form $\llangle\cdot,\cdot\rrangle$ and structures $D_{i}$ ($i=1,2,3$) and prove that these structures are Dirac structures, namely $D_{i}=D_{i}^{\bot}$, 
with $\bot$ denoting the orthogonal complement with respect to $\llangle\cdot,\cdot\rrangle$. 
These results constitute the core of this paper. For linear problems, the Dirac structure can be directly related to a Poisson bracket. But for nonlinear problems, it generally only relates to a skew-symmetric pseudo-Poisson bracket \cite{van2002hamiltonian} since the Jacobi identity needs to be proven separately
and the Dirac structure only ensures power conservation.
We will prove that the various state dependent Dirac structures $D_{i}$ ($i=1,2,3$) presented in this article also satisfy the Jacobi identity and that the resulting formulations constitute a port-Hamiltonian system, hence allow a non-zero energy exchange with the external environment.

The outline of this article is as follows.
First, we will give in Section 2 a summary of some important results from exterior algebra, Sobolev spaces, trace operators, and Hodge decompositions that will be extensively used in this paper. 
In Section 3 we will introduce the incompressible Euler equations with a free surface in terms of the velocity and pressure and discuss the decomposition of the velocity $v$ into three parts, namely an exact form $\phi$ (potential), co-exact form $\beta$ and a harmonic form $\alpha$.
Next, in Section 4 we will discuss the generalized Hamiltonian formulation of incompressible Euler equations with a free surface in three sets of variables, namely $(v,\Sigma)$, $(\eta,\phi_{\partial},\Sigma)$ and $(\omega,\phi_{\partial},\Sigma)$ with the homogeneous boundary condition $g=0$ at $\Gamma$.
Finally, in Section 5 we will present the Dirac structures and port-Hamiltonian formulations of the incompressible Euler equations in a domain with both a free boundary surface and a fixed boundary surface with the inhomogeneous boundary condition $g\neq 0$ at $\Gamma$ with respect to the $(v,\Sigma)$, $(\eta,\phi_{\partial},\Sigma)$ and $(\omega,\phi_{\partial},\Sigma)$ variables.

\section{Preliminaries}
Since the language of differential forms is crucial to express the geometrical properties of port-Hamiltonian systems, we will introduce in this section some of the geometrical concepts and tools that will be used later on and set the notation used in this article. For more details we refer to \citep{abraham2008foundations,abraham2012manifolds,MR2884939,marsden2013introduction,schwarz2006hodge}.

\subsection{Differential forms, function spaces, and related results}
Let $\Omega$ be an open, bounded and connected oriented $n$-dimensional manifold with an ($n-$1)-dimensional Lipschitz boundary $\partial\Omega$. 

\subsubsection{Riemannian metric and index operators}
Assume that there is a Riemannian metric $\llangle\cdot,\cdot\rrangle:T\Omega\times T\Omega\to\mathbb{R}$, where $T\Omega=\bigcup_{p\in\Omega} T_{p}\Omega$, with $T_{p}\Omega$ the $n$-dimensional tangent space at the point $p\in\Omega$. At each point $p\in\Omega$, $\llangle\cdot,\cdot\rrangle$ is symmetric, bilinear and positive definite.
For any vector $X\in T_{p}\Omega$, the index lowering operator $\flat:T_{p}\Omega\to T^{\ast}_{p}\Omega$, with $T_{p}^{\ast}\Omega$ the dual space of $T_{p}\Omega$, is defined as \citep{abraham2012manifolds,abraham2008foundations}: $X^{\flat}(\cdot):=\llangle X,\cdot\rrangle$. Dually, the index raising operator $\sharp:T_{p}^{\ast}\Omega\to T_{p}\Omega$ is given by $(X^{\flat})^{\sharp}=X.$

\subsubsection{Exterior product and inner product}

Let $\Lambda^{k}(\Omega)$ be the space that contains all smooth differential $k$-forms on $\Omega$.
Denote with $\wedge:\Lambda^{k}(\Omega)\times\Lambda^{j}(\Omega)\to\Lambda^{k+j}(\Omega)$ the exterior product of differential forms \citep{abraham2008foundations}, which satisfies the relation
\begin{equation}\label{wedge}
  \lambda\wedge\mu=(-1)^{kj}\mu\wedge\lambda,
\end{equation}
where $\lambda\in\Lambda^{k}(\Omega)$, $\mu\in\Lambda^{j}(\Omega)$. The space $\Lambda^{k}(\Omega)$ is endowed with the inner product \citep{arnold2006finite}
\begin{equation}
 \langle\lambda,\mu\rangle_{\Lambda^{k}}:=\sum_{\sigma\in S(k,n)}\lambda(E_{\sigma(1)},\ldots,E_{\sigma(k)})\mu(E_{\sigma(1)},\ldots,E_{\sigma(k)}),
\end{equation}
with $(E_{1},\ldots,E_{n})$ an orthonormal frame on $\Omega$ with respect to the metric 
$\llangle\cdot,\cdot\rrangle$, and $S(k,n)$, with $1\leq k\leq n$, the set of all permutations of the numbers $\{1,2,\ldots,n\}$ such that $\sigma(1)<\ldots<\sigma(k)$.

\subsubsection{Hodge star operator, exterior derivative and coderivative operators, interior product}
Since $\dim\Lambda^{k}(\Omega)=\dim\Lambda^{n-k}(\Omega)$, there exists an
isometry taking $k$-forms into $(n-k)$-forms, which is called the Hodge star operator.
Let $v_{\Omega}\in\Lambda^{n}(\Omega)$ be the volume form of $\Omega$. Then the Hodge operator $\ast:\Lambda^{k}(\Omega)\to\Lambda^{n-k}(\Omega)$ is defined by the relation \citep{schwarz2006hodge}
\begin{equation}
  \lambda\wedge\ast\mu=\langle\lambda,\mu\rangle_{\Lambda^{k}}v_{\Omega},\quad\forall\lambda,\mu\in\Lambda^{k}(\Omega).
\end{equation}
We have the relation
 \begin{equation}\label{eq42}
  \ast\ast\lambda=(-1)^{k(n-k)}\lambda,\quad\lambda\in\Lambda^{k}(\Omega).
 \end{equation}
 
For all $k$-forms, we denote by $d:\Lambda^{k}(\Omega)\to\Lambda^{k+1}(\Omega)$ the (weak) exterior derivative operator, taking differential $k$-forms on the domain $\Omega$ to differential $(k+1)$-forms.
Conversely, the coderivative operator $\delta:\Lambda^{k}(\Omega)\to\Lambda^{k-1}(\Omega)$ is defined as \citep{arnold2006finite}
\begin{equation}\label{rela1}
  \delta:=(-1)^{n(k+1)+1}\ast d\ast.
\end{equation}

Another operator, taking $k$-forms into $(k-1)$-forms, is called interior product, which is defined as:  
given $X\in T_{p}\Omega$ and $\alpha\in\Lambda^{k}(\Omega)$, then $i_{X}\alpha$ is the $(k-1)$-form given by
\begin{equation*}  
  i_{X}\alpha(X_{1},\ldots,X_{k-1}):=\alpha(X,X_{1},\ldots,X_{k-1}),
\end{equation*}
where $X_{1},\dots,X_{k-1}$ are any vectors that belong to $T_{p}\Omega$.
We have the following relation between the Hodge star operator and the interior product \citep{abraham2012manifolds}
\begin{equation}\label{rela4}
  i_{X}\alpha=\ast(X^{\flat}\wedge\ast\alpha).
\end{equation}

\subsubsection{Sobolev spaces of differential forms}
 The spaces $L^{2}\Lambda^{k}(\Omega)$ and $H^{s}\Lambda^{k}(\Omega)$ with $k\in\mathbb{N}\cup\{0\},s\geq 0$ are the spaces of differential $k$-forms whose coefficient functions belong, respectively, to the Sobolev spaces $L^{2}(\Omega)$ and $H^{s}(\Omega)$, with $H^{0}(\Omega)=L^{2}(\Omega)$, \citep{arnold2006finite,arnold2010finite}. We define the corresponding $L^{2}$-inner product and norm as 
\begin{equation}\label{inner product}
  \langle\lambda,\mu\rangle_{L^{2}\Lambda^{k}(\Omega)}:=\int_{\Omega}\lambda\wedge\ast\mu,\quad 
  \Vert\lambda\Vert^{2}_{L^{2}\Lambda^{k}(\Omega)}:=\langle\lambda,\lambda\rangle_{L^{2}\Lambda^{k}(\Omega)},\quad\lambda,\mu\in L^{2}\Lambda^{k}(\Omega).
\end{equation}
There is another important Sobolev space \citep{arnold2006finite,arnold2010finite} given by
\begin{equation}\label{cobou}
H\Lambda^{k}(\Omega):=\{\lambda\in L^{2}\Lambda^{k}(\Omega)\mid d\lambda\in L^{2}\Lambda^{k+1}(\Omega)\},
\end{equation}
with norm defined as
\begin{equation*}
  \Vert\lambda\Vert^{2}_{H\Lambda^{k}(\Omega)}:=\Vert\lambda\Vert^{2}_{L^{2}\Lambda^{k}(\Omega)}+\Vert d\lambda\Vert^{2}_{L^{2}\Lambda^{k+1}(\Omega)}.
\end{equation*}
Analogously, we define \citep{arnold2006finite,retherford1993hilbert}
\begin{equation*}
H^{\ast}\Lambda^{k}(\Omega):=\ast H\Lambda^{n-k}(\Omega)=\{\lambda\in L^{2}\Lambda^{k}(\Omega)\mid\delta\lambda\in L^{2}\Lambda^{k-1}(\Omega)\}.
\end{equation*}

\subsection{Trace operator and Stokes' theorem}

\subsubsection{Trace operator}
Given a differential $k$-form $\alpha$, then $\alpha|_{\partial\Omega}$, the boundary value of $\alpha$, is defined by \citep{schwarz2006hodge}
\begin{equation*}
  \alpha\mid_{\partial\Omega}:\Gamma(T\Omega\mid_{\partial\Omega})\times\ldots\times\Gamma(T\Omega\mid_{\partial\Omega})\to C^{\infty}(\Omega),
\end{equation*}
where $\Gamma(T\Omega)$ is the space of all smooth vector fields. Any $k$-form $\alpha$ at $\partial\Omega$ can be decomposed into a tangential component $\textbf{t}\alpha:\Lambda^{k}(\Omega)\to\Lambda^{k}(\partial\Omega)$ and a normal component $\boldsymbol{n}\alpha:=\alpha|_{\partial\Omega}-\textbf{t}\alpha$, i.e. $\alpha|_{\partial\Omega}=\textbf{t}\alpha+\boldsymbol{n}\alpha$ \citep{schwarz2006hodge}. The trace operator is defined as $\text{tr}:=\textbf{t}$.

Choose any vector fields $X_{1},\ldots,X_{k}\in \Gamma(T\Omega\mid_{\partial\Omega})$, decomposed into their tangential and normal parts $X_{i}=X_{i}^{\parallel}+X_{i}^{\perp},\ i=1,\ldots,k$. We define then
\begin{equation}
  \mathrm{tr}(\alpha)(X_{1},\ldots,X_{k}):=\alpha(X_{1}^{\parallel},\ldots,X_{k}^{\parallel}),
\end{equation}
hence,
\begin{equation}\label{perp}
  \begin{aligned}
    \boldsymbol{n}(\alpha)(X_{1},\ldots,X_{k}):=\alpha(X_{1}^{\perp},\ldots,X_{k}^{\perp}).
  \end{aligned}
\end{equation}

The trace operator tr has the following important properties.
\begin{lemma}{\citep{arnold2006finite,schwarz2006hodge}}\label{trlemma}
  For $\alpha\in\Lambda^{k}(\Omega)$ and $\beta\in\Lambda^{j}(\Omega)$, there hold the following relations
  \begin{equation}\label{proper2}
    \mathrm{tr}(\alpha\wedge\beta)=\mathrm{tr}(\alpha)\wedge\mathrm{tr}(\beta),\quad \mathrm{tr}(d\alpha)=d(\mathrm{tr}(\alpha)),\quad\boldsymbol{n}(\delta\alpha)=\delta\boldsymbol{n}(\alpha),
  \end{equation}                                                 
  \begin{equation}\label{proper3}
    \ast\boldsymbol{n}(\alpha)=\mathrm{tr}(\ast\alpha),\quad \ast\mathrm{tr}(\alpha)=\boldsymbol{n}(\ast\alpha).
  \end{equation}
\end{lemma}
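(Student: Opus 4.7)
The plan is to prove the five identities in a natural order: the wedge-product relation and the commutation with $d$ follow from definitions and naturality of the exterior calculus; the two Hodge-star relations are proven by a direct computation in a frame adapted to $\partial\Omega$; and the coderivative identity is then a corollary of these.

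For $\mathrm{tr}(\alpha\wedge\beta)=\mathrm{tr}(\alpha)\wedge\mathrm{tr}(\beta)$, I would unfold the trace via its tangential-component definition. Since $(\alpha\wedge\beta)(X_{1},\ldots,X_{k+j})$ expands as a signed sum over $(k,j)$-shuffles of products $\alpha(X_{\sigma(1)},\ldots,X_{\sigma(k)})\,\beta(X_{\sigma(k+1)},\ldots,X_{\sigma(k+j)})$, substituting $X_{i}^{\parallel}$ for each $X_{i}$ reproduces exactly the same shuffle expansion of $\mathrm{tr}(\alpha)\wedge\mathrm{tr}(\beta)$ on $\partial\Omega$. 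For $\mathrm{tr}(d\alpha)=d\,\mathrm{tr}(\alpha)$ I would identify $\mathrm{tr}$ with the pullback $\iota^{\ast}$ along the inclusion $\iota:\partial\Omega\hookrightarrow\Omega$ and invoke the naturality $\iota^{\ast}d=d\iota^{\ast}$; extension from smooth forms to the Sobolev setting introduced in Section 2.1.4 proceeds by density.

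For the Hodge-star identities $\ast\boldsymbol{n}(\alpha)=\mathrm{tr}(\ast\alpha)$ and $\ast\mathrm{tr}(\alpha)=\boldsymbol{n}(\ast\alpha)$, I would work in a local orthonormal frame $(E_{1},\ldots,E_{n-1},E_{n}=\nu)$ with $\nu$ the outward unit normal and $E_{1},\ldots,E_{n-1}$ tangent to $\partial\Omega$. Expanding $\alpha=\sum_{I}\alpha_{I}\,e^{I}$ in the dual cobasis, the multi-indices $I$ with $n\notin I$ contribute to $\mathrm{tr}(\alpha)$ while those with $n\in I$ contribute to $\boldsymbol{n}(\alpha)$. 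A direct calculation from the defining relation for $\ast$ gives $\ast e^{I}=\mathrm{sgn}(I,I^{c})\,e^{I^{c}}$, so $\ast$ interchanges the tangential and normal classes of basis forms up to signs that absorb the intrinsic Hodge star on $\partial\Omega$ via the identification $v_{\Omega}=e^{n}\wedge v_{\partial\Omega}$. Collecting the two sums then produces both identities at once.

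Finally, $\boldsymbol{n}(\delta\alpha)=\delta\,\boldsymbol{n}(\alpha)$ follows by combining the preceding steps with $\delta=(-1)^{n(k+1)+1}\ast d\ast$ from (\ref{rela1}). Starting from $\boldsymbol{n}(\delta\alpha)$, I would use $\ast\boldsymbol{n}(\cdot)=\mathrm{tr}(\ast\cdot)$ to transfer $\boldsymbol{n}$ through $\ast$, commute $d$ past $\mathrm{tr}$ by the second identity of the lemma, and then apply the other Hodge-star identity together with (\ref{eq42}) to reassemble the result as the boundary codifferential acting on $\boldsymbol{n}(\alpha)$. The main obstacle throughout will be the sign bookkeeping: both $\ast\ast=(-1)^{k(n-k)}$ and the reordering of the normal index $n$ into the final slot contribute signs that must be tracked consistently across all degrees $k$ and through the dimension drop $n\to n-1$ when passing from $\Omega$ to $\partial\Omega$.
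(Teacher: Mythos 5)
The paper offers no proof of this lemma at all: it is quoted verbatim from the cited references (it is Proposition~1.2.6 and the accompanying trace identities in Schwarz, and the trace discussion in Arnold--Falk--Winther), so there is nothing in the paper to compare your argument against line by line. Your outline is the standard proof from those sources and is sound: the shuffle expansion (equivalently, identifying $\mathrm{tr}$ with the pullback along $\iota:\partial\Omega\hookrightarrow\Omega$) gives the wedge and $d$ identities, the computation in an adapted orthonormal coframe $(e^{1},\ldots,e^{n-1},e^{n}=\nu^{\flat})$ with $\ast e^{I}=\pm e^{I^{c}}$ gives (\ref{proper3}), and the identity $\boldsymbol{n}(\delta\alpha)=\delta\boldsymbol{n}(\alpha)$ then follows by conjugating $\mathrm{tr}(d\,\cdot)=d\,\mathrm{tr}(\cdot)$ with the Hodge star via (\ref{rela1}) and (\ref{eq42}). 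The only points that deserve explicit care in a written-out version are the ones you already flag: (i) the sign bookkeeping through $\ast\ast=(-1)^{k(n-k)}$ and the placement of the normal index, and (ii) the interpretation of the operators in (\ref{proper2})--(\ref{proper3}) — $\boldsymbol{n}(\alpha)$ and $\mathrm{tr}(\ast\alpha)$ are restrictions of ambient forms to $\partial\Omega$, so the $\ast$ and $\delta$ appearing there are the ambient ones evaluated at boundary points (your frame computation is consistent with this, but the phrase ``intrinsic Hodge star on $\partial\Omega$'' should not be taken to mean the $(n-1)$-dimensional star acting on the left-hand sides); finally, the density extension to the Sobolev setting must be done in the graph spaces $H\Lambda^{k}(\Omega)$, $H^{\ast}\Lambda^{k}(\Omega)$ where the respective traces are continuous, not in $L^{2}\Lambda^{k}(\Omega)$.
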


\begin{lemma}\citep[Stokes' Theorem]{marsden2013introduction}
  On an $n$-dimensional oriented manifold $\Omega$ with boundary $\partial\Omega$, we have for any $\alpha\in\Lambda^{n-1}(\Omega)$
\begin{equation}\label{stokes}
  \int_{\Omega}d\alpha=\int_{\partial\Omega}\mathrm{tr}(\alpha).
\end{equation}
\end{lemma}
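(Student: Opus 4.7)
The plan is to establish Stokes' theorem by first reducing the global statement to a local one via a partition of unity argument, then verifying the local statement by direct computation using the fundamental theorem of calculus, and finally extending from the smooth setting to the Sobolev setting dictated by the Lipschitz boundary.

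First I would choose an oriented atlas $\{(U_{\iota},\varphi_{\iota})\}$ of $\Omega$ adapted to the boundary, so that each chart maps either into the interior of the closed half-space $\mathbb{H}^{n}=\{x^{1}\le 0\}$ or into its open interior, with $\partial\Omega$ sent to $\{x^{1}=0\}$. Subordinate to this atlas I would pick a partition of unity $\{\rho_{\iota}\}$ with $\sum_{\iota}\rho_{\iota}=1$ and write $\alpha=\sum_{\iota}\rho_{\iota}\alpha$. Since $d$, $\mathrm{tr}$ and integration are all linear, and $\mathrm{tr}(\rho_{\iota}\alpha)=\rho_{\iota}|_{\partial\Omega}\,\mathrm{tr}(\alpha)$, it suffices to prove the identity for each summand, whose pullback is a compactly supported $(n-1)$-form on the model domain.

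For the interior-chart case where $\mathrm{supp}(\rho_{\iota}\alpha)\subset U_{\iota}$ and $U_{\iota}\cap\partial\Omega=\emptyset$, the right-hand side vanishes because $\mathrm{tr}(\rho_{\iota}\alpha)=0$. Expanding the pullback as
\begin{equation*}
\sum_{i=1}^{n}(-1)^{i-1}f_{i}\,dx^{1}\wedge\cdots\widehat{dx^{i}}\cdots\wedge dx^{n},
\end{equation*}
the exterior derivative becomes $\bigl(\sum_{i}\partial_{i}f_{i}\bigr)dx^{1}\wedge\cdots\wedge dx^{n}$, and Fubini combined with the fundamental theorem of calculus forces each term to integrate to zero in view of compact support. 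For the boundary-chart case the same expansion applies on $\mathbb{H}^{n}$; terms with $i\ne 1$ again integrate to zero, whereas for $i=1$ the inner integral $\int_{-\infty}^{0}\partial_{1}f_{1}\,dx^{1}=f_{1}(0,x^{2},\dots,x^{n})$ survives and, after accounting for the induced orientation on $\partial\Omega$, produces exactly $\int_{\{x^{1}=0\}}\mathrm{tr}(\rho_{\iota}\alpha)$. Summing over $\iota$ closes the smooth case.

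The main obstacle is the low regularity of $\partial\Omega$: the theorem is stated on a domain with only a Lipschitz boundary, and $\alpha$ is not assumed to be smooth up to the boundary. To handle this I would first run the argument above for $\alpha\in C^{\infty}(\overline{\Omega})$, where the Lipschitz change of variables is legitimate by Rademacher's theorem and where the half-space computation remains valid almost everywhere on $\partial\Omega$. Next, for a general $\alpha$ in the natural space $H\Lambda^{n-1}(\Omega)$ introduced in (\ref{cobou}), I would approximate $\alpha$ by smooth forms $\alpha_{m}\to\alpha$ in the graph norm of $d$ and pass to the limit in the identity, invoking continuity of $\alpha\mapsto\int_{\Omega}d\alpha$ and boundedness of the trace $\mathrm{tr}\colon H\Lambda^{n-1}(\Omega)\to H^{-1/2}\Lambda^{n-1}(\partial\Omega)$ so that the boundary integral is understood as the duality pairing of $\mathrm{tr}(\alpha)$ with the constant form $1$. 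The delicate step is producing such an approximation across a Lipschitz boundary, for which I would rely on a partition-of-unity reduction to graph domains combined with a tangential mollification of the coefficients.
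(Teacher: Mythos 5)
The paper does not prove this lemma at all: it is quoted verbatim as a classical result from the cited textbook, so there is no internal proof to compare against. What you have written is essentially the standard textbook argument — an oriented atlas adapted to the boundary, a subordinate partition of unity, and the half-space computation via Fubini and the fundamental theorem of calculus — and that part is correct, including the sign bookkeeping in $d\alpha=\bigl(\sum_{i}\partial_{i}f_{i}\bigr)dx^{1}\wedge\cdots\wedge dx^{n}$ and the surviving $i=1$ term on $\{x^{1}=0\}$. The only place where you go beyond what the cited reference supplies is the extension to the regularity actually used in the paper (Lipschitz boundary, forms in $H\Lambda^{n-1}(\Omega)$ as in (\ref{cobou}) with the boundary term read as a duality pairing), and that is also the thinnest part of your sketch: the density of forms smooth up to the boundary in the graph norm of $d$ on a Lipschitz domain, and the continuity of $\mathrm{tr}$ into $H^{-1/2}\Lambda^{n-1}(\partial\Omega)$, are exactly the nontrivial ingredients, and the paper itself handles this level of generality by citing the trace and integration-by-parts machinery (cf.\ (\ref{part})) rather than reproving it. So your proposal is sound, but note that it proves a stronger statement than the lemma as written (which is stated for smooth forms), and the genuinely delicate steps in your final paragraph are precisely the ones you defer to ``standard'' approximation arguments; if you intend the Sobolev version, you should either cite those density and trace theorems explicitly or carry out the boundary-chart mollification in detail, since that is where a self-contained proof would actually require work.
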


\subsubsection{Boundary traces of Sobolev spaces $H^{1}\Lambda^{k}(\Omega)$}
We can extend the trace operator $\mathrm{tr}:\Lambda^{k}(\Omega)\to\Lambda^{k}(\partial\Omega)$ to a bounded linear operator from $H^{1}\Lambda^{k}(\Omega)$ onto $H^{\frac{1}{2}}\Lambda^{k}(\partial\Omega)$, where the space $H^{\frac{1}{2}}\Lambda^{k}(\Omega)$ and the norm are defined as
\begin{equation*}
  \begin{aligned}
     &H^{\frac{1}{2}}\Lambda^{k}(\partial\Omega):=\{\mu\in L^{2}\Lambda^{k}(\partial\Omega)\mid\exists\lambda\in H^{1}\Lambda^{k}(\Omega),\ s.t.\ \mathrm{tr}(\lambda)=\mu\},\\
     &\Vert\mu\Vert_{H^{\frac{1}{2}}\Lambda^{k}(\partial\Omega)}:=\inf_{\mathrm{tr}(\lambda)=\mu,\lambda\in H^{1}\Lambda^{k}(\Omega)}\Vert\lambda\Vert_{H^{1}\Lambda^{k}(\Omega)}.
  \end{aligned}
\end{equation*}

\subsubsection{Boundary traces of Sobolev spaces $H\Lambda^{k}(\Omega)$}
If $k=0$, since $H\Lambda^{0}(\Omega)=H^{1}\Lambda^{0}(\Omega)$, we have that $H^{\frac{1}{2}}\Lambda^{0}(\partial\Omega)$ is the trace space of $H\Lambda^{0}(\Omega)$.

If $0\leq k<n$, we denote the dual space of $H^{\frac{1}{2}}\Lambda^{k}(\partial\Omega)$ by $H^{-\frac{1}{2}}\Lambda^{k}(\partial\Omega)$. In \citep[p19]{arnold2006finite},
it is shown that the trace operator $\textrm{tr}:\Lambda^{k}(\Omega)\to\Lambda^{k}(\partial\Omega)$ can be extended to a bounded linear operator that maps $H\Lambda^{k}(\Omega)$ onto $H^{-\frac{1}{2}}\Lambda^{k}(\partial\Omega)$ for $0<k<n$.                                                                                                  
Since $H\Lambda^{n}(\Omega)=L^{2}\Lambda^{n}(\Omega)$, there is no trace for $k=n$.

Since $H^{\ast}\Lambda^{k}(\Omega)=\ast H\Lambda^{n-k}(\Omega)$, then for any $\mu\in H\Lambda^{n-k}(\Omega)$ there exists a $\lambda\in H^{\ast}\Lambda^{k}(\Omega)$, such that $\mu=(-1)^{k(n-k)}\ast\lambda$.
For $0<k<n$, $\mathrm{tr}(\mu)\in H^{-\frac{1}{2}}\Lambda^{n-k}(\partial\Omega)$, it immediately follows that $\mathrm{tr}(\ast\lambda)\in H^{-\frac{1}{2}}\Lambda^{n-k}(\partial\Omega)$. 
Similarly, for $\lambda\in H^{\ast}\Lambda^{n}(\Omega)$, $\mathrm{tr}(\ast\lambda)\in H^{\frac{1}{2}}\Lambda^{0}(\partial\Omega)$.

We can now state the integration by parts formula, see \citep{arnold2006finite,arnold2010finite,schwarz2006hodge}: for either $\lambda\in H\Lambda^{k-1}(\Omega)$, $\mu\in H^{1}\Lambda^{k}(\Omega)$ or $\lambda\in H^{1}\Lambda^{k-1}(\Omega)$, $\mu\in H^{\ast}\Lambda^{k}(\Omega)$,
\begin{equation}\label{part}
\begin{aligned}
  \langle d\lambda,\mu\rangle_{L^{2}\Lambda^{k}(\Omega)}&=\langle\lambda,\delta\mu\rangle_{L^{2}\Lambda^{k-1}(\Omega)}+\int_{\partial\Omega}\text{tr}(\lambda)\wedge\text{tr}(\ast\mu)\\
  &=\langle\lambda,\delta\mu\rangle_{L^{2}\Lambda^{k-1}(\Omega)}+\int_{\partial\Omega}\mathrm{tr}(\lambda)\wedge\ast\boldsymbol{n}(\mu),
\end{aligned}
\end{equation}
where the integral on the boundary is interpreted via the pairing of $H^{-\frac{1}{2}}\Lambda^{k-1}$ $(\partial\Omega)$ and $H^{\frac{1}{2}}\Lambda^{n-k}(\partial\Omega)$.

Furthermore, the boundary term $\mathrm{tr}(\lambda)\wedge\ast\boldsymbol{n}(\mu)$ can be represented as:
\begin{lemma}{\citep[p27]{schwarz2006hodge}}
  Given the differential forms $\lambda\in H\Lambda^{k-1}(\Omega)$ and $\mu\in H^{1}\Lambda^{k}(\Omega)$ or $\lambda\in H^{1}\Lambda^{k-1}(\Omega)$ and $\mu\in H^{\ast}\Lambda^{k}(\Omega)$. Let $\gamma:=\mathrm{tr}(\lambda)\wedge\ast\boldsymbol{n}(\mu)$. Then
  \begin{equation}\label{perp2}
    \gamma=\langle\lambda,i_{\mathcal{N}}\mu\rangle_{\Lambda^{k-1}}v_{\partial\Omega},
  \end{equation}
  where $v_{\partial\Omega}=i_{\mathcal{N}}v_{\Omega}|_{\partial\Omega}\in\Lambda^{n-1}(\partial\Omega)$ is the area form for the boundary $\partial\Omega$, with $\mathcal{N}$ the unit outward normal vector to the boundary $\partial\Omega$, and $v_{\Omega}\in\Lambda^{n}(\Omega)$ the volume form of $\Omega$.
\end{lemma}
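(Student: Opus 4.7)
The plan is to reduce the identity in \eqref{perp2} to a pointwise algebraic statement on $\partial\Omega$, prove it for smooth forms, and then extend to the stated Sobolev classes by density. First, I would rewrite $\gamma$ as the trace of an ambient $(n-1)$-form: applying $*\boldsymbol{n}(\mu) = \mathrm{tr}(*\mu)$ from \eqref{proper3} together with the compatibility \eqref{proper2} of $\mathrm{tr}$ with the exterior product gives
\begin{equation*}
\gamma = \mathrm{tr}(\lambda) \wedge *\boldsymbol{n}(\mu) = \mathrm{tr}(\lambda) \wedge \mathrm{tr}(*\mu) = \mathrm{tr}(\lambda \wedge *\mu),
\end{equation*}
so the task reduces to identifying the tangential part of $\lambda \wedge *\mu$ at each point of $\partial\Omega$.

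For this, I would wedge with $\mathcal{N}^{\flat}$ to produce a top-degree form on $\Omega$. By associativity and the defining property $\alpha \wedge *\beta = \langle \alpha,\beta\rangle_{\Lambda^{k}} v_{\Omega}$ of the Hodge star,
\begin{equation*}
\mathcal{N}^{\flat} \wedge (\lambda \wedge *\mu) = (\mathcal{N}^{\flat} \wedge \lambda) \wedge *\mu = \langle \mathcal{N}^{\flat} \wedge \lambda,\, \mu\rangle_{\Lambda^{k}}\, v_{\Omega}.
\end{equation*}
The key algebraic input is the pointwise adjoint identity $\langle \mathcal{N}^{\flat} \wedge \lambda,\, \mu\rangle_{\Lambda^{k}} = \langle \lambda,\, i_{\mathcal{N}}\mu\rangle_{\Lambda^{k-1}}$, stating that $i_{\mathcal{N}}$ is the formal adjoint of $\mathcal{N}^{\flat} \wedge (\cdot)$ with respect to the pointwise inner product; this is verified on an orthonormal basis of $\Lambda^{\bullet}$ using the antisymmetry \eqref{wedge} and the definition of the interior product. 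Hence $\mathcal{N}^{\flat} \wedge (\lambda \wedge *\mu) = \langle \lambda,\, i_{\mathcal{N}}\mu\rangle\, v_{\Omega}$.

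To extract $\gamma$, I use the cotangent-space splitting $T^{*}_{p}\Omega = \mathbb{R}\mathcal{N}^{\flat} \oplus T^{*}_{p}\partial\Omega$ at each $p\in\partial\Omega$, which yields a unique decomposition $(\lambda \wedge *\mu)|_{p} = \gamma(p) + \mathcal{N}^{\flat} \wedge \rho(p)$ with both pieces purely tangential and $\gamma = \mathrm{tr}(\lambda \wedge *\mu)$. Wedging with $\mathcal{N}^{\flat}$ annihilates the normal part, so
\begin{equation*}
\mathcal{N}^{\flat} \wedge \gamma = \mathcal{N}^{\flat} \wedge (\lambda \wedge *\mu)\big|_{\partial\Omega} = \langle \lambda,\, i_{\mathcal{N}}\mu\rangle\, v_{\Omega}\big|_{\partial\Omega}.
\end{equation*}
Writing $\gamma = g\, v_{\partial\Omega}$ and using the identity $\mathcal{N}^{\flat} \wedge v_{\partial\Omega} = v_{\Omega}|_{\partial\Omega}$---which follows from $v_{\partial\Omega} = i_{\mathcal{N}} v_{\Omega}|_{\partial\Omega}$ and the Leibniz-type rule $i_{\mathcal{N}}(\mathcal{N}^{\flat} \wedge \omega) + \mathcal{N}^{\flat} \wedge i_{\mathcal{N}}\omega = \langle \mathcal{N},\mathcal{N}\rangle\, \omega$ applied to $\omega = v_{\Omega}$ (the first summand vanishes since $\mathcal{N}^{\flat} \wedge v_{\Omega} = 0$ for dimension reasons, and $|\mathcal{N}| = 1$)---one concludes $g = \langle \lambda,\, i_{\mathcal{N}}\mu\rangle$, proving \eqref{perp2}.

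The hard part will be extending this pointwise chain from smooth forms to the stated Sobolev classes, especially when $\lambda \in H\Lambda^{k-1}$, $\mu \in H^{*}\Lambda^{k}$, where the traces live only in $H^{-\frac{1}{2}}\Lambda^{\bullet}(\partial\Omega)$ and lack pointwise values. I would first establish \eqref{perp2} for smooth $\lambda,\mu$ via the algebraic manipulations above and then pass to the limit by density of $C^{\infty}$ forms in $H\Lambda^{k-1}$ and $H^{*}\Lambda^{k}$, using continuity of $\mathrm{tr}$, $*$, and $i_{\mathcal{N}}$, and interpreting both sides of \eqref{perp2} through the $H^{-\frac{1}{2}}$--$H^{\frac{1}{2}}$ duality pairing underlying the integration by parts formula \eqref{part}.
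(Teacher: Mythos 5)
The paper offers no proof of this lemma at all: it is imported verbatim from Schwarz (p.~27), so there is nothing internal to compare against, and the only question is whether your argument stands on its own — it does. The reduction $\gamma=\mathrm{tr}(\lambda)\wedge\mathrm{tr}(\ast\mu)=\mathrm{tr}(\lambda\wedge\ast\mu)$ via \eqref{proper2}--\eqref{proper3}, the pointwise adjointness $\langle\mathcal{N}^{\flat}\wedge\lambda,\mu\rangle_{\Lambda^{k}}=\langle\lambda,i_{\mathcal{N}}\mu\rangle_{\Lambda^{k-1}}$, and the normalization $\mathcal{N}^{\flat}\wedge i_{\mathcal{N}}v_{\Omega}=v_{\Omega}$ obtained from the anti-derivation identity are all correct, and extracting the coefficient of $v_{\partial\Omega}$ by wedging with $\mathcal{N}^{\flat}$ is legitimate, since wedging with $\mathcal{N}^{\flat}$ annihilates precisely the normal part in the splitting you invoke (you are silently identifying $\gamma$, a form on $\partial\Omega$, with its ambient tangential representative $\mathbf{t}(\lambda\wedge\ast\mu)$, which is harmless but worth saying). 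The one step that must be phrased carefully is the extension to the stated Sobolev classes: when $\lambda\in H\Lambda^{k-1}$ or $\mu\in H^{\ast}\Lambda^{k}$ the relevant boundary data live only in $H^{-\frac{1}{2}}$, so \eqref{perp2} cannot be a pointwise identity and must be read, exactly as you propose, as an equality of $H^{-\frac{1}{2}}$--$H^{\frac{1}{2}}$ pairings (equivalently, of the boundary term appearing in \eqref{part}), after which density of smooth forms and continuity of the trace close the limit. With that interpretation your proposal is a correct, self-contained proof of the quoted result, essentially the standard textbook derivation.
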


\begin{lemma}\label{lemma2}
  Given $f_{1},f_{2}\in H^{1}\Lambda^{0}(\Omega)$. Let $\mu\in H\Lambda^{1}(\Omega)$ or $\mu\in H^{\ast}\Lambda^{1}(\Omega)$, with $\boldsymbol{n}(\mu)=0$ at $\partial\Omega$, then 
  \begin{equation}
  \int_{\Omega}df_{1}\wedge\ast\delta(\mu\wedge df_{2})=\int_{\partial\Omega}\big(\langle df_{1},\mu\rangle_{\Lambda^{1}}i_{\mathcal{N}}df_{2}\big)v_{\partial\Omega}.
  \end{equation}
\end{lemma}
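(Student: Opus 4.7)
The plan is to reduce the identity to a boundary term via integration by parts and then simplify that boundary term using the antiderivation property of $i_\mathcal{N}$ together with the hypothesis $\boldsymbol{n}(\mu)=0$.

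First I would rewrite the left-hand side as an $L^2$ inner product,
\begin{equation*}
  \int_\Omega df_1\wedge\ast\delta(\mu\wedge df_2)=\langle df_1,\delta(\mu\wedge df_2)\rangle_{L^2\Lambda^1(\Omega)},
\end{equation*}
and then apply the integration by parts formula (\ref{part}) with the role of ``$\lambda$'' played by $df_1\in H\Lambda^1(\Omega)$ and the role of ``$\mu$'' played by the $2$-form $\mu\wedge df_2$. Because $d(df_1)=0$, the left-hand side of (\ref{part}) vanishes, so
\begin{equation*}
  \langle df_1,\delta(\mu\wedge df_2)\rangle_{L^2\Lambda^1(\Omega)}=-\int_{\partial\Omega}\mathrm{tr}(df_1)\wedge\mathrm{tr}\bigl(\ast(\mu\wedge df_2)\bigr).
\end{equation*}

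Next I would rewrite the boundary integrand using (\ref{proper3}) to convert $\mathrm{tr}\,\ast$ into $\ast\boldsymbol{n}$, and then apply Lemma \ref{perp2} to the pair $(df_1,\mu\wedge df_2)$ to obtain
\begin{equation*}
  \mathrm{tr}(df_1)\wedge\mathrm{tr}\bigl(\ast(\mu\wedge df_2)\bigr)=\langle df_1,i_\mathcal{N}(\mu\wedge df_2)\rangle_{\Lambda^1}\,v_{\partial\Omega}.
\end{equation*}
Now I would use the antiderivation property of the interior product,
\begin{equation*}
  i_\mathcal{N}(\mu\wedge df_2)=(i_\mathcal{N}\mu)\,df_2-(i_\mathcal{N}df_2)\,\mu,
\end{equation*}
together with the observation that, for a $1$-form, the condition $\boldsymbol{n}(\mu)=0$ on $\partial\Omega$ is equivalent to $i_\mathcal{N}\mu=\mu(\mathcal{N})=0$ on $\partial\Omega$; indeed, by (\ref{perp}), $\boldsymbol{n}(\mu)(X)=\langle X,\mathcal{N}\rangle\mu(\mathcal{N})$, which vanishes for every $X$ precisely when $\mu(\mathcal{N})=0$. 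Hence on $\partial\Omega$ we have $i_\mathcal{N}(\mu\wedge df_2)=-(i_\mathcal{N}df_2)\,\mu$, and collecting the two minus signs produces the claimed identity.

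The principal technical obstacle is the compatibility of regularities required to invoke (\ref{part}): formally one needs either $\mu\wedge df_2\in H^1\Lambda^2(\Omega)$ or $df_1\in H^1\Lambda^1(\Omega)$, whereas the hypotheses only give $df_1\in H\Lambda^1(\Omega)$ and $\mu\wedge df_2\in H^\ast\Lambda^2(\Omega)$ (the latter being implicit since $\delta(\mu\wedge df_2)$ must make sense in $L^2$). I would handle this by a density argument: approximate $f_1,f_2$ and $\mu$ by smooth forms (respecting the boundary condition $\boldsymbol{n}(\mu)=0$), establish the identity on the smooth level where (\ref{part}) and Lemma \ref{perp2} apply classically, and then pass to the limit, since both sides are continuous with respect to the relevant Sobolev norms and the boundary pairing is well-defined as a duality between $H^{-\frac12}\Lambda^{k-1}(\partial\Omega)$ and $H^{\frac12}\Lambda^{n-k}(\partial\Omega)$.
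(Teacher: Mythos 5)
Your proposal is correct and follows essentially the same route as the paper's proof: integration by parts via (\ref{part}) using $d(df_1)=0$, conversion of the boundary term with (\ref{proper3}) and (\ref{perp2}), the antiderivation property of $i_{\mathcal{N}}$, and the identification $i_{\mathcal{N}}\mu=\boldsymbol{n}(\mu)(\mathcal{N})=0$. Your added remark on the regularity mismatch in (\ref{part}) and the density argument is extra care the paper does not spell out, but it does not change the argument.
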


\begin{proof}
  Using the integration by parts formula (\ref{part}), we have
  \begin{equation*}
    \int_{\Omega}df_{1}\wedge\ast\delta(\mu\wedge df_{2})=-\int_{\partial\Omega}\mathrm{tr}(df_{1})\wedge\ast\boldsymbol{n}(\mu\wedge df_{2}).
  \end{equation*}
  From (\ref{perp2}) we have that
  \begin{equation}\label{eta3}
    \begin{aligned}
      \mathrm{tr}(df_{1})\wedge\ast\boldsymbol{n}(\mu\wedge df_{2})&=\langle df_{1},i_{\mathcal{N}}(\mu\wedge df_{2})\rangle_{\Lambda^{1}}v_{\partial\Omega}\\
      &=\langle df_{1},i_{\mathcal{N}}\mu\wedge df_{2}-\mu\wedge i_{\mathcal{N}}df_{2}\rangle_{\Lambda^{1}}v_{\partial\Omega}.
    \end{aligned}
  \end{equation}
  Using the definition of $\boldsymbol{n}(\mu)$ in (\ref{perp}) and applying the boundary condition $\boldsymbol{n}(\mu)=0$, we have
  \begin{equation}\label{eta4}
     i_{\mathcal{N}}\mu=\mu(\mathcal{N})=\boldsymbol{n}(\mu)(\mathcal{N})=0. 
  \end{equation}
  Hence, by using (\ref{eta3}) and (\ref{eta4}), we have that
  \begin{equation*}
    \begin{aligned}
      \int_{\Omega}df_{1}\wedge\ast\delta(\mu\wedge df_{2})&=\int_{\partial\Omega}\langle df_{1},\mu\wedge i_{\mathcal{N}}df_{2}\rangle_{\Lambda^{1}}v_{\partial\Omega}\\
      &=\int_{\partial\Omega}\big(\langle df_{1},\mu\rangle_{\Lambda^{1}}i_{\mathcal{N}}df_{2}\big)v_{\partial\Omega}.
    \end{aligned}
  \end{equation*}
\end{proof}

\subsection{Hodge decompositions}
In order to define the Hodge decompositions, we first define the spaces $H\Lambda^{k}(\Omega)$ and $H^{\ast}\Lambda^{k}(\Omega)$ with a zero boundary trace denoted by 
\begin{equation*}
  \begin{aligned}
    \mathring{H}\Lambda^{k}(\Omega)&:=\{\mu\in H\Lambda^{k}(\Omega)\mid\text{tr}(\mu)=0\},\\
    \mathring{H}^{\ast}\Lambda^{k}(\Omega)&:=\{\mu\in H^{\ast}\Lambda^{k}(\Omega)\mid\text{tr}(\ast\mu)=0\}.
  \end{aligned}
\end{equation*}
\par
For the spaces $H\Lambda^{k}(\Omega)$, $H^{\ast}\Lambda^{k}(\Omega)$, $\mathring{H}\Lambda^{k}(\Omega)$ and $\mathring{H}^{\ast}\Lambda^{k}(\Omega)$, we have the corresponding spaces of cycles, boundaries, cocycles and coboundaries. For details, we refer to \citep{arnold2006finite,arnold2010finite}.
The $k$-cocycles are denoted as
\begin{equation}\label{cocy}
\mathfrak{Z}^{k}:=\{\mu\in H\Lambda^{k}(\Omega)\mid d\mu=0\}, \quad \mathring{\mathfrak{Z}}^{k}:=\{\mu\in\mathring{H}\Lambda^{k}(\Omega)\mid d\mu=0\},
\end{equation} 
and the $k$-coboundaries as
\begin{equation}\label{coboun}
\mathfrak{B}^{k}:=dH\Lambda^{k-1}(\Omega),\quad\mathring{\mathfrak{B}}^{k}:=d\mathring{H}\Lambda^{k-1}(\Omega).
\end{equation}
The $k$-cycles are denoted as
\begin{equation}\label{cy}
\mathfrak{Z}^{\ast k}:=\{\mu\in H^{\ast}\Lambda^{k}(\Omega)\mid\delta\mu=0\}, \quad \mathring{\mathfrak{Z}}^{\ast k}:=\{\mu\in\mathring{H}^{\ast}\Lambda^{k}(\Omega)\mid\delta\mu=0\},
\end{equation} 
and the $k$-boundaries as
\begin{equation}\label{bound}
\mathfrak{B}^{\ast k}:=\delta H^{\ast}\Lambda^{k+1}(\Omega),\quad\mathring{\mathfrak{B}}^{\ast k}:=\delta\mathring{H}^{\ast}\Lambda^{k+1}(\Omega).
\end{equation}

Each of the spaces of cycles is closed in $H\Lambda^{k}(\Omega)$, $H^{\ast}\Lambda^{k}(\Omega)$ or $L^{2}\Lambda^{k}(\Omega)$. The spaces of boundaries are closed in $L^{2}\Lambda^{k}(\Omega)$
due to the Poincar\'e inequality \citep{arnold2006finite}. Finally, the $k$-th harmonic forms are defined as
\begin{equation}\label{har}
\begin{aligned}
\mathfrak{H}^{k}&:=\{\mu\in H\Lambda^{k}(\Omega)\cap\mathring{H}^{\ast}\Lambda^{k}({\Omega})\mid d\mu=0,\ \delta\mu=0\},\\
\mathring{\mathfrak{H}}^{k}&:=\{\mu\in \mathring{H}\Lambda^{k}(\Omega)\cap H^{\ast}\Lambda^{k}({\Omega})\mid d\mu=0,\ \delta\mu=0\}.
\end{aligned}
\end{equation}

\begin{lemma}\citep[p22]{arnold2006finite}
  The spaces of cycles, cocycles, boundaries, and coboundaries, and their orthogonal complement in $L^{2}\Lambda^{k}(\Omega)$, indicated with $\bot$, satisfy the relations
  \begin{equation}
    \begin{aligned}
      &\mathfrak{Z}^{k\bot}\subset\mathfrak{B}^{k\bot}=\mathring{\mathfrak{Z}}^{\ast k},\quad &\mathfrak{Z}^{\ast k\bot}\subset\mathfrak{B}^{\ast k\bot}=\mathring{\mathfrak{Z}}^{k},\\
      &\mathring{\mathfrak{Z}}^{k\bot}\subset\mathring{\mathfrak{B}}^{k\bot}=\mathfrak{Z}^{\ast k},\quad &\mathring{\mathfrak{Z}}^{\ast k\bot}\subset\mathring{\mathfrak{B}}^{\ast k\bot}=\mathfrak{Z}^{k}.
    \end{aligned}
  \end{equation}
\end{lemma}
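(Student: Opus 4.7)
The plan is first to dispose of the four inclusions on the left-hand side of each relation by the trivial observation that $d\circ d=0$ (and $\delta\circ\delta=0$), which forces $\mathfrak{B}^{k}\subset\mathfrak{Z}^{k}$ together with the three analogues $\mathring{\mathfrak{B}}^{k}\subset\mathring{\mathfrak{Z}}^{k}$, $\mathfrak{B}^{\ast k}\subset\mathfrak{Z}^{\ast k}$ and $\mathring{\mathfrak{B}}^{\ast k}\subset\mathring{\mathfrak{Z}}^{\ast k}$; taking $L^{2}$-orthogonal complements reverses each inclusion and yields the four containments for free. The substance of the lemma therefore lies in the four equalities, which I will treat by a common two-stage argument. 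I carry out the details for $\mathfrak{B}^{k\bot}=\mathring{\mathfrak{Z}}^{\ast k}$; the remaining three then follow by interchanging the roles of $d$ and $\delta$, and of $\mathrm{tr}$ and $\ast\boldsymbol{n}$, via the symmetric form of the integration by parts formula (\ref{part}).

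For the easy inclusion $\mathring{\mathfrak{Z}}^{\ast k}\subset\mathfrak{B}^{k\bot}$, take $\mu\in\mathring{\mathfrak{Z}}^{\ast k}$ and any $\lambda\in H\Lambda^{k-1}(\Omega)$. Since $\mu\in H^{\ast}\Lambda^{k}(\Omega)$, (\ref{part}) gives
\begin{equation*}
\langle d\lambda,\mu\rangle_{L^{2}\Lambda^{k}(\Omega)}=\langle\lambda,\delta\mu\rangle_{L^{2}\Lambda^{k-1}(\Omega)}+\int_{\partial\Omega}\mathrm{tr}(\lambda)\wedge\mathrm{tr}(\ast\mu),
\end{equation*}
and both summands vanish because $\delta\mu=0$ and $\mathrm{tr}(\ast\mu)=0$ by the definition of $\mathring{H}^{\ast}\Lambda^{k}(\Omega)$; hence $\mu$ is $L^{2}$-orthogonal to $\mathfrak{B}^{k}=dH\Lambda^{k-1}(\Omega)$.

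For the reverse inclusion, I fix $\mu\in\mathfrak{B}^{k\bot}\subset L^{2}\Lambda^{k}(\Omega)$ and argue in two stages. First, testing against $\lambda\in C_{c}^{\infty}\Lambda^{k-1}(\Omega)\subset H\Lambda^{k-1}(\Omega)$ turns $\langle d\lambda,\mu\rangle_{L^{2}}=0$ into the statement that $\delta\mu=0$ in the distributional sense, which in particular places $\mu$ in $H^{\ast}\Lambda^{k}(\Omega)$ with vanishing coderivative. Second, with this regularity in hand, I reapply (\ref{part}) in the branch that requires $\lambda\in H^{1}\Lambda^{k-1}(\Omega)$ and $\mu\in H^{\ast}\Lambda^{k}(\Omega)$, using now the smaller test class $\lambda\in H^{1}\Lambda^{k-1}(\Omega)\subset H\Lambda^{k-1}(\Omega)$, and obtain $\int_{\partial\Omega}\mathrm{tr}(\lambda)\wedge\mathrm{tr}(\ast\mu)=0$ for every such $\lambda$. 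By the surjectivity of $\mathrm{tr}:H^{1}\Lambda^{k-1}(\Omega)\to H^{\frac{1}{2}}\Lambda^{k-1}(\partial\Omega)$ and the non-degeneracy of the pairing between $H^{\frac{1}{2}}\Lambda^{k-1}(\partial\Omega)$ and $H^{-\frac{1}{2}}\Lambda^{n-k}(\partial\Omega)$ recalled in Section 2.2, this forces $\mathrm{tr}(\ast\mu)=0$, so $\mu\in\mathring{H}^{\ast}\Lambda^{k}(\Omega)$, as required.

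The main obstacle I anticipate is precisely this second stage: one must make precise that the vanishing of the boundary pairing for every admissible test form $\lambda$ really forces $\mathrm{tr}(\ast\mu)=0$ in the appropriate negative-order Sobolev space rather than only in a weaker distributional sense. The non-degeneracy of the $H^{\pm 1/2}$ duality for differential forms on the Lipschitz boundary $\partial\Omega$, combined with the surjectivity of the trace operator quoted from \citep{arnold2006finite}, are the precise ingredients that close this step. The other three equalities are handled in exactly the same way after swapping the roles of $d,\mathrm{tr}$ with those of $\delta,\ast\boldsymbol{n}$ and adjusting the regularity branch of (\ref{part}) accordingly.
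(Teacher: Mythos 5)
The paper itself offers no proof of this lemma; it is quoted directly from the cited reference of Arnold, Falk and Winther, so there is no internal argument to compare against. Your proof is correct in substance and is essentially the standard one from that source: the four inclusions follow from $d\circ d=0$, $\delta\circ\delta=0$ (together with $\mathrm{tr}(d\lambda)=d(\mathrm{tr}\lambda)$ and $\boldsymbol{n}(\delta\alpha)=\delta\boldsymbol{n}(\alpha)$ in the ringed cases) and the reversal of inclusions under orthogonal complementation, while each equality is obtained by reading off the weak coderivative from testing against compactly supported smooth forms and the boundary condition from testing against $H^{1}$ forms, using surjectivity of the trace and non-degeneracy of the $H^{\frac{1}{2}}$--$H^{-\frac{1}{2}}$ pairing. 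One repair is needed in the ``easy'' inclusion $\mathring{\mathfrak{Z}}^{\ast k}\subset\mathfrak{B}^{k\bot}$: there you apply (\ref{part}) with $\lambda\in H\Lambda^{k-1}(\Omega)$ and $\mu\in H^{\ast}\Lambda^{k}(\Omega)$, a combination covered by neither branch of the formula, since then $\mathrm{tr}(\lambda)$ and $\mathrm{tr}(\ast\mu)$ both lie only in negative-order trace spaces and their boundary pairing is not defined a priori. Argue instead first for $\lambda\in H^{1}\Lambda^{k-1}(\Omega)$ (or smooth $\lambda$), where the second branch of (\ref{part}) applies and the boundary term vanishes because $\ast\boldsymbol{n}(\mu)=\mathrm{tr}(\ast\mu)=0$ and $\delta\mu=0$, and then pass to arbitrary $\lambda\in H\Lambda^{k-1}(\Omega)$ by density of smooth forms in $H\Lambda^{k-1}(\Omega)$ and the $L^{2}$-continuity of $\lambda\mapsto d\lambda$ on that space; the same density remark is needed in the ringed equalities, where your test forms lie in $\mathring{H}\Lambda^{k-1}(\Omega)$ or $\mathring{H}^{\ast}\Lambda^{k+1}(\Omega)$ rather than in $H^{1}$. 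With that adjustment, and your correctly identified use of trace surjectivity and non-degeneracy of the duality pairing in the reverse inclusions, the argument is complete.
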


Based on the spaces that we defined above, there exist two types of Hodge decompositions of $L^{2}\Lambda^{k}(\Omega)$, each with different boundary conditions \citep{arnold2006finite,schwarz2006hodge,arnold2010finite},

\begin{subequations}
\begin{alignat}{2}
&\text{(i): }L^{2}\Lambda^{k}&=\mathfrak{B}^{k}\oplus\mathring{\mathfrak{B}}^{\ast k}\oplus\mathfrak{H}^{k},\label{hode1}\\
&\text{(ii): }L^{2}\Lambda^{k}&=\mathring{\mathfrak{B}}^{k}\oplus\mathfrak{B}^{\ast k}\oplus\mathring{\mathfrak{H}}^{k}.\label{hode2}
\end{alignat}
\end{subequations}

\subsection{Shape derivatives}

Let $V\subseteq\Omega$ with $\dim V=m\leq n=\dim\Omega$ be an oriented compact submanifold. Given a flow $\xi_{t}:
\Omega\to\Omega$ defined in a neighborhood of $V$ for small time $t$, and define the submanifold $V(t)=\xi_{t}V$ \citep{MR2884939}.                                                                                                                                                                                                                                                                                                                                                                                                                                                                                                                                                                                                                                                                                                                                                                                                                                                                                                                                                                                                                                                                                                                                                                                                                                                                                                                                                                                                                                                                                                                                                                                                                                                                                                                                                                                                                                                                                                                                                                                                                                                                                                                                                                                                                                                                                                                                                                                                                                                                                                                                                                                                                                                                                                                                                                                                                                                                                                                                                                                                                                                                                                                                                                                                                                                                                                                                                                                                                                                                                                                                                                                                                                                                                                                                                                                                                                                                                                                                                                                                                                                                                                                                                                                                                                                                                                                                                                                                                                                                                                                                                                                                                                                                                                                                                                                                                                                                                                                                                                                                                                                                                                                                                                                                                                                                                                                                                                                                                                                                                                                                                                                                                                                                                                                                                                                                                                                                                                                                                                                                                                                                                                                                                                                                                                                                                                                                                                                                                                                                                                                                                                                                                                                                                                                                                                                                                                                                                                                                                                                                                                                                                                                                                                                                                                                                                                                                                                                                                                                                                                                                                                                                                                                                                                                                                                                                                                                                                                                                                                                                      
Obviously, $V(0)=\xi_{0}V=V$. 

\begin{defn}{(\citep{MR3525087})}
  Let $X=\frac{d\xi_{t}(x)}{dt}\mid_{t=0}$ be the velocity field. Let the functional $I$ be defined as $I(V(t);\alpha)=\int_{V(t)}\alpha,\ \alpha\in\Lambda^{m}(\Omega)$. Then the shape derivative of the functional $I$ at $V$ in the direction of the vector field $X$ is defined as
  \begin{equation}\label{shape5}
    dI(V;X,\alpha):=\lim_{t\to 0}\frac{1}{t}\Big(I(V(t);\alpha)-I(V(0);\alpha)\Big).
  \end{equation}
\end{defn}
\noindent
From (\ref{shape5}), we have that
\begin{equation}\label{shape4}
  \begin{aligned}
    dI(V;X,\alpha)&=\lim_{t\to 0}\frac{1}{t}\Big(\int_{V(t)}\alpha(\xi_{t}x)-\int_{V}\alpha(x)\Big)\\
    &=\lim_{t\to 0}\frac{1}{t}\Big(\int_{V}\xi_{t}^{\ast}\alpha(\xi_{t}x)-\int_{V}\alpha(x)\Big)\\
    &=\int_{V}\lim_{t\to 0}\frac{1}{t}\Big(\xi_{t}^{\ast}\alpha(\xi_{t}x)-\alpha(x)\Big)\\
    &=\int_{V}\mathfrak{L}_{X}\alpha,
  \end{aligned}
\end{equation}
where $x\in V$ and $\xi^{\ast}_{t}:\Lambda^{m}(V(t))\to\Lambda^{m}(V)$ is a pull-back map and $\mathfrak{L}_{X}$ is the Lie derivative.

\section{Incompressible Euler equations, function spaces, and 
Hodge decompositions}

In this section, we will first define several Sobolev spaces of differential forms that will be extensively used in this article. 
Next, we will state the incompressible Euler equations with a free surface in terms of differential forms, discuss the Hodge decomposition of the velocity, and give some auxiliary results.

\subsection{Definition of Sobolev spaces}
For the analysis of the incompressible Euler equations, we introduce the following notation
\begin{align*}
  &P\Lambda^{k}(\Omega):=\left\{\gamma\in H\Lambda^{k}(\Omega)\cap H^{\ast}\Lambda^{k}(\Omega)\mid d\gamma=0\text{ in }\Omega\right\},\\
  &\mathring{P}\Lambda^{k}(\Omega):=\{\gamma\in H\Lambda^{k}(\Omega)\cap H^{\ast}\Lambda^{k}(\Omega)\mid d\gamma=0\text{ in }\Omega,\ \mathrm{tr}(\gamma)=0\text{ at }\Gamma\};\\
  &\text{   }\\
  &P^{\ast}\Lambda^{k}(\Omega):=\{\gamma\in H\Lambda^{k}(\Omega)\cap H^{\ast}\Lambda^{k}(\Omega)\mid \delta\gamma=0\text{ in }\Omega\},\\
  &\mathring{P}^{\ast}\Lambda^{k}(\Omega):=\{\gamma\in H\Lambda^{k}(\Omega)\cap H^{\ast}\Lambda^{k}(\Omega)\mid\delta\gamma=0\text{ in }\Omega,\ \mathrm{tr}(\ast\gamma)=0\text{ at }\Gamma\};\\
  &\text{   }\\
  &\mathring{V}\Lambda^{k}(\Omega):=\{\mu\in H\Lambda^{k}(\Omega)\cap\mathring{H}^{\ast}\Lambda^{k}(\Omega)\mid d\mu=0\text{ in }\Omega\},\\
  &\mathring{V}^{\ast}\Lambda^{k}(\Omega):=\{\mu\in\mathring{H}\Lambda^{k}(\Omega)\cap H^{\ast}\Lambda^{k}(\Omega)\mid \delta\mu=0\text{ in }\Omega\}.
\end{align*}

Note, $P^{\ast}\Lambda^{k}(\Omega)=\ast P\Lambda^{n-k}(\Omega)$, $\mathring{P}^{\ast}\Lambda^{k}(\Omega)=\ast\mathring{P}\Lambda^{n-k}(\Omega)$ and $\ast\mathring{V}\Lambda^{k}(\Omega)=\mathring{V}^{\ast}\Lambda^{n-k}(\Omega)$. For $k=1$ elements of $P^{\ast}\Lambda^{1}(\Omega)$ are proxies of divergence-free vector fields.

\subsection{Incompressible Euler equations with a free surface}
Let $\Omega(t)\subset\mathbb{R}^{n},n\in\{1,2,3\}$ be a time dependent oriented manifold with $(n-1)$-dimensional Lipschitz continuous boundary $\partial\Omega$. The boundary $\partial\Omega$ is split into a free surface part $\Sigma$ and a possibly empty fixed part $\Gamma=\partial\Omega\backslash\Sigma$. 
The incompressible Euler equations with a free surface, including gravity as an external force and surface tension at the free surface, see e.g. \citep[p447-455]{wehausen1960surface}, stated in differential forms \citep{abraham2012manifolds,arnold2008topological} are
\begin{subequations}\label{dieuler}
  \begin{align}
    &v_{t}=-i_{v^{\sharp}}dv-d\big(\frac{1}{2}\llangle v^{\sharp},v^{\sharp}\rrangle+\frac{\tilde{p}}{\rho}+\Phi\big)=-i_{v^{\sharp}}dv-dh\quad \text{in }\Omega, \label{dieuler1}\\
    &\delta v=0\quad \text{in }\Omega, \label{dieuler2}\\
    &\Sigma_{t}=\ast\boldsymbol{n}v \quad\text{ at}\ \Sigma, \label{dieuler3}\\
    &\mathrm{tr}(\tilde{p})=\tau k\quad\text{ at}\ \Sigma, \label{dieuler4}\\
    &\ast\boldsymbol{n}v=g \quad\text{ at }\Gamma\label{dieuler5}.
  \end{align}
\end{subequations}
The variables in (\ref{dieuler}) are defined as follows:
\begin{itemize}
  \item $v\in P^{\ast}\Lambda^{1}(\Omega)$ denotes the fluid velocity; $p\in H^{1}\Lambda^{0}(\Omega)$ the pressure, $\tilde{p}:=p-\bar{p}\in H^{1}\Lambda^{0}(\Omega)$, with $\bar{p}\in\mathbb{R}^{+}$ a constant atmospheric pressure; $\rho\in\mathbb{R}^{+}$ a constant fluid density; 
  $\Phi\in H^{1}\Lambda^{0}(\Omega)$ the gravity potential, which is a linear harmonic function with $d\Phi=g_{0}$, with $g_{0}$ the gravity acceleration and $\delta g_{0}=0$, e.g. in Cartesian coordinates $\Phi=g_{0}z$, with $z$ the free surface height above the reference $x-y$ plane.
  We also define 
  \begin{equation}\label{h}
    h:=\frac{1}{2}\llangle v^{\sharp},v^{\sharp}\rrangle+\frac{\tilde{p}}{\rho}+\Phi\in H^{1}\Lambda^{0}(\Omega).
  \end{equation}

  \item Given a flow $\xi_{t}:\Omega\to\Omega$, we define the free surface as $\Sigma(t)=\xi_{t}\Sigma$. Obviously, $\Sigma(0)=\Sigma$. We define the time derivative of $\Sigma$ as
  \begin{equation*}
    \Sigma_{t}:=\Big\{\frac{\partial }{\partial t}\xi_{t}(\Sigma(0))\in\mathbb{R}^{n},x\in\Sigma\subseteq\partial\Omega\Big\}.
  \end{equation*} 
  Hence, (\ref{dieuler3}) expresses that the velocity of points at the free surface $\Sigma$ equals the normal fluid velocity.
  
  \item $k$ is the mean curvature of $\Sigma$, defined as $k:=\mathrm{div}(\mathcal{N})$, with $\mathcal{N}$ the unit outward normal vector at $\partial\Omega$. The divergence $\mathrm{div}$ satisfies $\mathrm{div}(X)v_{\Omega}=d(i_{X}v_{\Omega})$ for any vectors $X\in T_{p}\Omega$ and volume form $v_{\Omega}$; $\tau\in\mathbb{R},\tau\geq 0$ is the surface tension, which is assumed to be a constant.
  Hence (\ref{dieuler4}) expresses that at the free surface $\Sigma$ the pressure jump $p-\bar{p}$ at $\Sigma$ is equal to the surface tension.
  
  \item $g\in H^{-\frac{1}{2}}\Lambda^{n-1}(\partial\Omega)$ is a prescribed normal velocity at $\Gamma$ and is equal to zero at $\Sigma$.
\end{itemize}
The Euler equations with a free surface (\ref{dieuler}) model free surface gravity waves, including waves around an object, capillary waves, and also for instance the shape of droplets in the air.

\subsection{Hodge decomposition of the velocity}

In this section, we will discuss the Hodge decomposition of the velocity. This will introduce three new variables, namely an exact form $\phi$ (potential), a co-exact form $\beta$ and a harmonic form $\alpha$.  
This Hodge decomposition will be an essential tool in the analysis of the different Hamiltonian formulations that will be discussed in the subsequent sections.

According to (\ref{hode1}) and (\ref{hode2}), there are two different Hodge decompositions for $v\in P^{\ast}\Lambda^{1}(\Omega)$, namely
\begin{equation}\label{vhode}
  v=d\phi+\delta\beta+\alpha,\ \mathrm{where}
\end{equation}
\begin{equation*}
  \begin{aligned}
    (i)&:d\phi\in\mathfrak{B}^{1},\ \delta\beta\in\mathring{\mathfrak{B}}^{\ast 1}\ \mathrm{and}\ \alpha\in\mathfrak{H}^{1},\\
    (ii)&:d\phi\in\mathring{\mathfrak{B}}^{1},\ \delta\beta\in\mathfrak{B}^{\ast 1}\ \mathrm{and}\ \alpha\in\mathring{\mathfrak{H}}^{1}.
  \end{aligned}
\end{equation*}

Since in general, an inviscid incompressible fluid must satisfy an inhomogeneous boundary condition for the normal component of the velocity at the domain boundary $\partial\Omega=\Sigma\cup\Gamma$, we will use Hodge decomposition $(i)$ for the velocity $v$.

\begin{lemma}\label{lemma10}
The exact form $\phi$, co-exact form $\beta$ and harmonic form $\alpha$ in the Hodge decomposition (\ref{vhode}) $v=d\phi+\delta\beta+\alpha$, where $v\in P^{\ast}\Lambda^{1}(\Omega),\ d\phi\in\mathfrak{B}^{1},\ \delta\beta\in\mathring{\mathfrak{B}}^{\ast 1}\text{ and }\alpha\in\mathfrak{H}^{1}$, solve the following boundary value problems separately 
\begin{align}  
  &
  \begin{cases}\label{potential1}
    &\delta d\phi=0 \quad \mathrm{in}\ \Omega,\\
    &\ast\boldsymbol{n}(d\phi)=\ast\boldsymbol{n}v=\frac{\partial\Sigma}{\partial t} \quad \mathrm{on}\ \Sigma, \\
    &\ast\boldsymbol{n}(d\phi)=\ast\boldsymbol{n}v=g \quad \mathrm{ on }\ \Gamma,
  \end{cases}\\
  &
  \begin{cases}\label{stream}
    &d\delta\beta=dv \quad\mathrm{in}\ \Omega,\\
    &d\beta=0 \quad\mathrm{in}\ \Omega,\\
    &\mathrm{tr}(\ast\beta)=0 \quad\mathrm{on}\ \Sigma\cup\Gamma,
  \end{cases}\\
  &
  \begin{cases}\label{harmonic1}
    &d\alpha=\delta\alpha=0 \quad \mathrm{in}\ \Omega,\\
    &\mathrm{tr}(\ast\alpha)=0 \quad \mathrm{on}\ \partial\Omega. \\
 \end{cases}
\end{align}
  
\end{lemma}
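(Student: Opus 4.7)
The plan is to verify each of the three boundary value problems in turn by applying $\delta$ (for $\phi$), $d$ (for $\beta$) and direct inspection (for $\alpha$) to the Hodge decomposition $v = d\phi + \delta\beta + \alpha$, and then using the trace identities of Lemma~\ref{trlemma} together with the boundary conditions (\ref{dieuler3}) and (\ref{dieuler5}). The harmonic part (\ref{harmonic1}) is immediate from the definition (\ref{har}): $\alpha \in \mathfrak{H}^{1}$ satisfies $d\alpha = 0$ and $\delta\alpha = 0$ in $\Omega$, and the membership $\alpha \in \mathring{H}^{\ast}\Lambda^{1}(\Omega)$ forces $\mathrm{tr}(\ast\alpha) = 0$ on all of $\partial\Omega = \Sigma \cup \Gamma$.

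For $\phi$, I apply $\delta$ to the decomposition. Since $v \in P^{\ast}\Lambda^{1}(\Omega)$ gives $\delta v = 0$, and since $\delta^{2} = 0$ and $\delta\alpha = 0$, we obtain $\delta d\phi = 0$ in $\Omega$. For the boundary condition, the goal is to show $\boldsymbol{n}(\delta\beta) = 0$ and $\boldsymbol{n}(\alpha) = 0$, so that $\ast\boldsymbol{n}(d\phi) = \ast\boldsymbol{n}v$ on $\partial\Omega$. The first follows because $\beta \in \mathring{H}^{\ast}\Lambda^{2}(\Omega)$ gives $\mathrm{tr}(\ast\beta) = 0$, so by (\ref{proper3}) and (\ref{eq42}) we have $\boldsymbol{n}(\beta) = 0$, and then (\ref{proper2}) yields $\boldsymbol{n}(\delta\beta) = \delta\boldsymbol{n}(\beta) = 0$; the second follows similarly from $\mathrm{tr}(\ast\alpha) = 0$ via (\ref{proper3}). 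Substituting (\ref{dieuler3}) on $\Sigma$ and (\ref{dieuler5}) on $\Gamma$ then yields (\ref{potential1}).

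For $\beta$, applying $d$ to the decomposition and using $d^{2} = 0$ together with $d\alpha = 0$ immediately yields $d\delta\beta = dv$ in $\Omega$, while $\mathrm{tr}(\ast\beta) = 0$ on $\Sigma \cup \Gamma$ is built into the space $\mathring{H}^{\ast}\Lambda^{2}(\Omega)$. The only genuinely nontrivial requirement is $d\beta = 0$: the abstract Hodge decomposition (\ref{hode1}) determines $\delta\beta$ uniquely but leaves the potential $\beta$ free modulo $\ker\delta \cap \mathring{H}^{\ast}\Lambda^{2}(\Omega)$, so I would exploit this gauge freedom to select, orthogonally to $\mathring{\mathfrak{H}}^{2}$, a representative satisfying the Coulomb gauge $d\beta = 0$; existence of such a representative follows from standard Hodge--Morrey theory for boundary value problems for the Laplace--de Rham operator on $\Omega$.

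This gauge-fixing step is the main obstacle: the $\phi$- and $\alpha$-parts reduce to algebraic consequences of the orthogonality in (\ref{hode1}) and the trace identities (\ref{proper2})--(\ref{proper3}), but showing that the Coulomb representative for $\beta$ lies in the regularity class claimed in the lemma, and that $d\beta = 0$ is compatible with $\mathrm{tr}(\ast\beta) = 0$ on $\Sigma \cup \Gamma$ for the actual boundary data $\ast\boldsymbol{n}v$, requires an appeal to the elliptic regularity theory that underlies the Hodge--Morrey decomposition.
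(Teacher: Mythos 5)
Your proposal is correct and follows essentially the same route as the paper: apply $\delta$ and $d$ to the decomposition, read off $d\alpha=\delta\alpha=0$ and $\mathrm{tr}(\ast\beta)=\mathrm{tr}(\ast\alpha)=0$ from the space memberships, use the trace identities of Lemma \ref{trlemma} to conclude $\boldsymbol{n}(\delta\beta)=\boldsymbol{n}(\alpha)=0$ and hence $\ast\boldsymbol{n}(d\phi)=\ast\boldsymbol{n}(v)$, and then insert (\ref{dieuler3}) and (\ref{dieuler5}). The only divergence is in the step $d\beta=0$: you make the gauge choice explicit via Hodge--Morrey theory, while the paper dispatches it in one terse line and in effect relies on the same fact through the variational problem (\ref{eq118}) and the solution operator $N_{\beta}$ of Lemma \ref{lemma11}, so your treatment of that step is, if anything, the more careful one.
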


\begin{proof}
  From the Hodge decomposition $v=d\phi+\delta\beta+\alpha$ and the divergence-free condition (\ref{dieuler2}) since $v\in P^{\ast}\Lambda^{1}(\Omega)$, it is obvious that
  \begin{equation*}
    \delta v=\delta d\phi=0,\quad dv=d\delta\beta.
  \end{equation*}
  Since $d\delta\beta=dv\in\mathfrak{B}^{2}$, and from Lemma 2.5 we have $\mathfrak{B}^{2}\subset\mathfrak{Z}^{2}$, with $\mathfrak{Z}^{2}$ given by (\ref{cocy}), which implies that $d\beta=0$. Also, since $\delta\beta\in\mathring{\mathfrak{B}}^{\ast 1},\alpha\in\mathfrak{H}^{1}$, it directly follows that $\mathrm{tr}(\ast\beta)=\mathrm{tr}(\ast\alpha)=0$, and $d\alpha=\delta\alpha=0$. Then using (\ref{proper2}) and (\ref{proper3}), we obtain
  \begin{equation}\label{eta2}
    \begin{aligned}
      &\boldsymbol{n}(\delta\beta)=(-1)^{n-1}\boldsymbol{n}(\ast d\ast\beta)=(-1)^{n-1}\ast\mathrm{tr}(d\ast\beta)=(-1)^{n-1}\ast d\mathrm{tr}(\ast\beta)=0,\\
      &\boldsymbol{n}(\alpha)=(-1)^{n-1}\ast\mathrm{tr}(\ast\alpha)=0.
    \end{aligned}
  \end{equation}
  Thus, 
  \begin{equation*}
    \boldsymbol{n}(d\phi)=\boldsymbol{n}v-\boldsymbol{n}(\delta\beta)-\boldsymbol{n}(\alpha)=\boldsymbol{n}v.
  \end{equation*}
\end{proof}

In case $\Gamma=\partial\Omega$, hence a domain without a free surface, the harmonic form $\alpha$ in the Hodge decomposition (\ref{vhode}) is uncoupled from $d\phi$ and $\delta\beta$. It is then straightforward to
consider a connected domain and include $\alpha$ in the Hamiltonian formulation. At the free surface $\Sigma$, $\alpha$ is, however, coupled to $\Sigma$ through the boundary condition $\boldsymbol{n}(\alpha)=0$ at $\partial\Omega$, 
which significantly complicates the Hamiltonian formulation. For clarity of exposition, we assume therefore in the mathematical formulations that use a Hodge decomposition, in Sections 4 and 5 that the domain $\Omega$ is simply connected, which implies $\alpha=0$.

For the analysis in the subsequent sections we need explicit solutions of the equations for the exact form $\phi$ (\ref{potential1}) and the co-exact form $\beta$ (\ref{stream}).
For the Poisson equation (\ref{potential1}), we define the solution operator $N_{\phi}$, which is stated in the following lemma.

\begin{lemma}\label{lemma6}
  Given the space
  \begin{equation*}
    V:=\{\phi\in H\Lambda^{0}(\Omega)\mid\int_{\Omega}\phi v_{\Omega}=0\}.
  \end{equation*}
  The weak formulation of the Poisson equation (\ref{potential1}): Find $\phi\in V$, such that
  \begin{equation}\label{eq82}
    \langle d\phi,d\psi\rangle_{L^{2}\Lambda^{1}(\Omega)}=\int_{\partial\Omega}\mathrm{tr}(\psi)\wedge g_{\partial},\quad\forall\psi\in H\Lambda^{0}(\Omega),
  \end{equation}
  with $g_{\partial}\in H^{-\frac{1}{2}}\Lambda^{n-1}(\partial\Omega)$ defined as
  \begin{equation*}
    g_{\partial}=\ast\boldsymbol{n}(d\phi)=
    \begin{cases}
      \Sigma_{t}\quad\mathrm{on}\ \Sigma,\\
      g\quad\ \ \mathrm{on}\ \Gamma,
    \end{cases}
  \end{equation*}
  has a unique solution
  \begin{equation}\label{laplace}
    \phi(x)=N_{\phi}(g_{\partial}),
  \end{equation}
  with solution operator $N_{\phi}:H^{-\frac{1}{2}}\Lambda^{n-1}(\partial\Omega)\to H\Lambda^{0}(\Omega)$.
\end{lemma}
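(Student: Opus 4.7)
The plan is to recognize \eqref{eq82} as the weak form of a Neumann problem for the Laplace operator and apply the Lax-Milgram theorem on $V$. First I would confirm the weak formulation by testing the strong equation $\delta d\phi=0$ against $\psi\in H\Lambda^{0}(\Omega)$ and applying the integration by parts formula \eqref{part} with $\lambda=\psi$ and $\mu=d\phi$:
\begin{equation*}
0=\langle\psi,\delta d\phi\rangle_{L^{2}\Lambda^{0}(\Omega)}=\langle d\psi,d\phi\rangle_{L^{2}\Lambda^{1}(\Omega)}-\int_{\partial\Omega}\mathrm{tr}(\psi)\wedge\ast\boldsymbol{n}(d\phi),
\end{equation*}
which, after identifying $\ast\boldsymbol{n}(d\phi)$ with $g_{\partial}$, is precisely \eqref{eq82}.

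Next I would invoke Lax-Milgram on the Hilbert space $V$ endowed with the $H\Lambda^{0}(\Omega)$-norm; $V$ is closed in $H\Lambda^{0}(\Omega)$ as the kernel of the bounded mean functional. The bilinear form $a(\phi,\psi):=\langle d\phi,d\psi\rangle_{L^{2}\Lambda^{1}(\Omega)}$ is continuous by Cauchy-Schwarz, and coercive on $V$ by the Poincar\'e--Wirtinger inequality for $0$-forms with zero mean, $\Vert\phi\Vert_{L^{2}\Lambda^{0}(\Omega)}\leq C_{P}\Vert d\phi\Vert_{L^{2}\Lambda^{1}(\Omega)}$, which makes $\Vert d\phi\Vert_{L^{2}\Lambda^{1}(\Omega)}$ an equivalent norm on $V$. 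For the right-hand side, boundedness of the trace operator $\mathrm{tr}:H\Lambda^{0}(\Omega)\to H^{\frac{1}{2}}\Lambda^{0}(\partial\Omega)$ combined with the duality pairing between $H^{\frac{1}{2}}\Lambda^{0}(\partial\Omega)$ and $H^{-\frac{1}{2}}\Lambda^{n-1}(\partial\Omega)$ realized by the boundary integral yields
\begin{equation*}
\Big|\int_{\partial\Omega}\mathrm{tr}(\psi)\wedge g_{\partial}\Big|\leq C\Vert g_{\partial}\Vert_{H^{-\frac{1}{2}}\Lambda^{n-1}(\partial\Omega)}\Vert\psi\Vert_{H\Lambda^{0}(\Omega)},
\end{equation*}
so $L(\psi):=\int_{\partial\Omega}\mathrm{tr}(\psi)\wedge g_{\partial}$ is a bounded linear functional on $H\Lambda^{0}(\Omega)$, hence on $V$.

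The remaining point is the compatibility condition $\int_{\partial\Omega}g_{\partial}=0$, which is necessary because every constant $c\in\mathbb{R}\subset H\Lambda^{0}(\Omega)$ satisfies $a(\phi,c)=0$. This follows from incompressibility: since $g_{\partial}=\ast\boldsymbol{n}v$ on all of $\partial\Omega$ and $\delta v=0$ in $\Omega$, the identity \eqref{proper3}, Stokes' theorem \eqref{stokes}, and the definition \eqref{rela1} of $\delta$ give $\int_{\partial\Omega}g_{\partial}=\int_{\partial\Omega}\mathrm{tr}(\ast v)=\int_{\Omega}d(\ast v)=0$. Lax-Milgram then produces a unique $\phi\in V$ with $a(\phi,\tilde\psi)=L(\tilde\psi)$ for all $\tilde\psi\in V$; for a general $\psi\in H\Lambda^{0}(\Omega)$ the decomposition $\psi=\tilde\psi+c$ with $\tilde\psi\in V$ and $c$ the mean of $\psi$ gives $a(\phi,\psi)=a(\phi,\tilde\psi)=L(\tilde\psi)=L(\psi)$ by compatibility, so \eqref{eq82} holds on the full test space. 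Continuous dependence $\Vert\phi\Vert_{H\Lambda^{0}(\Omega)}\leq C\Vert g_{\partial}\Vert_{H^{-\frac{1}{2}}\Lambda^{n-1}(\partial\Omega)}$ from Lax-Milgram then furnishes the bounded linear solution operator $N_{\phi}$. The main technical obstacle I anticipate is verifying the Poincar\'e--Wirtinger inequality in the language of differential forms on a Lipschitz domain, for which I would appeal to the standard results of \citep{arnold2006finite,arnold2010finite}.
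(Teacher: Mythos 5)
Your proposal is correct; the paper in fact omits the proof of this lemma as ``standard'', and the argument it has in mind is precisely the one you give: derive (\ref{eq82}) from (\ref{potential1}) via the integration by parts formula (\ref{part}), then apply Lax--Milgram on the zero-mean space $V$, with coercivity from the Poincar\'e inequality for zero-mean $0$-forms and boundedness of the right-hand side from the trace theorem and the $H^{\frac{1}{2}}$--$H^{-\frac{1}{2}}$ duality pairing. Your explicit treatment of the compatibility condition $\int_{\partial\Omega}g_{\partial}=0$ (which you correctly deduce from $\delta v=0$, and which is what allows passing from test functions in $V$ to all of $H\Lambda^{0}(\Omega)$) is a point the lemma statement glosses over: strictly speaking it restricts the domain of $N_{\phi}$ to zero-mean boundary data, but it is automatically satisfied everywhere the paper invokes $N_{\phi}$, since there $g_{\partial}$ always arises as the normal trace of a divergence-free field or the trace of a closed $(n-1)$-form.
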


The proof of this lemma is standard and therefore omitted.
For (\ref{stream}) we define the solution operator $N_{\beta}$, which is stated in the following lemma.

\begin{lemma}\label{lemma11}
  The weak formulation of (\ref{stream}): Find $\beta\in\mathring{V}\Lambda^{2}(\Omega)$, such that 
  \begin{equation}\label{eq118}
    \langle\delta\beta,\delta\gamma\rangle_{L^{2}\Lambda^{1}(\Omega)}=\langle\omega,\gamma\rangle_{L^{2}\Lambda^{2}(\Omega)},\quad\forall\gamma\in\mathring{V}\Lambda^{2}(\Omega),
  \end{equation}
  with $\omega=dv\in\mathring{V}\Lambda^{2}(\Omega)$, has a unique solution
  \begin{equation}\label{eq81}
    \beta=N_{\beta}(\omega),
  \end{equation}
  with solution operator $N_{\beta}(\cdot):\mathring{V}\Lambda^{2}(\Omega)\to\mathring{V}\Lambda^{2}(\Omega)$.
\end{lemma}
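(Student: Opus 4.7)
The plan is to establish existence and uniqueness via the Lax--Milgram lemma on $\mathring{V}\Lambda^{2}(\Omega)$, treated as a closed subspace (hence a Hilbert space) of $H\Lambda^{2}(\Omega)\cap H^{\ast}\Lambda^{2}(\Omega)$ endowed with the graph norm $\Vert\mu\Vert^{2}:=\Vert\mu\Vert^{2}_{L^{2}\Lambda^{2}}+\Vert d\mu\Vert^{2}_{L^{2}\Lambda^{3}}+\Vert\delta\mu\Vert^{2}_{L^{2}\Lambda^{1}}$. Closedness of $\mathring{V}\Lambda^{2}(\Omega)$ follows since $d\mu=0$ is closed in $H\Lambda^{2}(\Omega)$ and $\mathrm{tr}(\ast\mu)=0$ is closed in $H^{\ast}\Lambda^{2}(\Omega)$; moreover, on this subspace the norm collapses to $\Vert\mu\Vert^{2}_{L^{2}\Lambda^{2}}+\Vert\delta\mu\Vert^{2}_{L^{2}\Lambda^{1}}$. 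Define the bilinear form $a(\beta,\gamma):=\langle\delta\beta,\delta\gamma\rangle_{L^{2}\Lambda^{1}(\Omega)}$ and the linear functional $\ell(\gamma):=\langle\omega,\gamma\rangle_{L^{2}\Lambda^{2}(\Omega)}$. Continuity of both is immediate from Cauchy--Schwarz, with $|a(\beta,\gamma)|\leq\Vert\beta\Vert\,\Vert\gamma\Vert$ and $|\ell(\gamma)|\leq\Vert\omega\Vert_{L^{2}\Lambda^{2}}\Vert\gamma\Vert_{L^{2}\Lambda^{2}}$.

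The main obstacle is coercivity of $a$, which reduces to a Poincar\'e-type estimate $\Vert\beta\Vert_{L^{2}\Lambda^{2}}\leq C\Vert\delta\beta\Vert_{L^{2}\Lambda^{1}}$ valid on $\mathring{V}\Lambda^{2}(\Omega)$. This is a standard consequence of Hodge theory: for $\beta\in H\Lambda^{k}\cap\mathring{H}^{\ast}\Lambda^{k}$ orthogonal to the harmonic space $\mathfrak{H}^{k}$ defined in (\ref{har}), one has $\Vert\beta\Vert_{L^{2}}\leq C(\Vert d\beta\Vert_{L^{2}}+\Vert\delta\beta\Vert_{L^{2}})$, which on $\mathring{V}\Lambda^{2}$ collapses to the desired form since $d\beta=0$. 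The kernel of $a$ in $\mathring{V}\Lambda^{2}(\Omega)$ is precisely $\mathring{V}\Lambda^{2}\cap\ker(\delta)=\mathfrak{H}^{2}$; under the simply-connectedness hypothesis adopted at the end of Section~3.3 this space is trivial (for the class of domains considered), and more generally one can work on the orthogonal complement $\mathring{V}\Lambda^{2}\ominus\mathfrak{H}^{2}$ with the convention $N_{\beta}(\omega)\perp\mathfrak{H}^{2}$. Under either interpretation, $a(\beta,\beta)=\Vert\delta\beta\Vert^{2}_{L^{2}\Lambda^{1}}\geq c\Vert\beta\Vert^{2}$.

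Lax--Milgram then yields a unique $\beta$ in (the appropriate quotient or orthogonal complement of) $\mathring{V}\Lambda^{2}(\Omega)$ satisfying (\ref{eq118}), together with the bound $\Vert\beta\Vert\leq C\Vert\omega\Vert_{L^{2}\Lambda^{2}}$; setting $N_{\beta}(\omega):=\beta$ defines a bounded linear operator $N_{\beta}:\mathring{V}\Lambda^{2}(\Omega)\to\mathring{V}\Lambda^{2}(\Omega)$, with linearity a direct consequence of the bilinear structure of $a$ and the linearity of $\ell$ in $\omega$. Finally, I would check consistency with the strong problem (\ref{stream}): the constraints $d\beta=0$ and $\mathrm{tr}(\ast\beta)=0$ are encoded in $\mathring{V}\Lambda^{2}(\Omega)$, while applying the integration-by-parts formula (\ref{part}) to the left-hand side of (\ref{eq118}) eliminates the boundary term using $\mathrm{tr}(\ast\gamma)=0$ and gives $\langle d\delta\beta-\omega,\gamma\rangle_{L^{2}\Lambda^{2}}=0$ for all $\gamma\in\mathring{V}\Lambda^{2}(\Omega)$; since both $d\delta\beta$ and $\omega=dv$ lie in the coboundary space $\mathfrak{B}^{2}$, a Hodge decomposition argument (using (\ref{hode1})) recovers $d\delta\beta=dv$ in $L^{2}\Lambda^{2}(\Omega)$, as required.
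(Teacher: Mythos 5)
Your proposal is correct and follows essentially the same route as the paper: the paper's proof likewise consists of the Poincar\'e inequality on $H\Lambda^{2}(\Omega)\cap\mathring{H}^{\ast}\Lambda^{2}(\Omega)$ combined with $d\beta=0$ on $\mathring{V}\Lambda^{2}(\Omega)$ to obtain coercivity of $\langle\delta\beta,\delta\gamma\rangle_{L^{2}\Lambda^{1}(\Omega)}$ (taking $\gamma=\beta$), with the Lax--Milgram step left implicit. Your remark on the harmonic space $\mathfrak{H}^{2}$ is a refinement the paper omits (note only that simple-connectedness controls $\mathfrak{H}^{1}$ rather than $\mathfrak{H}^{2}$, so the precise hypothesis is triviality of $\mathfrak{H}^{2}$ or working orthogonal to it, exactly as your fallback does), and the concluding recovery of the strong form, while somewhat sketchy, is not needed for the lemma as stated.
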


\begin{proof}
  Using the Poincar\'e inequality \citep[Theorem 2.2]{arnold2006finite}, there exists a positive constant $c$ such that
  \begin{equation*}
    \Vert\beta\Vert_{L^{2}\Lambda^{2}(\Omega)}\leq c\big(\Vert d\beta\Vert_{L^{2}\Lambda^{3}(\Omega)}+\Vert\delta\beta\Vert_{L^{2}\Lambda^{1}(\Omega)}\big),\quad\forall\beta\in H\Lambda^{2}(\Omega)\cap\mathring{H}^{\ast}\Lambda^{2}(\Omega).
  \end{equation*}
  Taking $\gamma=\beta$ in (\ref{eq118}) and using $\beta\in\mathring{V}\Lambda^{2}(\Omega)$, which implies $d\beta=0$, gives the result.
\end{proof}

\section{Hamiltonian formulations of the incompressible Euler 
equations with a free surface}

In this section, we will formulate the classical Hamiltonian system for the incompressible Euler equations with a free surface (\ref{dieuler}) by applying the differential geometric tools introduced in Section 2.
Hamiltonian formulations for the $(v,\Sigma)$, $(\eta,\phi_{\partial},\Sigma)$ and $(\omega,\phi_{\partial},\Sigma)$ variables will be given. These Hamiltonian formulations will provide the mathematical framework for the port-Hamiltonian formulations
that will be discussed in Section 5. As an extension of the Hamiltonian system for dynamic free surface problems governed by surface tension considered in \cite{MR838352}, we will take a gravity force and, next to the free surface $\Sigma$, also a fixed boundary surface $\Gamma$ into consideration.
For the Hamiltonian formulations discussed in this section, we will assume the homogeneous boundary condition $g=0$ at $\Gamma$ in (\ref{dieuler}e). In Section 5 we will then consider in the port-Hamiltonian formulations the inhomogeneous boundary condition $g\neq 0$ at $\Gamma$.

\subsection{Velocity-free surface Hamiltonian formulation}

In order to define the Poisson bracket for the incompressible Euler equations with a free surface in terms of the $(v,\Sigma)$ variables, we state the 
corresponding functional derivatives $(\frac{\delta\mathcal{F}}{\delta v},\frac{\delta\mathcal{F}}{\delta\Sigma})$.

\begin{defn}
  Under weak smoothness assumptions on the functional $\mathcal{F}:P^{\ast}\Lambda^{1}(\Omega)\to\mathbb{R}$, the functional derivative $\frac{\delta\mathcal{F}}{\delta v}\in P\Lambda^{n-1}(\Omega)$ is defined by \citep{abraham2008foundations,abraham2012manifolds,morrison1998hamiltonian}
  \begin{equation}\label{func1}
    \begin{aligned}
      \int_{\Omega}\frac{\delta\mathcal{F}}{\delta v}\wedge\partial v=\lim_{\epsilon\to 0}\frac{1}{\epsilon}\big(\mathcal{F}(v+\epsilon\partial v )-\mathcal{F}(v)\big),\ \forall\partial v\in P^{\ast}\Lambda^{1}(\Omega).
    \end{aligned}
  \end{equation}
\end{defn}

Using the shape derivative stated in Section 2.4, we can give the following definition of the functional derivative with respect to the free surface boundary $\Sigma$.
\begin{defn}\label{defn1}
  Let $\partial\Sigma=\partial\Sigma_{v}v_{\Sigma}\in H^{-\frac{1}{2}}\Lambda^{n-1}(\Sigma)$, where $v_{\Sigma}$ denotes the surface form, $v_{\Sigma}=v_{\partial\Omega}|_{\Sigma}\in\Lambda^{n-1}(\Sigma)$, and $\partial\Sigma_{v}\in H^{-\frac{1}{2}}\Lambda^{0}(\Sigma)$ represent infinitesimal volume preserving variations of $\Sigma\subseteq\partial\Omega$ in its normal direction,
  which are extended to zero at $\Gamma\subseteq\partial\Omega$.
 Given arbitrary functionals $\mathcal{F}=\int_{V}f, \text{ where } f\in L^{2}\Lambda^{m}(\Omega)$ is the density function, with $V=\Sigma\subseteq\partial\Omega$ or $V=\Omega$, $m=\dim(V)$. Assume that the velocity $v$ remains constant when $\Sigma$ varies. Then the functional derivative $\frac{\delta\mathcal{F}}{\delta\Sigma}\in H^{\frac{1}{2}}\Lambda^{0}(\Sigma)$ is defined as 
  \begin{equation*}
    \int_{V}\frac{\delta\mathcal{F}}{\delta\Sigma}\wedge\partial\Sigma=d\mathcal{F}(V;\partial\Sigma_{v}\mathcal{N},f),\ \forall\partial\Sigma\in H^{-\frac{1}{2}}\Lambda^{n-1}(\Sigma),
  \end{equation*}
  where the shape derivative $d\mathcal{F}$ is given by (\ref{shape5}).
\end{defn}

\subsubsection{Hamiltonian and corresponding functional derivatives with respect to $(v,\Sigma)$}

The Hamiltonian $H:P^{\ast}\Lambda^{1}(\Omega)\times H^{-\frac{1}{2}}\Lambda^{n-1}(\Sigma)\to\mathbb{R}$ for the incompressible Euler equations with a free surface (\ref{dieuler}) is given by, see \citep{MR838352,morrison1998hamiltonian}
\begin{equation}\label{total2}
  \begin{aligned}
    H(v,\Sigma)&=\int_{\Omega}\big(\frac{1}{2}\llangle v^{\sharp},v^{\sharp}\rrangle+\Phi\big) v_{\Omega}+\frac{\tau}{\rho}\int_{\Sigma}v_{\Sigma}.
  \end{aligned}
\end{equation}

\begin{lemma}\label{lemma1}
  The functional derivatives of the Hamiltonian (\ref{total2}) with respect to $(v,\Sigma)$ are
   \begin{subequations}\label{h3}
    \begin{align}
      &\frac{\delta H}{\delta v}=(-1)^{n-1}\ast v\qquad\qquad\qquad\quad\text{in }\Omega,\label{h1}\\
      &\frac{\delta H}{\delta\Sigma}=\mathrm{tr}\big(\frac{1}{2}\llangle v^{\sharp},v^{\sharp}\rrangle+\Phi\big)+\frac{\tau k}{\rho}\qquad \text{at }\Sigma.\label{h2}
    \end{align}
     \end{subequations}
\end{lemma}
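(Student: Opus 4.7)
The plan is to apply Definitions 4.1 and 4.2 termwise to the decomposition $H = H_{\mathrm{kin}} + H_{\mathrm{pot}} + H_{\mathrm{surf}}$, where $H_{\mathrm{kin}} = \frac{1}{2}\int_\Omega \llangle v^\sharp, v^\sharp\rrangle v_\Omega$, $H_{\mathrm{pot}} = \int_\Omega \Phi\, v_\Omega$, and $H_{\mathrm{surf}} = \frac{\tau}{\rho}\int_\Sigma v_\Sigma$. Only $H_{\mathrm{kin}}$ depends on $v$, so for (\ref{h1}) a first-order expansion of the metric pairing gives $\lim_{\epsilon\to 0}\frac{1}{\epsilon}(H(v+\epsilon\partial v,\Sigma)-H(v,\Sigma)) = \int_\Omega \llangle v^\sharp, (\partial v)^\sharp\rrangle v_\Omega$. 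I then rewrite the pointwise pairing of tangent vectors via the Hodge star as $\llangle v^\sharp, (\partial v)^\sharp\rrangle v_\Omega = (\partial v)\wedge\ast v$, and one application of the graded-commutativity rule (\ref{wedge}), with $\partial v$ a 1-form and $\ast v$ an $(n-1)$-form, produces the sign $(-1)^{n-1}$ and places $\partial v$ on the right. Matching against (\ref{func1}) identifies $\frac{\delta H}{\delta v}=(-1)^{n-1}\ast v$.

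For (\ref{h2}) I take the shape-generating vector field $X=\partial\Sigma_v\mathcal{N}$, extended by zero on $\Gamma$ since $\partial\Sigma_v|_\Gamma=0$ by hypothesis, and apply (\ref{shape4}) to each piece. For $H_{\mathrm{kin}}+H_{\mathrm{pot}}=\int_\Omega h\, v_\Omega$ with $h=\tfrac12\llangle v^\sharp,v^\sharp\rrangle+\Phi$, the form $h\,v_\Omega$ is a top form, so Cartan's magic formula collapses $\mathfrak{L}_X(h\,v_\Omega)$ to $d(h\,i_X v_\Omega)$. Stokes' theorem (\ref{stokes}) together with the identity $v_{\partial\Omega}=i_\mathcal{N}v_\Omega|_{\partial\Omega}$ then yields $\int_\Sigma\mathrm{tr}(h)\,\partial\Sigma_v\,v_\Sigma$. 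For $H_{\mathrm{surf}}$ I extend $\mathcal{N}$ smoothly to a tubular neighborhood of $\Sigma$, lift the area form to an $(n-1)$-form on $\Omega$ via $v_\Sigma=\mathrm{tr}(i_\mathcal{N}v_\Omega)$, and apply (\ref{shape4}) with $\alpha=i_\mathcal{N}v_\Omega$. Using $d(i_\mathcal{N}v_\Omega)=\mathrm{div}(\mathcal{N})\,v_\Omega=k\,v_\Omega$ and the antisymmetry $i_\mathcal{N}i_\mathcal{N}=0$, Cartan's formula reduces to $\mathfrak{L}_X(i_\mathcal{N}v_\Omega)=k\,i_X v_\Omega$, whose trace on $\Sigma$ equals $k\,\partial\Sigma_v\,v_\Sigma$. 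Adding the two contributions and comparing with Definition \ref{defn1} (written with $\partial\Sigma=\partial\Sigma_v\,v_\Sigma$) delivers (\ref{h2}).

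The main hurdle is the area-variation step: $v_\Sigma$ is intrinsic to $\Sigma$, while the shape-derivative formula (\ref{shape4}) is stated for an ambient $m$-form on $\Omega$. The extension of $\mathcal{N}$ into a neighborhood is not canonical, but the combination of Cartan's identity, the antisymmetry $i_\mathcal{N}i_\mathcal{N}=0$, and the paper's convention $k=\mathrm{div}(\mathcal{N})$ eliminates any dependence on the extension, leaving the intrinsic mean-curvature scalar on $\Sigma$. The remaining steps are routine applications of the Hodge-star, interior-product, and trace identities collected in Section 2, and no further regularity beyond that already built into $P^{\ast}\Lambda^{1}(\Omega)$ and $H^{1}\Lambda^{0}(\Omega)$ is required.
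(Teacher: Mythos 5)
Your proposal is correct and follows essentially the same route as the paper: the same $L^{2}$-pairing and graded-commutativity computation for $\frac{\delta H}{\delta v}$, and for $\frac{\delta H}{\delta\Sigma}$ the same splitting into the volume term (Cartan's formula plus Stokes with $i_{\partial\Sigma_{v}\mathcal{N}}v_{\Omega}|_{\Sigma}=\partial\Sigma$) and the surface term (using $i_{\mathcal{N}}i_{\mathcal{N}}v_{\Omega}=0$ and $d(i_{\mathcal{N}}v_{\Omega})=\mathrm{div}(\mathcal{N})v_{\Omega}=k\,v_{\Omega}$). Your remark about the normal extension in the area-variation step is exactly the implicit step in the paper's treatment of $\mathfrak{L}_{\partial\Sigma_{v}\mathcal{N}}v_{\Sigma}$, so nothing is missing.
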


\begin{proof}
For any $v, \partial v\in P^{\ast}\Lambda^{1}(\Omega)$, we have
\begin{equation*}
\begin{aligned}
\lim_{\epsilon\to 0}\frac{1}{\epsilon}\big(H(v+\epsilon\partial v)-H(v)\big)&=\int_{\Omega}\llangle v^{\sharp},\partial v^{\sharp}\rrangle v_{\Omega}=\langle v,\partial v\rangle_{L^{2}\Lambda^{1}(\Omega)}\\
&=\int_{\Omega}(-1)^{n-1}\ast v\wedge\partial v.
\end{aligned}
\end{equation*}
Therefore, using (\ref{func1}) the functional derivative of the Hamiltonian with respect to $v$ is given by (\ref{h1}).
Next, we turn to compute the functional derivative with respect to $\Sigma$. Given
\begin{equation*}
    H_{1}\big(\Omega;(\frac{1}{2}\llangle v^{\sharp},v^{\sharp}\rrangle+\Phi)v_{\Omega}\big)=\int_{\Omega}\big(\frac{1}{2}\llangle v^{\sharp},v^{\sharp}\rrangle+\Phi\big) v_{\Omega}.
\end{equation*}
Using (\ref{shape4}), Cartan's formula $\mathfrak{L}_{X}=i_{X}d+di_{X}$, and Stokes' theorem, we obtain since $\partial\Sigma_{v}=0$ at $\Gamma$ that
\begin{equation*}
  \begin{aligned}
    dH_{1}\big(\Omega;\partial\Sigma_{v}\mathcal{N},(\frac{1}{2}\llangle v^{\sharp},v^{\sharp}\rrangle+\Phi)v_{\Omega}\big)&=\int_{\Omega}d\big((\frac{1}{2}\llangle v^{\sharp},v^{\sharp}\rrangle+\Phi) i_{\partial\Sigma_{v}\mathcal{N}} v_{\Omega}\big)\\
    &=\int_{\Sigma}\mathrm{tr}(\frac{1}{2}\llangle v^{\sharp},v^{\sharp}\rrangle+\Phi)\wedge i_{\partial\Sigma_{v}\mathcal{N}}v_{\Omega}.
  \end{aligned}
\end{equation*}
Using the relation $i_{fX}\alpha=fi_{X}\alpha$, where $f\in\Lambda^{0}(\Omega)$, $X\in T_{p}\Omega$ and $\alpha\in\Lambda^{k}(\Omega)$, we have
\begin{equation*}
  i_{\partial\Sigma_{v}\mathcal{N}}v_{\Omega}|_{\Sigma}=\partial\Sigma_{v}i_{\mathcal{N}}v_{\Omega}|_{\Sigma}=\partial\Sigma_{v}v_{\Sigma}=\partial\Sigma,
\end{equation*}
with $i_{\mathcal{N}}v_{\Omega}|_{\Sigma}=v_{\Sigma}$ and $\mathcal{N}$ the outward pointing normal vector to the boundary $\Sigma$. Thus,
\begin{equation}\label{shape6}
  dH_{1}\big(\Omega;\partial\Sigma_{v}\mathcal{N},(\frac{1}{2}\llangle v^{\sharp},v^{\sharp}\rrangle+\Phi)v_{\Omega}\big)=\int_{\Sigma}\mathrm{tr}\big(\frac{1}{2}\llangle v^{\sharp},v^{\sharp}\rrangle+\Phi \big)\wedge\partial\Sigma.
\end{equation}
Given
\begin{equation*}
  H_{2}(\Sigma;\tau v_{\Sigma})=\frac{\tau}{\rho}\int_{\Sigma}v_{\Sigma}.
\end{equation*}
Using (\ref{shape4}), Cartan's formula and the definition of the mean curvature $k$,
the shape derivative of $H_{2}$ is given by
\begin{equation*}
  dH_{2}(\Sigma;\partial\Sigma_{v}\mathcal{N},\frac{\tau}{\rho}v_{\Sigma})=\frac{\tau}{\rho}\int_{\Sigma}\mathfrak{L}_{\partial\Sigma_{v}\mathcal{N}}v_{\Sigma}=\frac{\tau}{\rho}\int_{\Sigma}\big(di_{\partial\Sigma_{v}\mathcal{N}}v_{\Sigma}+i_{\partial\Sigma_{v}\mathcal{N}}dv_{\Sigma}\big).
\end{equation*}
Since $i_{\partial\Sigma_{v}\mathcal{N}}v_{\Sigma}=\partial\Sigma_{v}i_{\mathcal{N}}i_{\mathcal{N}}v_{\Omega}|_{\Sigma}=0$, the shape derivative of $H_{2}$ becomes
\begin{equation}\label{shape2}
  dH_{2}(\Sigma;\partial\Sigma_{v}\mathcal{N},\frac{\tau}{\rho}v_{\Sigma})=\frac{\tau}{\rho}\int_{\Sigma}i_{\partial\Sigma_{v}\mathcal{N}}dv_{\Sigma}=\frac{\tau}{\rho}\int_{\Sigma}\partial\Sigma_{v}i_{\mathcal{N}}\mathrm{div}(\mathcal{N})v_{\Omega}=\frac{\tau}{\rho}\int_{\Sigma}k\partial\Sigma.
\end{equation}
From (\ref{shape6}) and (\ref{shape2}), we obtain (\ref{h2}).
\end{proof}

\subsubsection{Hamiltonian formulation with respect to $(v,\Sigma)$ variables}

Inspired by the classical form of the Poisson bracket for the incompressible Euler equations, see e.g. \citep{beris1990poisson,MR838352}, the generalized Poisson bracket for the incompressible Euler equations with a free surface can be formulated as
\begin{defn}
Let $\mathcal{F}$, $\mathcal{G}:\mathring{P}^{\ast}\Lambda^{1}(\Omega)\times H^{-\frac{1}{2}}\Lambda^{n-1}(\Sigma)\rightarrow\mathbb{R}$ be arbitrary functionals. The Poisson bracket $\{\cdot,\cdot\}(v,\Sigma):\mathcal{F}\times\mathcal{G}\to\mathbb{R}$
in terms of the velocity $v$ and free surface $\Sigma$ is defined as 
\begin{equation}\label{bracket3}
  \begin{aligned}
    \left\{\mathcal{F},\mathcal{G}\right\}(v,\Sigma)=&(-1)^{n-1}\int_{\Omega}(\ast dv)\wedge(\ast\frac{\delta \mathcal{G}}{\delta v})\wedge(\ast\frac{\delta \mathcal{F}}{\delta v})\\
    +&(-1)^{n-1}\int_{\Sigma}\Big(\frac{\delta\mathcal{F}}{\delta\Sigma}\wedge\mathrm{tr}(\frac{\delta\mathcal{G}}{\delta v})-\frac{\delta\mathcal{G}}{\delta\Sigma}\wedge\mathrm{tr}(\frac{\delta\mathcal{F}}{\delta v})\Big).                   
  \end{aligned}
  \end{equation}
\end{defn}

\begin{remark}
  The bracket $\{\cdot,\cdot\}(v,\Sigma)$ defined in (\ref{bracket3}) is bilinear, skew-symmetric and satisfies the Jacobi identity \citep{MR838352}.
\end{remark}

\begin{theorem}\label{thm3}
  Given a connected, oriented Lipschitz continuous domain $\Omega$ with a non-overlapping free surface boundary $\Sigma\subseteq\partial\Omega$ and fixed boundary $\Gamma\subseteq\partial\Omega$, $\Sigma\cup\Gamma=\partial\Omega$.
  For any functional $\mathcal{F}(v,\Sigma):\mathring{P}^{\ast}\Lambda^{1}(\Omega)\times H^{-\frac{1}{2}}\Lambda^{n-1}(\Sigma)\to\mathbb{R}$, and Hamiltonian functional (\ref{total2}), the incompressible Euler equations with a free surface (\ref{dieuler}), and $g=0$ at $\Gamma$,
   can be expressed as the Hamiltonian system
  \begin{equation}\label{hami3}
    \dot{\mathcal{F}}(v,\Sigma)=\{\mathcal{F},H\}(v,\Sigma).
  \end{equation}
\end{theorem}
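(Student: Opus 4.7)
The plan is to verify $\dot{\mathcal{F}}(v,\Sigma) = \{\mathcal{F},H\}(v,\Sigma)$ by expanding both sides independently and matching the resulting pieces term by term. Two boundary conditions do the key work: the surface-tension condition $\mathrm{tr}(\tilde p)=\tau k$ at $\Sigma$, which turns the pressure contribution inside $h$ into the mean-curvature contribution of $\delta H/\delta\Sigma$, and the homogeneous condition $\ast\boldsymbol{n}v=g=0$ at $\Gamma$, which kills the unwanted $\Gamma$-boundary integral produced by the integration by parts on the gradient term.

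\textbf{Expansion of both sides.} Applying the chain rule via (\ref{func1}) and Definition \ref{defn1}, then substituting (\ref{dieuler1}) and (\ref{dieuler3}), one obtains
\begin{equation*}
  \dot{\mathcal{F}} \;=\; -\!\int_\Omega \tfrac{\delta\mathcal{F}}{\delta v}\wedge i_{v^\sharp}dv \;-\!\int_\Omega \tfrac{\delta\mathcal{F}}{\delta v}\wedge dh \;+\!\int_\Sigma \tfrac{\delta\mathcal{F}}{\delta\Sigma}\wedge (\ast\boldsymbol{n}v)\,v_\Sigma.
\end{equation*}
For the right-hand side, Lemma \ref{lemma1} combined with (\ref{eq42}) and (\ref{proper3}) yields $\ast(\delta H/\delta v)=v$ and $\mathrm{tr}(\delta H/\delta v) = (-1)^{n-1}\ast\boldsymbol{n}v$, while the surface-tension condition reduces $\delta H/\delta\Sigma$ in (\ref{h2}) to $\mathrm{tr}(h)$. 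Inserting these into (\ref{bracket3}) produces one volume integral and two surface integrals over $\Sigma$.

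\textbf{Matching.} The convective piece on the left is identified with the volume term of the bracket by writing $i_{v^\sharp}dv = \ast(v\wedge\ast dv)$ via (\ref{rela4}) and applying the graded Leibniz rule for the interior product to the vanishing $(n{+}1)$-form $(\delta\mathcal{F}/\delta v)\wedge dv$; after graded commutations (\ref{wedge}) and double-star reductions (\ref{eq42}), the expression rearranges into exactly $(-1)^{n-1}\int_\Omega(\ast dv)\wedge v\wedge\ast(\delta\mathcal{F}/\delta v)$. The $dh$ piece is handled by integration by parts: since $\delta\mathcal{F}/\delta v\in P\Lambda^{n-1}(\Omega)$ is closed, $d(h\wedge\delta\mathcal{F}/\delta v) = dh\wedge\delta\mathcal{F}/\delta v$, so (\ref{stokes}) reduces it to an integral over $\partial\Omega=\Sigma\cup\Gamma$. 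The $\Gamma$-part vanishes because admissible variations lie in $\mathring P^\ast\Lambda^1(\Omega)$, which allows the gauge representative of $\delta\mathcal{F}/\delta v$ to be chosen with $\mathrm{tr}(\delta\mathcal{F}/\delta v) = 0$ at $\Gamma$. The $\Sigma$-part reproduces the antisymmetric surface term of (\ref{bracket3}) via $\mathrm{tr}(h) = \delta H/\delta\Sigma$, and the kinematic surface piece matches the remaining bracket term via $\mathrm{tr}(\delta H/\delta v) = (-1)^{n-1}\ast\boldsymbol{n}v$.

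\textbf{Main obstacle.} The most delicate step is the sign bookkeeping that recasts $i_{v^\sharp}dv$ as the triple wedge $\ast dv\wedge v\wedge\ast(\delta\mathcal{F}/\delta v)$: several invocations of $\ast\ast=(-1)^{k(n-k)}$ and of graded commutativity must collapse to the single factor $(-1)^{n-1}$ appearing in (\ref{bracket3}). A subtler conceptual point is the vanishing of the $\Gamma$-boundary integral: this is really a statement about the duality pairing between the space $\mathring P^\ast\Lambda^1(\Omega)$ of admissible variations and the space in which $\delta\mathcal{F}/\delta v$ is chosen, and it is precisely this restriction that confines the present theorem to the homogeneous case $g = 0$ and motivates the port-Hamiltonian extension in Section 5.
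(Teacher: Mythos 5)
Your forward computation is essentially the paper's own argument: you substitute the functional derivatives (\ref{h3}) into the bracket (\ref{bracket3}), use the dynamic condition (\ref{dieuler4}) so that $\frac{\delta H}{\delta\Sigma}$ collapses to $\mathrm{tr}(h)$ on $\Sigma$, integrate the $dh$-term by parts using $d\big(\frac{\delta\mathcal{F}}{\delta v}\big)=0$ and $\mathrm{tr}\big(\frac{\delta\mathcal{F}}{\delta v}\big)=0$ at $\Gamma$ (the paper encodes your ``gauge representative'' remark simply by taking $\frac{\delta\mathcal{F}}{\delta v}\in\mathring{P}\Lambda^{n-1}(\Omega)$, dual to the space $\mathring{P}^{\ast}\Lambda^{1}(\Omega)$ of admissible variations), and identify the convective term through $i_{v^{\sharp}}dv=\ast(v\wedge\ast dv)$, i.e. (\ref{rela4}). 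Apart from the harmless notational slip of writing $\Sigma_{t}=(\ast\boldsymbol{n}v)\,v_{\Sigma}$ (the quantity $\ast\boldsymbol{n}(v)=\mathrm{tr}(\ast v)$ is already an $(n-1)$-form on $\Sigma$, so it should not be multiplied by $v_{\Sigma}$ again), this reproduces the paper's passage from (\ref{bracket3}) to (\ref{Ha3}) and then, via (\ref{dieuler1}), (\ref{dieuler3}) and the chain rule, to $\dot{\mathcal{F}}$.

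The genuine gap is that you prove only one implication. The theorem, as the paper proves it, is an equivalence: besides showing that a solution of (\ref{dieuler}) with $g=0$ satisfies $\dot{\mathcal{F}}=\{\mathcal{F},H\}$ for every admissible $\mathcal{F}$, the paper also shows that (\ref{hami3}), holding for all $\mathcal{F}$, forces (\ref{dieuler1}), (\ref{dieuler3}) and (\ref{dieuler4}), while (\ref{dieuler2}) and (\ref{dieuler5}) with $g=0$ are built into the space $\mathring{P}^{\ast}\Lambda^{1}(\Omega)$. This converse is obtained by adding and subtracting $\int_{\Omega}\frac{\delta\mathcal{F}}{\delta v}\wedge d\big(\frac{\tilde{p}}{\rho}\big)$, converting that term back to a boundary integral by Stokes' theorem, and then invoking the arbitrariness of $\frac{\delta\mathcal{F}}{\delta v}\in\mathring{P}\Lambda^{n-1}(\Omega)$ and $\frac{\delta\mathcal{F}}{\delta\Sigma}\in H^{\frac{1}{2}}\Lambda^{0}(\Sigma)$ to localize the momentum equation, the kinematic condition and the pressure condition separately. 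Your term-matching strategy presupposes the Euler equations, so it cannot recover them from the bracket identity; without this direction the claim that (\ref{dieuler}) ``can be expressed as'' the Hamiltonian system (\ref{hami3}) — i.e. that the bracket together with $H$ actually generates the free-surface dynamics — is not fully established. Adding the converse along the paper's lines would close the proof.
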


\begin{proof}
  First, we will prove that (\ref{dieuler}) implies (\ref{hami3}). Inserting the functional derivatives (\ref{h3}) with respect to $(v,\Sigma)$ into the Poisson bracket (\ref{bracket3}), we have
  \begin{equation*}\label{eq37}
    \begin{aligned}
      &\{\mathcal{F},H\}(v,\Sigma)=(-1)^{n-1}\int_{\Omega}(\ast dv)\wedge v\wedge(\ast\frac{\delta\mathcal{F}}{\delta v})+(-1)^{n-1}\int_{\Sigma}\frac{\delta\mathcal{F}}{\delta\Sigma}\wedge(-1)^{n-1}\mathrm{tr}(\ast v)\\
      &-(-1)^{n-1}\int_{\Sigma}\mathrm{tr}(\frac{1}{2}\llangle v^{\sharp},v^{\sharp}\rrangle+\Phi)\wedge\mathrm{tr}(\frac{\delta\mathcal{F}}{\delta v})-(-1)^{n-1}\int_{\Sigma}\frac{\tau k}{\rho}\wedge\mathrm{tr}(\frac{\delta\mathcal{F}}{\delta v}).
    \end{aligned}
  \end{equation*}
  Using $\frac{\delta\mathcal{F}}{\delta v}\in\mathring{P}\Lambda^{n-1}(\Omega)$, which implies that $d(\frac{\delta\mathcal{F}}{\delta v})=0$ in $\Omega$ and $\mathrm{tr}(\frac{\delta\mathcal{F}}{\delta v})=0$ at $\Gamma$, and Stokes' theorem, gives
  \begin{equation*}
    \begin{aligned}
    \int_{\Sigma}\mathrm{tr}(\frac{1}{2}\llangle v^{\sharp},v^{\sharp}\rrangle+\Phi)\wedge\mathrm{tr}(\frac{\delta\mathcal{F}}{\delta v})=\int_{\Omega}d\big(\frac{1}{2}\llangle v^{\sharp},v^{\sharp}\rrangle+\Phi\big)\wedge\frac{\delta\mathcal{F}}{\delta v}.
    \end{aligned}
  \end{equation*}
Then, $\{\mathcal{F},H\}$ can be rewritten as 
\begin{equation}\label{Ha3}
  \begin{aligned}
    \{\mathcal{F},H\}(v,\Sigma)=&-\int_{\Omega}\frac{\delta\mathcal{F}}{\delta v}\wedge\Big(i_{v^{\sharp}}dv+d(\frac{1}{2}\llangle v^{\sharp},v^{\sharp}\rrangle+\Phi)\Big)+\int_{\Sigma}\frac{\delta\mathcal{F}}{\delta\Sigma}\wedge\ast\boldsymbol{n}v\\
     &+(-1)^{n}\int_{\Sigma}\frac{\tau k}{\rho}\wedge\mathrm{tr}(\frac{\delta\mathcal{F}}{\delta v}),
  \end{aligned}
\end{equation}
where we used that $\ast(v\wedge\ast dv)=i_{v^{\sharp}}dv$, which follows from (\ref{rela4}).
From $\frac{\delta\mathcal{F}}{\delta v}\in\mathring{P}\Lambda^{n-1}(\Omega)$ and Stokes' theorem, together with (\ref{dieuler4}), we obtain
\begin{equation}\label{eq101}
  \begin{aligned}
    \int_{\Sigma}\frac{\tau k}{\rho}\wedge\mathrm{tr}(\frac{\delta\mathcal{F}}{\delta v})=\int_{\partial\Omega}\mathrm{tr}(\frac{\tilde{p}}{\rho})\wedge\mathrm{tr}(\frac{\delta\mathcal{F}}{\delta v})=(-1)^{n-1}\int_{\Omega}\frac{\delta\mathcal{F}}{\delta v}\wedge d\big(\frac{\tilde{p}}{\rho}\big).
  \end{aligned}
\end{equation} 
Using (\ref{eq101}), and subsequently (\ref{dieuler1}), (\ref{dieuler3}) and the functional chain rule, we obtain from (\ref{Ha3})
\begin{equation*}
  \begin{aligned}
    \{\mathcal{F},H\}(v,\Sigma)=&-\int_{\Omega}\frac{\delta\mathcal{F}}{\delta v}\wedge\Big(i_{v^{\sharp}}dv+d(\frac{1}{2}\llangle v^{\sharp},v^{\sharp}\rrangle+\frac{\tilde{p}}{\rho}+\Phi)\Big)+\int_{\Sigma}\frac{\delta\mathcal{F}}{\delta\Sigma}\wedge\ast\boldsymbol{n}v\\
    =&\int_{\Omega}\frac{\delta\mathcal{F}}{\delta v}\wedge v_{t}+\int_{\Sigma}\frac{\delta\mathcal{F}}{\delta\Sigma}\wedge\Sigma_{t}\\
    =&\dot{\mathcal{F}}(v,\Sigma).
  \end{aligned}
\end{equation*}

Next, we will prove that (\ref{hami3}) implies (\ref{dieuler}). 
If we consider $(\ref{hami3})$ with the right hand side written as (\ref{Ha3}) 
and add and subtract $\int_{\Omega}\frac{\delta\mathcal{F}}{\delta v}\wedge d\big(\frac{\tilde{p}}{\rho}\big)$ to the right hand side of (\ref{hami3}),
then we obtain using $\frac{\delta\mathcal{F}}{\delta v}\in\mathring{P}\Lambda^{n-1}(\Omega)$ and Stokes' theorem
\begin{equation*}
  \begin{aligned}
    &\int_{\Omega}\frac{\delta\mathcal{F}}{\delta v}\wedge\big(v_{t}+i_{v^{\sharp}}dv+d(\frac{1}{2}\llangle v^{\sharp},v^{\sharp}\rrangle+\frac{\tilde{p}}{\rho}+\Phi)\big)+\int_{\Sigma}\frac{\delta\mathcal{F}}{\delta\Sigma}\wedge\big(\Sigma_{t}-\ast\boldsymbol{n}v\big)\\
    &+(-1)^{n}\int_{\Sigma}\mathrm{tr}(\frac{\delta\mathcal{F}}{\delta v})\wedge\big(\mathrm{tr}(\frac{\tilde{p}}{\rho})-\frac{\tau k}{\rho}\big)=0.
  \end{aligned}
\end{equation*}
Taking $\frac{\delta\mathcal{F}}{\delta v}\in\mathring{P}\Lambda^{n-1}(\Omega)$, $\frac{\delta\mathcal{F}}{\delta\Sigma}\in H^{\frac{1}{2}}\Lambda^{0}(\Omega)$ arbitrary gives (\ref{dieuler1}), (\ref{dieuler3}) and (\ref{dieuler4}).
Equations (\ref{dieuler2}) and (\ref{dieuler5}), with $g=0$ at $\Gamma$, are automatically satisfies if $v\in\mathring{P}^{\ast}\Lambda^{1}(\Omega)$.
\end{proof}

\begin{corollary}
  For the Hamiltonian $H:P^{\ast}\Lambda^{1}(\Omega)\times H^{-\frac{1}{2}}\Lambda^{n-1}(\Sigma)\to\mathbb{R}$ stated in (\ref{total2}), we have
  \begin{equation*}
    \dot{H}=\{H,H\}(v,\Sigma)=0.
  \end{equation*}
\end{corollary}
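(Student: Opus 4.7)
The plan is to derive this corollary as an immediate consequence of Theorem~\ref{thm3} combined with the skew-symmetry property of the Poisson bracket recorded in Remark~4.1. First, I would specialize Theorem~\ref{thm3} to the choice $\mathcal{F}=H$, which yields $\dot{H}(v,\Sigma)=\{H,H\}(v,\Sigma)$. Note that $H$ is an admissible test functional in the theorem since its functional derivative $\frac{\delta H}{\delta v}=(-1)^{n-1}\ast v$ satisfies $d\frac{\delta H}{\delta v}=(-1)^{n-1}\ast\delta v=0$ using $\delta v=0$ in $\Omega$, and $\mathrm{tr}(\frac{\delta H}{\delta v})=0$ at $\Gamma$ follows from $\ast\boldsymbol{n}v=g=0$ together with (\ref{proper3}); hence $\frac{\delta H}{\delta v}\in\mathring{P}\Lambda^{n-1}(\Omega)$.

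Second, I would invoke the skew-symmetry of $\{\cdot,\cdot\}(v,\Sigma)$: for any admissible $\mathcal{F},\mathcal{G}$ one has $\{\mathcal{F},\mathcal{G}\}=-\{\mathcal{G},\mathcal{F}\}$, so setting $\mathcal{F}=\mathcal{G}=H$ gives $\{H,H\}=-\{H,H\}$, hence $\{H,H\}(v,\Sigma)=0$. Combining this with the first step yields $\dot{H}=0$.

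If one prefers a direct verification rather than appeal to skew-symmetry, the conclusion follows by inserting (\ref{h1})--(\ref{h2}) into (\ref{bracket3}). The volume integrand becomes $(\ast dv)\wedge(\ast\frac{\delta H}{\delta v})\wedge(\ast\frac{\delta H}{\delta v})=(\ast dv)\wedge v\wedge v$ after using $\ast\ast v=(-1)^{n-1}v$, and this vanishes pointwise because $v\wedge v=0$ for the $1$-form $v$ by (\ref{wedge}). The two boundary integrals over $\Sigma$ differ only by an interchange of $\mathcal{F}$ and $\mathcal{G}$, so they cancel identically when $\mathcal{F}=\mathcal{G}=H$. Either route gives the same result.

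I do not anticipate any real obstacle; the corollary is essentially a restatement of the fact that the Hamiltonian is a Casimir-like invariant for any skew-symmetric bracket paired with itself. The physical content is that, under the homogeneous boundary condition $g=0$ at $\Gamma$, the system is closed and the total energy $H$ is conserved along the flow of the incompressible Euler equations with free surface.
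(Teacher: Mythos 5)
Your proposal is correct and follows essentially the same route as the paper: the paper's proof is exactly the one-line observation that the corollary follows from Theorem \ref{thm3} together with the skew-symmetry of the bracket (\ref{bracket3}). Your additional checks (admissibility of $H$ via $\delta v=0$ and $g=0$ at $\Gamma$, and the direct verification that the volume integrand vanishes since $v\wedge v=0$) are sound but not needed beyond what the paper records.
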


\begin{proof}
  This result follows directly from Theorem \ref{thm3} using the skew-symmetry of the Poisson bracket (\ref{bracket3}).
\end{proof}

This means that the total energy of an inviscid fluid with free surface boundary $\Sigma$ and homogeneous boundary condition $g=0$ at the fixed boundary $\Gamma$ is conserved when there is no fluid flow through the fixed boundary $\Gamma$.

In Section 5.2, we will extend the Hamiltonian system stated in Theorem \ref{thm3} to a port-Hamiltonian system, which also incorporates the inhomogeneous boundary condition (\ref{dieuler5}) with $g\neq 0$, and accounts for a net energy flow through the open surface $\Gamma$.

\subsection{Solenoidal velocity-potential-free surface Hamiltonian formulation}

Based on the Poisson bracket discussed in Section 4.1 and the Hodge decompositions given in Section 3, we will discuss in this section the Hamiltonian formulation of the incompressible Euler equations with a free surface in terms of a potential function $\phi$, a solenoidal velocity field $\eta$ and the free surface $\Sigma$.

We will denote the restriction of the potential function $\phi\in H\Lambda^{0}(\Omega)$ to the boundary $\partial\Omega$ as $\phi_{\partial}=\mathrm{tr}(\phi)\in H^{\frac{1}{2}}\Lambda^{0}(\partial\Omega)$. The solenoidal velocity field is defined as $\eta=\delta\beta\in\mathring{\mathfrak{B}}^{\ast 1}$, which is tangential to $\partial\Omega$. This follows directly from (\ref{eta2}), which states that $\boldsymbol{n}(\eta)=0$ at $\partial\Omega$.

In the computation of the functional derivatives of $\mathcal{F}$ in Section 4.1, we can use the fact that the variations of $v$ and $\Sigma$ are not restricted at the free surface. This is not the case for $\eta$ and $\Sigma$ since they are coupled through the boundary condition $\boldsymbol{n}(\eta)=0$. 

Given the volume-preserving diffeomorphism $g_{t}=\varphi_{t}\circ\xi_{t}:\Omega\to\Omega$, where $\xi_{t},\varphi_{t}:\Omega\to\Omega$ are both also volume-preserving 
diffeomorphisms, with $\xi_{t}$ divergence-free on $\Omega$ and parallel to $\partial\Omega$.
Differentiating the diffeomorphism $g_{t}$ and using the relation $\partial\Sigma=\ast\boldsymbol{n}(\partial\varphi)$,
we obtain that variations $\partial\eta$ have the form \citep{MR838352}
\begin{equation}\label{eta1}
  \partial\eta=\eta'+[\eta,u]_{1},
\end{equation} 
where $\eta'\in P^{\ast}\Lambda^{1}(\Omega)$ has a trace parallel to $\partial\Omega$, which implies $\boldsymbol{n}(\eta')=0$ at $\partial\Omega$, $u\in P^{\ast}\Lambda^{1}(\Omega)$ and is coupled to the surface perturbations $\partial\Sigma$ through $\ast\boldsymbol{n}(u)=\partial\Sigma$. At the fixed surface $\Gamma$, $\ast\boldsymbol{n}(u)=0$. 
The bracket $[\cdot,\cdot]_{1}:P^{\ast}\Lambda^{1}(\Omega)\times P^{\ast}\Lambda^{1}(\Omega)\to P^{\ast}\Lambda^{1}(\Omega)$ is a Lie-bracket, which satisfies for $\alpha,\beta\in P^{\ast}\Lambda^{1}(\Omega)$, see e.g. \citep{MR2884939,rashad2021port1}, 
\begin{equation}\label{lie}
  [\alpha,\beta]_{1}:=[\alpha^{\sharp},\beta^{\sharp}]^{\flat}=(-1)^{n-1}\delta(\alpha\wedge\beta),
\end{equation}
where $[\cdot,\cdot]:T\Omega\times T\Omega\to T\Omega$ is the classical Lie bracket for vector fields.

We now consider the dynamic equations for $\eta$ and $\phi$.
Following the same approach as used for (\ref{eta1}), we have that $\eta\in\mathring{\mathfrak{B}}^{\ast 1}$ satisfies the equation
\begin{equation}\label{varia2}
  \eta_{t}=[\eta,u']_{1},
\end{equation}
where $\ast\boldsymbol{n}(u')=\Sigma_{t}$ at $\Sigma$ and $\ast\boldsymbol{n}(u')=0$ at $\Gamma$. We therefore choose $u'=dN_{\phi}(\Sigma_{t})$.
Using (\ref{rela4}), (\ref{dieuler1}) and (\ref{varia2}), it follows that
\begin{equation}
  d\phi_{t}=v_{t}-\eta_{t}=v_{t}-[\eta,u']_{1}=-\ast(v\wedge\ast d\eta)-d\big(\frac{1}{2}\llangle v^{\sharp},v^{\sharp}\rrangle+\frac{\tilde{p}}{\rho}+\Phi\big)-[\eta,u']_{1}.
\end{equation}
Hence, the incompressible Euler equations with a free surface (\ref{dieuler}) can be expressed in terms of the $(\eta,\phi_{\partial},\Sigma)$ variables as
\begin{subequations}\label{vor4}
  \begin{empheq}[left=\empheqlbrace]{align}
    &\eta_{t}=[\eta,u']_{1}\quad\text{ in }\Omega\\
    &d\phi_{t}=-\ast(v\wedge\ast d\eta)-d\big(\frac{1}{2}\llangle v^{\sharp},v^{\sharp}\rrangle+\frac{\tilde{p}}{\rho}+\Phi\big)-[\eta,u']_{1}\text{ in }\Omega,\\
    &\delta d\phi=0\text{ in }\Omega,\\
    &\ast\boldsymbol{n}(d\phi)=\Sigma_{t}\text{ at }\Sigma,\\
    &\ast\boldsymbol{n}(d\phi)=g\text{ at }\Gamma,\\
    &\mathrm{tr}(\frac{\tilde{p}}{\rho})=\frac{\tau k}{\rho}\text{ at }\Sigma,
  \end{empheq}
\end{subequations}
with
\begin{equation}\label{vor6}
  v=d\phi+\eta\quad\text{in }\Omega.
\end{equation}
Note, taking the divergence of (\ref{vor4}b) we obtain
\begin{equation}\label{vor5}
  \delta d(\frac{\tilde{p}}{\rho})=\delta i_{v^{\sharp}}d\eta+\delta d\big(\frac{1}{2}\llangle v^{\sharp},v^{\sharp}\rrangle\big),
\end{equation}
hence (\ref{vor4}) can be reduced to two Poisson equations, namely (\ref{vor5}), (\ref{vor4}f) for the pressure $\frac{\tilde{p}}{\rho}$
and (\ref{vor4}b)--(\ref{vor4}e) for the potential $\phi$, and a transport equation for $\eta$ (\ref{vor4}a).

\subsubsection{Hodge decomposition of the functional derivative with respect to velocity}

For the analysis in the following sections, we will also need a Hodge decomposition of the functional derivative $\frac{\delta\mathcal{F}}{\delta v}\in P\Lambda^{n-1}(\Omega)$. We will use the Hodge decomposition (\ref{hode2}) for a simply connected domain, which results in 
\begin{equation}\label{hode3}
  \frac{\delta\mathcal{F}}{\delta v}=\frac{\delta_{\beta}\mathcal{F}}{\delta v}+\frac{\delta_{\phi}\mathcal{F}}{\delta v},
\end{equation}
where $\frac{\delta_{\beta}\mathcal{F}}{\delta v}\in\mathring{\mathfrak{B}}^{(n-1)}$ and $\frac{\delta_{\phi}\mathcal{F}}{\delta v}\in\mathfrak{B}^{\ast(n-1)}$.\\

\begin{lemma}\label{lemma5}
  Given a simply-connected, oriented Lipschitz continuous domain $\Omega$.
  For $\frac{\delta\mathcal{F}}{\delta v}\in P\Lambda^{n-1}(\Omega)$, the functional derivative $\frac{\delta_{\phi}\mathcal{F}}{\delta v}\in\mathfrak{B}^{\ast(n-1)}$ can be expressed as 
  \begin{equation}\label{eta5}
    \frac{\delta_{\phi}\mathcal{F}}{\delta v}=-\ast dw,
  \end{equation}
  where 
  \begin{equation}\label{extraterm}
    w=-N_{\phi}\big(\mathrm{tr}(\frac{\delta\mathcal{F}}{\delta v})\big)\in H\Lambda^{0}(\Omega),
  \end{equation}
  with $N_{\phi}:H^{-\frac{1}{2}}\Lambda^{n-1}(\partial\Omega)\to H\Lambda^{0}(\Omega)$ the solution operator of the Poisson equation (\ref{potential1}).
\end{lemma}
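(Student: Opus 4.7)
The plan is to first exploit the co-exact structure of $\frac{\delta_\phi\mathcal{F}}{\delta v}$ to write it in the form $-\ast dw$ for some zero-form $w$, and then pin down $w$ uniquely (up to an additive constant) by reading off a Poisson boundary-value problem for it from the closedness of $\frac{\delta\mathcal{F}}{\delta v}$ and the trace behavior of the two summands in (\ref{hode3}). By definition $\frac{\delta_\phi\mathcal{F}}{\delta v}\in\mathfrak{B}^{\ast(n-1)}=\delta H^\ast\Lambda^n(\Omega)$, so there is some $\zeta\in H^\ast\Lambda^n(\Omega)$ with $\frac{\delta_\phi\mathcal{F}}{\delta v}=\delta\zeta$. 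Specializing (\ref{rela1}) to $k=n$ gives $\delta=-\ast d\ast$ on $\Lambda^n$, and setting $w:=\ast\zeta\in H\Lambda^0(\Omega)$ (note $w\in H\Lambda^0$ since $\delta\zeta\in L^2$ means $dw\in L^2$) yields $\frac{\delta_\phi\mathcal{F}}{\delta v}=-\ast dw$. Uniqueness up to an additive constant is immediate on the connected domain $\Omega$, so $w$ is pinned down as soon as one additional scalar condition (here a zero-mean normalization, as in the space $V$ of Lemma \ref{lemma6}) is imposed.

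Next I would derive the PDE and boundary condition satisfied by $w$. For the interior equation, the hypothesis $\frac{\delta\mathcal{F}}{\delta v}\in P\Lambda^{n-1}(\Omega)$ gives $d\frac{\delta\mathcal{F}}{\delta v}=0$, while $\frac{\delta_\beta\mathcal{F}}{\delta v}\in\mathring{\mathfrak{B}}^{n-1}$ is exact and therefore closed; the decomposition (\ref{hode3}) thus forces $d\frac{\delta_\phi\mathcal{F}}{\delta v}=0$, i.e.\ $d\ast dw=0$, which after applying $\ast$ and using (\ref{rela1}) becomes $\delta dw=0$, so $w$ is harmonic. For the boundary condition, $\frac{\delta_\beta\mathcal{F}}{\delta v}\in\mathring H\Lambda^{n-1}(\Omega)$ has vanishing trace, hence $\mathrm{tr}(\frac{\delta_\phi\mathcal{F}}{\delta v})=\mathrm{tr}(\frac{\delta\mathcal{F}}{\delta v})$; combining with $\ast\boldsymbol{n}(\alpha)=\mathrm{tr}(\ast\alpha)$ from (\ref{proper3}) and $\frac{\delta_\phi\mathcal{F}}{\delta v}=-\ast dw$ yields $\ast\boldsymbol{n}(dw)=-\mathrm{tr}(\frac{\delta\mathcal{F}}{\delta v})$ on $\partial\Omega$. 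These two conditions are exactly the Poisson problem (\ref{potential1}) with boundary data $g_\partial=-\mathrm{tr}(\frac{\delta\mathcal{F}}{\delta v})$, so by Lemma \ref{lemma6} and linearity of $N_\phi$ one obtains $w=-N_\phi\big(\mathrm{tr}(\frac{\delta\mathcal{F}}{\delta v})\big)$, which is (\ref{extraterm}).

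The main obstacle I anticipate is the solvability/compatibility bookkeeping for the Neumann-type Poisson problem defining $N_\phi$. Specifically, I need the boundary datum $-\mathrm{tr}(\frac{\delta\mathcal{F}}{\delta v})$ to live in $H^{-\frac{1}{2}}\Lambda^{n-1}(\partial\Omega)$, which is fine since $\frac{\delta\mathcal{F}}{\delta v}\in H\Lambda^{n-1}(\Omega)$ by the discussion of traces in Section 2.2.3, and it must satisfy the zero-mean compatibility condition $\int_{\partial\Omega}\mathrm{tr}(\frac{\delta\mathcal{F}}{\delta v})=0$ implicit in the weak formulation (\ref{eq82}) tested against $\psi\equiv 1$. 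The latter follows from Stokes' theorem (\ref{stokes}) applied to the closed $(n-1)$-form $\frac{\delta\mathcal{F}}{\delta v}$, so the obstruction is automatically absent. A secondary bookkeeping point is verifying that the simply-connectedness hypothesis is used correctly, namely that the harmonic summand $\mathring{\mathfrak{H}}^{n-1}$ in decomposition (\ref{hode2}) vanishes, thereby legitimizing the two-term splitting (\ref{hode3}).
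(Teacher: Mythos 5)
Your proposal is correct and follows essentially the same route as the paper's proof: write $\frac{\delta_{\phi}\mathcal{F}}{\delta v}=-\ast dw$ from the definition of $\mathfrak{B}^{\ast(n-1)}$, deduce $\delta dw=0$ from $d\frac{\delta\mathcal{F}}{\delta v}=0$ and closedness of the exact part, read off the Neumann datum $\ast\boldsymbol{n}(dw)=-\mathrm{tr}(\frac{\delta\mathcal{F}}{\delta v})$ from the vanishing trace of $\frac{\delta_{\beta}\mathcal{F}}{\delta v}$, and invoke $N_{\phi}$. Your extra bookkeeping (the zero-mean compatibility via Stokes' theorem and uniqueness up to a constant) is a sound refinement that the paper leaves implicit.
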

  
\begin{proof}
  Since $\frac{\delta_{\beta}\mathcal{F}}{\delta v}\in\mathring{\mathfrak{B}}^{(n-1)}$, from definition (\ref{coboun}), there exists an $w'\in\mathring{H}\Lambda^{n-2}(\Omega)$ such that 
  \begin{equation}\label{eq59}
    \frac{\delta_{\beta}\mathcal{F}}{\delta v}=dw',
  \end{equation}
  with boundary condition
  \begin{equation*}
    \mathrm{tr}(\frac{\delta_{\beta}\mathcal{F}}{\delta v})=\mathrm{tr}(dw')=0.
  \end{equation*}
  Similarly, from $\frac{\delta_{\phi}\mathcal{F}}{\delta v}\in\mathfrak{B}^{\ast(n-1)}$ and definition (\ref{bound}), there exists an $w\in H\Lambda^{0}(\Omega)$ such that 
  \begin{equation}\label{eq55}
    \frac{\delta_{\phi}\mathcal{F}}{\delta v}=\delta(\ast w)=-\ast dw.
  \end{equation}
  From $\frac{\delta\mathcal{F}}{\delta v}\in P\Lambda^{n-1}(\Omega)$ we have $d(\frac{\delta\mathcal{F}}{\delta v})=0$ and using subsequently (\ref{hode3}), (\ref{eq59}) and (\ref{eq55}), we obtain that $w$ satisfies the Laplace equation (\ref{extraterm}), since
  \begin{equation*}
    0=\ast d(\frac{\delta\mathcal{F}}{\delta v})=(-1)^{n}\delta(\ast\frac{\delta_{\beta}\mathcal{F}}{\delta v}+\ast\frac{\delta_{\phi}\mathcal{F}}{\delta v})=(-1)^{n}\big(\delta(\ast dw')-\delta(\ast\ast dw)\big)=\delta dw,
  \end{equation*}
  with boundary condition
  \begin{equation*}
    \ast\boldsymbol{n}(dw)=\mathrm{tr}(\ast dw)=-\mathrm{tr}(\frac{\delta\mathcal{F}}{\delta v})+\mathrm{tr}(\frac{\delta_{\beta}\mathcal{F}}{\delta v})=-\mathrm{tr}(\frac{\delta\mathcal{F}}{\delta v}).
  \end{equation*}
  Using the solution operator for the Laplace equation as stated in Lemma \ref{lemma6}, we obtain (\ref{extraterm}).
\end{proof}

\subsubsection{Functional derivatives with respect to $(\eta,\phi_{\partial},\Sigma)$ variables}

The first step in the derivation of the solenoidal velocity-potential-free surface $(\eta,\phi_{\partial},\Sigma)$ Hamiltonian formulation of the incompressible Euler equations with a free surface is to obtain the functional derivatives with respect to $\eta$, $\phi_{\partial}$ and $\Sigma$.

Given arbitrary functionals $\mathcal{F}(v,\Sigma):P^{\ast}\Lambda^{1}(\Omega)\times H^{-\frac{1}{2}}\Lambda^{n-1}(\Sigma)\to\mathbb{R}$ and $\tilde{\mathcal{F}}(\eta,\phi_{\partial},\Sigma):\mathring{\mathfrak{B}}^{\ast 1}\times H^{\frac{1}{2}}\Lambda^{0}(\partial\Omega)\times H^{-\frac{1}{2}}\Lambda^{n-1}(\Sigma)\to\mathbb{R}$, which satisfy the relation
\begin{equation}\label{hami1}
  \tilde{\mathcal{F}}(\eta,\phi_{\partial},\Sigma)=\mathcal{F}(v,\Sigma),
\end{equation}
we can define the following functional derivatives.

\begin{defn}
The functional derivative $\frac{\delta\tilde{\mathcal{F}}}{\delta\eta}\in\mathring{\mathfrak{B}}^{(n-1)}$ is defined as
\begin{equation*}
  \int_{\Omega}\frac{\delta\tilde{\mathcal{F}}}{\delta\eta}\wedge\partial\eta=\lim_{\epsilon\to 0}\frac{1}{\epsilon}\big(\tilde{\mathcal{F}}(\eta+\epsilon\partial\eta)-\tilde{\mathcal{F}}(\eta)\big),\ \forall\partial\eta\in\mathring{\mathfrak{B}}^{\ast 1}.
\end{equation*}
The functional derivative $\frac{\delta\tilde{\mathcal{F}}}{\delta\phi_{\partial}}\in H^{-\frac{1}{2}}\Lambda^{n-1}(\partial\Omega)$ is defined as
\begin{equation*}
  \int_{\partial\Omega}\frac{\delta\tilde{\mathcal{F}}}{\delta\phi_{\partial}}\wedge\partial\phi_{\partial}=\lim_{\epsilon\to 0}\frac{1}{\epsilon}\big(\tilde{\mathcal{F}}(\phi_{\partial}+\epsilon\partial\phi_{\partial})-\tilde{\mathcal{F}}(\phi_{\partial})\big),\ \forall\partial\phi_{\partial}\in H^{\frac{1}{2}}\Lambda^{0}(\partial\Omega).
\end{equation*}
\end{defn} 
\noindent
The functional derivative $\frac{\delta\tilde{\mathcal{F}}}{\delta\Sigma}$ is stated in Definition \ref{defn1} with $\mathcal{F}$ replaced by $\tilde{\mathcal{F}}$. 

The next lemma provides the relation between the functional derivatives $(\frac{\delta\mathcal{F}}{\delta v},\frac{\delta\mathcal{F}}{\delta\Sigma})$ and $(\frac{\delta\tilde{\mathcal{F}}}{\delta\eta},\frac{\delta\tilde{\mathcal{F}}}{\delta\phi_{\partial}},\frac{\delta\tilde{\mathcal{F}}}{\delta\Sigma})$.
Here, we choose $u=dN_{\phi}(\partial\Sigma)\in H\Lambda^{1}(\Omega)$, with $\partial\Sigma=0$ at $\Gamma$ and $N_{\phi}$ the solution operator of the Laplace equation stated in Lemma \ref{lemma6}.

\begin{lemma}\label{lemma7}
  For any functionals $\tilde{\mathcal{F}}(\eta,\phi_{\partial},\Sigma):$ $\mathring{\mathfrak{B}}^{\ast 1}\times$  $H^{\frac{1}{2}}\Lambda^{0}(\partial\Omega)\times$ $H^{-\frac{1}{2}}\Lambda^{n-1}(\Sigma)$ $\to\mathbb{R}$ and $\mathcal{F}(v,\Sigma):P^{\ast}\Lambda^{1}(\Omega)\times H^{-\frac{1}{2}}\Lambda^{n-1}(\Sigma)\to\mathbb{R}$, 
  the functional derivatives have the following relations
   \begin{subequations}\label{func3}
    \begin{empheq}[left=\empheqlbrace]{align}
      \frac{\delta\tilde{\mathcal{F}}}{\delta\eta}&=\frac{\delta_{\beta}\mathcal{F}}{\delta v},\label{eq103}\\
      \frac{\delta\tilde{\mathcal{F}}}{\delta\phi_{\partial}}&=(-1)^{n-1}\mathrm{tr}(\frac{\delta\mathcal{F}}{\delta v}),\label{eq104}\\
      \frac{\delta\tilde{\mathcal{F}}}{\delta\Sigma}&=\frac{\delta\mathcal{F}}{\delta\Sigma}+(-1)^{n-1}\langle dN_{\phi}\big(\frac{\delta\tilde{\mathcal{F}}}{\delta\phi_{\partial}}\big),\eta\rangle_{\Lambda^{1}}.\label{eq105}
    \end{empheq}
  \end{subequations}
  Furthermore, we have
  \begin{equation}\label{func4}
    \frac{\delta\mathcal{F}}{\delta v}=\frac{\delta\tilde{\mathcal{F}}}{\delta\eta}+(-1)^{n-1}\ast dN_{\phi}\big(\frac{\delta\tilde{\mathcal{F}}}{\delta\phi_{\partial}}\big).
  \end{equation}
\end{lemma}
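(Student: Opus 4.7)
The overall strategy is to compute the total variation of $\tilde{\mathcal{F}}(\eta,\phi_{\partial},\Sigma)$ induced by independent variations of the three arguments and match it term-by-term with the total variation of $\mathcal{F}(v,\Sigma)$ through the identification $v=d\phi+\eta$, where $\phi$ is recovered from $\phi_{\partial}$ by harmonic extension (a Dirichlet problem in $\Omega$). The three identities (\ref{eq103})--(\ref{eq105}) are read off from three independent variations, and (\ref{func4}) then follows by combining them with the Hodge decomposition (\ref{hode3}) and Lemma \ref{lemma5}.

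For (\ref{eq103}), I would vary $\eta$ along $\partial\eta\in\mathring{\mathfrak{B}}^{\ast 1}$ while holding $\phi_{\partial}$ and $\Sigma$ fixed, so that $\partial v=\partial\eta$. Splitting $\frac{\delta\mathcal{F}}{\delta v}=\frac{\delta_{\beta}\mathcal{F}}{\delta v}+\frac{\delta_{\phi}\mathcal{F}}{\delta v}$ by (\ref{hode3}) and writing $\frac{\delta_{\phi}\mathcal{F}}{\delta v}=-\ast dw$ via Lemma \ref{lemma5}, the integration-by-parts identity (\ref{part}) together with $\delta(\partial\eta)=0$ and $\boldsymbol{n}(\partial\eta)=0$ eliminates the contribution of $\frac{\delta_{\phi}\mathcal{F}}{\delta v}$. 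Equating the surviving pairing $\int_{\Omega}\frac{\delta_{\beta}\mathcal{F}}{\delta v}\wedge\partial\eta$ with $\int_{\Omega}\frac{\delta\tilde{\mathcal{F}}}{\delta\eta}\wedge\partial\eta$ for arbitrary $\partial\eta\in\mathring{\mathfrak{B}}^{\ast 1}$ delivers (\ref{eq103}). For (\ref{eq104}), I would vary only $\phi_{\partial}$, keeping $\eta$ and $\Sigma$ fixed; the induced variation of $v$ is $\partial v=d\partial\phi$ with $\partial\phi$ the harmonic extension of $\partial\phi_{\partial}$. Since $\frac{\delta\mathcal{F}}{\delta v}\in P\Lambda^{n-1}(\Omega)$ is closed, the Leibniz rule together with Stokes' theorem (\ref{stokes}) converts $\int_{\Omega}\frac{\delta\mathcal{F}}{\delta v}\wedge d\partial\phi$ into the boundary integral $(-1)^{n-1}\int_{\partial\Omega}\mathrm{tr}(\frac{\delta\mathcal{F}}{\delta v})\wedge\partial\phi_{\partial}$, from which (\ref{eq104}) is immediate.

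The main work lies in (\ref{eq105}). Varying $\Sigma$ with $\phi_{\partial}$ fixed and no independent $\eta$-component ($\eta'=0$) forces the coupled variation $\partial\eta=[\eta,u]_{1}$ from (\ref{eta1}), with $u=dN_{\phi}(\partial\Sigma)\in H\Lambda^{1}(\Omega)$ satisfying $\ast\boldsymbol{n}(u)=\partial\Sigma$ on $\Sigma$ and $\ast\boldsymbol{n}(u)=0$ on $\Gamma$; a shape-derivative contribution to $\phi$ arises additionally from the moving domain. Using (\ref{lie}) to write $[\eta,u]_{1}=(-1)^{n-1}\delta(\eta\wedge u)$ and processing $\int_{\Omega}\frac{\delta\mathcal{F}}{\delta v}\wedge\delta(\eta\wedge u)$ with integration-by-parts manipulations in the style of Lemma \ref{lemma2}, the bulk terms are annihilated by $d\frac{\delta\mathcal{F}}{\delta v}=0$, and the remaining boundary contribution localizes on $\Sigma$ thanks to $\boldsymbol{n}(u)=0$ on $\Gamma$, producing a term of the form $(-1)^{n-1}\int_{\Sigma}\langle dN_{\phi}(\mathrm{tr}(\frac{\delta\mathcal{F}}{\delta v})),\eta\rangle_{\Lambda^{1}}\,\partial\Sigma$. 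Adding this to the direct contribution $\int_{\Sigma}\frac{\delta\mathcal{F}}{\delta\Sigma}\wedge\partial\Sigma$ and then using (\ref{eq104}) to rewrite the argument of $N_{\phi}$ yields (\ref{eq105}). The principal obstacle sits precisely here: tracking the shape-derivative contribution from $\phi$ alongside the Lie-bracket coupling, and carefully applying Lemma \ref{lemma2}-style identities to extract the surface integral in the specific form $\langle dN_{\phi}(\cdot),\eta\rangle_{\Lambda^{1}}$ on $\Sigma$ alone.

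Finally, (\ref{func4}) is a direct consequence of the three identities: combining (\ref{hode3}) with Lemma \ref{lemma5} gives $\frac{\delta_{\phi}\mathcal{F}}{\delta v}=\ast dN_{\phi}(\mathrm{tr}(\frac{\delta\mathcal{F}}{\delta v}))$, and then substituting (\ref{eq103}) and (\ref{eq104}) recasts $\frac{\delta\mathcal{F}}{\delta v}=\frac{\delta_{\beta}\mathcal{F}}{\delta v}+\frac{\delta_{\phi}\mathcal{F}}{\delta v}$ as $\frac{\delta\tilde{\mathcal{F}}}{\delta\eta}+(-1)^{n-1}\ast dN_{\phi}(\frac{\delta\tilde{\mathcal{F}}}{\delta\phi_{\partial}})$, which is exactly (\ref{func4}).
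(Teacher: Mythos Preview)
Your strategy for (\ref{eq103}), (\ref{eq104}), and (\ref{func4}) is sound and matches the paper's argument. The gap is in your treatment of (\ref{eq105}).

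You propose to compute $\int_\Omega\frac{\delta\mathcal{F}}{\delta v}\wedge[\eta,u]_1=(-1)^{n-1}\int_\Omega\frac{\delta\mathcal{F}}{\delta v}\wedge\delta(\eta\wedge u)$ directly and claim that, after integration by parts, the bulk term is annihilated by $d\frac{\delta\mathcal{F}}{\delta v}=0$. This is not correct: the integration by parts produces a bulk term involving $d\bigl(\ast\frac{\delta\mathcal{F}}{\delta v}\bigr)$ (equivalently $\delta\frac{\delta\mathcal{F}}{\delta v}$), not $d\frac{\delta\mathcal{F}}{\delta v}$. The hypothesis $\frac{\delta\mathcal{F}}{\delta v}\in P\Lambda^{n-1}(\Omega)$ gives closedness of the $(n-1)$-form, not closedness of its Hodge dual $1$-form, so that bulk term does not vanish. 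Lemma~\ref{lemma2} genuinely needs the first factor to be exact (of the form $df_1$), and $\ast\frac{\delta\mathcal{F}}{\delta v}$ is not exact in general.

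Relatedly, you omit the contribution $\int_\Omega\frac{\delta\tilde{\mathcal{F}}}{\delta\eta}\wedge\partial\eta$ on the $\tilde{\mathcal{F}}$ side when varying $\Sigma$: since $\partial\eta=[\eta,u]_1\neq 0$ under a $\Sigma$-variation, the chain rule gives
\[
\int_\Sigma\frac{\delta\tilde{\mathcal{F}}}{\delta\Sigma}\wedge\partial\Sigma+\int_\Omega\frac{\delta\tilde{\mathcal{F}}}{\delta\eta}\wedge[\eta,u]_1=\int_\Sigma\frac{\delta\mathcal{F}}{\delta\Sigma}\wedge\partial\Sigma+\int_\Omega\frac{\delta\mathcal{F}}{\delta v}\wedge[\eta,u]_1.
\]
This is precisely how the paper proceeds: using (\ref{eq103}) to replace $\frac{\delta\tilde{\mathcal{F}}}{\delta\eta}$ by $\frac{\delta_\beta\mathcal{F}}{\delta v}$ and subtracting, only $\int_\Omega\frac{\delta_\phi\mathcal{F}}{\delta v}\wedge[\eta,u]_1=\int_\Omega(-\ast dw)\wedge[\eta,u]_1$ remains, and now Lemma~\ref{lemma2} applies because $dw$ \emph{is} exact. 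The Hodge splitting must be invoked \emph{before} the Lemma~\ref{lemma2} step, not after.
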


\begin{proof}
The variation $\partial v$ has the form
\begin{equation*}
  \partial v=d(\partial\phi)+\partial\eta.
\end{equation*}
Hence,
\begin{equation*}
  \begin{aligned}
  \int_{\Omega}\frac{\delta\mathcal{F}}{\delta v}\wedge\partial v=\int_{\Omega}\frac{\delta\mathcal{F}}{\delta v}\wedge \big(d(\partial\phi)+\partial\eta\big).
  \end{aligned}
\end{equation*}
Applying the decomposition (\ref{hode3}), with (\ref{eta5}) and (\ref{eta1}), we obtain
\begin{equation}\label{com1}
  \begin{aligned}
    \int_{\Omega}\frac{\delta\mathcal{F}}{\delta v}\wedge\partial v=\int_{\Omega}\Big(\frac{\delta\mathcal{F}}{\delta v}\wedge d(\partial\phi)+\frac{\delta_{\beta}\mathcal{F}}{\delta v}\wedge\partial\eta-\ast dw\wedge\eta'-\ast dw\wedge [\eta,u]_{1}\Big).\\
  \end{aligned}
\end{equation}
Using the integration by parts formula (\ref{part}) and $\frac{\delta\mathcal{F}}{\delta v}\in P\Lambda^{n-1}(\Omega)$, which follows from (\ref{func1}) and implies $d(\frac{\delta\mathcal{F}}{\delta v})=0$, hence $\delta\ast(\frac{\delta\mathcal{F}}{\delta v})=0$, we have that
\begin{equation}\label{com2}
  \begin{aligned}
    \int_{\Omega}\frac{\delta\mathcal{F}}{\delta v}\wedge d(\partial\phi)=&\big\langle\ast\frac{\delta\mathcal{F}}{\delta v},d(\partial\phi)\big\rangle_{L^{2}\Lambda^{1}(\Omega)}=(-1)^{n-1}\int_{\partial\Omega}\mathrm{tr}(\frac{\delta\mathcal{F}}{\delta v})\wedge\partial\phi_{\partial}.\\
  \end{aligned}
\end{equation}
For $w\in H\Lambda^{0}(\Omega)$, as stated in Lemma \ref{lemma5}, and $\eta'\in P^{\ast}\Lambda^{1}(\Omega)$, we have using (\ref{part})
\begin{equation}\label{com3}
  \begin{aligned}
    \int_{\Omega}\ast dw\wedge\eta'=(-1)^{n-1}\langle w,\delta\eta'\rangle_{L^{2}\Lambda^{0}(\Omega)}+(-1)^{n-1}\int_{\partial\Omega}\mathrm{tr}(w)\wedge\ast\boldsymbol{n}(\eta')
    =0,
  \end{aligned}
\end{equation}
since $\ast\boldsymbol{n}(\eta')=0$ at $\partial\Omega$. Using (\ref{lie}), Lemma \ref{lemma2} and the fact that $\eta\in P^{\ast}\Lambda^{1}(\Omega)$, $u=dN_{\phi}(\partial\Sigma)$, we obtain
\begin{equation}\label{com4}
  \begin{aligned}
   \int_{\Omega}\ast dw\wedge [\eta,u]_{1}&=\int_{\Omega}dw\wedge\ast\delta(\eta\wedge u)=\int_{\partial\Omega}(\langle dw,\eta\rangle_{\Lambda^{1}}i_{\mathcal{N}}u)v_{\partial\Omega}.
  \end{aligned}
\end{equation}
Since $u=dN_{\phi}(\partial\Sigma)$ with $\partial\Sigma=0\text{ at }\Gamma$, we have using Lemma \ref{lemma6} that $\ast\boldsymbol{n}(u)=\partial\Sigma\text{ at }\Sigma$ and $\ast\boldsymbol{n}(u)=0\text{ at }\Gamma$.
From (\ref{rela4}) and the relation $v_{\Sigma}=i_{\mathcal{N}}v_{\Omega}|_{\Sigma}$, it follows that 
\begin{equation}\label{eq50}
  \ast v_{\Sigma}=\ast i_{\mathcal{N}}v_{\Omega}|_{\Sigma}=\ast\ast(\mathcal{N}^{\flat}\wedge\ast v_{\Omega})|_{\Sigma}=(-1)^{n-1}\mathcal{N}^{\flat},
\end{equation}
which, using $\partial\Sigma=\partial\Sigma_{v}v_{\Sigma}$, results in
\begin{equation}\label{eq60}
  \boldsymbol{n}(u)=(-1)^{n-1}\ast\partial\Sigma=(-1)^{n-1}\partial\Sigma_{v}(\ast v_{\Sigma})=(\partial\Sigma_{v})^{\flat}\quad\text{at }\Sigma.
\end{equation}
Finally, using (\ref{perp}), it follows that
\begin{equation}\label{eq61}
  (i_{\mathcal{N}}u)v_{\partial\Omega}=(i_{\mathcal{N}}u)v_{\Sigma}=u(\mathcal{N})v_{\Sigma}=\boldsymbol{n}(u)(\mathcal{N})v_{\Sigma}=(\partial\Sigma_{v})^{\flat}(\mathcal{N})v_{\Sigma}=\partial\Sigma,
\end{equation}
and (\ref{com4}) becomes
\begin{equation}\label{eq56}
  \int_{\Omega}\ast dw\wedge [\eta,u]_{1}=\int_{\Sigma}\langle dw,\eta\rangle_{\Lambda^{1}}\wedge\partial\Sigma.
\end{equation}
From (\ref{hami1}), using the chain rule, we now obtain that
\begin{equation}\label{com5}
  \int_{\Omega}\frac{\delta\mathcal{F}}{\delta v}\wedge\partial v+\int_{\Sigma}\frac{\delta\mathcal{F}}{\delta\Sigma}\wedge\partial\Sigma=\int_{\Omega}\frac{\delta\tilde{\mathcal{F}}}{\delta\eta}\wedge\partial\eta+\int_{\partial\Omega}\frac{\delta\tilde{\mathcal{F}}}{\delta\phi_{\partial}}\wedge\partial\phi_{\partial}+\int_{\Sigma}\frac{\delta\tilde{\mathcal{F}}}{\delta\Sigma}\wedge\partial\Sigma.
\end{equation}
Substituting (\ref{com1})-(\ref{com3}) and (\ref{eq56}) into (\ref{com5}), we have
\begin{equation*}
  \begin{aligned}
    &\int_{\Omega}\frac{\delta_{\beta}\mathcal{F}}{\delta v}\wedge\partial\eta+(-1)^{n-1}\int_{\partial\Omega}\mathrm{tr}(\frac{\delta\mathcal{F}}{\delta v})\wedge\partial\phi_{\partial}+\int_{\Sigma}(\frac{\delta\mathcal{F}}{\delta\Sigma}-\langle dw,\eta\rangle_{\Lambda^{1}})\wedge\partial\Sigma\\
    =&\int_{\Omega}\frac{\delta\tilde{\mathcal{F}}}{\delta\eta}\wedge\partial\eta+\int_{\partial\Omega}\frac{\delta\tilde{\mathcal{F}}}{\delta\phi_{\partial}}\wedge\partial\phi_{\partial}+\int_{\Sigma}\frac{\delta\tilde{\mathcal{F}}}{\delta\Sigma}\wedge\partial\Sigma,
  \end{aligned}
\end{equation*}
which implies using (\ref{extraterm}) that (\ref{eq103})--(\ref{eq105}) hold. Finally, (\ref{func4}) follows using (\ref{hode3})--(\ref{extraterm}).
\end{proof}

Using the fact that $\phi$ is a harmonic function, the Hamiltonian $H(v,\Sigma)$, stated in (\ref{total2}), can be expressed as $\tilde{H}:\mathring{\mathfrak{B}}^{\ast 1}\times H^{\frac{1}{2}}\Lambda^{0}(\partial\Omega)\times H^{-\frac{1}{2}}\Lambda^{n-1}(\Sigma)\to\mathbb{R}$,
\begin{equation}\label{hami4}
  \tilde{H}(\eta,\phi_{\partial},\Sigma)=\int_{\Omega}\big(\frac{1}{2}\llangle\eta^{\sharp},\eta^{\sharp}\rrangle+\Phi\big)v_{\Omega}+\frac{1}{2}\int_{\partial\Omega}\mathrm{tr}(\phi)\wedge\ast\boldsymbol{n}(d\phi)+\frac{\tau}{\rho}\int_{\Sigma}v_{\Sigma},
\end{equation}
with $\ast\boldsymbol{n}(d\phi)$ given by (\ref{potential1}).
The functional derivatives of $\tilde{H}$ with respect to the $(\eta,\phi_{\partial},\Sigma)$ variables can be obtained using (\ref{eq103})--(\ref{eq105}), together with (\ref{h1})--(\ref{h2}),
\begin{subequations}\label{func7}
  \begin{empheq}[left=\empheqlbrace]{align}
    &\frac{\delta\tilde{H}}{\delta\eta}=(-1)^{n-1}\ast\eta,\label{eq106}\\
    &\frac{\delta\tilde{H}}{\delta\phi_{\partial}}=\ast\boldsymbol{n}(d\phi),\label{eq107}\\
    &\frac{\delta\tilde{H}}{\delta\Sigma}=\mathrm{tr}\big(\frac{1}{2}\llangle v^{\sharp},v^{\sharp}\label{eq108}\rrangle+\Phi\big)+\frac{\tau k}{\rho}+(-1)^{n-1}\langle d\phi,\eta\rangle_{\Lambda^{1}}.
  \end{empheq}
\end{subequations}

\subsubsection{Hamiltonian formulation with respect to $(\eta,\phi_{\partial},\Sigma)$ variables}

Using Lemma \ref{lemma7}, we can express the Poisson bracket (\ref{bracket3}) in terms of the $(\eta,\phi_{\partial},\Sigma)$ variables, which is stated in the following lemma.

\begin{lemma}\label{lemma3}
Let $\tilde{\mathcal{F}},\tilde{\mathcal{G}}:\mathring{\mathfrak{B}}^{\ast 1}\times H^{\frac{1}{2}}\Lambda^{0}(\partial\Omega)\times H^{-\frac{1}{2}}\Lambda^{n-1}(\Sigma)\to\mathbb{R}$
be arbitrary functionals. The Poisson bracket (\ref{bracket3}) in the $(\eta,\phi_{\partial},\Sigma)$ variables is equal to
\begin{equation}\label{bracket5}
  \begin{aligned}
    \{\tilde{\mathcal{F}},\tilde{\mathcal{G}}\}(\eta,\phi_{\partial},\Sigma)=&(-1)^{n-1}\int_{\Omega}(\ast d\eta)\wedge\big(\ast\frac{\delta\tilde{\mathcal{G}}}{\delta\eta}+dN_{\phi}(\frac{\delta\tilde{\mathcal{G}}}{\delta\phi_{\partial}})\big)\wedge\big(\ast\frac{\delta\tilde{\mathcal{F}}}{\delta\eta}+dN_{\phi}(\frac{\delta\tilde{\mathcal{F}}}{\delta\phi_{\partial}})\big)\\
    &+\int_{\Sigma}\Big(\frac{\delta\tilde{\mathcal{F}}}{\delta\Sigma}+(-1)^{n}\langle dN_{\phi}\big(\frac{\delta\tilde{\mathcal{F}}}{\delta\phi_{\partial}}\big),\eta\rangle_{\Lambda^{1}}\Big)\wedge\frac{\delta\tilde{\mathcal{G}}}{\delta\phi_{\partial}}\\
    &-\int_{\Sigma}\Big(\frac{\delta\tilde{\mathcal{G}}}{\delta\Sigma}+(-1)^{n}\langle dN_{\phi}\big(\frac{\delta\tilde{\mathcal{G}}}{\delta\phi_{\partial}}\big),\eta\rangle_{\Lambda^{1}}\Big)\wedge\frac{\delta\tilde{\mathcal{F}}}{\delta\phi_{\partial}}.                 
  \end{aligned}
  \end{equation}
\end{lemma}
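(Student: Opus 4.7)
The plan is to obtain (\ref{bracket5}) by direct substitution of the change-of-variable identities in Lemma \ref{lemma7} into the original Poisson bracket (\ref{bracket3}), which is expressed in the $(v,\Sigma)$ variables. Since the domain is assumed simply connected, the Hodge decomposition reduces to $v = d\phi+\eta$, so that in particular $dv = d\eta$ (as $d\circ d=0$). This immediately disposes of the factor $\ast dv$ appearing in the volume integrand of (\ref{bracket3}), replacing it by $\ast d\eta$, which is exactly what appears in the first line of (\ref{bracket5}).

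For the two factors $\ast\frac{\delta\mathcal{G}}{\delta v}$ and $\ast\frac{\delta\mathcal{F}}{\delta v}$ in the same volume integrand, I would substitute (\ref{func4}), namely $\frac{\delta\mathcal{F}}{\delta v}=\frac{\delta\tilde{\mathcal{F}}}{\delta\eta}+(-1)^{n-1}\ast dN_{\phi}\bigl(\frac{\delta\tilde{\mathcal{F}}}{\delta\phi_{\partial}}\bigr)$, and apply the Hodge star. Using the identity (\ref{eq42}), which for a $1$-form gives $\ast\ast = (-1)^{n-1}$, the double star on the potential piece simplifies as
\begin{equation*}
\ast\tfrac{\delta\mathcal{F}}{\delta v}=\ast\tfrac{\delta\tilde{\mathcal{F}}}{\delta\eta}+(-1)^{n-1}(-1)^{n-1}dN_{\phi}\bigl(\tfrac{\delta\tilde{\mathcal{F}}}{\delta\phi_{\partial}}\bigr)=\ast\tfrac{\delta\tilde{\mathcal{F}}}{\delta\eta}+dN_{\phi}\bigl(\tfrac{\delta\tilde{\mathcal{F}}}{\delta\phi_{\partial}}\bigr),
\end{equation*}
and similarly for $\mathcal{G}$. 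This reproduces verbatim the bulk integrand of (\ref{bracket5}).

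For the boundary part of (\ref{bracket3}), I would convert the trace of the functional derivative of the velocity using (\ref{eq104}): $\mathrm{tr}\bigl(\frac{\delta\mathcal{G}}{\delta v}\bigr)=(-1)^{n-1}\frac{\delta\tilde{\mathcal{G}}}{\delta\phi_{\partial}}$, and rearrange (\ref{eq105}) to read $\frac{\delta\mathcal{F}}{\delta\Sigma}=\frac{\delta\tilde{\mathcal{F}}}{\delta\Sigma}+(-1)^{n}\langle dN_{\phi}(\frac{\delta\tilde{\mathcal{F}}}{\delta\phi_{\partial}}),\eta\rangle_{\Lambda^{1}}$. The leading sign $(-1)^{n-1}$ in front of the boundary term of (\ref{bracket3}) then combines with the $(-1)^{n-1}$ from (\ref{eq104}) to produce $+1$, giving precisely the two surface integrals over $\Sigma$ shown on the last two lines of (\ref{bracket5}). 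The antisymmetric role of $\mathcal{F}$ and $\mathcal{G}$ is preserved throughout, so the second boundary term is obtained by the same computation with the labels swapped and the sign flipped.

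The only real subtlety is the bookkeeping of the $(-1)^{n-1}$ signs: they appear in the definition of $\delta$ via (\ref{rela1}), in the iterated star (\ref{eq42}), in the trace identity (\ref{eq104}), and in the surface coupling term (\ref{eq105}). The main obstacle I anticipate is therefore making sure these cancellations are done consistently so that every $\ast\ast$ and every trace produces the sign that matches the target formula (\ref{bracket5}); apart from this sign accounting, the argument is an essentially mechanical substitution and no new analytical ingredient beyond Lemma \ref{lemma7} and the Hodge decomposition of Section 3 is required.
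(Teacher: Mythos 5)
Your proposal is correct and follows exactly the paper's route: the paper's own proof of Lemma \ref{lemma3} is the one-line remark that substituting (\ref{func4}) (together with the relations (\ref{eq104})--(\ref{eq105}) of Lemma \ref{lemma7}) into the bracket (\ref{bracket3}) gives the result immediately, and your sign bookkeeping via $\ast\ast=(-1)^{n-1}$ on $1$-forms, $dv=d\eta$, and the rearrangement of (\ref{eq105}) is precisely the computation left implicit there, carried out correctly.
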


\begin{proof}
  After substituting (\ref{func4}) into the Poisson bracket (\ref{bracket3}) the result is immediate.
\end{proof}
\noindent
The Hamiltonian system for the Euler equations with a free surface (\ref{dieuler}) in terms of the $(\eta,\phi_{\partial},\Sigma)$ variables is now stated in the following theorem.

\begin{theorem}\label{thm4}
  Given a simply-connected, oriented Lipschitz continuous domain $\Omega$ with a nonoverlapping free surface $\Sigma\subseteq\partial\Omega$ and fixed boundary $\Gamma\subseteq\partial\Omega$, $\Sigma\cup\Gamma=\partial\Omega$.
  For any functional $\tilde{\mathcal{F}}(\eta,\phi_{\partial},\Sigma):\mathring{\mathfrak{B}}^{\ast 1}\times H^{\frac{1}{2}}\Lambda^{0}(\partial\Omega)\times H^{-\frac{1}{2}}\Lambda^{n-1}(\Sigma)\to\mathbb{R}$, and Hamiltonian functional $\tilde{H}$ (\ref{hami4}), the incompressible Euler equations with a free surface in the solenoidal velocity-potential-free surface variables $(\eta,\phi_{\partial},\Sigma)$ (\ref{vor4})--(\ref{vor6}), with $g=0$ at $\Gamma$, can be expressed as the Hamiltonian system
  \begin{equation}\label{eq33}
    \dot{\tilde{\mathcal{F}}}(\eta,\phi_{\partial},\Sigma)=\{\tilde{\mathcal{F}},\tilde{H}\}(\eta,\phi_{\partial},\Sigma).
  \end{equation}
\end{theorem}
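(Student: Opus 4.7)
The plan is to reduce the statement to Theorem \ref{thm3} using the change-of-variables machinery already packaged in Lemmas \ref{lemma7} and \ref{lemma3}. Under the simple-connectedness hypothesis, Lemma \ref{lemma10} says that the Hodge decomposition of any $v\in\mathring{P}^{\ast}\Lambda^{1}(\Omega)$ reduces to $v=d\phi+\eta$ (with harmonic part $\alpha=0$), so every functional $\tilde{\mathcal{F}}(\eta,\phi_{\partial},\Sigma)$ on $\mathring{\mathfrak{B}}^{\ast 1}\times H^{\frac{1}{2}}\Lambda^{0}(\partial\Omega)\times H^{-\frac{1}{2}}\Lambda^{n-1}(\Sigma)$ determines, via $\mathcal{F}(v,\Sigma):=\tilde{\mathcal{F}}(\eta,\phi_{\partial},\Sigma)$, a functional on $\mathring{P}^{\ast}\Lambda^{1}(\Omega)\times H^{-\frac{1}{2}}\Lambda^{n-1}(\Sigma)$, and this correspondence is a bijection because the decomposition and the Poisson solver $N_{\phi}$ of Lemma \ref{lemma6} are both invertible on the relevant classes. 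The same identification applied to the Hamiltonian yields $H(v,\Sigma)=\tilde{H}(\eta,\phi_{\partial},\Sigma)$, using the integration-by-parts identity $\langle d\phi,d\phi\rangle_{L^{2}\Lambda^{1}(\Omega)}=\int_{\partial\Omega}\mathrm{tr}(\phi)\wedge\ast\boldsymbol{n}(d\phi)$ together with $\langle d\phi,\eta\rangle_{L^{2}\Lambda^{1}(\Omega)}=0$, which follows from $\delta\eta=0$ and $\boldsymbol{n}(\eta)=0$.

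For the forward implication I would argue as follows. Given a solution $(\eta,\phi_{\partial},\Sigma)$ of \eqref{vor4}--\eqref{vor6} with $g=0$ at $\Gamma$, setting $v=d\phi+\eta$ reconstructs a solution of \eqref{dieuler} with $g=0$ at $\Gamma$: condition \eqref{dieuler2} follows from $\delta d\phi=0$ and $\delta\eta=0$, condition \eqref{dieuler1} follows from adding $d\phi_{t}$ and $\eta_{t}$ from \eqref{vor4}, and \eqref{dieuler3}--\eqref{dieuler5} are read off directly from the boundary conditions. Hence Theorem \ref{thm3} applies and gives $\dot{\mathcal{F}}(v,\Sigma)=\{\mathcal{F},H\}(v,\Sigma)$. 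Since $\tilde{\mathcal{F}}=\mathcal{F}$ along trajectories, their time derivatives coincide, and Lemma \ref{lemma3} identifies the right-hand side with $\{\tilde{\mathcal{F}},\tilde{H}\}(\eta,\phi_{\partial},\Sigma)$, establishing \eqref{eq33}.

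For the converse, assume \eqref{eq33} holds for every admissible $\tilde{\mathcal{F}}$. Running Lemma \ref{lemma3} backwards through the bijection above converts this into $\dot{\mathcal{F}}(v,\Sigma)=\{\mathcal{F},H\}(v,\Sigma)$ for every admissible $\mathcal{F}$, and the converse direction of Theorem \ref{thm3} then yields \eqref{dieuler} with $g=0$ at $\Gamma$. Decomposing $v=d\phi+\eta$ via Lemma \ref{lemma10} and applying $d$ and $\delta$ to \eqref{dieuler1} reproduces the transport equation \eqref{vor4}a and the Poisson problem \eqref{vor5}, while the boundary conditions come from \eqref{dieuler3}--\eqref{dieuler5} together with Lemma \ref{lemma10}. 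This closes the equivalence. The only delicate point is the bookkeeping of admissible function classes: one must check that as $\tilde{\mathcal{F}}$ ranges over its natural class, the relations \eqref{func3}--\eqref{func4} produce a $\frac{\delta\mathcal{F}}{\delta v}\in\mathring{P}\Lambda^{n-1}(\Omega)$ ranging over every element of its class (so that $g=0$ on $\Gamma$ is automatic and Theorem \ref{thm3} is applicable), which is precisely where the surjectivity of $N_{\phi}$ in Lemma \ref{lemma6} and the Hodge decomposition \eqref{hode3} enter. Beyond that, the proof is a mechanical combination of Theorem \ref{thm3} and Lemma \ref{lemma3}, requiring no essentially new computation.
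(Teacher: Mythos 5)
Your reduction strategy is sound and it is genuinely different from the paper's argument. The paper proves (\ref{eq33}) by direct computation: it inserts the functional derivatives (\ref{func7}) of $\tilde{H}$ into the bracket (\ref{bracket5}), evaluates the three resulting terms $T_{1},T_{2},T_{3}$ with the help of Lemma \ref{lemma7} and (\ref{eq101}), and identifies the sum with $\dot{\tilde{\mathcal{F}}}$ via the chain rule and (\ref{vor4}); the converse is then handled by repeating the arbitrariness argument from the second part of Theorem \ref{thm3} inside the transformed bracket. You instead transport Theorem \ref{thm3} through the change of variables, using $H(v,\Sigma)=\tilde{H}(\eta,\phi_{\partial},\Sigma)$ (which is precisely how (\ref{hami4}) was obtained), the equality of brackets in Lemma \ref{lemma3} applied with $\mathcal{G}=H$, the equality $\dot{\mathcal{F}}=\dot{\tilde{\mathcal{F}}}$ along trajectories, and the equivalence of the two PDE systems. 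This buys a shorter proof with essentially no new computation, at the price of letting the PDE-level equivalence (\ref{dieuler})$\,\Leftrightarrow\,$(\ref{vor4})--(\ref{vor6}) carry the weight that the paper's term-by-term evaluation carries; your closing remark on the surjectivity of the correspondence of functional classes is exactly the bookkeeping needed to invoke the converse half of Theorem \ref{thm3}.

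Two steps need tightening. First, the claim that ``applying $d$ and $\delta$ to (\ref{dieuler1}) reproduces the transport equation (\ref{vor4}a)'' is not accurate: applying $d$ only yields the vorticity equation $d\eta_{t}=-d(i_{v^{\sharp}}dv)$, whereas (\ref{vor4}a), $\eta_{t}=[\eta,u']_{1}$, is the moving-domain kinematic identity (\ref{varia2}), which the paper obtains from the volume-preserving diffeomorphism argument of Section 4.2 following \cite{MR838352}; in your converse direction you should simply invoke that derivation of (\ref{vor4}) from (\ref{dieuler}) rather than attempt to re-derive it by differentiation. Second, the reconstruction of $d\phi$ from $\phi_{\partial}$ is done by the harmonic Dirichlet lifting (the operator $\mathrm{li}_{\phi}$ of (\ref{tracelift2})), not by the Neumann solver $N_{\phi}$ of Lemma \ref{lemma6}; $N_{\phi}$ enters only when converting $\frac{\delta\tilde{\mathcal{F}}}{\delta\phi_{\partial}}$ back into $\frac{\delta\mathcal{F}}{\delta v}$ through (\ref{func4}). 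With these two repairs your proposal is a valid alternative proof of Theorem \ref{thm4}.
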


\begin{proof}
  Inserting the Hamiltonian $\tilde{H}$ (\ref{hami4}) into the bracket (\ref{bracket5}), we have
  \begin{equation*}
    \begin{aligned}
      \{\tilde{\mathcal{F}},\tilde{H}\}(\eta,\phi_{\partial},\Sigma)=&(-1)^{n-1}\int_{\Omega}(\ast d\eta)\wedge\big(\ast\frac{\delta\tilde{H}}{\delta\eta}+dN_{\phi}(\frac{\delta\tilde{H}}{\delta\phi_{\partial}})\big)\wedge\big(\ast\frac{\delta\tilde{\mathcal{F}}}{\delta\eta}+dN_{\phi}(\frac{\delta\tilde{\mathcal{F}}}{\delta\phi_{\partial}})\big)\\
      &+\int_{\Sigma}\Big(\frac{\delta\tilde{\mathcal{F}}}{\delta\Sigma}+(-1)^{n}\langle dN_{\phi}\big(\frac{\delta\tilde{\mathcal{F}}}{\delta\phi_{\partial}}\big),\eta\rangle_{\Lambda^{1}}\Big)\wedge\frac{\delta\tilde{H}}{\delta\phi_{\partial}}\\
      &-\int_{\Sigma}\Big(\frac{\delta\tilde{H}}{\delta\Sigma}+(-1)^{n}\langle dN_{\phi}\big(\frac{\delta\tilde{H}}{\delta\phi_{\partial}}\big),\eta\rangle_{\Lambda^{1}}\Big)\wedge\frac{\delta\tilde{\mathcal{F}}}{\delta\phi_{\partial}}\\
      :=&T_{1}+T_{2}+T_{3}. 
    \end{aligned}
    \end{equation*}

    If (\ref{dieuler}) holds with $g=0$ at $\Gamma$, we can compute from (\ref{func3}), (\ref{func4}), and (\ref{func7}), the terms $T_{1}$, $T_{2}$ and $T_{3}$ as follows
    \begin{align*}
      T_{1}=&(-1)^{n-1}\int_{\Omega}(\ast d\eta)\wedge v\wedge\ast\big(\frac{\delta\tilde{\mathcal{F}}}{\delta\eta}+(-1)^{n-1}\ast dN_{\phi}(\frac{\delta\tilde{\mathcal{F}}}{\delta\phi_{\partial}})\big)\\
      =&-\int_{\Omega}\big(\frac{\delta\tilde{\mathcal{F}}}{\delta\eta}+(-1)^{n-1}\ast dN_{\phi}(\frac{\delta\tilde{\mathcal{F}}}{\delta\phi_{\partial}})\big)\wedge i_{v^{\sharp}}dv,\\
      T_{2}=&\int_{\Sigma}\Big(\frac{\delta\tilde{\mathcal{F}}}{\delta\Sigma}+(-1)^{n}\langle dN_{\phi}\big(\frac{\delta\tilde{\mathcal{F}}}{\delta\phi_{\partial}}\big),\eta\rangle_{\Lambda^{1}}\Big)\wedge\ast\boldsymbol{n}(d\phi),\\
      T_{3}=&-\int_{\Sigma}\big(\mathrm{tr}(\frac{1}{2}\llangle v^{\sharp},v^{\sharp}\rrangle +\Phi)+\frac{\tau k}{\rho}\big)\wedge(-1)^{n-1}\mathrm{tr}(\frac{\delta\mathcal{F}}{\delta v})\\
      =&-\int_{\Omega}\big(\frac{\delta\tilde{\mathcal{F}}}{\delta\eta}+(-1)^{n-1}\ast dN_{\phi}(\frac{\delta\tilde{\mathcal{F}}}{\delta\phi_{\partial}})\big)\wedge d\big(\frac{1}{2}\llangle v^{\sharp},v^{\sharp}\rrangle+\Phi+\frac{\tilde{p}}{\rho}\big),
    \end{align*}
    where for $T_{3}$ we used $\frac{\delta\mathcal{F}}{\delta v}\in\mathring{P}\Lambda^{n-1}(\Omega)$ since $v\in\mathring{P}^{\ast}\Lambda^{1}(\Omega)$, (\ref{func4}) and (\ref{eq101}), which follows from (\ref{dieuler4}).
    Collecting all terms gives
    \begin{equation}\label{eq31}
      \begin{aligned}
        \{\tilde{\mathcal{F}},\tilde{H}\}(\eta,\phi_{\partial},\Sigma)=&\int_{\Omega}\big(\frac{\delta\tilde{\mathcal{F}}}{\delta\eta}+(-1)^{n-1}\ast dN_{\phi}(\frac{\delta\tilde{\mathcal{F}}}{\delta\phi_{\partial}})\big)\wedge\big(-i_{v^{\sharp}}dv-d(\frac{1}{2}\llangle v^{\sharp},v^{\sharp}\rrangle+\frac{\tilde{p}}{\rho}+\Phi)\big)\\
        &+\int_{\Sigma}\big(\frac{\delta\tilde{\mathcal{F}}}{\delta\Sigma}+(-1)^{n}\langle dN_{\phi}(\frac{\delta\tilde{\mathcal{F}}}{\delta\phi_{\partial}}),\eta\rangle_{\Lambda^{1}}\big)\wedge\ast\boldsymbol{n}(d\phi)\\
        =&\int_{\Omega}\big(\frac{\delta\tilde{\mathcal{F}}}{\delta\eta}+(-1)^{n}\ast dN_{\phi}(\frac{\delta\tilde{\mathcal{F}}}{\delta\phi_{\partial}})\big)\wedge v_{t}\\
        &+\int_{\Sigma}\big(\frac{\delta\tilde{\mathcal{F}}}{\delta\Sigma}+(-1)^{n}\langle dN_{\phi}(\frac{\delta\tilde{\mathcal{F}}}{\delta\phi_{\partial}}),\eta\rangle_{\Lambda^{1}}\big)\wedge\Sigma_{t}\\
        =&\dot{\tilde{\mathcal{F}}}(\eta,\phi_{\partial},\Sigma),
      \end{aligned}
    \end{equation}
    which gives (\ref{eq33}). Note in the last step we used the functional chain rule with $\frac{\delta\mathcal{F}}{\delta v}$ and $\frac{\delta\mathcal{F}}{\delta\Sigma}$ replaced by, respectively, (\ref{func4}) and (\ref{eq105}), and (\ref{dieuler1}), (\ref{dieuler3}).

    Next, we assume that (\ref{eq33}) holds. If we use (\ref{eq105}) and (\ref{func4}) for the functional derivatives of $\tilde{\mathcal{F}}$ in (\ref{eq31}), then the proof is identical to the second part of Theorem \ref{thm3}.    
  \end{proof}

  In Section 5.3 we will extend the Hamiltonian system stated in Theorem \ref{thm4} to a port-Hamiltonian system which also incorporates the inhomogeneous boundary condition (\ref{dieuler5}) with $g\neq 0$.

\subsection{Vorticity-potential-free surface Hamiltonian formulation}

The objective of this section is to derive the Hamiltonian formulation of the Euler equations with a free surface in terms of the vorticity $\omega$, the potential function $\phi_{\partial}$ and the free surface $\Sigma$.
  
We start with defining the vorticity $\omega=dv\in\mathring{V}\Lambda^{2}(\Omega)$. After applying the exterior derivative $d$ to both sides of (\ref{dieuler1}) and using (\ref{rela4}) we obtain the vorticity equation 
\begin{equation}\label{incom2}
\omega_{t}+ d\ast(v\wedge\ast\omega)=0.
\end{equation}
Therefore, the dynamic equations for the incompressible Euler equations with a free surface in the $(\omega,\phi_{\partial},\Sigma)$ variables are
\begin{subequations}\label{vor1}
  \begin{empheq}[left=\empheqlbrace]{align}
    &\omega_{t}+d\ast\big(v\wedge\ast\omega\big)=0\text{ in }\Omega\qquad\qquad\qquad\qquad\qquad\qquad\qquad\qquad \ ,\\
    &d\omega=0\text{ in }\Omega,\\
    &\ast\boldsymbol{n}(\omega)=0\text{ at }\Gamma\cup\Sigma,
  \end{empheq}
\end{subequations}
\begin{subequations}\label{vor2}
  \begin{empheq}[left=\empheqlbrace]{align}
    &d\phi_{t}=-\ast(v\wedge\ast\omega)-d\big(\frac{1}{2}\llangle v^{\sharp},v^{\sharp}\rrangle+\frac{\tilde{p}}{\rho}+\Phi\big)-[\delta N_{\beta}(\omega),u']_{1}\text{ in }\Omega,\\
    &\delta d\phi=0\text{ in }\Omega,\\
    &\ast\boldsymbol{n}(d\phi)=\Sigma_{t}\text{ at }\Sigma,\\
    &\ast\boldsymbol{n}(d\phi)=g\text{ at }\Gamma,\\
    &\mathrm{tr}(\frac{\tilde{p}}{\rho})=\frac{\tau k}{\rho}\text{ at }\Sigma,
  \end{empheq}
\end{subequations}
with
\begin{equation}\label{vor3}
  v=d\phi+\delta N_{\beta}(\omega)\quad\text{in }\Omega,
\end{equation}
and $u'=dN_{\phi}(\Sigma_{t})$, with $\Sigma_{t}=0$ at $\Gamma$.

\subsubsection{Functional derivatives with respect to $(\omega,\phi_{\partial},\Sigma)$ variables}
Our aim is now to formulate the Hamiltonian formulation in terms of the $(\omega,\phi_{\partial},\Sigma)$ variables. We start by defining the functional $\bar{\mathcal{F}}(\omega,\phi_{\partial},\Sigma):\mathring{V}\Lambda^{2}(\Omega)\times H^{\frac{1}{2}}\Lambda^{0}(\partial\Omega)\times H^{-\frac{1}{2}}\Lambda^{n-1}(\Sigma)\to\mathbb{R}$ as
\begin{equation*}
  \bar{\mathcal{F}}(\omega,\phi_{\partial},\Sigma)=\tilde{\mathcal{F}}(\eta,\phi_{\partial},\Sigma)=\mathcal{F}(v,\Sigma).
\end{equation*}

\begin{defn}
  The functional derivative $\frac{\delta\bar{\mathcal{F}}}{\delta\omega}\in\mathring{V}^{\ast}\Lambda^{n-2}(\Omega)$ is defined as
  \begin{equation*}
    \int_{\Omega}\frac{\delta\bar{\mathcal{F}}}{\delta\omega}\wedge\partial\omega=\lim_{\epsilon\to 0}\frac{1}{\epsilon}\big(\bar{\mathcal{F}}(\omega+\epsilon\partial\omega)-\bar{\mathcal{F}}(\omega)\big),\ \forall\partial\omega\in\mathring{V}\Lambda^{2}(\Omega).
  \end{equation*}
\end{defn}

Since $\omega$ is independent of $\phi_{\partial}$ and $\Sigma$, it's obvious that
\begin{equation}\label{func5}
  \frac{\delta\bar{\mathcal{F}}}{\delta\phi_{\partial}}=\frac{\delta\tilde{\mathcal{F}}}{\delta\phi_{\partial}},\quad \frac{\delta\bar{\mathcal{F}}}{\delta\Sigma}=\frac{\delta\tilde{\mathcal{F}}}{\delta\Sigma}=\frac{\delta\mathcal{F}}{\delta\Sigma}+(-1)^{n-1}\langle dN_{\phi}(\frac{\delta\tilde{\mathcal{F}}}{\delta\phi_{\partial}}),\eta\rangle_{\Lambda^{1}}.
\end{equation}

We now present the relations between the functional derivatives in terms of $\omega$ and $\eta$.

\begin{lemma}\label{lemma12}
  For an arbitrary an functional $\bar{\mathcal{F}}(\omega,\phi_{\partial},\Sigma):\mathring{V}\Lambda^{2}(\Omega)\times H^{\frac{1}{2}}\Lambda^{0}(\partial\Omega)\times$$H^{-\frac{1}{2}}\Lambda^{n-1}(\Sigma)$ $\to\mathbb{R}$, we have
  \begin{equation}\label{func6}
    \frac{\delta\tilde{\mathcal{F}}}{\delta\eta}=(-1)^{n-1}d\big(\frac{\delta\bar{\mathcal{F}}}{\delta\omega}\big).
  \end{equation}
  Furthermore, we have
  \begin{equation}\label{fun8}
    \frac{\delta\mathcal{F}}{\delta v}=(-1)^{n-1}d(\frac{\delta\bar{\mathcal{F}}}{\delta\omega})+(-1)^{n-1}\ast dN_{\phi}(\frac{\delta\bar{\mathcal{F}}}{\delta\phi_{\partial}}).
  \end{equation}
\end{lemma}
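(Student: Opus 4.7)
The plan is to exploit the relation $\omega = d\eta$ (which follows from the Hodge decomposition $v = d\phi + \eta$ together with $dd\phi=0$ under the simply-connected assumption) and to push the exterior derivative from the variation $\partial\eta$ onto the functional derivative $\frac{\delta\bar{\mathcal{F}}}{\delta\omega}$ via integration by parts. Since the paper has already established the corresponding identity in the $(\eta,\phi_{\partial},\Sigma)$ variables, the second identity (\ref{fun8}) will then be immediate from Lemma~\ref{lemma7}.

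First I would note that a perturbation $\partial\eta \in \mathring{\mathfrak{B}}^{\ast 1}$ induces $\partial\omega = d(\partial\eta)$, and $\partial\omega \in \mathring{V}\Lambda^{2}(\Omega)$ since $d\partial\omega = 0$ and $\mathrm{tr}(\ast\partial\omega) = 0$ (the latter because $\mathrm{tr}(\ast d\eta) = \boldsymbol{n}(\delta\eta)\cdot(\pm 1) = 0$ by (\ref{eta2}), and the analogous calculation for the perturbation). Then, since $\bar{\mathcal{F}}(\omega,\phi_{\partial},\Sigma)=\tilde{\mathcal{F}}(\eta,\phi_{\partial},\Sigma)$, applying the chain rule with $\phi_\partial$ and $\Sigma$ held fixed gives
\begin{equation*}
\int_{\Omega}\frac{\delta\tilde{\mathcal{F}}}{\delta\eta}\wedge\partial\eta = \int_{\Omega}\frac{\delta\bar{\mathcal{F}}}{\delta\omega}\wedge \partial\omega = \int_{\Omega}\frac{\delta\bar{\mathcal{F}}}{\delta\omega}\wedge d(\partial\eta).
\end{equation*}

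Next, I would apply the Leibniz rule $d(\alpha\wedge\beta) = d\alpha\wedge\beta + (-1)^{n-2}\alpha\wedge d\beta$ with $\alpha=\frac{\delta\bar{\mathcal{F}}}{\delta\omega}\in\mathring{V}^{\ast}\Lambda^{n-2}(\Omega)$ and $\beta=\partial\eta$, followed by Stokes' theorem (\ref{stokes}). The crucial point is that $\frac{\delta\bar{\mathcal{F}}}{\delta\omega}\in \mathring{H}\Lambda^{n-2}(\Omega)$ by definition of $\mathring{V}^{\ast}\Lambda^{n-2}(\Omega)$, so $\mathrm{tr}(\frac{\delta\bar{\mathcal{F}}}{\delta\omega})=0$ on $\partial\Omega$, and the boundary contribution vanishes. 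A short sign computation then yields
\begin{equation*}
\int_{\Omega}\frac{\delta\bar{\mathcal{F}}}{\delta\omega}\wedge d(\partial\eta) = (-1)^{n-1}\int_{\Omega}d\Big(\frac{\delta\bar{\mathcal{F}}}{\delta\omega}\Big)\wedge\partial\eta.
\end{equation*}
Since $d(\frac{\delta\bar{\mathcal{F}}}{\delta\omega})$ lies in $\mathring{\mathfrak{B}}^{n-1}$ (being exact from a form with vanishing trace), and the test variations $\partial\eta$ range over all of $\mathring{\mathfrak{B}}^{\ast 1}$, arbitrariness yields the first identity (\ref{func6}).

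Finally, to obtain (\ref{fun8}) I substitute (\ref{func6}) together with the trivial identity $\frac{\delta\tilde{\mathcal{F}}}{\delta\phi_{\partial}}=\frac{\delta\bar{\mathcal{F}}}{\delta\phi_{\partial}}$ from (\ref{func5}) into relation (\ref{func4}) of Lemma~\ref{lemma7}. I expect the main technical obstacle to be the sign bookkeeping in the Leibniz rule (where the degree $n-2$ of $\frac{\delta\bar{\mathcal{F}}}{\delta\omega}$ enters) and, more subtly, the justification of the passage from the integral identity to the pointwise identity for $\frac{\delta\tilde{\mathcal{F}}}{\delta\eta}$: one must verify that both sides live in the same subspace $\mathring{\mathfrak{B}}^{n-1}$ so that the arbitrariness of $\partial\eta\in\mathring{\mathfrak{B}}^{\ast 1}$ is enough to conclude. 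This is guaranteed by the Hodge-type orthogonality relations recorded in Lemma~2.5 of the excerpt.
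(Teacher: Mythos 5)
Your proposal is correct and follows essentially the same route as the paper: chain rule via $\partial\omega=d(\partial\eta)$, then moving $d$ onto $\frac{\delta\bar{\mathcal{F}}}{\delta\omega}$ with the boundary term killed by $\mathrm{tr}\big(\frac{\delta\bar{\mathcal{F}}}{\delta\omega}\big)=0$ (your Leibniz-plus-Stokes computation is exactly the integration-by-parts formula (\ref{part}) the paper invokes, with the same sign $(-1)^{n-1}$), and finally substituting, together with (\ref{func5}), into (\ref{func4}) of Lemma \ref{lemma7} to get (\ref{fun8}).
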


\begin{proof}
  Using the relation $\omega=dv=d\eta$ and the chain rule, we have used the integration by parts formula (\ref{part})
  \begin{equation*}
    \begin{aligned}
     \int_{\Omega}\frac{\delta\tilde{\mathcal{F}}}{\delta\eta}\wedge\partial\eta&=\int_{\Omega}\frac{\delta\bar{\mathcal{F}}}{\partial\omega}\wedge\partial\omega=\int_{\Omega}\frac{\delta\bar{\mathcal{F}}}{\partial\omega}\wedge d(\partial\eta)=\langle d(\partial\eta),\ast\frac{\delta\bar{\mathcal{F}}}{\delta\omega}\rangle_{L^{2}\Lambda^{2}(\Omega)}\\
      &=\langle\delta\ast\frac{\delta\bar{\mathcal{F}}}{\delta\omega},\partial\eta\rangle_{L^{2}\Lambda^{1}(\Omega)}+\int_{\partial\Omega}\mathrm{tr}(\partial\eta)\wedge\mathrm{tr}(\frac{\delta\bar{\mathcal{F}}}{\delta\omega}).
    \end{aligned}
  \end{equation*}
  Since $\frac{\delta\bar{\mathcal{F}}}{\delta\omega}\in\mathring{V}^{\ast}\Lambda^{n-2}(\Omega)$, it follows that $\mathrm{tr}(\frac{\delta\bar{\mathcal{F}}}{\delta\omega})=0$. Thus,
  \begin{equation*}
    \begin{aligned}
      \int_{\Omega}\frac{\delta\tilde{\mathcal{F}}}{\delta\eta}\wedge\partial\eta=\int_{\Omega}\partial\eta\wedge\ast\delta\ast\big(\frac{\delta\bar{\mathcal{F}}}{\delta\omega}\big)=(-1)^{n-1}\int_{\Omega}d\big(\frac{\delta\bar{\mathcal{F}}}{\delta\omega}\big)\wedge\partial\eta.
    \end{aligned}
  \end{equation*}
  Hence,
  \begin{equation}\label{hami5}
    \frac{\delta\tilde{\mathcal{F}}}{\delta\eta}=(-1)^{n-1}d\big(\frac{\delta\bar{\mathcal{F}}}{\delta\omega}\big).
  \end{equation}
  Substituting (\ref{hami5}) into (\ref{func4}) and using (\ref{func5}), we immediately obtain (\ref{fun8}).
\end{proof}
  
\begin{lemma}\label{lemma13}
  The Hamiltonian functional $\bar{H}(\omega,\phi_{\partial},\Sigma):\mathring{V}\Lambda^{2}(\Omega)\times H^{\frac{1}{2}}\Lambda^{0}(\partial\Omega)$ $\times H^{-\frac{1}{2}}\Lambda^{n-1}(\Sigma)$ $\to\mathbb{R}$ is given by
  \begin{equation}\label{hami7}
    \bar{H}(\omega,\phi_{\partial},\Sigma)=\frac{1}{2}\int_{\Omega}\beta\wedge\ast\omega+\int_{\Omega}\Phi v_{\Omega}+\frac{1}{2}\int_{\partial\Omega}\mathrm{tr}(\phi)\wedge\ast\boldsymbol{n}(d\phi)+\frac{\tau}{\rho}\int_{\Sigma}v_{\Sigma}.
  \end{equation}
  The functional derivatives in terms of $(\omega,\phi_{\partial},\Sigma)$ are
  \begin{subequations}\label{eq120}
    \begin{empheq}[left=\empheqlbrace]{align}
      &\frac{\delta\bar{H}}{\delta\omega}=\ast\beta,\label{eq114}\\
      &\frac{\delta\bar{H}}{\delta\phi_{\partial}}=\ast\boldsymbol{n}(d\phi),\label{eq115}\\
      &\frac{\delta\bar{H}}{\delta\Sigma}=\mathrm{tr}\big(\frac{1}{2}\llangle v^{\sharp},v^{\sharp}\rrangle+\Phi\big)+\frac{\tau k}{\rho}+(-1)^{n-1}\langle d\phi,\eta\rangle_{\Lambda^{1}}.\label{eq116}
    \end{empheq}
  \end{subequations}
\end{lemma}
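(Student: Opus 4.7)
The plan is to handle the three functional derivatives in three rather different ways: the $\phi_\partial$ and $\Sigma$ derivatives should follow directly from the known formulas for $\tilde H$ via the change-of-variable identities (\ref{func5}), while the $\omega$ derivative requires a direct variational computation that is the only genuinely new work. As a preliminary sanity check I would first verify that $\bar H(\omega,\phi_\partial,\Sigma)=\tilde H(\eta,\phi_\partial,\Sigma)$: the second, third and fourth terms of (\ref{hami7}) coincide verbatim with those of (\ref{hami4}), so it suffices to prove $\tfrac12\int_\Omega\beta\wedge\ast\omega=\tfrac12\int_\Omega\llangle\eta^\sharp,\eta^\sharp\rrangle v_\Omega$. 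Using $\omega=d\delta\beta$ and the integration-by-parts formula (\ref{part}) with $\lambda=\delta\beta\in H\Lambda^1(\Omega)$ and $\mu=\beta\in H^1\Lambda^2(\Omega)$, one gets $\langle\beta,d\delta\beta\rangle_{L^2\Lambda^2(\Omega)}=\langle\delta\beta,\delta\beta\rangle_{L^2\Lambda^1(\Omega)}+\int_{\partial\Omega}\mathrm{tr}(\delta\beta)\wedge\mathrm{tr}(\ast\beta)$, and the boundary term vanishes because $\beta\in\mathring V\Lambda^2(\Omega)$ implies $\mathrm{tr}(\ast\beta)=0$; what remains is exactly $\langle\eta,\eta\rangle_{L^2\Lambda^1(\Omega)}$.

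For the $\phi_\partial$ and $\Sigma$ derivatives there is then nothing more to do: combining $\frac{\delta\bar H}{\delta\phi_\partial}=\frac{\delta\tilde H}{\delta\phi_\partial}$ and $\frac{\delta\bar H}{\delta\Sigma}=\frac{\delta\tilde H}{\delta\Sigma}$ from (\ref{func5}) with the previously established formulas (\ref{eq107}) and (\ref{eq108}) yields (\ref{eq115}) and (\ref{eq116}) respectively. For the $\omega$ derivative I would vary $\omega\to\omega+\epsilon\partial\omega$ with $\partial\omega\in\mathring V\Lambda^2(\Omega)$; by linearity of the solution operator $N_\beta$ from Lemma \ref{lemma11} the corresponding variation of $\beta$ is $\partial\beta=N_\beta(\partial\omega)\in\mathring V\Lambda^2(\Omega)$, and the first-order increment of $\bar H$ reads
\begin{equation*}
\tfrac12\int_\Omega\beta\wedge\ast\partial\omega+\tfrac12\int_\Omega\partial\beta\wedge\ast\omega.
\end{equation*}
To collapse the two integrals into one, I would apply the weak formulation (\ref{eq118}) twice: first with the pair $(\beta,\omega)$ and test form $\gamma=\partial\beta$, giving $\langle\omega,\partial\beta\rangle_{L^2\Lambda^2}=\langle\delta\beta,\delta\partial\beta\rangle_{L^2\Lambda^1}$; second with the pair $(\partial\beta,\partial\omega)$ and test form $\gamma=\beta$, giving $\langle\partial\omega,\beta\rangle_{L^2\Lambda^2}=\langle\delta\partial\beta,\delta\beta\rangle_{L^2\Lambda^1}$. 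Symmetry of the $L^2$ inner product equates the two right-hand sides, so $\int_\Omega\partial\beta\wedge\ast\omega=\int_\Omega\beta\wedge\ast\partial\omega$; hence the first-order increment reduces to $\int_\Omega\beta\wedge\ast\partial\omega=\int_\Omega\ast\beta\wedge\partial\omega$, and the definition of the functional derivative gives $\frac{\delta\bar H}{\delta\omega}=\ast\beta$.

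The main obstacle is isolated in the $\omega$-step: one has to recognize that $\beta$ depends implicitly on $\omega$ through a boundary-value problem, so naively differentiating the quadratic form $\tfrac12\langle\beta,\omega\rangle_{L^2\Lambda^2}$ produces two terms that do not visibly combine. The only viable route I see is to use the weak formulation of $N_\beta$ in two different ways to exhibit the self-adjointness of the map $\omega\mapsto\beta$; once this symmetry has been extracted from (\ref{eq118}), the rest of the argument is essentially bookkeeping with Hodge duality and the vanishing boundary conditions on $\beta$.
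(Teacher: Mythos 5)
Your proposal is correct, and for the key identity (\ref{eq114}) it takes a genuinely different route from the paper. You agree with the paper on the routine parts: the identification $\bar H(\omega,\phi_{\partial},\Sigma)=\tilde H(\eta,\phi_{\partial},\Sigma)$ via integration by parts, $\langle\beta,d\delta\beta\rangle_{L^{2}\Lambda^{2}(\Omega)}=\Vert\delta\beta\Vert^{2}_{L^{2}\Lambda^{1}(\Omega)}$ using $\mathrm{tr}(\ast\beta)=0$, and the derivation of (\ref{eq115})--(\ref{eq116}) from (\ref{func5}) together with (\ref{eq107})--(\ref{eq108}). For $\frac{\delta\bar H}{\delta\omega}$, however, the paper does not differentiate $\bar H$ directly: it inserts $\bar H$ into the chain-rule relation (\ref{func6}) of Lemma \ref{lemma12}, uses (\ref{eq106}) to get $d\big(\frac{\delta\bar H}{\delta\omega}\big)=\ast\eta=\ast\delta\beta=d(\ast\beta)$ with $\mathrm{tr}\big(\frac{\delta\bar H}{\delta\omega}\big)=0$, and then concludes $\frac{\delta\bar H}{\delta\omega}=\ast\beta$ by uniqueness (implicitly using that both sides lie in $\mathring{V}^{\ast}\Lambda^{n-2}(\Omega)$ and that the relevant harmonic space is trivial on the simply connected domain). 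You instead vary $\omega\mapsto\omega+\epsilon\,\partial\omega$ directly, note $\partial\beta=N_{\beta}(\partial\omega)$ by linearity, and extract the self-adjointness of $N_{\beta}$ from the weak formulation (\ref{eq118}) tested once with $\gamma=\partial\beta$ and once with $\gamma=\beta$, so the two increment terms coincide. Your argument is more self-contained and elementary: it bypasses Lemma \ref{lemma12} and the uniqueness step for closed $(n-2)$-forms with vanishing trace, and it makes explicit the symmetry of the map $\omega\mapsto\beta$ that the paper leaves implicit; the paper's route is shorter once Lemma \ref{lemma12} is in place and fits its general pattern of transporting functional derivatives between variable sets. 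The only points to tidy in your version are routine: observe that the remaining terms of (\ref{hami7}) do not depend on $\omega$, check that $\ast\beta\in\mathring{V}^{\ast}\Lambda^{n-2}(\Omega)=\ast\mathring{V}\Lambda^{2}(\Omega)$ as required of the functional derivative, and verify the regularity hypotheses of (\ref{part}) for the pair $(\delta\beta,\beta)$, which the paper also uses without comment.
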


  \begin{proof}
    With (\ref{hami4}), and after integration by parts of the first term in (\ref{hami4}) using (\ref{stream}), we obtain (\ref{hami7}).
    Introducing $\bar{H}$ into (\ref{func6}) and using (\ref{eq106}), we obtain that $\frac{\delta\bar{H}}{\delta\omega}$ satisfies
  \begin{equation*}
    d\frac{\delta\bar{H}}{\delta\omega}=(-1)^{n-1}\frac{\delta\tilde{H}}{\delta\eta}=\ast\eta=\ast\delta\beta=d(\ast\beta),
  \end{equation*}
  with the boundary condition
  \begin{equation*}
    \mathrm{tr}(\frac{\delta\bar{H}}{\delta\omega})=0.
  \end{equation*} 
  Thus, we obtain (\ref{eq114}). Finally, (\ref{eq115}) and (\ref{eq116}) hold directly from (\ref{eq107}), (\ref{eq108}) and (\ref{func5}).
\end{proof}

\subsubsection{Poisson bracket with respect to $(\omega,\phi_{\partial},\Sigma)$ variables}
The Poisson bracket for the Hamiltonian formulation in terms of the $(\omega,\phi_{\partial},\Sigma)$ variables are stated in the following lemma.

\begin{lemma}\label{lemma9}
  Let $\bar{\mathcal{F}},\bar{\mathcal{G}}:\mathring{V}\Lambda^{2}(\Omega)\times H^{\frac{1}{2}}\Lambda^{0}(\partial\Omega)\times H^{-\frac{1}{2}}\Lambda^{n-1}(\Sigma)\to\mathbb{R}$.
  The Poisson bracket with respect to $(\omega,\phi_{\partial},\Sigma)$ can be expressed as
  \begin{equation}\label{bracket6}
    \begin{aligned}
      &\{\bar{\mathcal{F}},\bar{\mathcal{G}}\}(\omega,\phi_{\partial},\Sigma)\\
      =&(-1)^{n-1}\int_{\Omega}(\ast\omega)\wedge\Big((-1)^{n-1}\ast d\frac{\delta\bar{\mathcal{G}}}{\delta\omega}+dN_{\phi}(\frac{\delta\bar{\mathcal{G}}}{\delta\phi_{\partial}})\big)
      \wedge\big((-1)^{n-1}\ast d\frac{\delta\bar{\mathcal{F}}}{\delta\omega}+dN_{\phi}(\frac{\delta\bar{\mathcal{F}}}{\delta\phi_{\partial}})\Big)\\
      &+\int_{\Sigma}\Big(\frac{\delta\bar{\mathcal{F}}}{\delta\Sigma}+(-1)^{n}\langle dN_{\phi}(\frac{\delta\bar{\mathcal{F}}}{\delta\phi_{\partial}}),\delta N_{\beta}(\omega)\rangle_{\Lambda^{1}} \Big)\wedge\frac{\delta\bar{\mathcal{G}}}{\delta\phi_{\partial}}\\
      &-\int_{\Sigma}\Big(\frac{\delta\bar{\mathcal{G}}}{\delta\Sigma}+(-1)^{n}\langle dN_{\phi}(\frac{\delta\bar{\mathcal{G}}}{\delta\phi_{\partial}}),\delta N_{\beta}(\omega)\rangle_{\Lambda^{1}} \Big)\wedge\frac{\delta\bar{\mathcal{F}}}{\delta\phi_{\partial}}.
    \end{aligned}
  \end{equation}
\end{lemma}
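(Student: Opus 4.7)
The strategy is to mirror the proof of Lemma \ref{lemma3}: start from the Poisson bracket (\ref{bracket3}) in the $(v,\Sigma)$ variables and substitute the chain-rule relations that express $\frac{\delta\mathcal{F}}{\delta v}$ and $\frac{\delta\mathcal{F}}{\delta\Sigma}$ in terms of the new variables $(\omega,\phi_{\partial},\Sigma)$. The needed substitutions are already in hand: (\ref{fun8}) for $\frac{\delta\mathcal{F}}{\delta v}$, the second part of (\ref{func5}) for $\frac{\delta\mathcal{F}}{\delta\Sigma}$, together with $dv=\omega$, and $\eta=\delta\beta=\delta N_{\beta}(\omega)$.

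First, I would replace $\ast dv$ by $\ast\omega$ and then, using (\ref{fun8}), compute
\begin{equation*}
  \ast\frac{\delta\mathcal{F}}{\delta v}=(-1)^{n-1}\ast d\!\left(\tfrac{\delta\bar{\mathcal{F}}}{\delta\omega}\right)+(-1)^{n-1}\ast\!\ast dN_{\phi}\!\left(\tfrac{\delta\bar{\mathcal{F}}}{\delta\phi_{\partial}}\right),
\end{equation*}
and simplify the second term using (\ref{eq42}), which on $1$-forms gives $\ast\ast=(-1)^{n-1}$. The same manipulation applies to $\mathcal{G}$. Substituting these into the volume integral of (\ref{bracket3}) produces exactly the first line of (\ref{bracket6}).

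Next, the boundary integrals require simplifying $\mathrm{tr}(\frac{\delta\mathcal{F}}{\delta v})$ and $\mathrm{tr}(\frac{\delta\mathcal{G}}{\delta v})$. For the exact-form part, $\mathrm{tr}(d\,\frac{\delta\bar{\mathcal{F}}}{\delta\omega})=d\,\mathrm{tr}(\frac{\delta\bar{\mathcal{F}}}{\delta\omega})=0$ by (\ref{proper2}) and the zero trace of $\frac{\delta\bar{\mathcal{F}}}{\delta\omega}\in\mathring{V}^{\ast}\Lambda^{n-2}(\Omega)$. For the other piece, (\ref{proper3}) and Lemma \ref{lemma6} give $\mathrm{tr}\!\bigl(\ast dN_{\phi}(\tfrac{\delta\bar{\mathcal{F}}}{\delta\phi_{\partial}})\bigr)=\ast\boldsymbol{n}(dN_{\phi}(\tfrac{\delta\bar{\mathcal{F}}}{\delta\phi_{\partial}}))=\tfrac{\delta\bar{\mathcal{F}}}{\delta\phi_{\partial}}$. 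Combined with the $(-1)^{n-1}$ prefactors from (\ref{fun8}) and the $(-1)^{n-1}$ in front of the surface integral of (\ref{bracket3}), the boundary contributions collapse to $\int_{\Sigma}\frac{\delta\mathcal{F}}{\delta\Sigma}\wedge\frac{\delta\bar{\mathcal{G}}}{\delta\phi_{\partial}}-\int_{\Sigma}\frac{\delta\mathcal{G}}{\delta\Sigma}\wedge\frac{\delta\bar{\mathcal{F}}}{\delta\phi_{\partial}}$. Finally, using (\ref{func5}) to substitute $\frac{\delta\mathcal{F}}{\delta\Sigma}=\frac{\delta\bar{\mathcal{F}}}{\delta\Sigma}+(-1)^{n}\langle dN_{\phi}(\frac{\delta\bar{\mathcal{F}}}{\delta\phi_{\partial}}),\delta N_{\beta}(\omega)\rangle_{\Lambda^{1}}$ (and the analogous identity for $\mathcal{G}$) yields the remaining two lines of (\ref{bracket6}).

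The proof is therefore essentially a direct, if notationally heavy, chain-rule computation, as was the case for Lemma \ref{lemma3}. The only real obstacle is sign-bookkeeping: one has to track the $(-1)^{n-1}$ arising from (\ref{fun8}), from $\ast\ast$ on $1$-forms, from the prefactor in (\ref{bracket3}), and from the reformulation of $\frac{\delta\mathcal{F}}{\delta\Sigma}$ via (\ref{func5}), and verify that they recombine exactly into the signs appearing in (\ref{bracket6}). No new analytic ingredient beyond Lemmas \ref{lemma6}, \ref{lemma11}, \ref{lemma7}, and \ref{lemma12} is needed.
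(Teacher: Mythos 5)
Your proposal is correct and is essentially the paper's argument: the paper substitutes (\ref{func5}), (\ref{func6}) and $\eta=\delta N_{\beta}(\omega)$ from (\ref{eq81}) into the intermediate bracket (\ref{bracket5}), whereas you substitute (\ref{fun8}) and (\ref{func5}) directly into (\ref{bracket3}), which is the same computation since (\ref{fun8}) is exactly (\ref{func4}) combined with (\ref{func6}). Your sign bookkeeping (the $(-1)^{n-1}$ from $\ast\ast$ on $1$-forms, $\mathrm{tr}(d\tfrac{\delta\bar{\mathcal{F}}}{\delta\omega})=0$, and $\mathrm{tr}(\ast dN_{\phi}(\cdot))=\ast\boldsymbol{n}(dN_{\phi}(\cdot))$ recovering the boundary datum) checks out.
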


\begin{proof}
After substituting (\ref{func5}) and (\ref{func6}) into the Poisson bracket (\ref{bracket5}) and using (\ref{eq81}), which gives $\eta=\delta N_{\beta}(\omega)$, the result is immediate.
\end{proof}

\begin{theorem}\label{thm1}
  Given a simply-connected, oriented Lipschitz continuous domain $\Omega$ with a nonoverlapping free surface $\Sigma\subseteq\partial\Omega$ and fixed boundary $\Gamma\subseteq\partial\Omega$, $\Sigma\cup\Gamma=\partial\Omega$.
  For any functionals $\bar{\mathcal{F}}(\omega,\phi_{\partial},\Sigma):\mathring{V}\Lambda^{2}(\Omega)\times H^{\frac{1}{2}}\Lambda^{0}(\partial\Omega)\times H^{-\frac{1}{2}}\Lambda^{n-1}(\Sigma)\to\mathbb{R}$ and Hamiltonian functional $\bar{H}$ (\ref{hami7}), the incompressible Euler equations with a free surface in the vorticity-potential-free surface variables $(\omega,\phi_{\partial},\Sigma)$ (\ref{vor1})--(\ref{vor3}), with $g=0$ at $\Gamma$, can be expressed as the Hamiltonian system
  \begin{equation}\label{eq65}
    \dot{\bar{\mathcal{F}}}(\omega,\phi_{\partial},\Sigma)=\{\bar{\mathcal{F}},\bar{H}\}(\omega,\phi_{\partial},\Sigma).
  \end{equation}
\end{theorem}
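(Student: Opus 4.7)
The plan is to reduce the proof to Theorem \ref{thm4}, exploiting that the bracket (\ref{bracket6}) and Hamiltonian $\bar{H}$ in (\ref{hami7}) were constructed precisely so that the change of variables from $(\eta,\phi_{\partial},\Sigma)$ to $(\omega,\phi_{\partial},\Sigma)$ leaves the Hamiltonian dynamics invariant. Concretely, I would argue in three steps: first, that the two brackets agree after substitution; second, that the two time derivatives $\dot{\tilde{\mathcal{F}}}$ and $\dot{\bar{\mathcal{F}}}$ agree; third, apply Theorem \ref{thm4}.

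For the first step I would substitute the functional derivative relations (\ref{func5}) and (\ref{func6}) of Lemma \ref{lemma12} into the bracket (\ref{bracket5}), which is exactly how (\ref{bracket6}) was derived in Lemma \ref{lemma9}, and also observe (via Lemma \ref{lemma13}) that the functional derivatives of $\bar{H}$ correspond under these relations to those of $\tilde{H}$. It follows that
\begin{equation*}
\{\bar{\mathcal{F}},\bar{H}\}(\omega,\phi_{\partial},\Sigma)=\{\tilde{\mathcal{F}},\tilde{H}\}(\eta,\phi_{\partial},\Sigma),
\end{equation*}
where $\eta=\delta N_{\beta}(\omega)$ on the right-hand side. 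Theorem \ref{thm4} then identifies this with $\dot{\tilde{\mathcal{F}}}(\eta,\phi_{\partial},\Sigma)$, provided the Euler system (\ref{vor4})--(\ref{vor6}) holds with $g=0$; but (\ref{vor1})--(\ref{vor3}) is equivalent to (\ref{vor4})--(\ref{vor6}) upon using $\omega=d\eta=dv$ and $\eta=\delta N_{\beta}(\omega)$, so the hypotheses are met.

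For the second step, since $\bar{\mathcal{F}}(\omega,\phi_{\partial},\Sigma)=\tilde{\mathcal{F}}(\eta,\phi_{\partial},\Sigma)$ and $\omega_{t}=d\eta_{t}$, I apply the chain rule and use integration by parts. The key identity is
\begin{equation*}
\int_{\Omega}\frac{\delta\bar{\mathcal{F}}}{\delta\omega}\wedge\omega_{t}=\int_{\Omega}\frac{\delta\bar{\mathcal{F}}}{\delta\omega}\wedge d\eta_{t}=(-1)^{n-1}\int_{\Omega}d\bigl(\frac{\delta\bar{\mathcal{F}}}{\delta\omega}\bigr)\wedge\eta_{t}=\int_{\Omega}\frac{\delta\tilde{\mathcal{F}}}{\delta\eta}\wedge\eta_{t},
\end{equation*}
where the boundary term vanishes because $\mathrm{tr}(\frac{\delta\bar{\mathcal{F}}}{\delta\omega})=0$ (since $\frac{\delta\bar{\mathcal{F}}}{\delta\omega}\in\mathring{V}^{\ast}\Lambda^{n-2}(\Omega)$), and the last equality is (\ref{func6}). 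Together with the trivial identities $\frac{\delta\bar{\mathcal{F}}}{\delta\phi_{\partial}}=\frac{\delta\tilde{\mathcal{F}}}{\delta\phi_{\partial}}$ and $\frac{\delta\bar{\mathcal{F}}}{\delta\Sigma}=\frac{\delta\tilde{\mathcal{F}}}{\delta\Sigma}$ from (\ref{func5}), this gives $\dot{\bar{\mathcal{F}}}(\omega,\phi_{\partial},\Sigma)=\dot{\tilde{\mathcal{F}}}(\eta,\phi_{\partial},\Sigma)$, and combining with the first step yields the forward direction (\ref{eq65}).

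For the converse direction I would mirror the second half of Theorem \ref{thm4}: assume (\ref{eq65}) holds for all $\bar{\mathcal{F}}$, write $\{\bar{\mathcal{F}},\bar{H}\}$ and $\dot{\bar{\mathcal{F}}}$ in the form obtained above, pair the $\omega$-, $\phi_{\partial}$- and $\Sigma$-components using the chain rule conversion of Lemma \ref{lemma12}, and exploit arbitrariness of $\frac{\delta\bar{\mathcal{F}}}{\delta\omega}\in\mathring{V}^{\ast}\Lambda^{n-2}(\Omega)$, $\frac{\delta\bar{\mathcal{F}}}{\delta\phi_{\partial}}\in H^{-\frac{1}{2}}\Lambda^{n-1}(\partial\Omega)$, $\frac{\delta\bar{\mathcal{F}}}{\delta\Sigma}\in H^{\frac{1}{2}}\Lambda^{0}(\Sigma)$ to extract (\ref{vor2}a)--(\ref{vor2}f); the constraints (\ref{vor1}b), (\ref{vor1}c) follow automatically from $\omega=d\eta\in\mathring{V}\Lambda^{2}(\Omega)$, and (\ref{vor1}a) follows by taking the exterior derivative of (\ref{vor2}a) using (\ref{incom2}). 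The main obstacle is the integration-by-parts step in the second stage, where one must carefully track the sign $(-1)^{n-1}$ and verify that the boundary contribution from $\mathrm{tr}(\frac{\delta\bar{\mathcal{F}}}{\delta\omega})$ vanishes; the rest is bookkeeping once Lemmas \ref{lemma12} and \ref{lemma13} are in hand.
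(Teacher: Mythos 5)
Your route—reducing the $(\omega,\phi_{\partial},\Sigma)$ statement to Theorem \ref{thm4} via the change of variables—is genuinely different from the paper's proof, which is a direct computation: the paper substitutes the derivatives (\ref{eq120}) of $\bar{H}$ into the bracket (\ref{bracket6}), evaluates the three resulting terms using $N_{\phi}$, $N_{\beta}$, Lemma \ref{lemma2} and the key identity (\ref{eq117}), and never needs the transport equation for $\eta$. Your steps one and two are sound: the bracket identification is exactly Lemmas \ref{lemma9} and \ref{lemma13}, and the conversion $\int_{\Omega}\frac{\delta\bar{\mathcal{F}}}{\delta\omega}\wedge\omega_{t}=\int_{\Omega}\frac{\delta\tilde{\mathcal{F}}}{\delta\eta}\wedge\eta_{t}$ is correct (sign and boundary term check out, since $\mathrm{tr}(\frac{\delta\bar{\mathcal{F}}}{\delta\omega})=0$). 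The genuine gap is the claim that (\ref{vor1})--(\ref{vor3}) is equivalent to (\ref{vor4})--(\ref{vor6}) ``upon using $\omega=d\eta$ and $\eta=\delta N_{\beta}(\omega)$''. For the forward direction you must pass from (\ref{vor1})--(\ref{vor3}) to the hypotheses of Theorem \ref{thm4}, and the only missing equation is precisely (\ref{vor4}a), $\eta_{t}=[\eta,u']_{1}$, which is not a formal consequence of the substitution. From (\ref{vor1}a) together with $d$ applied to (\ref{vor2}a) you only obtain $d\big(\eta_{t}-[\eta,u']_{1}\big)=0$, and $\delta\big(\eta_{t}-[\eta,u']_{1}\big)=0$ is automatic; hence $\eta_{t}-[\eta,u']_{1}$ is a harmonic gradient, and to conclude it vanishes you need a boundary condition, i.e. you must compute $\boldsymbol{n}(\eta_{t})$ by differentiating the constraint $\boldsymbol{n}(\eta)=0$ along the moving boundary $\Sigma(t)$ (equivalently, differentiate $\eta=\delta N_{\beta}(\omega)$ in time on the moving domain). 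That is exactly the kinematic content of (\ref{varia2}), which the paper asserts from the literature and which your reduction silently presupposes; without it Theorem \ref{thm4} cannot be invoked.

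The converse as you describe it also contains a circular step: you propose to obtain (\ref{vor1}a) ``by taking the exterior derivative of (\ref{vor2}a) using (\ref{incom2})'', but (\ref{incom2}) is the vorticity equation itself, derived from (\ref{dieuler1}), which is not available in this direction. Applying $d$ to (\ref{vor2}a) only yields the constraint $d\ast(v\wedge\ast\omega)+d[\delta N_{\beta}(\omega),u']_{1}=0$, because the left-hand side $d\phi_{t}$ is exact; this is not an evolution equation for $\omega$ unless you already know $\omega_{t}=d[\delta N_{\beta}(\omega),u']_{1}$, i.e. (\ref{vor4}a) again. If instead you run the converse fully through Theorem \ref{thm4} (recover (\ref{vor4})--(\ref{vor6}) from arbitrariness, then add (\ref{vor4}a) and (\ref{vor4}b) and apply $d$), that direction is fine since it uses only the easy implication of the equivalence; but note the paper's own converse is simpler: arbitrariness of $\frac{\delta\bar{\mathcal{F}}}{\delta\omega}\in\mathring{V}^{\ast}\Lambda^{n-2}(\Omega)$ in the $\omega$-pairing delivers (\ref{vor1}a) directly, with no detour through $\eta$. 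To repair your forward direction you either need to prove (\ref{vor4}a) from (\ref{vor1})--(\ref{vor3}) by the boundary/kinematic argument sketched above, or abandon the reduction and argue directly with (\ref{bracket6}) as the paper does.
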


\begin{proof}
  First, we will prove that (\ref{vor1})--(\ref{vor3}) imply (\ref{eq65}).
  Inserting the functional derivatives of the Hamiltonian $\bar{H}$ (\ref{eq120}) into the Poisson bracket (\ref{bracket6}) gives
  \begin{equation*}
   \begin{aligned}
      &\{\bar{\mathcal{F}},\bar{H}\}(\omega,\phi_{\partial},\Sigma)\\
      =&(-1)^{n-1}\int_{\Omega}(\ast\omega)\wedge\big((-1)^{n-1}\ast d\frac{\delta\bar{H}}{\delta\omega}+dN_{\phi}(\frac{\delta\bar{H}}{\delta\phi_{\partial}})\big)
      \wedge\big((-1)^{n-1}\ast d\frac{\delta\bar{\mathcal{F}}}{\delta\omega}+dN_{\phi}(\frac{\delta\bar{\mathcal{F}}}{\delta\phi_{\partial}})\big)\\
      &+\int_{\Sigma}\Big(\frac{\delta\bar{\mathcal{F}}}{\delta\Sigma}+(-1)^{n}\langle dN_{\phi}\big(\frac{\delta\bar{\mathcal{F}}}{\delta\phi_{\partial}}\big),\delta N_{\beta}(\omega)\rangle_{\Lambda^{1}}\Big)\wedge\frac{\delta\bar{H}}{\delta\phi_{\partial}}\\
      &-\int_{\Sigma}\Big(\frac{\delta\bar{H}}{\delta\Sigma}+(-1)^{n}\langle dN_{\phi}\big(\frac{\delta\bar{H}}{\delta\phi_{\partial}}\big),\delta N_{\beta}(\omega)\rangle_{\Lambda^{1}}\Big)\wedge\frac{\delta\bar{\mathcal{F}}}{\delta\phi_{\partial}}\\
      :=&T_{1}+T_{2}+T_{3}. 
    \end{aligned}
  \end{equation*}
  The term $T_{1}$ can be computed using $\ast d\frac{\delta\bar{H}}{\delta\omega}=\ast d\ast\beta=(-1)^{n-1}\delta\beta$, $dN_{\phi}\big(\frac{\delta\bar{H}}{\delta\phi_{\partial}}\big)=dN_{\phi}\big(\ast\boldsymbol{n}(d\phi)\big)=d\phi$, 
  the Hodge decomposition (\ref{vhode}) and (\ref{part}), yielding
  \begin{equation*}
    \begin{aligned}
      T_{1}=&(-1)^{n-1}\int_{\Omega}(\ast\omega)\wedge v\wedge\big((-1)^{n-1}\ast d\frac{\delta\bar{\mathcal{F}}}{\delta\omega}+dN_{\phi}(\frac{\delta\bar{\mathcal{F}}}{\delta\phi_{\partial}})\big)\\
        =&\int_{\Omega}\frac{\delta\bar{\mathcal{F}}}{\delta\omega}\wedge\ast\delta\big((\ast\omega)\wedge v\big)+(-1)^{n-1}\int_{\Omega}(\ast\omega)\wedge v\wedge dN_{\phi}(\frac{\delta\bar{\mathcal{F}}}{\delta\phi_{\partial}})\\
        =&-\int_{\Omega}\frac{\delta\bar{\mathcal{F}}}{\delta\omega}\wedge d\ast(v\wedge\ast\omega)+(-1)^{n}\int_{\Omega}\ast dN_{\phi}(\frac{\delta\bar{\mathcal{F}}}{\delta\phi_{\partial}})\wedge\ast(v\wedge\ast\omega).
    \end{aligned}
  \end{equation*}
  The term $T_{2}$ can be computed using (\ref{eq115}), resulting in
  \begin{equation*}
    T_{2}=\int_{\Sigma}\big(\frac{\delta\bar{\mathcal{F}}}{\delta\Sigma}+(-1)^{n}\langle dN_{\phi}(\frac{\delta\bar{\mathcal{F}}}{\delta\phi_{\partial}}),\delta N_{\beta}(\omega)\rangle_{\Lambda^{1}}\big)\wedge\ast\boldsymbol{n}(d\phi).
  \end{equation*}
  From (\ref{eq60}) and (\ref{eq61}), we have that
  \begin{equation}\label{eq69}
    \ast\boldsymbol{n}(d\phi)=\Sigma_{t}=(i_{\mathcal{N}}u')v_{\Sigma}\text{ at }\Sigma.
  \end{equation}
  Using (\ref{eq69}), Lemma 2.4, which is possible since $\delta N_{\beta}(\omega)$, $u'\in P^{\ast}\Lambda^{1}(\Omega)$ and, using (\ref{proper2}), (\ref{stream}), $\boldsymbol{n}(\delta N_{\beta}(\omega))=\delta\boldsymbol{n}(\beta)=\delta\ast\mathrm{tr}(\ast\beta)=0$ at $\partial\Omega$, $\ast\boldsymbol{n}(u')=0$
  at $\Gamma$ and (\ref{lie}) we obtain
  \begin{equation*} 
    \begin{aligned}
      T_{2}=&\int_{\Sigma}\frac{\delta\bar{\mathcal{F}}}{\delta\Sigma}\wedge\ast\boldsymbol{n}(d\phi)+(-1)^{n}\int_{\partial\Omega}\langle dN_{\phi}(\frac{\delta\bar{\mathcal{F}}}{\delta\phi_{\partial}}),\delta N_{\beta}(\omega)\rangle_{\Lambda^{1}}i_{\mathcal{N}}(u')v_{\Sigma}\\
      =&\int_{\Sigma}\frac{\delta\bar{\mathcal{F}}}{\delta\Sigma}\wedge\ast\boldsymbol{n}(d\phi)+(-1)^{n}\int_{\Omega}dN_{\phi}(\frac{\delta\bar{\mathcal{F}}}{\delta\phi_{\partial}})\wedge\ast\delta\big(\delta N_{\beta}(\omega)\wedge u'\big)\\
      =&\int_{\Sigma}\frac{\delta\bar{\mathcal{F}}}{\delta\Sigma}\wedge\ast\boldsymbol{n}(d\phi)+(-1)^{n}\int_{\Omega}\ast dN_{\phi}(\frac{\delta\bar{\mathcal{F}}}{\delta\phi_{\partial}})\wedge[\delta N_{\beta}(\omega),u']_{1}.
    \end{aligned}
  \end{equation*}
  The term $T_{3}$ can be computed using (\ref{eq115}) and (\ref{eq116}), together with (\ref{vor2}e)
  \begin{equation*}
    \begin{aligned}
      T_{3}=&-\int_{\Sigma}\big(\mathrm{tr}(\frac{1}{2}\llangle v^{\sharp},v^{\sharp}\rrangle+\Phi)+\frac{\tau k}{\rho}\big)\wedge\frac{\delta\bar{\mathcal{F}}}{\delta\phi_{\partial}}\\
      =&(-1)^{n}\int_{\Sigma}\mathrm{tr}\big(\frac{1}{2}\llangle v^{\sharp},v^{\sharp}\rrangle+\Phi+\frac{\tilde{p}}{\rho}\big)\wedge\mathrm{tr}(\frac{\delta\mathcal{F}}{\delta v}).
    \end{aligned}
  \end{equation*}
  Since $\frac{\delta\mathcal{F}}{\delta v}\in\mathring{P}\Lambda^{n-1}(\Omega)$, which implies $d\big(\frac{\delta\mathcal{F}}{\delta v}\big)=0$ in $\Omega$ and $\mathrm{tr}\big(\frac{\delta\mathcal{F}}{\delta v}\big)=0$
  at $\Gamma$, we obtain using Stokes' theorem and (\ref{fun8})
  \begin{equation*}
    \begin{aligned}
      T_{3}=&-\int_{\Omega}\frac{\delta\mathcal{F}}{\delta v}\wedge d\big(\frac{1}{2}\llangle v^{\sharp},v^{\sharp}\rrangle+\Phi+\frac{\tilde{p}}{\rho}\big)\\
      =&(-1)^{n}\int_{\Omega}\Big(d\big(\frac{\delta\bar{\mathcal{F}}}{\delta\omega}\big)+\ast dN_{\phi}\big(\frac{\delta\bar{\mathcal{F}}}{\delta\phi_{\partial}}\big)\Big)\wedge d\big(\frac{1}{2}\llangle v^{\sharp},v^{\sharp}\rrangle+\Phi+\frac{\tilde{p}}{\rho}\big).
    \end{aligned}
  \end{equation*}
  Consider now
  \begin{equation*}
    \begin{aligned}
      \int_{\Omega}d\big(\frac{\delta\bar{\mathcal{F}}}{\delta\omega}\big)\wedge d\big(\frac{1}{2}\llangle v^{\sharp},v^{\sharp}\rrangle+\Phi+\frac{\tilde{p}}{\rho}\big)
      =&(-1)^{n-1}\langle d\big(\frac{\delta\bar{\mathcal{F}}}{\delta\omega}\big),\ast d\big(\frac{1}{2}\llangle v^{\sharp},v^{\sharp}\rrangle+\Phi+\frac{\tilde{p}}{\rho}\big)\rangle_{L^{2}\Lambda^{n-1}(\Omega)}\\
      =&(-1)^{n-1}\langle \frac{\delta\bar{\mathcal{F}}}{\delta\omega},\delta\ast d\big(\frac{1}{2}\llangle v^{\sharp},v^{\sharp}\rrangle+\Phi+\frac{\tilde{p}}{\rho}\big)\rangle_{L^{2}\Lambda^{n-2}(\Omega)}\\
      =&0,
    \end{aligned}
  \end{equation*}
  where we used $\frac{\delta\bar{\mathcal{F}}}{\delta\omega}\in\mathring{V}^{\ast}\Lambda^{n-2}(\Omega)$ and 
  $\delta\ast d\big(\frac{1}{2}\llangle v^{\sharp},v^{\sharp}\rrangle+\Phi+\frac{\tilde{p}}{\rho}\big)=0$.
  Hence $T_{3}$ is equal to
  \begin{equation*}
    T_{3}=(-1)^{n}\int_{\Omega}\ast dN_{\phi}\big(\frac{\delta\bar{\mathcal{F}}}{\delta\phi_{\partial}}\big)\wedge d\big(\frac{1}{2}\llangle v^{\sharp},v^{\sharp}\rrangle+\Phi+\frac{\tilde{p}}{\rho}\big).
  \end{equation*}
  Collecting all terms and using (\ref{vor1}a), (\ref{vor2}a), (\ref{vor2}c) and the functional chain rule we obtain
  \begin{equation}\label{eq119}
    \begin{aligned}
      \{\bar{\mathcal{F}},\bar{H}\}(\omega,\phi_{\partial},\Sigma)=&\int_{\Omega}\frac{\delta\bar{\mathcal{F}}}{\delta\omega}\wedge\big(-d\ast(v\wedge\ast\omega)\big)+\int_{\Sigma}\frac{\delta\bar{\mathcal{F}}}{\delta\Sigma}\wedge\ast\boldsymbol{n}(d\phi)\\
      &+(-1)^{n-1}\int_{\Omega}\ast dN_{\phi}\big(\frac{\delta\bar{\mathcal{F}}}{\delta\phi_{\partial}}\big)\wedge\Big(-\ast(v\wedge\ast\omega)\\
      &-d(\frac{1}{2}\llangle v^{\sharp},v^{\sharp}\rrangle+\Phi+\frac{\tilde{p}}{\rho})-[\delta N_{\beta}(\omega),u']_{1}\Big)\\
      =&\int_{\Omega}\frac{\delta\bar{\mathcal{F}}}{\delta\omega}\wedge\omega_{t}+\int_{\partial\Omega}\frac{\delta\bar{\mathcal{F}}}{\delta\phi_{\partial}}\wedge(\phi_{\partial})_{t}+\int_{\Sigma}\frac{\delta\bar{\mathcal{F}}}{\delta\Sigma}\wedge\Sigma_{t}\\
      =&\dot{\bar{\mathcal{F}}}(\omega,\phi_{\partial},\Sigma),
    \end{aligned}
  \end{equation}
  which gives (\ref{eq65}). Here we used in the second step of (\ref{eq119}) the relation
  \begin{equation}\label{eq117}
    \begin{aligned}
      &(-1)^{n-1}\int_{\Omega}\ast dN_{\phi}(\frac{\delta\bar{\mathcal{F}}}{\delta\phi_{\partial}})\wedge d\phi_{t}=\int_{\Omega}d\phi_{t}\wedge\big(d(\frac{\delta\bar{\mathcal{F}}}{\delta\omega})+\ast dN_{\phi}(\frac{\delta\bar{\mathcal{F}}}{\delta\phi_{\partial}})\big)\\
      =&(-1)^{n-1}\int_{\Omega}d\phi_{t}\wedge\frac{\delta\mathcal{F}}{\delta v}=(-1)^{n-1}\int_{\partial\Omega}\mathrm{tr}(\phi_{t})\wedge\mathrm{tr}(\frac{\delta\mathcal{F}}{\delta v})\\
      =&\int_{\partial\Omega}\frac{\delta\bar{\mathcal{F}}}{\delta\phi_{\partial}}\wedge(\phi_{\partial})_{t},
    \end{aligned}
  \end{equation}
  since $\int_{\Omega}d\phi_{t}\wedge d\big(\frac{\delta\bar{\mathcal{F}}}{\delta\omega}\big)=\langle\frac{\delta\bar{\mathcal{F}}}{\delta\omega},\delta\ast d\phi_{t}\rangle_{L^{2}\Lambda^{n-2}(\Omega)}+(-1)^{n-1}\int_{\partial\Omega}\mathrm{tr}(\frac{\delta\bar{\mathcal{F}}}{\delta\omega})\wedge\mathrm{tr}(d\phi_{t})=0$ 
  for $\frac{\delta\bar{\mathcal{F}}}{\delta\omega}\in\mathring{V}^{\ast}\Lambda^{n-2}(\Omega)$ and $\delta\ast d\phi_{t}=\ast d^{2}\phi_{t}=0$.

  Next, we assume that (\ref{eq65}) holds. Using the same steps as in the evaluation of $T_{1}$, $T_{2}$ and $T_{3}$, but without using (\ref{vor2}c), we obtain that (\ref{eq65}) can be expressed as
  \begin{equation*}
    \begin{aligned}
      &\int_{\Omega}\frac{\delta\bar{\mathcal{F}}}{\delta\omega}\wedge\big(\omega_{t}+d\ast(v\wedge\ast\omega)\big)+\int_{\Sigma}\frac{\delta\bar{\mathcal{F}}}{\delta\Sigma}\wedge\big(\Sigma_{t}-\ast\boldsymbol{n}(d\phi)\big)\\
      +&(-1)^{n-1}\int_{\Omega}\ast dN_{\phi}(\frac{\delta\bar{\mathcal{F}}}{\delta\phi_{\partial}})\wedge\big(d\phi_{t}+\ast(v\wedge\ast\omega)+d(\frac{1}{2}\llangle v^{\sharp},v^{\sharp}\rrangle+\Phi)\big)\\
      +&[\delta N_{\beta}(\omega),u']_{1}\big)+\int_{\Sigma}\frac{\tau k}{\rho}\wedge\frac{\delta\bar{\mathcal{F}}}{\delta\phi_{\partial}}=0,
    \end{aligned}
  \end{equation*}
  where we used (\ref{eq117}) for the third term. Then by adding and subtracting $(-1)^{n-1}$ $\int_{\Omega}\ast dN_{\phi}(\frac{\delta\bar{\mathcal{F}}}{\delta\phi_{\partial}})\wedge d(\frac{\tilde{p}}{\rho})$, using the relation
  \begin{equation*}
    (-1)^{n-1}\int_{\Omega}\ast dN_{\phi}(\frac{\delta\bar{\mathcal{F}}}{\delta\phi})\wedge d(\frac{\tilde{p}}{\rho})=\int_{\Sigma}\mathrm{tr}(\frac{\tilde{p}}{\rho})\wedge\frac{\delta\bar{\mathcal{F}}}{\delta\phi_{\partial}},
  \end{equation*}
  which can be derived in the same way as (\ref{eq117}), we obtain
  \begin{equation*}
    \begin{aligned}
      &\int_{\Omega}\frac{\delta\bar{\mathcal{F}}}{\delta\omega}\wedge\big(\omega_{t}+d\ast(v\wedge\ast\omega)\big)+\int_{\Sigma}\frac{\delta\bar{\mathcal{F}}}{\delta\Sigma}\wedge\big(\Sigma_{t}-\ast\boldsymbol{n}(d\phi)\big)\\
      +&(-1)^{n-1}\int_{\Omega}\ast dN_{\phi}(\frac{\delta\bar{\mathcal{F}}}{\delta\phi_{\partial}})\wedge\Big(d\phi_{t}+\ast(v\wedge\ast\omega)+d(\frac{1}{2}\llangle v^{\sharp},v^{\sharp}\rrangle+\Phi+\frac{\tilde{p}}{\rho})+[\delta N_{\beta}(\omega),\eta]_{1}\Big)\\
      +&\int_{\Sigma}\big(\mathrm{tr}(\frac{\tilde{p}}{\rho})-\frac{\tau k}{\rho}\big)\wedge\frac{\delta\bar{\mathcal{F}}}{\delta\phi_{\partial}}=0.
    \end{aligned}
  \end{equation*}
  Taking $\frac{\delta\bar{\mathcal{F}}}{\delta\omega}\in\mathring{V}^{\ast}\Lambda^{n-2}(\Omega)$, $\frac{\delta\bar{\mathcal{F}}}{\delta\phi_{\partial}}\in H^{-\frac{1}{2}}\Lambda^{n-1}(\partial\Omega)$  
  and $\frac{\delta\bar{\mathcal{F}}}{\delta\Sigma}\in H^{\frac{1}{2}}\Lambda^{0}(\Sigma)$ arbitrary gives (\ref{vor1}a), (\ref{vor2}a), (\ref{vor2}c) and (\ref{vor2}e).
  The other conditions in (\ref{vor1})--(\ref{vor2}) are automatically satisfied due to the choice of the function spaces.
\end{proof}
  
\begin{corollary}\label{corollary1}
  Given a simply-connected, oriented Lipschitz domain $\Omega$ with a nonoverlapping free surface $\Sigma\subseteq\partial\Omega$ and fixed boundary $\Gamma\subseteq\partial\Omega$, $\Sigma\cup\Gamma=\partial\Omega$. For any functionals $\bar{\mathcal{F}}_{p}(\phi_{\partial},\Sigma):H^{\frac{1}{2}}\Lambda^{0}(\partial\Omega)\times H^{-\frac{1}{2}}\Lambda^{n-1}(\Sigma)\to\mathbb{R}$,
  with $\bar{\mathcal{F}}_{p}(\phi_{\partial},\Sigma)=\bar{\mathcal{F}}(0,\phi_{\partial},\Sigma)$ the potential flow free surface water waves (\ref{vor2}) with $g\neq 0$ and $\tau=0$ can be expressed as the Hamiltonian system
  \begin{equation*}
    \dot{\bar{\mathcal{F}}}_{p}(\phi_{\partial},\Sigma)=\{\bar{\mathcal{F}}_{p},\bar{H}_{p}\},
  \end{equation*}
with the canonical Poisson bracket given by
\begin{equation*}
  \{\bar{\mathcal{F}}_{p},\bar{\mathcal{G}}_{p}\}(\phi_{\partial},\Sigma)=\int_{\Sigma}\Big(\frac{\delta\bar{\mathcal{F}}_{p}}{\delta\Sigma}\wedge\frac{\delta\bar{\mathcal{G}}_{p}}{\delta\phi_{\partial}}-\frac{\delta\bar{\mathcal{G}}_{p}}{\delta\Sigma}\wedge\frac{\delta\bar{\mathcal{F}}_{p}}{\delta\phi_{\partial}}\Big),
\end{equation*}
and the Hamiltonian functional
\begin{equation*}
  \bar{H}_{p}=\frac{1}{2}\int_{\partial\Omega}\phi_{\partial}\wedge\ast\boldsymbol{n}(d\phi).
\end{equation*}
\end{corollary}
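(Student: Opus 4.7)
The approach is to specialize Theorem \ref{thm1} to the potential-flow regime, setting $\omega = 0$ and $\tau = 0$ and restricting to functionals $\bar{\mathcal{F}}_p(\phi_{\partial},\Sigma) = \bar{\mathcal{F}}(0,\phi_{\partial},\Sigma)$ that are independent of $\omega$. First, I record the kinematic consequences of $\omega = 0$: by uniqueness of the solution of (\ref{eq118}) in Lemma \ref{lemma11}, $\beta = N_{\beta}(0) = 0$, so $\eta = \delta\beta = 0$ and the Hodge decomposition (\ref{vhode}) collapses to $v = d\phi$. Moreover, $\frac{\delta\bar{\mathcal{F}}_p}{\delta\omega} \equiv 0$ and the vorticity equation (\ref{vor1}a) is satisfied trivially since both $\omega_t$ and $d\ast(v\wedge\ast\omega)$ vanish.

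Next I reduce the bracket (\ref{bracket6}). The entire volume integral over $\Omega$ vanishes because it carries the factor $\ast\omega = 0$ (and because $\frac{\delta\bar{\mathcal{F}}_p}{\delta\omega} = \frac{\delta\bar{\mathcal{G}}_p}{\delta\omega} = 0$), while each of the correction terms $\langle dN_{\phi}(\cdot),\delta N_{\beta}(\omega)\rangle_{\Lambda^{1}}$ appearing in the two surface integrals vanishes because $\delta N_{\beta}(\omega) = 0$. What remains is exactly the canonical Zakharov bracket stated in the corollary.

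For the Hamiltonian, I substitute $\omega = 0$, $\beta = 0$, $\tau = 0$ into (\ref{hami7}). The kinetic-energy term $\tfrac{1}{2}\int_{\Omega}\beta\wedge\ast\omega$ and the capillary term disappear, and the boundary integral $\tfrac{1}{2}\int_{\partial\Omega}\phi_{\partial}\wedge\ast\boldsymbol{n}(d\phi)$ represents the kinetic energy of the potential flow via integration by parts using $\delta d\phi = 0$, yielding $\bar{H}_p$. Its derivatives follow from (\ref{eq115})--(\ref{eq116}) with $\eta = 0$ and $\tau = 0$, giving $\frac{\delta\bar{H}_p}{\delta\phi_{\partial}} = \ast\boldsymbol{n}(d\phi)$ and $\frac{\delta\bar{H}_p}{\delta\Sigma} = \mathrm{tr}\bigl(\tfrac{1}{2}\llangle v^{\sharp},v^{\sharp}\rrangle + \Phi\bigr)$. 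Plugging into the reduced bracket, applying the chain rule, and invoking the kinematic surface condition (\ref{vor2}c) together with the Bernoulli condition obtained from the trace of (\ref{vor2}a) at $\Sigma$ (where $\mathrm{tr}(\tilde{p}) = 0$ by (\ref{vor2}e) with $\tau = 0$) gives $\dot{\bar{\mathcal{F}}}_p = \{\bar{\mathcal{F}}_p,\bar{H}_p\}$.

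The hard part is not the bracket reduction, which is essentially algebraic, but matching the reduced Hamilton's equation $(\phi_{\partial})_t|_{\Sigma} = -\frac{\delta\bar{H}_p}{\delta\Sigma}$ with the trace at $\Sigma$ of the dynamic equation (\ref{vor2}a). This requires integrating $d\phi_t = -d\bigl(\tfrac{1}{2}\llangle v^{\sharp},v^{\sharp}\rrangle + \tilde{p}/\rho + \Phi\bigr)$ in $\Omega$, gauging away the resulting time-dependent constant, and then restricting to $\Sigma$ using $\mathrm{tr}(\tilde{p}) = 0$. These are standard manipulations in the Zakharov derivation; everything else is a direct specialization of Theorem \ref{thm1}.
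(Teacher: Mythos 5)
Your proposal is correct and follows essentially the same route as the paper, whose proof simply sets $\omega=0$ and $\tau=0$ in (\ref{eq65}) and notes that $\tilde{p}=0$ at the free surface since $p=\bar{p}$; your additional details (collapse of $\beta=N_{\beta}(0)=0$, $\eta=0$, reduction of the bracket (\ref{bracket6}) and of $\bar{H}$ in (\ref{hami7}), and the Bernoulli matching at $\Sigma$) are just an expanded write-up of that same specialization.
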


\begin{proof}
  The result is immediate after introducing $\omega=0$ and $\tau=0$ in (\ref{eq65}). Note at the free surface $\tilde{p}=0$ since $p=\bar{p}$ the atmospheric pressure. 
\end{proof}

Corollary \ref{corollary1} states a coordinate-free Hamiltonian formulation of the Hamilton principle for surface waves stated in \citep{MR489348} for $\Omega\subseteq\mathbb{R}^{2}$.

In Section 5.4, we will extend the Hamiltonian system stated in Theorem \ref{thm1} to a port-Hamiltonian system that also incorporates the inhomogeneous boundary condition (\ref{vor2}d) with $g\neq 0$ at $\Gamma$.

\section{Distributed-parameter port-Hamiltonian system}

Based on the Hamiltonian framework for the incompressible Euler equations with a free surface discussed in Section 4, we will now present the port-Hamiltonian formulation for these equations.
The port-Hamiltonian framework allows for inhomogeneous boundary conditions and the exchange of the energy of the system through its boundaries. The port-Hamiltonian framework, therefore, provides a direct extension of Hamiltonian systems to open systems.
We will consider three formulations:
\begin{enumerate}[i)]
  \item a $(v,\Sigma)$ formulation with velocity $v$ and free surface $\Sigma$;
  \item an $(\eta,\phi_{\partial},\Sigma)$ formulation with the solenoidal velocity $\eta$, boundary potential $\phi_{\partial}$ and free surface $\Sigma$;
  \item an $(\omega,\phi_{\partial},\Sigma)$ formulation with vorticity $\omega$, boundary potential $\phi_{\partial}$ and free surface $\Sigma$.
\end{enumerate}
For all three formulations, we will prove that they can be described by a Dirac structure and we will give the related port-Hamiltonian system.
In the next section, we will first introduce some basic notations and the concept of a Dirac structure. The port-Hamiltonian formulations will be discussed in Sections 5.2--5.4.

\subsection{Introduction to Dirac structures}

The central part in defining a port-Hamiltonian system is the definition of a Dirac structure, see \citep{van2014port}. We will first give a brief introduction to general Dirac structures.

Let the linear spaces $\mathcal{F}$ and $\mathcal{E}$ denote, respectively, the space of flows and the space of efforts. The total space $\mathcal{F}\times\mathcal{E}$ is called the space of power variables, and is equipped with a linear and non-degenerate pairing 
\begin{equation*}
\langle\cdot\mid\cdot\rangle:\mathcal{F} \times \mathcal{E} \to \mathbb{R}.
\end{equation*}
Using the pairing $\langle\cdot|\cdot\rangle$, we can define a symmetric bilinear form $\llangle\cdot,\cdot\rrangle$ on $\mathcal{F}\times\mathcal{E}$ as
\begin{equation*}
\llangle(f_{1},e_{1}),(f_{2},e_{2})\rrangle:=\langle e_{1}|f_{2}\rangle+\langle e_{2}|f_{1}\rangle,\ (f_{i},e_{i})\in\mathcal{F}\times\mathcal{E},\ i=1,2.
\end{equation*}
\begin{defn}{\citep{van2014port}}\label{defn2}
Let $\mathcal{F}$ and $\mathcal{E}$ be linear spaces with the pairing $\langle\cdot|\cdot\rangle$. A Dirac structure is a linear subspace $D\subset\mathcal{F}\times\mathcal{E}$ such that $D=D^{\bot}$, with $\bot$ denoting the orthogonal complement with respect to the bilinear form $\llangle\cdot,\cdot\rrangle$.\\
\end{defn}
\begin{remark}
  From Definition \ref{defn2} we have that for any $(f,e)\in D$
  \begin{equation}\label{power1}
    0=\llangle(f,e),(f,e)\rrangle=2\langle e|f\rangle\text{ implies } \langle e|f\rangle=0,
  \end{equation}
  which means that if $(f,e)$ is a pair of power variables, the Dirac structure is power-conserving.
\end{remark}

\subsection{Dirac structure in terms of velocity and free surface variables}

In this section, we will present the Dirac structure with respect to the state variables $(v,\Sigma)$ for the $n$-dimensional incompressible Euler equations with a free surface (\ref{dieuler}).
We start with defining the following pairings, extension and trace-lifting operators.

\begin{defn}
  Given the differential forms $\alpha\in L^{2}\Lambda^{k}(\Omega)$ and $\beta\in L^{2}\Lambda^{n-k}(\Omega)$, we define at $\Omega$ the pairing between $\alpha$ and $\beta$ as
\begin{equation*}
\langle\alpha|\beta\rangle:=\int_{\Omega}\alpha\wedge\beta.
\end{equation*}
Similarly, we define at $\partial\Omega$ the pairing between $\alpha\in L^{2}\Lambda^{k}(\partial\Omega)$ and $\beta\in L^{2}\Lambda^{n-k-1}(\partial\Omega)$ as
\begin{equation*}
\langle\alpha|\beta\rangle:=\int_{\partial\Omega}\alpha\wedge\beta.
\end{equation*}
\end{defn}

\begin{defn}{\citep[Theorem 11.7]{MR0350179}}
  The function space $H_{00}^{\frac{1}{2}}\Lambda^{0}(\Gamma_{1})$, with $\Gamma_{1}\subset\partial\Omega$, is defined as
  \begin{equation}\label{eq123}
    \begin{aligned}
      H_{00}^{\frac{1}{2}}\Lambda^{0}(\Gamma_{1}):=&\{\mu\in H^{\frac{1}{2}}\Lambda^{0}(\Gamma_{1})\mid\rho^{-\frac{1}{2}}\mu\in L^{2}\Lambda^{0}(\Gamma_{1}),\rho=\mathrm{dist}(x,\partial\Omega)\}\\
      =&\{\mu\in H^{\frac{1}{2}}\Lambda^{0}(\Gamma_{1})\mid\mu_{0}\in H^{\frac{1}{2}}\Lambda^{0}(\partial\Omega)\},
    \end{aligned}
  \end{equation}
  with $\mu_{0}$ the extension of $\mu$ by zero outside $\Gamma_{1}$.
\end{defn}

\begin{remark}
Note, $H^{\frac{1}{2}}_{00}\Lambda^{0}(\Gamma_{1})\subset H^{\frac{1}{2}}\Lambda^{0}(\Gamma_{1})$ with strict inclusion.
\end{remark}

\begin{lemma}{\citep[p45]{MR1409140}}
  Given the extension operator $E:H^{\frac{1}{2}}\Lambda^{0}(\Sigma)\to H^{\frac{1}{2}}\Lambda^{0}(\partial\Omega)$, defined as
  \begin{equation}
    E(\mu_{\Sigma})|_{\Sigma}=\mu_{\Sigma},\quad\forall\mu_{\Sigma}\in H^{\frac{1}{2}}\Lambda^{0}(\Sigma),
  \end{equation}
  then $H^{\frac{1}{2}}\Lambda^{0}(\partial\Omega)$ can be decomposed into the spaces
  \begin{equation}
    H^{\frac{1}{2}}\Lambda^{0}(\partial\Omega)=H^{\frac{1}{2}}\Lambda^{0}(\Sigma)\oplus H_{00}^{\frac{1}{2}}\Lambda^{0}(\Gamma).
  \end{equation}
\end{lemma}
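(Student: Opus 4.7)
The plan is to build the decomposition explicitly from the extension operator $E$ together with the restriction map, and then verify the direct-sum structure using the second characterization in (\ref{eq123}).

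First, given $\mu\in H^{\frac{1}{2}}\Lambda^{0}(\partial\Omega)$, I would set $\mu_{\Sigma}:=\mu|_{\Sigma}$ and $\mu_{\Gamma}:=\bigl(\mu-E(\mu_{\Sigma})\bigr)|_{\Gamma}$. The piece $\mu_{\Sigma}$ lies in $H^{\frac{1}{2}}\Lambda^{0}(\Sigma)$ because restriction to a Lipschitz subset of $\partial\Omega$ is a bounded linear map from $H^{\frac{1}{2}}\Lambda^{0}(\partial\Omega)$ onto $H^{\frac{1}{2}}\Lambda^{0}(\Sigma)$. By the defining property $E(\mu_{\Sigma})|_{\Sigma}=\mu_{\Sigma}$, the residual $\mu-E(\mu_{\Sigma})$ vanishes on $\Sigma$, and hence it coincides with its extension by zero of $\mu_{\Gamma}$ outside $\Gamma$.

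Second, I would verify that $\mu_{\Gamma}\in H_{00}^{\frac{1}{2}}\Lambda^{0}(\Gamma)$. By construction, the zero extension $(\mu_{\Gamma})_{0}$ equals $\mu-E(\mu_{\Sigma})$, and this difference lies in $H^{\frac{1}{2}}\Lambda^{0}(\partial\Omega)$ since both $\mu$ and $E(\mu_{\Sigma})$ do. The equivalent characterization in (\ref{eq123}), namely that $\mu_{\Gamma}\in H_{00}^{\frac{1}{2}}\Lambda^{0}(\Gamma)$ if and only if its zero extension to $\partial\Omega$ belongs to $H^{\frac{1}{2}}\Lambda^{0}(\partial\Omega)$, is then exactly what is needed. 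Thus every $\mu$ admits the representation $\mu=E(\mu_{\Sigma})+(\mu_{\Gamma})_{0}$ with $\mu_{\Sigma}\in H^{\frac{1}{2}}\Lambda^{0}(\Sigma)$ and $\mu_{\Gamma}\in H_{00}^{\frac{1}{2}}\Lambda^{0}(\Gamma)$.

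Third, to promote this into a direct sum I would prove that the decomposition is unique, i.e.\ the intersection (inside $H^{\frac{1}{2}}\Lambda^{0}(\partial\Omega)$) of the image of $E$ with the zero-extended copy of $H_{00}^{\frac{1}{2}}\Lambda^{0}(\Gamma)$ is trivial. If $E(\nu_{\Sigma})=(\nu_{\Gamma})_{0}$, then restricting to $\Gamma$ forces $\nu_{\Gamma}$ to equal $E(\nu_{\Sigma})|_{\Gamma}$, while restricting to $\Sigma$ forces $\nu_{\Sigma}=0$ since $(\nu_{\Gamma})_{0}$ vanishes on $\Sigma$; hence both summands are zero.

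The main obstacle is the construction and boundedness of the extension operator $E$ itself. On a Lipschitz boundary, producing a right-inverse of the restriction that maps $H^{\frac{1}{2}}\Lambda^{0}(\Sigma)$ continuously into $H^{\frac{1}{2}}\Lambda^{0}(\partial\Omega)$ is delicate near the interface $\overline{\Sigma}\cap\overline{\Gamma}$: a naive extension by zero fails because it can drop the regularity index below $\tfrac{1}{2}$, which is precisely why $H^{\frac{1}{2}}_{00}$ (rather than $H^{\frac{1}{2}}$) appears on the $\Gamma$-side. A local partition of unity combined with a reflection or harmonic-extension argument across the interface produces such an $E$; granted this, the rest of the argument reduces to the straightforward bookkeeping above together with the characterization (\ref{eq123}).
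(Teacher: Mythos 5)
The paper offers no proof of this lemma to compare against: it is quoted directly from the cited reference (p.~45 of the work cited in the lemma), so the statement is used as an imported fact. Your argument is the standard proof of that fact and is essentially correct: you write $\mu=E(\mu|_{\Sigma})+\bigl(\mu-E(\mu|_{\Sigma})\bigr)$, observe that the second term vanishes on $\Sigma$ and is therefore the zero extension of its restriction to $\Gamma$, and invoke the second characterization in (\ref{eq123}) to conclude that this restriction lies in $H^{\frac{1}{2}}_{00}\Lambda^{0}(\Gamma)$; directness follows by restricting a common element to $\Sigma$ and to $\Gamma$, using linearity of $E$ so that $E(0)=0$. Two remarks. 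First, the decomposition has to be read as (image of $E$) $\oplus$ (zero-extended copy of $H^{\frac{1}{2}}_{00}\Lambda^{0}(\Gamma)$), since $H^{\frac{1}{2}}\Lambda^{0}(\Sigma)$ is not literally a subspace of $H^{\frac{1}{2}}\Lambda^{0}(\partial\Omega)$ and the splitting depends on the choice of $E$; you adopt exactly this reading, which is the intended one. Second, the existence and boundedness of $E$ is part of the hypothesis (``Given the extension operator $E$\ldots''), so the ``main obstacle'' you flag---constructing $E$ near $\overline{\Sigma}\cap\overline{\Gamma}$---is assumed rather than required; your comment does correctly explain why $H^{\frac{1}{2}}_{00}$, not $H^{\frac{1}{2}}$, must appear on the $\Gamma$-side. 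If you want the splitting as a topological direct sum, add one sentence noting that both maps $\mu\mapsto E(\mu|_{\Sigma})$ and $\mu\mapsto\mu-E(\mu|_{\Sigma})$ are bounded on $H^{\frac{1}{2}}\Lambda^{0}(\partial\Omega)$, which follows from the boundedness of the restriction to $\Sigma$ and of $E$.
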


\begin{defn}
  The trace-lifting operator $\mathrm{li}:H^{\frac{1}{2}}\Lambda^{0}(\partial\Omega)\to H^{1}\Lambda^{0}(\Omega)$ is defined as
\begin{equation}\label{tracelift}
  \mathrm{tr}\big(\mathrm{li}(\mu)\big)=\mu,\quad\forall\mu\in H^{\frac{1}{2}}\Lambda^{0}(\partial\Omega).
\end{equation}
\end{defn}
Next, we define the function spaces used in the definition of the Dirac structure of the incompressible Euler equations with a free surface in the $(v,\Sigma)$ variables
\begin{equation}\label{space1}
\begin{aligned}
\mathcal{F}_{1}&:=P^{\ast}\Lambda^{1}(\Omega)\times H^{-\frac{1}{2}}\Lambda^{n-1}(\Sigma)\times H^{-\frac{1}{2}}\Lambda^{n-1}(\Gamma),\\
\mathcal{E}_{1}&:=P\Lambda^{n-1}(\Omega)\times H^{\frac{1}{2}}\Lambda^{0}(\Sigma)\times H^{\frac{1}{2}}\Lambda^{0}(\Gamma),
\end{aligned}
\end{equation}
where the spaces $\mathcal{F}_{1}$ and $\mathcal{E}_{1}$ are dual to each other with a non-degenerate pairing using $L^{2}$ as pivot space.

The Dirac structure for the incompressible Euler equations with a free surface in terms of the state variables $(v,\Sigma)$ is stated in the following theorem.

\begin{theorem}\label{thm14}
Let $\Omega\subset\mathbb{R}^{n}$, $n\in\{1,2,3\}$ be a $n$-dimensional oriented connected manifold with Lipschitz continuous boundary $\partial\Omega=\Sigma\cup\Gamma$ with $\Sigma\cap\Gamma=\emptyset$. Assume $v\in P^{\ast}\Lambda^{1}(\Omega)$ is the solution of the incompressible Euler equations with a free surface (\ref{dieuler}). Given the function spaces $\mathcal{F}_{1}$ and $\mathcal{E}_{1}$, defined in (\ref{space1}), together with the bilinear form:
\begin{equation}\label{bilinear5}
\begin{aligned}
\llangle(f^{1},e^{1}),(f^{2},e^{2})\rrangle=&\int_{\Omega}\Big(e_{v}^{1}\wedge f_{v}^{2}+e_{v}^{2}\wedge f_{v}^{1}\Big)+\int_{\Sigma}\Big(e_{\Sigma}^{1}\wedge f_{\Sigma}^{2}+e_{\Sigma}^{2}\wedge f_{\Sigma}^{1}\Big)\\
&+\int_{\Gamma}\Big(e_{b}^{1}\wedge f_{b}^{2}+e_{b}^{2}\wedge f_{b}^{1}\Big),
\end{aligned}
\end{equation}
where 
\begin{equation*}
f^{i}=(f_{v}^{i},f_{\Sigma}^{i},f_{b}^{i})\in\mathcal{F}_{1},\quad e^{i}=(e_{v}^{i},e_{\Sigma}^{i},e_{b}^{i})\in\mathcal{E}_{1},\quad i=1,2.
\end{equation*}
Then $D_{1}\subset\mathcal{F}_{1}\times\mathcal{E}_{1}$, defined as
\begin{equation}\label{dirac1}
  \begin{aligned}
  D_{1}:=\Bigg\{ & (f_{v},f_{\Sigma},f_{b},e_{v},e_{\Sigma},e_{b})\in \mathcal{F}_{1}\times\mathcal{E}_{1}\mid\\
  &
    \begin{pmatrix}
     f_{v}  \\
     f_{\Sigma}
    \end{pmatrix} 
    =
    \begin{pmatrix}
      d\big(\mathrm{li}(\tilde{e}_{\Sigma})\big)+i_{(\ast e_{v})^{\sharp}}dv \\
     (-1)^{n}\mathrm{tr}_{\Sigma}(e_{v})
    \end{pmatrix},
     \\
   &
    \begin{pmatrix}
     f_{b}  \\
     e_{b}
    \end{pmatrix} 
    =
    \begin{pmatrix}
      (-1)^{n}\mathrm{tr}_{\Gamma}(e_{v})\\
      \tilde{e}_{\Sigma}|_{\Gamma} 
    \end{pmatrix}\Bigg\}
  \end{aligned}
  \end{equation}
is a Dirac structure, where $\mathrm{tr}_{\Sigma}(\cdot)$ and $\mathrm{tr}_{\Gamma}(\cdot)$ are, respectively, the boundary traces at $\Sigma$ and $\Gamma$, and $\tilde{e}_{\Sigma}=E(e_{\Sigma})$. 
\end{theorem}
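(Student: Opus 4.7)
The plan is to establish $D_1=D_1^{\perp}$ by verifying the two inclusions separately.

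I would begin with the isotropy direction $D_1\subseteq D_1^{\perp}$. Fix two elements $(f^{1},e^{1}),(f^{2},e^{2})\in D_1$ and substitute the defining relations into the bilinear form (\ref{bilinear5}). The interior contribution splits into terms of the form $\int_{\Omega}e_v^{i}\wedge d(\mathrm{li}(\tilde{e}_{\Sigma}^{j}))$ and $\int_{\Omega}e_v^{i}\wedge i_{(\ast e_v^{j})^{\sharp}}dv$. For the first type, integration by parts using $de_v^{i}=0$ (which holds since $e_v^{i}\in P\Lambda^{n-1}(\Omega)$) converts them into boundary pairings over $\Sigma\cup\Gamma$ that cancel the explicit $\int_{\Sigma}$ and $\int_{\Gamma}$ contributions, once we use the commutativity of $0$-forms with wedge products, the identity $e_b^{i}=\tilde{e}_{\Sigma}^{i}|_{\Gamma}$, and the sign cancellation $(-1)^{n-1}+(-1)^{n}=0$. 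For the second type, the anti-derivation identity applied to the vanishing $(n+1)$-form $e_v^{1}\wedge dv$ yields $e_v^{1}\wedge i_{X}dv=(-1)^{n}(i_{X}e_v^{1})\wedge dv$. Combined with the skew-symmetry $i_{(\ast e_v^{2})^{\sharp}}e_v^{1}=-i_{(\ast e_v^{1})^{\sharp}}e_v^{2}$, which follows from (\ref{rela4}) and the anti-commutativity of two $1$-forms, the two cross terms cancel.

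For the reverse inclusion $D_1^{\perp}\subseteq D_1$, I would take $(f,e)\in D_1^{\perp}$ and write the vanishing of $\llangle(f,e),(f',e')\rrangle$ for every test $(f',e')\in D_1$, parametrized by $(e_v',e_{\Sigma}')$. Applying exactly the same manipulations produces the identity
\[
0=\int_{\Omega}e_v'\wedge R_v+\int_{\Sigma}e_{\Sigma}'\wedge R_{\Sigma}+\int_{\Gamma}\tilde{e}_{\Sigma}'|_{\Gamma}\wedge R_{\Gamma},
\]
where $R_v:=f_v-i_{(\ast e_v)^{\sharp}}dv-d(\mathrm{li}(\tilde{e}))$ for any joint extension $\tilde{e}$ of $(e_{\Sigma},e_b)$ to $\partial\Omega$, $R_{\Sigma}:=f_{\Sigma}-(-1)^{n}\mathrm{tr}_{\Sigma}(e_v)$, and $R_{\Gamma}:=f_b-(-1)^{n}\mathrm{tr}_{\Gamma}(e_v)$. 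I would extract each defining relation of $D_1$ by a judicious choice of test direction: first, taking $e_{\Sigma}'\in H_{00}^{1/2}\Lambda^{0}(\Sigma)$ with zero extension to $\Gamma$ annihilates both the $R_v$ and $R_{\Gamma}$ contributions and yields $R_{\Sigma}=0$ by density; second, exploiting the decomposition $H^{1/2}\Lambda^{0}(\partial\Omega)=H^{1/2}\Lambda^{0}(\Sigma)\oplus H_{00}^{1/2}\Lambda^{0}(\Gamma)$ to generate $\tilde{e}_{\Sigma}'|_{\Gamma}$ independently gives $R_{\Gamma}=0$; third, taking $e_{\Sigma}'=0$ and varying $e_v'$ freely in $P\Lambda^{n-1}(\Omega)$ yields $R_v=0$ in the $P^{\ast}\Lambda^{1}(\Omega)$-pairing, hence $f_v=i_{(\ast e_v)^{\sharp}}dv+d(\mathrm{li}(\tilde{e}))$. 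Matching $\tilde{e}$ with the specific extension $\tilde{e}_{\Sigma}=E(e_{\Sigma})$ then enforces $e_b=\tilde{e}_{\Sigma}|_{\Gamma}$, placing $(f,e)\in D_1$.

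The main obstacle will be disentangling the test functions on $\Sigma$ and $\Gamma$: because $\tilde{e}_{\Sigma}'|_{\Gamma}=E(e_{\Sigma}')|_{\Gamma}$ couples the two boundary pieces, the three residuals $R_v,R_{\Sigma},R_{\Gamma}$ cannot be isolated by trivial test choices, and the decomposition $H^{1/2}\Lambda^{0}(\partial\Omega)=H^{1/2}\Lambda^{0}(\Sigma)\oplus H_{00}^{1/2}\Lambda^{0}(\Gamma)$ together with the density of $H_{00}^{1/2}\Lambda^{0}(\Sigma)$ inside $H^{1/2}\Lambda^{0}(\Sigma)$ are the essential ingredients that decouple them. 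A secondary difficulty is the nonlinear interior-product term $i_{(\ast e_v)^{\sharp}}dv$: its skew-symmetric role in both inclusions is what enables the standard Dirac-structure argument to proceed despite the $v$-dependence, and its membership in $P^{\ast}\Lambda^{1}(\Omega)$ (needed for the equation $f_v=d(\mathrm{li}(\tilde{e}_{\Sigma}))+i_{(\ast e_v)^{\sharp}}dv\in P^{\ast}\Lambda^{1}(\Omega)$ to be consistent) must be verified using $\delta v=0$ and the structure of $dv$.
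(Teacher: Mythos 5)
Your isotropy argument ($D_1\subseteq D_1^{\perp}$) is sound and essentially the paper's: applying the anti-derivation rule to the vanishing $(n+1)$-form $e_v^{1}\wedge dv$ is just another route to the skew-symmetry $i_{(\ast e_v^{1})^{\sharp}}dv\wedge e_v^{2}=-i_{(\ast e_v^{2})^{\sharp}}dv\wedge e_v^{1}$ that the paper derives from (\ref{rela4}), and your Stokes/sign bookkeeping matches. The reverse inclusion, however, has a genuine gap in two places. First, your residual $R_v=f_v-i_{(\ast e_v)^{\sharp}}dv-d(\mathrm{li}(\tilde{e}))$ is built on lifting a ``joint extension $\tilde{e}$ of $(e_{\Sigma},e_b)$''. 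For an arbitrary element of $D_1^{\perp}$ the pair $(e_{\Sigma},e_b)\in H^{\frac{1}{2}}\Lambda^{0}(\Sigma)\times H^{\frac{1}{2}}\Lambda^{0}(\Gamma)$ is unconstrained, and when $\overline{\Sigma}$ and $\overline{\Gamma}$ meet along a contact line (exactly the situation the $H^{\frac{1}{2}}_{00}$ spaces in (\ref{eq123}) are introduced for), the concatenated boundary function need not lie in $H^{\frac{1}{2}}\Lambda^{0}(\partial\Omega)$; hence $\mathrm{li}(\tilde{e})$, and with it $R_v$ and your master identity, are not defined a priori. The paper's proof is arranged precisely to avoid this: in the first stage the test element satisfies $\mathrm{tr}_{\Gamma}(e_v^{2})=0$ and $\tilde{e}_{\Sigma}^{2}|_{\Gamma}=0$, so only the $\Sigma$-datum $e_{\Sigma}^{1}$ of the fixed element enters the boundary pairing; it can then be replaced by its admissible extension $E(e_{\Sigma}^{1})$, and the ``un-Stokes'' step only ever lifts $E(e_{\Sigma}^{1})$ — the datum $e_b^{1}$ is never lifted.

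Second, even granting your identity, your extraction does not recover the membership condition $e_b=E(e_{\Sigma})|_{\Gamma}$. Your three steps yield $R_{\Sigma}=0$, $R_{\Gamma}=0$ (i.e.\ $f_{\Sigma}$ and $f_b$ in terms of traces of $e_v$) and $f_v=i_{(\ast e_v)^{\sharp}}dv+d(\mathrm{li}(\tilde{e}))$ with the joint extension; the final assertion that ``matching $\tilde{e}$ with $E(e_{\Sigma})$ enforces $e_b=\tilde{e}_{\Sigma}|_{\Gamma}$'' is circular, because $d(\mathrm{li}(\tilde{e}))=d(\mathrm{li}(E(e_{\Sigma})))$ holds exactly when the two boundary data agree (up to a constant), which is the relation to be proved, not an input. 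In the paper this relation comes from a separate step: after the interior and $\Sigma$ relations (\ref{eq122}) are substituted back into the full orthogonality identity (\ref{eq11}), what survives is a pure $\Gamma$-integral $\int_{\Gamma}\big(\tilde{e}_{\Sigma}^{2}\wedge((-1)^{n-1}\mathrm{tr}_{\Gamma}(e_v^{1})+f_b^{1})+(-1)^{n}\mathrm{tr}_{\Gamma}(e_v^{2})\wedge(e_b^{1}-\tilde{e}_{\Sigma}^{1})\big)=0$, and varying $\mathrm{tr}_{\Gamma}(e_v^{2})$ gives $e_b^{1}=\tilde{e}_{\Sigma}^{1}|_{\Gamma}$. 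Your plan never pairs $\mathrm{tr}_{\Gamma}(e_v')$ against $e_b-E(e_{\Sigma})|_{\Gamma}$, so this defining relation of $D_1$ is simply not obtained. (Two smaller caveats: within $D_1$ the test trace $\tilde{e}_{\Sigma}'|_{\Gamma}=E(e_{\Sigma}')|_{\Gamma}$ is slaved to $e_{\Sigma}'$, so ``generating it independently'' from the decomposition $H^{\frac{1}{2}}\Lambda^{0}(\partial\Omega)=H^{\frac{1}{2}}\Lambda^{0}(\Sigma)\oplus H^{\frac{1}{2}}_{00}\Lambda^{0}(\Gamma)$ needs justification, though the paper's own ``arbitrariness'' step is equally informal; and the consistency $f_v\in P^{\ast}\Lambda^{1}(\Omega)$ is not settled by $\delta v=0$ alone — the paper addresses only independence of the lifting in Lemma \ref{lemma14}.)
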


Before we prove that (\ref{dirac1}) is a Dirac structure, we first prove that $f_{v}$ is well-defined.

\begin{lemma}\label{lemma14}
  Given $e_{\Sigma}\in H^{\frac{1}{2}}\Lambda^{0}(\Sigma)$, $e_{v}\in P\Lambda^{n-1}(\Omega)$, then $f_{v}\in P^{\ast}\Lambda^{1}(\Omega)$ in the Dirac structure (\ref{dirac1}) is well-defined for the lifting operator $\mathrm{li}(\cdot)$.
\end{lemma}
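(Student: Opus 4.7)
The plan is to verify the three conditions defining $P^{\ast}\Lambda^{1}(\Omega)$: namely $f_{v}\in L^{2}\Lambda^{1}(\Omega)$ with $df_{v}\in L^{2}\Lambda^{2}(\Omega)$ and $\delta f_{v}\in L^{2}\Lambda^{0}(\Omega)$, together with $\delta f_{v}=0$ in $\Omega$. The main conceptual point is that the lifting operator $\mathrm{li}(\cdot)$ entering (\ref{dirac1}) should not be taken as an arbitrary right inverse of the trace, but pinned down precisely so as to enforce divergence-freeness of $f_{v}$.

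First I would establish the $L^{2}$-regularity of the two summands. Boundedness of the trace lifting (\ref{tracelift}) yields $\mathrm{li}(\tilde{e}_{\Sigma})\in H^{1}\Lambda^{0}(\Omega)$, so $d\bigl(\mathrm{li}(\tilde{e}_{\Sigma})\bigr)\in L^{2}\Lambda^{1}(\Omega)$ and is automatically exact. For the interior-product term, I would rewrite it via (\ref{rela4}) as $i_{(\ast e_{v})^{\sharp}}dv=\ast\bigl((\ast e_{v})\wedge\ast dv\bigr)$. Invoking Hodge-type regularity on a Lipschitz domain with $n\leq 3$, the spaces $P\Lambda^{n-1}(\Omega)$ and $P^{\ast}\Lambda^{1}(\Omega)$ embed continuously into $H^{1}\Lambda$, and Sobolev embedding turns the pointwise product into an $L^{2}$ differential form. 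Cartan's formula $\mathfrak{L}_{X}=i_{X}d+d\,i_{X}$ applied with $X=(\ast e_{v})^{\sharp}$ together with $d^{2}v=0$ then gives $df_{v}=d\bigl(i_{(\ast e_{v})^{\sharp}}dv\bigr)=\mathfrak{L}_{(\ast e_{v})^{\sharp}}dv\in L^{2}\Lambda^{2}(\Omega)$ by the same estimates.

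The critical step is to force $\delta f_{v}=0$. I would define $\mathrm{li}(\tilde{e}_{\Sigma})$ as the unique weak solution $\psi\in H^{1}\Lambda^{0}(\Omega)$ of the Dirichlet Poisson problem
\begin{equation*}
\delta d\psi=-\delta\bigl(i_{(\ast e_{v})^{\sharp}}dv\bigr)\text{ in }\Omega,\qquad \mathrm{tr}(\psi)=\tilde{e}_{\Sigma}\text{ on }\partial\Omega,
\end{equation*}
interpreted variationally. Subtracting any fixed $H^{1}$-extension of $\tilde{e}_{\Sigma}$ homogenizes the boundary data and reduces the problem to $\mathring{H}\Lambda^{0}(\Omega)$, where Lax--Milgram together with the Poincar\'e inequality of \citep{arnold2006finite} yields well-posedness. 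By construction, $\delta f_{v}=\delta d\psi+\delta\bigl(i_{(\ast e_{v})^{\sharp}}dv\bigr)=0$ in $\Omega$, so $f_{v}\in H^{\ast}\Lambda^{1}(\Omega)$ with vanishing coderivative.

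The main obstacle is verifying that $\delta\bigl(i_{(\ast e_{v})^{\sharp}}dv\bigr)$ lies in the correct dual space $H^{-1}\Lambda^{0}(\Omega)$ for the Poisson problem to be posable in the variational sense; this reduces to the Hodge-regularity/Sobolev-embedding estimates invoked in the first step, specifically the control of the nonlinear contraction $\ast\bigl((\ast e_{v})\wedge\ast dv\bigr)$ in $L^{2}\Lambda^{1}(\Omega)$. Once this is secured, the remaining verifications are routine bookkeeping, and the resulting $f_{v}$ lies in $P^{\ast}\Lambda^{1}(\Omega)$ as claimed.
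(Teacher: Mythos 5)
Your proposal answers a different question from the one this lemma (and the paper's proof of it) actually settles, and the decisive step is missing. Here ``well-defined for the lifting operator'' means: the element $f_{v}$ prescribed in (\ref{dirac1}) does not depend on \emph{which} right inverse of the trace is used, given that the membership $f_{v}\in P^{\ast}\Lambda^{1}(\Omega)$ is already imposed by the flow space $\mathcal{F}_{1}$. The paper proves exactly this: take two liftings $\mathrm{li},\mathrm{li}'$, note $f_{v}-f_{v}'=d\big(\mathrm{li}(\tilde{e}_{\Sigma})-\mathrm{li}'(\tilde{e}_{\Sigma})\big)$, and use $\delta f_{v}=\delta f_{v}'=0$ together with the vanishing trace of the difference to conclude that the difference is a harmonic function with homogeneous Dirichlet data, hence zero. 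You never make this comparison. Instead you replace the arbitrary lifting of (\ref{tracelift}) by one particular, state-dependent Poisson solve (its right-hand side involves $e_{v}$ and $v$, so it is not a lifting operator acting on the boundary datum alone), which amounts to changing the definition of $D_{1}$ rather than proving that the given definition is unambiguous. The repair is short and already latent in your construction: any lifting $\psi$ with $\mathrm{tr}(\psi)=\tilde{e}_{\Sigma}$ for which $\delta f_{v}=0$ solves precisely your variational Dirichlet problem, so Lax--Milgram uniqueness forces all such choices to produce the same $f_{v}$ --- but that conclusion is the entire content of the lemma and has to be drawn explicitly.

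Separately, the regularity scaffolding you invoke is both unnecessary for this statement and partly unjustified. The spaces $P\Lambda^{n-1}(\Omega)$ and $P^{\ast}\Lambda^{1}(\Omega)$ carry no boundary trace conditions, so they do not embed into $H^{1}\Lambda$ on a Lipschitz (or even smooth) domain; and even granting $H^{1}$-regularity, in three dimensions $\ast\big((\ast e_{v})\wedge\ast dv\big)$ is a product of an $L^{6}$ field with an $L^{2}$ form, hence controlled only in $L^{3/2}$, so the claimed $L^{2}$ bound (and the subsequent Cartan-formula bound on $df_{v}$) does not follow from Sobolev embedding. The paper takes $i_{(\ast e_{v})^{\sharp}}dv\in L^{2}\Lambda^{1}(\Omega)$ as part of its formal setting; the lemma itself concerns only the ambiguity in $\mathrm{li}(\cdot)$.
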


\begin{proof}
  Assume that there are two trace-lifting operators $\mathrm{li},\mathrm{li}':H^{\frac{1}{2}}\Lambda^{0}(\partial\Omega)\to H^{1}\Lambda^{0}(\Omega)$ such that
  \begin{equation*}
    \mathrm{tr}\big(\mathrm{li}(\tilde{\mu})\big)=\mathrm{tr}\big(\mathrm{li}'(\tilde{\mu})\big)=\tilde{\mu}=E(\mu_{\Sigma}),\quad\forall\mu_{\Sigma}\in H^{\frac{1}{2}}\Lambda^{0}(\Sigma).
  \end{equation*}
  For any $e_{v}\in P\Lambda^{n-1}(\Omega)$ and $e_{\Sigma}\in H^{\frac{1}{2}}\Lambda^{0}(\Sigma)$, the variables $f_{v}\in P^{\ast}\Lambda^{1}(\Omega)$ and $f_{v}'\in P^{\ast}\Lambda^{1}(\Omega)$ are given as
  \begin{equation*}
    \begin{aligned}
    f_{v}&=d\big(\mathrm{li}(\tilde{e}_{\Sigma})\big)+i_{(\ast e_{v})^{\sharp}}dv,\\
    f_{v}'&=d\big(\mathrm{li}(\tilde{e}_{\Sigma})\big)+i_{(\ast e_{v})^{\sharp}}dv,
    \end{aligned}
  \end{equation*}
  with $\tilde{e}_{\Sigma}=E(e_{\Sigma})$. Then,
  \begin{equation*}
    f_{v}-f_{v}'=d\big(\mathrm{li}(\tilde{e}_{\Sigma})-\mathrm{li}'(\tilde{e}_{\Sigma})\big).
  \end{equation*}
  Since $\delta f_{v}=\delta f_{v}'=0$, we have
  \begin{equation*}
    \delta d\big(\mathrm{li}(\tilde{e}_{\Sigma})-\mathrm{li}'(\tilde{e}_{\Sigma})\big)=0,
  \end{equation*}
  with boundary condition
  \begin{equation*}
    \mathrm{tr}\big(\mathrm{li}(\tilde{e}_{\Sigma})-\mathrm{li}'(\tilde{e}_{\Sigma})\big)=E(e_{\Sigma})-E(e_{\Sigma})=0\quad\text{at}\ \partial\Omega.
  \end{equation*}
  So, $\mathrm{li}(\tilde{e}_{\Sigma})-\mathrm{li}'(\tilde{e}_{\Sigma})$ satisfies the Laplace equation in $\Omega$ with homogeneous Dirichlet boundary condition at $\partial\Omega$, hence
  \begin{equation*}
    \mathrm{li}(\tilde{e}_{\Sigma})-\mathrm{li}'(\tilde{e}_{\Sigma})=0\quad\text{in }\Omega,
  \end{equation*}
  which implies that $f_{v}=f_{v}'$.
\end{proof}

Next, we give a proof of Theorem \ref{thm14}.

\begin{proof} 
  \begin{enumerate}[(1)]
    \item First we check that $i_{(\ast e_{v}^{1})^{\sharp}}dv\wedge e_{v}^{2}$ is skew-symmetric. For any $e_{v}^{1,2}\in P\Lambda^{n-1}(\Omega)$, we have using (\ref{rela4})
    \begin{equation}\label{antisy}
      \begin{aligned}
        i_{(\ast e_{v}^{1})^{\sharp}}dv\wedge e_{v}^{2}
        =\ast\big((\ast e_{v}^{1})\wedge(\ast dv)\big)\wedge e_{v}^{2}=-\ast\big((\ast e_{v}^{2})\wedge(\ast dv)\big)\wedge e_{v}^{1}
        =-i_{(\ast e_{v}^{2})^{\sharp}}dv\wedge e_{v}^{1}.
      \end{aligned}
    \end{equation}
    
    \item Next, we show that $D_{1}\subset D_{1}^{\bot}$. Let $(f^{1},e^{1})\in D_{1}$ be fixed, and consider any $(f^{2},e^{2})\in D_{1}$. From (\ref{antisy}), $e_{v}^{1},\ e_{v}^{2}\in P\Lambda^{n-1}(\Omega)$, hence $de_{v}^{1}=de_{v}^{2}=0$, and Stokes' Theorem, we obtain that
    \begin{align*}
      \llangle (f^{1},e^{1}),(f^{2},e^{2})\rrangle
      =&\int_{\Omega}\Big( e_{v}^{1}\wedge\big(d(\mathrm{li}(\tilde{e}_{\Sigma}^{2}))+i_{(\ast e_{v}^{2})^{\sharp}}dv\big)+e_{v}^{2}\wedge\big(d(\mathrm{li}(\tilde{e}_{\Sigma}^{1}))+i_{(\ast e_{v}^{1})^{\sharp}}dv\big)\Big)\\
      &+(-1)^{n}\int_{\Sigma}\Big(e_{\Sigma}^{1}\wedge\mathrm{tr}_{\Sigma}(e_{v}^{2})+e_{\Sigma}^{2}\wedge\mathrm{tr}_{\Sigma}(e_{v}^{1}) \Big)\\
      &+(-1)^{n}\int_{\Gamma}\Big(\tilde{e}_{\Sigma}^{1}\wedge\mathrm{tr}_{\Gamma}(e_{v}^{2})+\tilde{e}_{\Sigma}^{2}\wedge\mathrm{tr}_{\Gamma}(e_{v}^{1})\Big)\\     
      =&\ 0.      
    \end{align*}
    \noindent
  Hence, we have $(f^{1},e^{1})\in D_{1}^{\bot}$, which implies $D_{1}\subset D_{1}^{\bot}$.
     
    \item Finally, we show that $D_{1}^{\bot}\subset D_{1}$. Let $(f^{1},e^{1})\in D_{1}^{\bot}\subset\mathcal{F}_{1}\times\mathcal{E}_{1}$. Then 
    \begin{equation*}
      \llangle(f^{1},e^{1}),(f^{2},e^{2})\rrangle=0,\quad\forall(f^{2},e^{2})\in D_{1},
    \end{equation*}
   or equivalently for all $(f^{2},e^{2})\in D_{1}$ we have
    \begin{equation}\label{eq11}
      \begin{aligned}
        &\int_{\Omega}\Big(e_{v}^{1}\wedge\big(d(\mathrm{li}(\tilde{e}_{\Sigma}^{2}))+i_{(\ast e_{v}^{2})^{\sharp}}dv\big)+e_{v}^{2}\wedge f_{v}^{1}\Big)+\int_{\Sigma}\Big((-1)^{n}e_{\Sigma}^{1}\wedge\mathrm{tr}_{\Sigma}(e_{v}^{2})+e_{\Sigma}^{2}\wedge f_{\Sigma}^{1}\Big)\\
        &+\int_{\Gamma}\Big(e_{b}^{1}\wedge(-1)^{n}\mathrm{tr}_{\Gamma}(e_{v}^{2})+\tilde{e}_{\Sigma}^{2}\wedge f_{b}^{1}\Big)=0.
      \end{aligned}
    \end{equation}
    Take $e_{v}^{2}\in P\Lambda^{n-1}(\Omega)$ such that $\mathrm{tr}_{\Gamma}(e_{v}^{2})=0$ and choose $\tilde{e}_{\Sigma}^{2}\in H^{\frac{1}{2}}_{00}\Lambda^{0}(\Sigma)$, which from (\ref{eq123}) implies that $\tilde{e}_{\Sigma}^{2}|_{\Gamma}=0$, then we obtain using the skew-symmetry of $i_{(\ast e_{v}^{2})^{\sharp}}dv\wedge e_{v}^{1}$ and $e_{v}^{1}\wedge d\big(\mathrm{li}(\tilde{e}_{\Sigma}^{2})\big)=(-1)^{n-1}d\big(e_{v}^{1}\wedge\mathrm{li}(\tilde{e}_{\Sigma}^{2})\big)$ since $e_{v}^{1}\in P\Lambda^{n-1}(\Omega)$
    \begin{equation*}
      \begin{aligned}
        &\int_{\Omega}\Big(-e_{v}^{2}\wedge i_{(\ast e_{v}^{1})^{\sharp}}dv+(-1)^{n-1}d\big(e_{v}^{1}\wedge\mathrm{li}(\tilde{e}_{\Sigma}^{2})\big)+e_{v}^{2}\wedge f_{v}^{1}\Big)\\
        &+\int_{\partial\Omega}(-1)^{n}\tilde{e}_{\Sigma}^{1}\wedge\mathrm{tr}(e_{v}^{2})+\int_{\Sigma}e_{\Sigma}^{2}\wedge f_{\Sigma}^{1}=0,
      \end{aligned}
    \end{equation*}
    where we extended $\int_{\Sigma}(-1)^{n}e_{\Sigma}^{1}\wedge\mathrm{tr}_{\Sigma}(e_{v}^{2})$ to $\partial\Omega$ since $\mathrm{tr}_{\Gamma}(e_{v}^{2})=0$. Using Stokes' theorem and $\tilde{e}_{\Sigma}^{2}|_{\Gamma}=0$, we then obtain
    \begin{equation*}
      \begin{aligned}
        &\int_{\Omega}\Big(-e_{v}^{2}\wedge i_{(\ast e_{v}^{1})^{\sharp}}dv+(-1)^{n}d\big(e_{v}^{2}\wedge\mathrm{li}(\tilde{e}_{\Sigma}^{1})\big)+e_{v}^{2}\wedge f_{v}^{1}\Big)\\
        &+\int_{\Sigma}\Big((-1)^{n-1}e_{\Sigma}^{2}\wedge \mathrm{tr}_{\Sigma}(e_{v}^{1})+e_{\Sigma}^{2}\wedge f_{\Sigma}^{1}\Big)=0.
      \end{aligned}
    \end{equation*}
    Using $d\big(e_{v}^{2}\wedge\mathrm{li}(\tilde{e}_{\Sigma}^{1})\big)=(-1)^{n-1}e_{v}^{2}\wedge d\big(\mathrm{li}(\tilde{e}_{\Sigma}^{1})\big)$ for $e_{v}^{2}\in P\Lambda^{n-1}(\Omega)$, we have
    \begin{equation*}
      \begin{aligned}
        &\int_{\Omega}e_{v}^{2}\wedge\Big(-i_{(\ast e_{v}^{1})^{\sharp}}dv-d\big(\mathrm{li}(\tilde{e}_{\Sigma}^{1})\big)+f_{v}^{1}\Big)+\int_{\Sigma}e_{\Sigma}^{2}\wedge\big((-1)^{n-1}\mathrm{tr}_{\Sigma}(e_{v}^{1})+f_{\Sigma}^{1}\big)=0.
      \end{aligned}
    \end{equation*}
    Since $e_{v}^{2}$ and $e_{\Sigma}^{2}$ are arbitrary, we have
    \begin{subequations}\label{eq122}
      \begin{empheq}[left=\empheqlbrace]{align}
        f_{v}^{1}&=d(\mathrm{li}(\tilde{e}_{\Sigma}^{1}))+i_{(\ast e_{v}^{1})^{\sharp}}dv\quad\text{in }\Omega,\label{eq62}\\
        f_{\Sigma}^{1}&=(-1)^{n}\mathrm{tr}_{\Sigma}(e_{v}^{1})\quad\quad\quad\quad \ \text{at }\Sigma.\label{eq63}
      \end{empheq}
    \end{subequations}

    Next, we consider the relations for the port variables $f_{b}$, $e_{b}$. After introducing (\ref{eq122}) into (\ref{eq11}), we obtain
      \begin{align*}
        &\int_{\Omega}\Big( e_{v}^{1}\wedge\big(d(\mathrm{li}(\tilde{e}_{\Sigma}^{2}))+i_{(\ast e_{v}^{2})^{\sharp}}dv\big)+e_{v}^{2}\wedge\big(d(\mathrm{li}(\tilde{e}_{\Sigma}^{1}))+i_{(\ast e_{v}^{1})^{\sharp}}dv\big)\Big)\\
        &+(-1)^{n}\int_{\Sigma}\Big(e_{\Sigma}^{1}\wedge\mathrm{tr}_{\Sigma}(e_{v}^{2})+e_{\Sigma}^{2}\wedge\mathrm{tr}_{\Sigma}(e_{v}^{1}) \Big)+\int_{\Gamma}\Big(e_{b}^{1}\wedge(-1)^{n}\mathrm{tr}_{\Gamma}(e_{v}^{2})+\tilde{e}_{\Sigma}^{2}\wedge f_{b}^{1}\Big)\\
        =&\int_{\Gamma}\Big(\tilde{e}_{\Sigma}^{2}\wedge\big((-1)^{n-1}\mathrm{tr}_{\Gamma}(e_{v}^{1})+f_{b}^{1}\big)+(-1)^{n}\mathrm{tr}_{\Gamma}(e_{v}^{2})\wedge\big(-\tilde{e}_{\Sigma}^{1}+e_{b}^{1}\big)\Big)=0.
      \end{align*}
    Since $e_{\Sigma}^{2}$ and $e_{v}^{2}$ are arbitrary, we have
    \begin{subequations}
      \begin{empheq}[left=\empheqlbrace]{align}
        f_{b}^{1}&=(-1)^{n}\mathrm{tr}_{\Gamma}(e_{v}^{1}),\\
        e_{b}^{1}&=\tilde{e}_{\Sigma}^{1}|_{\Gamma}.
      \end{empheq}
    \end{subequations}
    Hence $D_{1}^{\bot}\subset D_{1}$, which together with $D_{1}\subset D_{1}^{\bot}$ gives that $D_{1}=D_{1}^{\bot}$, thus $D_{1}$ is a Dirac structure.
  \end{enumerate}
\end{proof}

\begin{remark}
  Notice that by the power-conserving property (\ref{power1}), together with the bilinear form (\ref{bilinear5}), it immediately follows that for any 
$(f_{v},e_{\Sigma},f_{b},e_{v},e_{\Sigma},e_{b})\in D_{1}\subset\mathcal{F}_{1}\times\mathcal{E}_{1}$ we have
\begin{equation*}
  \int_{\Omega}e_{v}\wedge f_{v}+\int_{\Sigma}e_{\Sigma}\wedge f_{\Sigma}+\int_{\Gamma}e_{b}\wedge f_{b}=0.
\end{equation*}
With the Hamiltonian (\ref{total2}), flow variables and energy variables, together with the chain rule, we have
\begin{equation}\label{eq136}
  \dot{H}=\int_{\Gamma}e_{b}\wedge f_{b}.
\end{equation}
\end{remark}

Next, we aim at deriving the $(v,\Sigma)$ distributed-parameter port-Hamiltonian formulation of the incompressible Euler equations with a free surface. Define the flow variables in the Dirac structure (\ref{dirac1}) as
\begin{alignat*}{2}
f_{v}&=-v_{t}\in P^{\ast}\Lambda^{1}(\Omega),\quad &&f_{\Sigma}=-\Sigma_{t}\in H^{-\frac{1}{2}}\Lambda^{n-1}(\Sigma),
\end{alignat*}
and the energy variables as
\begin{alignat*}{2}
&e_{v}=\frac{\delta H}{\delta v}\in P\Lambda^{n-1}(\Omega),\quad && e_{\Sigma}=\frac{\delta H}{\delta\Sigma}\in H^{\frac{1}{2}}\Lambda^{0}(\Sigma),
\end{alignat*}
with the Hamiltonian $H$ given in (\ref{total2}). Using Theorem \ref{thm14}, the port-Hamiltonian formulation can be stated in the following corollary.

\begin{corollary}
  Given an oriented $n$-dimensional connected manifold $\Omega$ with Lipschitz continuous boundary $\partial\Omega$.
  Given the essential boundary condition (\ref{dieuler4}) and the port variable $f_{b}=-g$.
  The distributed-parameter port-Hamiltonian system for the incompressible Euler equations with a free surface (\ref{dieuler}) with respect to the state space variables $(v,\Sigma)\in P^{\ast}\Lambda^{1}(\Omega)\times H^{-\frac{1}{2}}\Lambda^{n-1}(\Sigma)$, 
  Dirac structure $D_{1}$ (\ref{dirac1}), and Hamiltonian (\ref{total2}), is given as
  \begin{equation}\label{coro}
    \begin{aligned}
      &
    \begin{pmatrix}
      -v_{t}  \\
      -\Sigma_{t} 
     \end{pmatrix} 
     =
     \begin{pmatrix}
      i_{(\ast\cdot)^{\sharp}}dv \quad & d\big(\mathrm{li}(E(\cdot))\big)  \\
      (-1)^{n}\mathrm{tr}_{\Sigma}(\cdot) \quad & 0 
      \end{pmatrix}
      \begin{pmatrix}
        \frac{\delta H}{\delta v} \\
        \frac{\delta H}{\delta\Sigma}
       \end{pmatrix},
      \end{aligned}
    \end{equation}
  \text{with port variables $f_{b},e_{b}$ defined as},
    \begin{equation}\label{coro3}
    \begin{aligned}
     \begin{pmatrix}
      f_{b}  \\
      e_{b}
     \end{pmatrix} 
     =
     \begin{pmatrix}
      (-1)^{n}\mathrm{tr}_{\Gamma}(\cdot)\quad &0 \\
      0\quad & E(\cdot)|_{\Gamma}
     \end{pmatrix}
     \begin{pmatrix}
      \frac{\delta H}{\delta v} \\
      \frac{\delta H}{\delta\Sigma}
     \end{pmatrix}.
   \end{aligned}
  \end{equation}
\end{corollary}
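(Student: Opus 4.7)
The plan is to obtain the port-Hamiltonian system (\ref{coro})--(\ref{coro3}) by substituting the Hamiltonian's functional derivatives from Lemma \ref{lemma1} into the Dirac structure relations (\ref{dirac1}) of Theorem \ref{thm14} and matching the result with the free-surface Euler system (\ref{dieuler}). With $e_{v} = \frac{\delta H}{\delta v} = (-1)^{n-1}\ast v$, identity (\ref{eq42}) yields $\ast e_{v} = (-1)^{n-1}\ast\ast v = v$, so $(\ast e_{v})^{\sharp} = v^{\sharp}$ and the interior term $i_{(\ast e_{v})^{\sharp}}dv$ in (\ref{dirac1}) collapses to $i_{v^{\sharp}}dv$.

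First I would handle the trace identities at the boundary. Using (\ref{proper3}),
\begin{equation*}
f_{\Sigma} = (-1)^{n}\mathrm{tr}_{\Sigma}(e_{v}) = -\mathrm{tr}_{\Sigma}(\ast v) = -\ast\boldsymbol{n}v,
\end{equation*}
so setting $f_{\Sigma} = -\Sigma_{t}$ reproduces the kinematic free-surface condition (\ref{dieuler3}). The identical calculation at $\Gamma$ gives $f_{b} = -\ast\boldsymbol{n}v|_{\Gamma}$, which matches the prescribed port variable $f_{b} = -g$ precisely when (\ref{dieuler5}) holds. The port effort $e_{b} = \tilde{e}_{\Sigma}|_{\Gamma}$ is then simply read off as an output of the Dirac structure.

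Next I would treat the momentum balance, which is the core of the argument. From Lemma \ref{lemma1}, $e_{\Sigma} = \mathrm{tr}\bigl(\tfrac{1}{2}\llangle v^{\sharp},v^{\sharp}\rrangle + \Phi\bigr) + \tau k/\rho$; combining this with the dynamic boundary condition (\ref{dieuler4}), $\mathrm{tr}(\tilde p)/\rho = \tau k/\rho$, shows that $e_{\Sigma} = \mathrm{tr}(h)|_{\Sigma}$ where $h$ is the Bernoulli quantity of (\ref{h}). Since $v_{t} = -i_{v^{\sharp}}dv - dh$ lies in $P^{\ast}\Lambda^{1}(\Omega)$ by virtue of $\delta v = 0$ (so $\delta v_{t} = 0$), the Bernoulli quantity $h \in H^{1}\Lambda^{0}(\Omega)$ is a legitimate trace lifting of $\tilde{e}_{\Sigma} := h|_{\partial\Omega}$. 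By Lemma \ref{lemma14} the choice of lifting is immaterial for $f_{v}$, so one may select $\mathrm{li}(\tilde{e}_{\Sigma}) := h$, under which $d(\mathrm{li}(\tilde{e}_{\Sigma})) + i_{v^{\sharp}}dv = dh + i_{v^{\sharp}}dv = -v_{t}$, exactly the first row of (\ref{coro}) with $f_{v} = -v_{t}$.

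The main obstacle is the careful identification of the trace-lifting with the physical Bernoulli quantity $h$: one must argue that $h$, whose interior value is determined by the pressure Poisson problem $\delta dh = -\delta i_{v^{\sharp}}dv$ with boundary data prescribed by $\tilde{e}_{\Sigma}$ on $\Sigma$ and an admissible extension on $\Gamma$, is regular enough to serve as an element of $H^{1}\Lambda^{0}(\Omega)$, and that the resulting $f_{v}$ satisfies the state-space constraint $f_{v} \in P^{\ast}\Lambda^{1}(\Omega)$. Lemma \ref{lemma14} provides the needed invariance, and the $P^{\ast}$-membership follows from the divergence-free character of $v_{t}$. The converse implication, that the abstract system (\ref{coro})--(\ref{coro3}) together with the essential condition (\ref{dieuler4}) implies (\ref{dieuler}), then follows by reversing these identifications: (\ref{dieuler2}) is encoded in $v \in P^{\ast}\Lambda^{1}(\Omega)$, while (\ref{dieuler1}), (\ref{dieuler3}) and (\ref{dieuler5}) are recovered row by row from the same algebra.
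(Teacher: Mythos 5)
Your proposal is correct and follows essentially the same route as the paper: substitute the functional derivatives from Lemma \ref{lemma1} into the Dirac structure $D_{1}$, use the essential boundary condition (\ref{dieuler4}) to identify $e_{\Sigma}$ with the trace of the Bernoulli quantity $h$, invoke the well-definedness of the trace lifting (Lemma \ref{lemma14}) to replace $d(\mathrm{li}(E(\frac{\delta H}{\delta\Sigma})))$ by $dh$, and then read off (\ref{dieuler1}), (\ref{dieuler3}), (\ref{dieuler5}) row by row, with $v\in P^{\ast}\Lambda^{1}(\Omega)$ giving (\ref{dieuler2}). Your additional remark that $\delta v_{t}=0$ guarantees $f_{v}\in P^{\ast}\Lambda^{1}(\Omega)$, so that $h$ is an admissible lifting, just makes explicit the mechanism the paper uses implicitly when citing the well-definedness lemma.
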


\begin{proof}
  Using (\ref{h3}) and the essential boundary condition (\ref{dieuler4}), we have 
  \begin{equation}\label{eq134}
    \begin{aligned}
      &\frac{\delta H}{\delta v}=(-1)^{n-1}\ast v\in P\Lambda^{n-1}(\Omega),\\
      &\frac{\delta H}{\delta\Sigma}=\mathrm{tr}\big(\frac{1}{2}\llangle v^{\sharp},v^{\sharp}\rrangle+\Phi+\frac{\tilde{p}}{\rho}\big)\in H^{\frac{1}{2}}\Lambda^{0}(\Sigma).
    \end{aligned}
  \end{equation}
  Thus, from Lemma \ref{thm14} we obtain
  \begin{equation}\label{eq135}
    d\big(\mathrm{li}(E(\frac{\delta H}{\delta\Sigma}))\big)=d\big(\frac{1}{2}\llangle v^{\sharp},v^{\sharp}\rrangle+\Phi+\frac{\tilde{p}}{\rho}\big).
  \end{equation}
  Substituting (\ref{eq134}) and (\ref{eq135}) into the port-Hamiltonian formulation (\ref{coro}), we obtain the incompressible Euler equations (\ref{dieuler1}) and (\ref{dieuler3}).
  Next, the port variable $f_{b}=-g=(-1)^{n}\mathrm{tr}_{\Gamma}(\frac{\delta H}{\delta v})=-\ast\boldsymbol{n}(v)$ gives the inhomogeneous boundary condition (\ref{dieuler5}) at $\Gamma$, and $v\in P^{\ast}\Lambda^{1}(\Omega)$ implies (\ref{dieuler2}).
\end{proof}

We will show that the Dirac structure (\ref{bilinear5}), (\ref{dirac1}) is associated to a Poisson bracket $\{\cdot,\cdot\}_{D}(v,\Sigma)$.
Define the bilinear form (see \cite{van2002hamiltonian})
\begin{equation*}
  [e_{1},e_{2}]_{D}:=(e_{1},f_{2})=\int_{\Omega}e_{v}^{1}\wedge f_{v}^{2}+\int_{\Sigma}e_{\Sigma}^{1}\wedge f_{\Sigma}^{2}+\int_{\Gamma}e_{b}^{1}\wedge f_{b}^{2}.
\end{equation*}
For $\frac{\delta\mathcal{F}}{\delta v},\frac{\delta\mathcal{G}}{\delta v}\in P\Lambda^{n-1}(\Omega)$ and $\frac{\delta\mathcal{F}}{\delta\Sigma},\frac{\delta\mathcal{G}}{\delta\Sigma}\in H^{\frac{1}{2}}\Lambda^{0}(\Sigma)$, the bilinear form $\{\cdot,\cdot\}_{D}(v,\Sigma):\mathcal{F}\times\mathcal{G}\to\mathbb{R}$ associated to the Dirac structure (\ref{dirac1}) is
\begin{equation}\label{bracket1}
  \begin{aligned}
    \{\mathcal{F},\mathcal{G}\}_{D}(v,\Sigma)&:=[\delta\mathcal{F},\delta\mathcal{G}]_{D}\\
    &=\int_{\Omega}\frac{\delta\mathcal{G}}{\delta v}\wedge\Big(d\big(\mathrm{li}(E(\frac{\delta\mathcal{F}}{\delta\Sigma}))\big)+i_{(\ast\frac{\delta\mathcal{F}}{\delta v})^{\sharp}}dv \Big)+\int_{\Sigma}\frac{\delta\mathcal{G}}{\delta\Sigma}\wedge(-1)^{n}\mathrm{tr}_{\Sigma}(\frac{\delta\mathcal{F}}{\delta v})\\
    &+\int_{\Gamma}E(\frac{\delta\mathcal{G}}{\delta\Sigma})\wedge(-1)^{n}\mathrm{tr}_{\Gamma}(\frac{\delta\mathcal{F}}{\delta v}).
  \end{aligned}
\end{equation}

\begin{remark}
  With (\ref{rela4}), Stokes' theorem and the condition $d(\frac{\delta\mathcal{G}}{\delta v})=0$, which follows from $\frac{\delta\mathcal{G}}{\delta v}\in P\Lambda^{n-1}(\Omega)$, we have that (\ref{bracket1}) can be rewritten as
  \begin{equation}\label{bracket8}
    \begin{aligned}
      \{\mathcal{F},\mathcal{G}\}_{D}(v,\Sigma)=&(-1)^{n-1}\int_{\Omega}(\ast dv)\wedge(\ast\frac{\delta\mathcal{G}}{\delta v})\wedge(\ast\frac{\delta\mathcal{F}}{\delta v})\\
      &+(-1)^{n-1}\int_{\partial\Omega}\Big(E(\frac{\delta\mathcal{F}}{\delta\Sigma})\wedge\mathrm{tr}(\frac{\delta\mathcal{G}}{\delta v})-E(\frac{\delta\mathcal{G}}{\delta\Sigma})\wedge\mathrm{tr}(\frac{\delta\mathcal{F}}{\delta v}) \Big).
    \end{aligned}
  \end{equation}
\end{remark}

Since the bracket $\{\cdot,\cdot\}_{D}(v,\Sigma)$ has the same structure as the Poisson bracket (\ref{bracket3}),
it is straightforward to check that $\{\cdot,\cdot\}_{D}(v,\Sigma)$ is linear, anti-symmetric and satisfies the Jacobi identity.
The bracket $\{\cdot,\cdot\}_{D}(v,\Sigma)$ is thus a Poisson bracket and the system generated by the Dirac structure $D_{1}$ is a Poisson structure.

\begin{theorem}\label{thm5}
  Let $\Omega\subset\mathbb{R}^{n},n\in\{1,2,3\}$ be an oriented $n$-dimensional connected manifold with Lipschitz continuous boundary $\partial\Omega=\Sigma\cup\Gamma$ with free surface $\Sigma$ and fixed boundary $\Gamma$, $\Sigma\cap\Gamma=\emptyset$. 
  For any functionals $\mathcal{F}(v,\Sigma):P^{\ast}\Lambda^{1}(\Omega)\times H^{-\frac{1}{2}}\Lambda^{n-1}(\Sigma)\to\mathbb{R}$, the port-Hamiltonian formulation of the incompressible Euler equations with a free surface (\ref{coro}) and (\ref{coro3}) is equivalent to
  \begin{equation}\label{eq87}
    \dot{\mathcal{F}}(v,\Sigma)=\{\mathcal{F},H\}_{D}(v,\Sigma)-\int_{\Gamma}E(\frac{\delta\mathcal{F}}{\delta\Sigma})\wedge\ast\boldsymbol{n}(v),
  \end{equation}
  with the Hamiltonian $H$ given by (\ref{total2}).
\end{theorem}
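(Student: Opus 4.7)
The plan is to derive (\ref{eq87}) by applying the functional chain rule to $\dot{\mathcal{F}}(v,\Sigma)$, substituting the port-Hamiltonian equations (\ref{coro}), and then recognising the resulting volume and $\Sigma$-integrals as $-\{H,\mathcal{F}\}_{D}(v,\Sigma)$ up to a $\Gamma$-port boundary term; the skew-symmetry of $\{\cdot,\cdot\}_{D}(v,\Sigma)$ then yields (\ref{eq87}) directly.

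First, the chain rule gives
\begin{equation*}
\dot{\mathcal{F}}(v,\Sigma) = \int_{\Omega}\frac{\delta\mathcal{F}}{\delta v}\wedge v_{t} + \int_{\Sigma}\frac{\delta\mathcal{F}}{\delta\Sigma}\wedge\Sigma_{t}.
\end{equation*}
Inserting $v_{t}$ and $\Sigma_{t}$ from (\ref{coro}), the right-hand side becomes
\begin{equation*}
-\int_{\Omega}\frac{\delta\mathcal{F}}{\delta v}\wedge\Bigl(d\bigl(\mathrm{li}(E(\tfrac{\delta H}{\delta\Sigma}))\bigr) + i_{(\ast\frac{\delta H}{\delta v})^{\sharp}}dv\Bigr) - \int_{\Sigma}\frac{\delta\mathcal{F}}{\delta\Sigma}\wedge(-1)^{n}\mathrm{tr}_{\Sigma}\bigl(\tfrac{\delta H}{\delta v}\bigr).
\end{equation*}
Comparing with the bracket definition (\ref{bracket1}) evaluated with the substitution $\mathcal{F}\leftarrow H$, $\mathcal{G}\leftarrow\mathcal{F}$, one sees that this is exactly $-\{H,\mathcal{F}\}_{D}(v,\Sigma)$ plus the extra $\Gamma$-port term $\int_{\Gamma}E(\tfrac{\delta\mathcal{F}}{\delta\Sigma})\wedge(-1)^{n}\mathrm{tr}_{\Gamma}(\tfrac{\delta H}{\delta v})$, i.e.,
\begin{equation*}
\dot{\mathcal{F}}(v,\Sigma) = -\{H,\mathcal{F}\}_{D}(v,\Sigma) + \int_{\Gamma}E\bigl(\tfrac{\delta\mathcal{F}}{\delta\Sigma}\bigr)\wedge(-1)^{n}\mathrm{tr}_{\Gamma}\bigl(\tfrac{\delta H}{\delta v}\bigr).
\end{equation*}

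Next, I would invoke the skew-symmetry $\{\mathcal{F},H\}_{D} = -\{H,\mathcal{F}\}_{D}$, which is transparent from the equivalent form (\ref{bracket8}): the volume integrand is a wedge of $\ast dv$ with the two anticommuting $1$-forms $\ast\tfrac{\delta\mathcal{F}}{\delta v}$ and $\ast\tfrac{\delta\mathcal{G}}{\delta v}$, and the boundary integrand is antisymmetric by inspection. Finally, using $\tfrac{\delta H}{\delta v} = (-1)^{n-1}\ast v$ from (\ref{h1}) and $\mathrm{tr}(\ast v) = \ast\boldsymbol{n}(v)$ from (\ref{proper3}), one has $(-1)^{n}\mathrm{tr}_{\Gamma}(\tfrac{\delta H}{\delta v}) = -\ast\boldsymbol{n}(v)$, and the previous identity rearranges to (\ref{eq87}). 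For the converse, (\ref{eq87}) combined with the chain rule reduces, after the same sign bookkeeping, to
\begin{equation*}
\int_{\Omega}\frac{\delta\mathcal{F}}{\delta v}\wedge\Bigl(v_{t} + i_{(\ast\frac{\delta H}{\delta v})^{\sharp}}dv + d\bigl(\mathrm{li}(E(\tfrac{\delta H}{\delta\Sigma}))\bigr)\Bigr) + \int_{\Sigma}\frac{\delta\mathcal{F}}{\delta\Sigma}\wedge\bigl(\Sigma_{t} + (-1)^{n}\mathrm{tr}_{\Sigma}(\tfrac{\delta H}{\delta v})\bigr) = 0,
\end{equation*}
and the arbitrariness of $\tfrac{\delta\mathcal{F}}{\delta v}\in P\Lambda^{n-1}(\Omega)$ and $\tfrac{\delta\mathcal{F}}{\delta\Sigma}\in H^{\frac{1}{2}}\Lambda^{0}(\Sigma)$ yields (\ref{coro}), exactly as in the second half of the proof of Theorem \ref{thm3}; the port relations (\ref{coro3}) are simply the definition of $f_{b}$ and $e_{b}$ on $\Gamma$. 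The main challenge is keeping consistent track of the signs from the Hodge $\ast$, the interior-product identity (\ref{rela4}) and the $(-1)^{n}$ factors in the Dirac structure (\ref{dirac1}); once those are aligned, the skew-symmetry of $\{\cdot,\cdot\}_{D}(v,\Sigma)$ does all of the essential work and no further analytical estimates are required.
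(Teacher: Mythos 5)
Your proposal is correct and follows essentially the same route as the paper's proof: functional chain rule plus substitution of (\ref{coro}), identification of the resulting integrals with $-\{H,\mathcal{F}\}_{D}(v,\Sigma)$ up to the $\Gamma$-term, skew-symmetry of the bracket, and the identities $\frac{\delta H}{\delta v}=(-1)^{n-1}\ast v$ and $\mathrm{tr}(\ast v)=\ast\boldsymbol{n}(v)$ to convert the boundary term, with the converse obtained from the arbitrariness of $\frac{\delta\mathcal{F}}{\delta v}$ and $\frac{\delta\mathcal{F}}{\delta\Sigma}$. The only cosmetic difference is that the paper recovers the port relations (\ref{coro3}) in the converse direction via the power balance $\dot{H}=\int_{\Gamma}e_{b}\wedge f_{b}$ from (\ref{eq136})--(\ref{eq137}) rather than treating them as definitional, which does not affect correctness.
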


\begin{proof}
  Using the Poisson bracket (\ref{bracket1}) in terms of the $(v,\Sigma)$ variables, together with (\ref{rela4}), we have 
  \begin{equation*}
    \begin{aligned}
      &\{\mathcal{F},H\}_{D}(v,\Sigma)=-\{H,\mathcal{F}\}_{D}(v,\Sigma)\\
      =&-\int_{\Omega}\frac{\delta\mathcal{F}}{\delta v}\wedge\Big(i_{(\ast\frac{\delta H}{\delta v})^{\sharp}}dv+d\big(\mathrm{li}(E(\frac{\delta H}{\delta\Sigma}))\big)\Big)+(-1)^{n-1}\int_{\Sigma}\frac{\delta\mathcal{F}}{\delta\Sigma}\wedge\mathrm{tr}(\frac{\delta H}{\delta v})\\
      &+(-1)^{n-1}\int_{\Gamma}E(\frac{\delta\mathcal{F}}{\delta\Sigma})\wedge\mathrm{tr}(\frac{\delta H}{\delta v}).
    \end{aligned}
  \end{equation*}
  Hence, the right-hand side term of (\ref{eq87}) is 
  \begin{equation}\label{eq90}
    \begin{aligned}
      &\{\mathcal{F},H\}_{D}(v,\Sigma)-\int_{\Gamma}E(\frac{\delta\mathcal{F}}{\delta\Sigma})\wedge\ast\boldsymbol{n}(v)\\
      =&-\int_{\Omega}\frac{\delta\mathcal{F}}{\delta v}\wedge\Big(i_{(\ast\frac{\delta H}{\delta v})^{\sharp}}dv+d\big(\mathrm{li}(E(\frac{\delta H}{\delta\Sigma}))\big)\Big)+(-1)^{n-1}\int_{\Sigma}\frac{\delta\mathcal{F}}{\delta\Sigma}\wedge\mathrm{tr}(\frac{\delta H}{\delta v}).
    \end{aligned}
  \end{equation}
  Next, using the functional chain rule
  \begin{equation}\label{eq91}
    \dot{\mathcal{F}}(v,\Sigma)=\int_{\Omega}\frac{\delta\mathcal{F}}{\delta v}\wedge v_{t}+\int_{\Sigma}\frac{\delta\mathcal{F}}{\delta\Sigma}\wedge\Sigma_{t},
  \end{equation}
  and combining (\ref{eq90}) with (\ref{eq91}), we obtain (\ref{eq87}) if (\ref{coro}) holds.
  
  Conversely, if (\ref{eq87}) holds, together with (\ref{eq90}) and (\ref{eq91}), (\ref{coro}) directly follows since $\mathcal{F}$ is arbitrary. Since the Poisson bracket $\{\cdot,\cdot\}_{D}$ is skew-symmetric, 
  we have from (\ref{eq87})
  \begin{equation}\label{eq137}
    \dot{H}(v,\Sigma)=-\int_{\Gamma}E(\frac{\delta H}{\delta\Sigma})\wedge\ast\boldsymbol{n}(v)=\int_{\Gamma}E(\frac{\delta H}{\delta\Sigma})\wedge(-1)^{n}\mathrm{tr}(\frac{\delta H}{\delta v}).
  \end{equation}
  Using (\ref{eq136}), we obtain the relation for the port variables (\ref{coro3}).
\end{proof}

\begin{corollary}
  Given the Hamiltonian $H: P^{\ast}\Lambda^{1}(\Omega)\times H^{-\frac{1}{2}}\Lambda^{n-1}(\Sigma)\to\mathbb{R}$ stated in (\ref{total2}) and the essential boundary conditions (\ref{dieuler4}),(\ref{dieuler5}).
  The rate of change of the Hamiltonian for the incompressible Euler equations with a free surface (\ref{dieuler}) is 
  \begin{equation}\label{eq94}
    \dot{H}(v,\Sigma)=-\int_{\Gamma}\mathrm{tr}(h)\wedge g.
  \end{equation} 
\end{corollary}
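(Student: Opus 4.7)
The plan is to specialize the general port-Hamiltonian energy balance (\ref{eq137}), established in the proof of Theorem \ref{thm5}, to the specific Hamiltonian (\ref{total2}) by plugging in the functional derivatives from Lemma \ref{lemma1} and invoking the two essential boundary conditions (\ref{dieuler4}) at $\Sigma$ and (\ref{dieuler5}) at $\Gamma$. Concretely, equation (\ref{eq137}) gives
\begin{equation*}
  \dot{H}(v,\Sigma) = -\int_{\Gamma} E\bigl(\tfrac{\delta H}{\delta \Sigma}\bigr)\wedge \ast\boldsymbol{n}(v),
\end{equation*}
so the task reduces to identifying each factor in the integrand with the claimed expression.

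For the boundary flow factor, I would use Lemma \ref{lemma1} to write $\frac{\delta H}{\delta v}=(-1)^{n-1}\ast v$, then apply the identity (\ref{proper3}), namely $\mathrm{tr}(\ast v)=\ast\boldsymbol{n}(v)$, so that $\ast\boldsymbol{n}(v)|_\Gamma = g$ by the inhomogeneous boundary condition (\ref{dieuler5}). This immediately replaces $\ast\boldsymbol{n}(v)$ on $\Gamma$ by $g$.

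For the effort factor, Lemma \ref{lemma1} gives $\frac{\delta H}{\delta\Sigma}|_\Sigma=\mathrm{tr}\bigl(\tfrac{1}{2}\llangle v^\sharp,v^\sharp\rrangle+\Phi\bigr)+\tfrac{\tau k}{\rho}$, and the essential surface tension condition (\ref{dieuler4}) lets me rewrite $\tfrac{\tau k}{\rho}=\mathrm{tr}(\tfrac{\tilde p}{\rho})$ at $\Sigma$. Consequently $\frac{\delta H}{\delta\Sigma}|_\Sigma=\mathrm{tr}(h)|_\Sigma$, using the definition (\ref{h}) of $h$. Since $h\in H^{1}\Lambda^{0}(\Omega)$ is globally defined on $\Omega$, its trace $\mathrm{tr}(h)\in H^{\frac{1}{2}}\Lambda^{0}(\partial\Omega)$ is a legitimate extension of $\frac{\delta H}{\delta\Sigma}$ from $\Sigma$ to $\partial\Omega$, so one may take $E\bigl(\tfrac{\delta H}{\delta\Sigma}\bigr)=\mathrm{tr}(h)$ on $\partial\Omega$.

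Assembling these two ingredients, the integrand becomes $\mathrm{tr}(h)\wedge g$, yielding (\ref{eq94}). The only mildly delicate point is justifying the choice of extension operator; this is not a real obstacle, since the trace $\mathrm{tr}(h)$ provides a canonical lift compatible with the definition of $E$ in Section 5.1, and the final expression (\ref{eq94}) is manifestly independent of any arbitrary choice on $\Gamma$ because $\mathrm{tr}(h)|_\Gamma$ is determined intrinsically by $h$.
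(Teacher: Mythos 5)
Your proposal is correct and follows essentially the same route as the paper: it specializes the energy balance (\ref{eq137}) from Theorem \ref{thm5}, identifies $\frac{\delta H}{\delta\Sigma}$ with $\mathrm{tr}(h)$ via (\ref{h2}) and the surface-tension condition (\ref{dieuler4}), and replaces $\ast\boldsymbol{n}(v)$ on $\Gamma$ by $g$ using (\ref{dieuler5}). Your extra remark on the choice of extension $E(\frac{\delta H}{\delta\Sigma})=\mathrm{tr}(h)$ simply makes explicit a point the paper leaves implicit.
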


\begin{proof}
  From (\ref{eq137}), with the functional derivative $\frac{\delta H}{\delta\Sigma}=\mathrm{tr}(h)$, which follows from (\ref{h2}), and (\ref{dieuler4}), and the inhomogeneous boundary condition (\ref{dieuler5}) for the velocity, $\ast\boldsymbol{n}(v)=g$ at $\Gamma$,
  we obtain (\ref{eq94}). 
\end{proof}

\begin{remark}
  Consider the homogeneous boundary condition, which means that for any functional $\mathcal{F}$, $\frac{\delta\mathcal{F}}{\delta v}\in\mathring{P}\Lambda^{k}(\Omega)$, we have that $\mathrm{tr}(\frac{\delta\mathcal{F}}{\delta v})=0$.
  Substituting the boundary condition into (\ref{bracket8}) and (\ref{eq87}), we obtain the same results as (\ref{bracket3}) and (\ref{hami3}) shown.
\end{remark}

\subsection{Dirac structure in terms of the $(\eta,\phi_{\partial},\Sigma)$ variables}

Based on the Hamiltonian formulation presented in Section 4.2, we will present in this section the port-Hamiltonian formulation of the incompressible Euler equations with a free surface with respect to $(\eta,\phi_{\partial},\Sigma)$ variables 
with, $\eta$ a solenoidal velocity field, $\phi_{\partial}$ a potential function and free surface $\Sigma$. 
We will first define the trace-lifting operators and function spaces that will be used in the definition of the Dirac structure.

\begin{defn}
The harmonic trace-lifting operator $\mathrm{li}_{\phi}:H^{\frac{1}{2}}\Lambda^{0}(\partial\Omega)\to H^{1}\Lambda^{0}(\Omega)$ is defined as 
\begin{equation}\label{tracelift2}
  \mathrm{tr}\big(\mathrm{li}_{\phi}(\mu)\big)=\mu,\ \forall\mu\in H^{\frac{1}{2}}\Lambda^{0}(\partial\Omega),
\end{equation}
with
\begin{equation*}
  \langle d(\mathrm{li}_{\phi}(\mu)),d\psi\rangle_{L^{2}\Lambda^{1}(\Omega)}=0,\quad\forall\psi\in\mathring{H}^{1}\Lambda^{0}(\Omega).
\end{equation*}
\end{defn}
Next, we define the following function spaces for the Dirac structure in the $(\eta,\phi_{\partial},\Sigma)$ variables as
\begin{equation}\label{space2}
  \begin{aligned}
    \mathcal{F}_{2}:=&\mathring{\mathfrak{B}}^{\ast 1}\times H^{\frac{1}{2}}\Lambda^{0}(\partial\Omega)\times H^{-\frac{1}{2}}\Lambda^{n-1}(\Sigma)\times H^{-\frac{1}{2}}\Lambda^{n-1}(\Gamma),\\
    \mathcal{E}_{2}:=&\mathring{\mathfrak{B}}^{(n-1)}\times H^{-\frac{1}{2}}\Lambda^{n-1}(\partial\Omega)\times H^{\frac{1}{2}}\Lambda^{0}(\Sigma)\times H^{\frac{1}{2}}\Lambda^{0}(\Gamma),
  \end{aligned}
\end{equation}
where the spaces $\mathcal{F}_{2}$ and $\mathcal{E}_{2}$ are dual to each other with a non-degenerate pairing using $L^{2}$ as pivot space.

The following theorem states the Dirac structure for the $(\eta,\phi_{\partial},\Sigma)$ formulation.

\begin{theorem}\label{thm6}
  Let $\Omega\subset\mathbb{R}^{n}$, $n\in\{1,2,3\}$ be a $n$-dimensional oriented and simply connected manifold with Lipschitz continuous boundary $\partial\Omega=\Sigma\cup\Gamma$ with $\Sigma\cap\Gamma=\emptyset$. Given the function spaces $\mathcal{F}_{2}$ and $\mathcal{E}_{2}$, defined in (\ref{space2}), together with the bilinear form:
  \begin{equation}\label{bilinear3}
  \begin{aligned}
  \llangle(f^{1},e^{1}),(f^{2},e^{2})\rrangle=&\int_{\Omega}\big(e_{\eta}^{1}\wedge f_{\eta}^{2}+e_{\eta}^{2}\wedge f_{\eta}^{1}\big)+\int_{\partial\Omega}\big(e_{\phi}^{1}\wedge f_{\phi}^{2}+e_{\phi}^{2}\wedge f_{\phi}^{1}\big)\\
  &+\int_{\Sigma}\big(e_{\Sigma}^{1}\wedge f_{\Sigma}^{2}+e_{\Sigma}^{2}\wedge f_{\Sigma}^{1}\big)+\int_{\Gamma}\big(e_{b}^{1}\wedge f_{b}^{2}+e_{b}^{2}\wedge f_{b}^{1}\big),
  \end{aligned}
  \end{equation}
  where 
  \begin{equation*}
  f^{i}=(f_{\eta}^{i},f_{\phi}^{i},f_{\Sigma}^{i},f_{b}^{i})\in\mathcal{F}_{2},\ e^{i}=(e_{\eta}^{i},e_{\phi}^{i},e_{\Sigma}^{i},e_{b}^{i})\in\mathcal{E}_{2},\ i=1,2.
  \end{equation*}
  Then $D_{2}\subset\mathcal{F}_{2}\times\mathcal{E}_{2}$, defined as
  \begin{equation}\label{dirac2}
  \begin{aligned}
  D_{2}:=
  \Bigg\{ & (f_{\eta},f_{\phi},f_{\Sigma},f_{b},e_{\eta},e_{\phi},e_{\Sigma},e_{b})\in \mathcal{F}_{2}\times\mathcal{E}_{2}\mid\\
  &
    \begin{pmatrix}
     f_{\eta}  \\ 
     d\big(\mathrm{li}_{\phi}(f_{\phi})\big)\\ \\
     f_{\Sigma}|_{\Sigma}
    \end{pmatrix} 
    =
    \begin{pmatrix}
      \begin{aligned}
      &-[\eta,dN_{\phi}(e_{\phi})]_{1},\\ 
      &\ast\Big(\big(\ast e_{\eta}+dN_{\phi}(e_{\phi})\big)\wedge(\ast d\eta)\Big)+[\eta,dN_{\phi}(e_{\phi})]_{1}\\
      &+d\Big(\mathrm{li}\big(\tilde{e}_{\Sigma})+(-1)^{n}\mathrm{li}\big(\langle dN_{\phi}(e_{\phi}),\eta\rangle_{\Lambda^{1}}\big)\Big)\\      &-e_{\phi},
      \end{aligned}
    \end{pmatrix}
     \\
   &
    \begin{pmatrix}
     f_{b}  \\
     e_{b}
    \end{pmatrix} 
    =
    \begin{pmatrix}
     -e_{\phi}|_{\Gamma} \\
     \tilde{e}_{\Sigma}|_{\Gamma} \end{pmatrix}
   \Bigg\},
  \end{aligned}
  \end{equation}
  with $\eta\in\mathring{\mathfrak{B}}^{\ast 1}$ and $\tilde{e}_{\Sigma}=E(e_{\Sigma})$, is a Dirac structure.
  \end{theorem}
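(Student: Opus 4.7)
The plan is to mimic the three-step strategy used in the proof of Theorem \ref{thm14}: check well-posedness of the defining relations, prove $D_2 \subset D_2^{\bot}$, and then prove the reverse inclusion. First I would verify that the expressions defining $D_2$ do not depend on the choice of the lifting operators: the differential $d(\mathrm{li}(\cdot))$ depends only on the boundary trace by the argument of Lemma \ref{lemma14}, since the difference of two lifts with identical trace solves the homogeneous Dirichlet Laplace problem and therefore vanishes, while $f_\phi$ is recovered uniquely from $d(\mathrm{li}_\phi(f_\phi))$ because the harmonic lifting $\mathrm{li}_\phi$ is uniquely determined by its trace (up to an additive constant fixing the mean of $f_\phi$), the right-hand side of the second component being a closed $1$-form as a consistency condition inherited from the Euler dynamics. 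I would also record the skew-symmetry analogue of (\ref{antisy}),
\[ \int_\Omega e_\eta^1 \wedge \ast\bigl((\ast e_\eta^2 + dN_\phi(e_\phi^2)) \wedge \ast d\eta\bigr) + \int_\Omega e_\eta^2 \wedge \ast\bigl((\ast e_\eta^1 + dN_\phi(e_\phi^1)) \wedge \ast d\eta\bigr) = 0, \]
obtained by expanding via (\ref{rela4}) and exploiting antisymmetry of the triple wedge product in its first two factors.

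For the inclusion $D_2 \subset D_2^{\bot}$, I would substitute the defining equations into the bilinear form (\ref{bilinear3}) and collect terms. The convective bulk contributions cancel by the skew-symmetry above. The mixed Lie-bracket terms $\int_\Omega e_\eta^1 \wedge [\eta,dN_\phi(e_\phi^2)]_1$ and their symmetric counterparts are transported to the free surface via Lemma \ref{lemma2} together with (\ref{lie}) and $\boldsymbol{n}(\eta)=0$, producing boundary integrals of the form $\int_\Sigma \langle dN_\phi(e_\phi^2),\eta\rangle_{\Lambda^1} \wedge \cdots$ that are designed to cancel, after Stokes' theorem, against the correction $(-1)^n d\bigl(\mathrm{li}(\langle dN_\phi(e_\phi),\eta\rangle_{\Lambda^1})\bigr)$ inside the formula for $d(\mathrm{li}_\phi(f_\phi))$. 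The remaining pure-boundary contributions then cancel against the port equations $f_\Sigma = -e_\phi|_\Sigma$, $f_b = -e_\phi|_\Gamma$, $e_b = \tilde e_\Sigma|_\Gamma$ by Stokes' theorem applied to the $d(\mathrm{li}(\tilde e_\Sigma))$ term, using $\tilde e_\Sigma = E(e_\Sigma)$ and the splitting $\partial\Omega = \Sigma \cup \Gamma$.

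For the reverse inclusion $D_2^{\bot} \subset D_2$, I would fix $(f^1,e^1) \in D_2^{\bot}$ and test against all $(f^2,e^2) \in D_2$, specialising the test efforts in stages as in step (3) of Theorem \ref{thm14}: first choose $e_\phi^2$ with $e_\phi^2|_\Gamma = 0$ and $\tilde e_\Sigma^2 \in H_{00}^{\frac{1}{2}}\Lambda^0(\Sigma)$; then vary $e_\eta^2$ arbitrarily to extract the $f_\eta^1$ equation, vary $e_\Sigma^2$ to obtain $f_\Sigma^1 = -e_\phi^1|_\Sigma$, and vary $e_\phi^2$ supported in $\Sigma$ to extract the equation for $d(\mathrm{li}_\phi(f_\phi^1))$, invoking the unique determination discussed above. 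Finally, releasing the constraints $e_\phi^2|_\Gamma = 0$ and $\tilde e_\Sigma^2|_\Gamma = 0$ isolates the port relations $f_b^1 = -e_\phi^1|_\Gamma$ and $e_b^1 = \tilde e_\Sigma^1|_\Gamma$, exactly as in the proof of Theorem \ref{thm14}.

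The principal difficulty will be the bookkeeping around the Lie bracket $[\eta,dN_\phi(e_\phi)]_1$, which appears with opposite signs in the $f_\eta$ and $d(\mathrm{li}_\phi(f_\phi))$ equations, together with the scalar correction $(-1)^n \langle dN_\phi(e_\phi),\eta\rangle_{\Lambda^1}$ inside the lifting. These are precisely the ingredients of Lemma \ref{lemma7}, reflecting the volume-preserving diffeomorphism structure (\ref{eta1}); their cancellation must be realised via Lemma \ref{lemma2} and the computation (\ref{eq56}), matching the boundary contribution produced when integrating the Lie bracket against a test energy. Ensuring this cancellation is compatible with the harmonic lifting $\mathrm{li}_\phi$ rather than the generic $\mathrm{li}$, and carrying it out symmetrically in both index orderings of the bilinear form, is the core technical step beyond Theorem \ref{thm14}.
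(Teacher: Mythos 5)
Your overall three-step skeleton (well-posedness of the lifts, $D_{2}\subset D_{2}^{\bot}$, $D_{2}^{\bot}\subset D_{2}$) matches the paper, but the central cancellation argument as you describe it does not go through. The ``skew-symmetry analogue of (\ref{antisy})'' you record is false: expanding with (\ref{rela4}) one finds
\begin{equation*}
\int_{\Omega}e_{\eta}^{1}\wedge\ast\big((\ast e_{\eta}^{2}+dN_{\phi}(e_{\phi}^{2}))\wedge\ast d\eta\big)+\int_{\Omega}e_{\eta}^{2}\wedge\ast\big((\ast e_{\eta}^{1}+dN_{\phi}(e_{\phi}^{1}))\wedge\ast d\eta\big)
=\int_{\Omega}\Big(d\eta\big((dN_{\phi}(e_{\phi}^{2}))^{\sharp},(\ast e_{\eta}^{1})^{\sharp}\big)+d\eta\big((dN_{\phi}(e_{\phi}^{1}))^{\sharp},(\ast e_{\eta}^{2})^{\sharp}\big)\Big)v_{\Omega},
\end{equation*}
and these cross terms do not vanish; only the full triple wedge $(\ast e_{\eta}^{1}+dN_{\phi}(e_{\phi}^{1}))\wedge(\ast e_{\eta}^{2}+dN_{\phi}(e_{\phi}^{2}))\wedge(\ast d\eta)$, with the combined effort in \emph{both} slots, is skew. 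So your claim that ``the convective bulk contributions cancel by the skew-symmetry above'' leaves uncancelled bulk terms. The second problem is your treatment of the mixed terms $\int_{\Omega}e_{\eta}^{1}\wedge[\eta,dN_{\phi}(e_{\phi}^{2})]_{1}$: Lemma \ref{lemma2} cannot transport these to the boundary, since it requires the partner of $\ast\delta(\mu\wedge df_{2})$ to be an exact form $df_{1}$, whereas $\ast e_{\eta}^{1}$ is only co-exact; these terms are genuinely bulk terms. Lemma \ref{lemma2} applies only when the partner is $dN_{\phi}(e_{\phi}^{i})$, which gives (\ref{eq77}).

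The missing idea, which the paper supplies as (\ref{eq75}), is that $de_{\eta}=0$ and $\mathrm{tr}(e_{\eta})=0$ give $\int_{\Omega}e_{\eta}\wedge df=0$ for every $f$; choosing $f=\mathrm{li}_{\phi}(f_{\phi})-\mathrm{li}(\tilde{e}_{\Sigma})+(-1)^{n-1}\mathrm{li}(\langle dN_{\phi}(e_{\phi}),\eta\rangle_{\Lambda^{1}})$ and inserting the defining relation for $d(\mathrm{li}_{\phi}(f_{\phi}))$ converts the $e_{\eta}$--Lie-bracket pairing into exactly the convective cross terms left over above, so that the bulk part reassembles into the genuinely skew combination; the remaining pieces are then handled by (\ref{eq76}) and (\ref{eq77}) and cancel against the $\Sigma$- and $\Gamma$-pairings. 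In other words, the two leftover families of terms in your scheme in fact cancel against each other, but your proposal asserts each cancels separately, which is incorrect. Relatedly, you never invoke the identity (\ref{eq138}), $\int_{\partial\Omega}e_{\phi}\wedge f_{\phi}=\langle dN_{\phi}(e_{\phi}),d(\mathrm{li}_{\phi}(f_{\phi}))\rangle_{L^{2}\Lambda^{1}(\Omega)}$, which is the only mechanism by which $f_{\phi}$ (known in $D_{2}$ solely through $d(\mathrm{li}_{\phi}(f_{\phi}))$) enters the bilinear form; it is indispensable both for the forward inclusion and for extracting the second defining relation of (\ref{dirac2}) in the reverse inclusion. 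Until these two points are repaired, both inclusions have a genuine gap, even though the ingredients you list (Lemma \ref{lemma2}, (\ref{lie}), the harmonic lifting $\mathrm{li}_{\phi}$, the staged choice of test efforts) are the right ones once reorganized around (\ref{eq138}) and (\ref{eq75}).
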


  \begin{lemma}\label{lemma15}
    Given $e_{\eta}\in\mathring{\mathfrak{B}}^{(n-1)}$, $e_{\phi}\in H^{-\frac{1}{2}}\Lambda^{n-1}(\partial\Omega)$ and $e_{\Sigma}\in H^{\frac{1}{2}}\Lambda^{0}(\Sigma)$, then $d\big(\mathrm{li}_{\phi}(f_{\phi})\big)$ in (\ref{dirac2}) is well-defined.
  \end{lemma}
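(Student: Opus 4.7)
The plan is to mimic the proof of Lemma \ref{lemma14}: show that the 1-form $d(\mathrm{li}_{\phi}(f_{\phi}))$ defined by the second row of (\ref{dirac2}) does not depend on the particular choice of the generic trace-lifting operator $\mathrm{li}$ that appears on the right-hand side. Inspecting (\ref{dirac2}), the terms $\ast((\ast e_{\eta}+dN_{\phi}(e_{\phi}))\wedge(\ast d\eta))$ and $[\eta, dN_{\phi}(e_{\phi})]_{1}$ do not involve $\mathrm{li}$. Hence for any two trace-liftings $\mathrm{li}, \mathrm{li}'$, the two resulting right-hand sides $\Psi$ and $\Psi'$ satisfy $\Psi - \Psi' = d\xi$, where $\xi := (\mathrm{li}-\mathrm{li}')(\tilde{e}_{\Sigma}) + (-1)^{n}(\mathrm{li}-\mathrm{li}')(\zeta)$ with $\zeta := \langle dN_{\phi}(e_{\phi}),\eta\rangle_{\Lambda^{1}}$. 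Since $\mathrm{li}$ and $\mathrm{li}'$ agree on the boundary, $\xi$ lies in $\mathring{H}^{1}\Lambda^{0}(\Omega)$, i.e., has vanishing boundary trace.

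Next, let $f_{\phi}$ and $f_{\phi}'$ be the boundary potentials associated to $\Psi$ and $\Psi'$, and set $\chi := \mathrm{li}_{\phi}(f_{\phi}) - \mathrm{li}_{\phi}(f_{\phi}') = \mathrm{li}_{\phi}(f_{\phi} - f_{\phi}')$, which is harmonic in $\Omega$ by linearity of the harmonic lift. Subtracting the two defining equations gives $d\chi = d\xi$, so on the connected domain $\Omega$ we have $\chi - \xi = C$ for some constant $C$. Applying $\delta d$ and using $\delta d\chi = 0$ (harmonicity of $\chi$) yields $\delta d\xi = 0$. Combined with $\mathrm{tr}(\xi) = 0$ and uniqueness of the homogeneous Dirichlet problem for the Laplacian in $\Omega$, this forces $\xi \equiv 0$, hence $\chi = C$ and $d(\mathrm{li}_{\phi}(f_{\phi})) = d(\mathrm{li}_{\phi}(f_{\phi}'))$.

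The key structural ingredient, as in Lemma \ref{lemma14}, is exploiting a specific property of the operator on the left-hand side --- here the harmonicity of $\mathrm{li}_{\phi}(f_{\phi})$ rather than the divergence-freeness of $f_{v}$ used there --- to promote $\xi$ from merely a zero-trace function to the unique solution of a homogeneous Dirichlet problem, which then forces $\xi$ itself to vanish. The main subtlety I anticipate is verifying the correct Sobolev regularity so that Dirichlet uniqueness applies: $\xi \in H^{1}\Lambda^{0}(\Omega)$ with $\mathrm{tr}(\xi) \in H^{1/2}\Lambda^{0}(\partial\Omega)$ equal to zero, which follows from boundedness of any admissible trace-lifting into $H^{1}\Lambda^{0}(\Omega)$ together with linearity.
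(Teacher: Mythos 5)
Your argument is correct and follows essentially the same route as the paper's own proof: in both, the difference of the two generic liftings (your $\xi$, the paper's $\hat f$) has vanishing trace, the harmonicity of the $\mathrm{li}_{\phi}$-lift transfers to give $\delta d\xi=0$, and uniqueness of the homogeneous Dirichlet problem forces $\xi=0$, so the right-hand sides coincide. The only cosmetic difference is that you allow the induced boundary data $f_{\phi}$, $f_{\phi}'$ to differ a priori and absorb this via the ``closed $0$-form is constant'' argument on the connected domain, whereas the paper fixes $f_{\phi}$ and works directly with the differences $\tilde f$ and $\hat f$; both versions are fine.
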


  \begin{proof}
    Assume that there exist $\mathrm{li}_{\phi}(f_{\phi}),\ \mathrm{li}'_{\phi}(f_{\phi})\in H^{1}\Lambda^{0}(\Omega)$ such that for two operators $\mathrm{li},\ \mathrm{li}':H^{\frac{1}{2}}\Lambda^{0}(\partial\Omega)\to H^{1}\Lambda^{0}(\Omega)$, we have
    \begin{equation*}
      \begin{aligned}
        d\big(\mathrm{li}_{\phi}(f_{\phi})\big)&=\ast\Big(\big(\ast e_{\eta}+dN_{\phi}(e_{\phi})\big)\wedge(\ast d\eta)\Big)+d\Big(\mathrm{li}\big(\tilde{e}_{\Sigma})+(-1)^{n}\mathrm{li}\big(\langle dN_{\phi}(e_{\phi}),\eta\rangle_{\Lambda^{1}}\big)\Big),\\
        d\big(\mathrm{li}'_{\phi}(f_{\phi})\big)&=\ast\Big(\big(\ast e_{\eta}+dN_{\phi}(e_{\phi})\big)\wedge(\ast d\eta)\Big)+d\Big(\mathrm{li}'\big(\tilde{e}_{\Sigma})+(-1)^{n}\mathrm{li}'\big(\langle dN_{\phi}(e_{\phi}),\eta\rangle_{\Lambda^{1}}\big)\Big).
      \end{aligned}
    \end{equation*}
    Let $\tilde{f}:=\mathrm{li}_{\phi}(f_{\phi})-\mathrm{li}'_{\phi}(f_{\phi})$, then with (\ref{tracelift2}), we directly have that
    \begin{equation}\label{eq96}
      \mathrm{tr}(\tilde{f})=\mathrm{tr}\big(\mathrm{li}_{\phi}(f_{\phi})-\mathrm{li}'_{\phi}(f_{\phi})\big)=0.
    \end{equation}
     Since $\mathrm{li}_{\phi}(\mu)$, $\mathrm{li}_{\phi}'(\mu)$ both satisfy the Laplace equation, see Definition 7.6, we have
    \begin{equation}\label{eq97}
      \delta d\big(\mathrm{li}_{\phi}(f_{\phi})-\mathrm{li}'_{\phi}(f_{\phi})\big)=\delta d\tilde{f}=0,
    \end{equation}
    hence, from (\ref{eq96}) and (\ref{eq97}) we have
    \begin{equation*}
      \tilde{f}=\mathrm{li}_{\phi}(f_{\phi})-\mathrm{li}'_{\phi}(f_{\phi})=0.
    \end{equation*}
    Next, let $\hat{f}=\mathrm{li}(\tilde{e}_{\Sigma})-\mathrm{li}'(\tilde{e}_{\Sigma})+(-1)^{n}\mathrm{li}(\langle dN_{\phi}(e_{\phi}),\eta\rangle_{\Lambda^{1}})-(-1)^{n}\mathrm{li}'(\langle dN_{\phi}(e_{\phi}),\eta\rangle_{\Lambda^{1}})$. Using (\ref{tracelift}), we have 
    \begin{equation}\label{eq133}
      \mathrm{tr}(\hat{f})=0\quad\text{at }\partial\Omega.
    \end{equation}
    Since $\tilde{f}$ is harmonic we obtain 
    \begin{equation}
      0=\delta d\tilde{f}=\delta d\hat{f},
    \end{equation}
    which together with (\ref{eq133}) implies $\hat{f}=0$ in $\Omega$.
  \end{proof}

  In the following, we give the proof of Theorem \ref{thm6}.

  \begin{proof}
    \begin{enumerate}[(1)]

      \item From the definition of the trace-lifting operator $\mathrm{li}_{\phi}$ (\ref{tracelift2}), and taking $\phi=N_{\phi}(e_{\phi}^{i}),\ \psi=\mathrm{li}_{\phi}(f_{\phi}^{j}),\ i,j\in\{1,2\}$ in (\ref{eq82}) with $f_{\phi}^{j}\in H^{\frac{1}{2}}\Lambda^{0}(\partial\Omega)$, $e_{\phi}^{i}\in H^{-\frac{1}{2}}\Lambda^{n-1}(\partial\Omega)$, we obtain
      \begin{equation}\label{eq138}
        \begin{aligned}
          \langle dN_{\phi}(e_{\phi}^{i}),d\big(\mathrm{li}_{\phi}(f_{\phi}^{j})\big)\rangle_{L^{2}\Lambda^{1}(\Omega)}=\int_{\partial\Omega}e_{\phi}^{i}\wedge f_{\phi}^{j}.
        \end{aligned}
      \end{equation}
      Thus, (\ref{bilinear3}) can be rewritten as
      \begin{equation}\label{bilinear4}
        \begin{aligned}
          \llangle(f^{1},e^{1}),(f^{2},e^{2})\rrangle=&\int_{\Omega}\Big(e_{\eta}^{1}\wedge f_{\eta}^{2}+e_{\eta}^{2}\wedge f_{\eta}^{1}+dN_{\phi}(e_{\phi}^{1})\wedge\ast d\big(\mathrm{li}_{\phi}(f_{\phi}^{2})\big)\\
          &+dN_{\phi}(e_{\phi}^{2})\wedge\ast d\big(\mathrm{li}_{\phi}(f_{\phi}^{1})\big)\Big)
          +\int_{\Sigma}\Big(e_{\Sigma}^{1}\wedge f_{\Sigma}^{2}+e_{\Sigma}^{2}\wedge f_{\Sigma}^{1}\Big)\\
          &+\int_{\Gamma}\Big(e_{b}^{1}\wedge f_{b}^{2}+e_{b}^{2}\wedge f_{b}^{1}\Big).
        \end{aligned}
      \end{equation}

      \item Since $e_{\eta}\in\mathring{\mathfrak{B}}^{(n-1)}=d\mathring{H}\Lambda^{n-2}(\Omega)$, it follows that $de_{\eta}=0$ and $\mathrm{tr}(e_{\eta})=0$. Then using (\ref{eq42}) and (\ref{part}), together with $\delta(\ast e_{\eta})=(-1)^{n}\ast(de_{\eta})=0$, we obtain for any $f\in L^{2}\Lambda^{0}(\Omega)$ that 
      \begin{equation*}
        \begin{aligned}
          \int_{\Omega}e_{\eta}\wedge df=&\langle f,\delta(\ast e_{\eta})\rangle_{L^{2}\Lambda^{0}(\Omega)}+(-1)^{n-1}\int_{\partial\Omega}\mathrm{tr}(f)\wedge\mathrm{tr}(e_{\eta})=0.
        \end{aligned}
      \end{equation*}
      Choosing $f=\mathrm{li}_{\phi}(f_{\phi}^{j})-\mathrm{li}\big(\tilde{e}_{\Sigma}^{j})+(-1)^{n-1}\mathrm{li}\big(\langle dN_{\phi}(e_{\phi}^{j}),\eta\rangle_{\Lambda^{1}(\Omega)}\big)$ for $i,j\in\{1,2\}$, then gives 
      \begin{equation*}
        \begin{aligned}
          \int_{\Omega}e_{\eta}^{i}\wedge d\Big(\mathrm{li}_{\phi}(f_{\phi}^{j})-\mathrm{li}(\tilde{e}_{\Sigma}^{j})+(-1)^{n-1}\mathrm{li}\big(\langle dN_{\phi}(e_{\phi}^{j}),\eta\rangle_{\Lambda^{1}}\big)\Big)=0,
        \end{aligned}
      \end{equation*}
      which implies using the expression for $f_{\phi}$ in (\ref{dirac2}) that  
      \begin{equation}\label{eq75}
        \begin{aligned}
          \int_{\Omega}e_{\eta}^{i}\wedge[\eta,dN_{\phi}(e_{\phi}^{j})]_{1}=-\int_{\Omega}e_{\eta}^{i}\wedge\ast\Big((\ast e_{\eta}^{j}+dN_{\phi}(e_{\phi}^{j}))\wedge(\ast d\eta)\Big).
        \end{aligned}
      \end{equation}

      \item Next, we show that $D_{2}\subset D_{2}^{\bot}$. Let $(f^{1},e^{1})\in D_{2}$ be fixed, and consider any $(f^{2},e^{2})\in D_{2}$. From (\ref{bilinear4}), we have that
      \begin{equation}\label{eq78}
        \begin{aligned}
          \llangle(f^{1},e^{1}),(f^{2},e^{2})\rrangle
          =&\int_{\Omega}\Big(-e_{\eta}^{1}\wedge[\eta,dN_{\phi}(e_{\phi}^{2})]_{1}-e_{\eta}^{2}\wedge[\eta,dN_{\phi}(e_{\phi}^{1})]_{1}\\
          &+(-1)^{n-1}dN_{\phi}(e_{\phi}^{1})\wedge\big(\ast e_{\eta}^{2}+dN_{\phi}(e_{\phi}^{2})\big)\wedge(\ast d\eta)\\
          &+dN_{\phi}(e_{\phi}^{1})\wedge\ast d\Big(\mathrm{li}\big(\tilde{e}_{\Sigma}^{2}\big)+(-1)^{n}\mathrm{li}\big(\langle dN_{\phi}(e_{\phi}^{2}),\eta\rangle_{\Lambda^{1}}\big)\Big)\\
          &+dN_{\phi}(e_{\phi}^{1})\wedge\ast[\eta,dN_{\phi}(e_{\phi}^{2})]_{1}\\
          &+(-1)^{n-1}dN_{\phi}(e_{\phi}^{2})\wedge\big(\ast e_{\eta}^{1}+dN_{\phi}(e_{\phi}^{1})\big)\wedge(\ast d\eta)\\
          &+dN_{\phi}(e_{\phi}^{2})\wedge\ast d\Big(\mathrm{li}\big(\tilde{e}_{\Sigma}^{1}\big)+(-1)^{n}\mathrm{li}\big(\langle dN_{\phi}(e_{\phi}^{1}),\eta\rangle_{\Lambda^{1}}\big)\Big)\\          
          &+dN_{\phi}(e_{\phi}^{2})\wedge\ast[\eta,dN_{\phi}(e_{\phi}^{1})]_{1}\Big)\\
          &-\int_{\Sigma}\big(e_{\Sigma}^{1}\wedge e_{\phi}^{2}+e_{\Sigma}^{2}\wedge e_{\phi}^{1}\big)-\int_{\Gamma}\big(\tilde{e}_{\Sigma}^{1}\wedge e_{\phi}^{2}+\tilde{e}_{\Sigma}^{2}\wedge e_{\phi}^{1}\big).
        \end{aligned}
      \end{equation}

      Using (\ref{eq82}) with $g_{\partial}=e_{\phi}^{i}$ and the definition of the lifting operator (\ref{tracelift}), we obtain for $i,j\in\{1,2\}$
      \begin{equation}\label{eq76}
        \begin{aligned}
          &\int_{\Omega}dN_{\phi}(e_{\phi}^{i})\wedge\ast d\Big(\mathrm{li}\big(\tilde{e}_{\Sigma}^{j}\big)+(-1)^{n}\mathrm{li}\big(\langle dN_{\phi}(e_{\phi}^{j}),\eta\rangle_{\Lambda^{1}}\big)\Big)\\
          =&\Big\langle dN_{\phi}(e_{\phi}^{i}),d\Big(\mathrm{li}\big(\tilde{e}_{\Sigma}^{j}\big)+(-1)^{n}\mathrm{li}\big(\langle dN_{\phi}(e_{\phi}^{j}),\eta\rangle_{\Lambda^{1}}\big)\Big)\Big\rangle_{L^{2}\Lambda^{1}(\Omega)}\\
          =&\int_{\partial\Omega}\tilde{e}_{\Sigma}^{j}\wedge e_{\phi}^{i}+(-1)^{n}\int_{\partial\Omega}\langle dN_{\phi}(e_{\phi}^{j}),\eta\rangle_{\Lambda^{1}}\wedge e_{\phi}^{i}.
        \end{aligned}
      \end{equation}
      Next, from (\ref{lie}) and Lemma \ref{lemma2}, we have for $i,j\in\{1,2\}$
      \begin{equation}\label{eq77}
        \begin{aligned}
          \int_{\Omega}dN_{\phi}(e_{\phi}^{i})\wedge\ast[\eta,dN_{\phi}(e_{\phi}^{j})]_{1}=&(-1)^{n-1}\int_{\Omega}dN_{\phi}(e_{\phi}^{i})\wedge\ast\delta\big(\eta\wedge dN_{\phi}(e_{\phi}^{j})\big)\\
          =&(-1)^{n-1}\int_{\partial\Omega}\big(\langle dN_{\phi}(e_{\phi}^{i}),\eta\rangle_{\Lambda^{1}}i_{\mathcal{N}}dN_{\phi}(e_{\phi}^{j})\big)v_{\Sigma}\\
          =&(-1)^{n-1}\int_{\partial\Omega}\langle dN_{\phi}(e_{\phi}^{i}),\eta\rangle_{\Lambda^{1}}\wedge e_{\phi}^{j},
        \end{aligned}
      \end{equation}
      where in the first step we use that fact that $\delta dN_{\phi}(e_{\phi}^{i})=0,i=1,2$ and $\delta\eta=0$ for $\eta\in\mathring{\mathfrak{B}}^{\ast 1}$. To show the last step, let $u=dN_{\phi}(e_{\phi}^{j})$. Then $u$ satisfies
      \begin{equation}\label{eq95}
        \delta u=0 \text{ in }\Omega,\quad\ast\boldsymbol{n}(u)=e_{\phi}^{j}\text{ at }\partial\Omega.
      \end{equation}
      By applying \citep[Proposition 3.20]{csato2011pullback}, we have that
      \begin{equation*}
        \boldsymbol{n}(u)=\mathcal{N}^{\flat}\wedge(i_{\mathcal{N}}u).
      \end{equation*}
      With (\ref{eq50}) and $i_{\mathcal{N}}u\in L^{2}\Lambda^{0}(\partial\Omega)$, we have
      \begin{equation*}
        \ast\boldsymbol{n}(u)=\ast\big(\mathcal{N}^{\flat}\wedge(i_{\mathcal{N}}u)\big)=(i_{\mathcal{N}}u)\wedge(\ast\mathcal{N}^{\flat})=(i_{\mathcal{N}}u)v_{\Sigma}.
      \end{equation*}
      Finally, using (\ref{eq95}), we obtain that $i_{\mathcal{N}}dN_{\phi}(e_{\phi}^{j})v_{\Sigma}=e_{\phi}^{j}$.
      Substituting (\ref{eq75}), (\ref{eq76}) and (\ref{eq77}) into (\ref{eq78}), we have 
      \begin{align*}
          &\llangle(f^{1},e^{1}),(f^{2},e^{2})\rrangle\\
          =&\int_{\Omega}\Big((-1)^{n-1}\big(\ast e_{\eta}^{1}+dN_{\phi}(e_{\phi}^{1})\big)\wedge\big(\ast e_{\eta}^{2}+dN_{\phi}(e_{\phi}^{2})\big)\wedge(\ast d\eta)\\
          &+(-1)^{n-1}\big(\ast e_{\eta}^{2}+dN_{\phi}(e_{\phi}^{2})\big)\wedge\big(\ast e_{\eta}^{1}+dN_{\phi}(e_{\phi}^{1})\big)\wedge(\ast d\eta)\Big)\\
          &+\int_{\partial\Omega}\Big(\tilde{e}_{\Sigma}^{2}\wedge e_{\phi}^{1}+(-1)^{n}\langle dN_{\phi}(e_{\phi}^{2}),\eta\rangle_{\Lambda^{1}}\wedge e_{\phi}^{1} \Big)\\
          &+\int_{\partial\Omega}\Big(\tilde{e}_{\Sigma}^{1}\wedge e_{\phi}^{2}+(-1)^{n}\langle dN_{\phi}(e_{\phi}^{1}),\eta\rangle_{\Lambda^{1}}\wedge e_{\phi}^{2} \Big)\\ 
          &+(-1)^{n-1}\int_{\partial\Omega}\Big(\langle dN_{\phi}(e_{\phi}^{1}),\eta\rangle_{\Lambda^{1}}\wedge e_{\phi}^{2}+\langle dN_{\phi}(e_{\phi}^{2}),\eta\rangle_{\Lambda^{1}}\wedge e_{\phi}^{1}\Big)\\
          &-\int_{\partial\Omega}(\tilde{e}_{\Sigma}^{1}\wedge e_{\phi}^{2}+\tilde{e}_{\Sigma}^{2}\wedge e_{\phi}^{1})\\
          =&\ 0.
        \end{align*}
        Therefore, $(f^{2},e^{2})\in D^{\bot}$ which implies that $D_{2}\subset D_{2}^{\bot}$.

        \item Finally, we show that $D_{2}^{\bot}\subset D_{2}$. Let $(f^{1},e^{1})\in D_{2}^{\bot}\subset\mathcal{F}_{2}\times\mathcal{E}_{2}$. Then,
        \begin{equation*}
          \llangle(f^{1},e^{1}),(f^{2},e^{2})\rrangle=0,\quad\forall(f^{2},e^{2})\in D_{2},
        \end{equation*}
        which equals
        \begin{equation}\label{eq79}
          \begin{aligned}
            &\int_{\Omega}\Big(-e_{\eta}^{1}\wedge[\eta,dN_{\phi}(e_{\phi}^{2})]_{1}+e_{\eta}^{2}\wedge f_{\eta}^{1}+(-1)^{n-1}dN_{\phi}(e_{\phi}^{1})\wedge\big(\ast e_{\eta}^{2}+dN_{\phi}(e_{\phi}^{2})\big)\\
            &\wedge(\ast d\eta)+dN_{\phi}(e_{\phi}^{1})\wedge\ast d\big(\mathrm{li}(\tilde{e}_{\Sigma}^{2})+(-1)^{n}\mathrm{li}(\langle dN_{\phi}(e_{\phi}^{2}),\eta\rangle_{\Lambda^{1}})\big)\\
            &+dN_{\phi}(e_{\phi}^{1})\wedge\ast[\eta,dN_{\phi}(e_{\phi}^{2})]_{1}
            +dN(e_{\phi}^{2})\wedge\ast d(\mathrm{li}_{\phi}(f_{\phi}^{1}))\Big)\\
            &+\int_{\Sigma}(-e_{\Sigma}^{1}\wedge e_{\phi}^{2}+e_{\Sigma}^{2}\wedge f_{\Sigma}^{1})+\int_{\Gamma}(-e_{b}^{1}\wedge e_{\phi}^{2}+\tilde{e}_{\Sigma}^{2}\wedge f_{b}^{1})=0.
          \end{aligned}
        \end{equation}
        Next, we take $\tilde{e}_{\Sigma}^{2}\in H_{00}^{\frac{1}{2}}\Lambda^{0}(\Sigma)$ which implies $\tilde{e}_{\Sigma}^{2}|_{\Gamma}=0$, $e_{\phi}^{2}\in H^{-\frac{1}{2}}\Lambda^{n-1}(\partial\Omega)$ such that $e_{\phi}^{2}|_{\Gamma}=0$.
        Using (\ref{eq75}), (\ref{eq76}) and (\ref{eq77}), then (\ref{eq79}) can be rewritten as
        \begin{equation}\label{eq92}
          \begin{aligned}
            &\int_{\Omega}\Big(e_{\eta}^{1}\wedge\ast\big((\ast e_{\eta}^{2}+dN_{\phi}(e_{\phi}^{2}))\wedge(\ast d\eta)\big)+e_{\eta}^{2}\wedge f_{\eta}^{1}+dN_{\phi}(e_{\phi}^{2})\\
            &\wedge\ast d(\mathrm{li}_{\phi}(f_{\phi}^{1}))
            +(-1)^{n-1}dN_{\phi}(e_{\phi}^{1})\wedge\big(\ast e_{\eta}^{2}+dN_{\phi}(e_{\phi}^{2})\big)\wedge(\ast d\eta)\Big)\\
            &+\int_{\partial\Omega}\big(\tilde{e}_{\Sigma}^{2}\wedge e_{\phi}^{1}+(-1)^{n}\langle dN_{\phi}(e_{\phi}^{2}),\eta\rangle_{\Lambda^{1}}\wedge e_{\phi}^{1}\big)\\
            &+(-1)^{n-1}\int_{\partial\Omega}\langle dN_{\phi}(e_{\phi}^{1}),\eta\rangle_{\Lambda^{1}}\wedge e_{\phi}^{2}+\int_{\Sigma}(-e_{\Sigma}^{1}\wedge e_{\phi}^{2}+e_{\Sigma}^{2}\wedge f_{\Sigma}^{1})=0.
          \end{aligned}
        \end{equation}
        Using the boundary conditions $\tilde{e}_{\Sigma}^{2}|_{\Gamma}=e_{\phi}^{2}|_{\Gamma}=0$, we have
        \begin{equation*}
          \int_{\partial\Omega}\tilde{e}_{\Sigma}^{2}\wedge e_{\phi}^{1}-\int_{\Sigma}e_{\Sigma}^{1}\wedge e_{\phi}^{2}=\int_{\Sigma}e_{\Sigma}^{2}\wedge e_{\phi}^{1}-\int_{\partial\Omega}\tilde{e}_{\Sigma}^{1}\wedge e_{\phi}^{2}.
        \end{equation*}
        and (\ref{eq92}) becomes
        \begin{equation*}
          \begin{aligned}
            &\int_{\Omega}\Big((-1)^{n-1}\big(\ast e_{\eta}^{1}+dN_{\phi}(e_{\phi}^{1})\big)\wedge\big(\ast e_{\eta}^{2}+dN_{\phi}(e_{\phi}^{2})\big)\wedge(\ast d\eta)+e_{\eta}^{2}\wedge f_{\eta}^{1}\\
            &+dN(e_{\phi}^{2})\wedge\ast d(\mathrm{li}_{\phi}(f_{\phi}^{1})) \Big)
            +\int_{\partial\Omega}\big(-\tilde{e}_{\Sigma}^{1}\wedge e_{\phi}^{2}+(-1)^{n-1}\langle dN_{\phi}(e_{\phi}^{1}),\eta\rangle_{\Lambda^{1}}\wedge e_{\phi}^{2} \big)\\
            &+(-1)^{n}\int_{\partial\Omega}\langle dN_{\phi}(e_{\phi}^{2}),\eta\rangle_{\Lambda^{1}}\wedge e_{\phi}^{1}+\int_{\Sigma}\big(e_{\Sigma}^{2}\wedge f_{\Sigma}^{1}+e_{\Sigma}^{2}\wedge e_{\phi}^{1}\big)=0.
          \end{aligned}
        \end{equation*}
        Again, applying (\ref{eq75}), (\ref{eq76}) and (\ref{eq77}), we have that
        \begin{equation}\label{eq83}
          \begin{aligned}
            &\int_{\Omega}\Big((-1)^{n-1}\big(\ast e_{\eta}^{1}+dN_{\phi}(e_{\phi}^{1})\big)\wedge\big(\ast e_{\eta}^{2}+dN_{\phi}(e_{\phi}^{2})\big)\wedge(\ast d\eta)+e_{\eta}^{2}\wedge f_{\eta}^{1}\\
            &+dN(e_{\phi}^{2})\wedge\ast d(\mathrm{li}_{\phi}(f_{\phi}^{1}))-dN_{\phi}(e_{\phi}^{2})\wedge\ast d\big(\mathrm{li}(\tilde{e}_{\Sigma}^{1})+(-1)^{n}\mathrm{li}(\langle dN_{\phi}(e_{\phi}^{1}),\eta\rangle_{\Lambda^{1}})\big)\\
            &-dN_{\phi}(e_{\phi}^{2})\wedge\ast[\eta,dN_{\phi}(e_{\phi}^{1})]_{1}\Big)+\int_{\Sigma}\big(e_{\Sigma}^{2}\wedge f_{\Sigma}^{1}+e_{\Sigma}^{2}\wedge e_{\phi}^{1}\big)=0.
          \end{aligned}
        \end{equation}
        Since the first term in (\ref{eq83}) can be split into two parts, it can be further evaluated as
        \begin{equation}\label{eq84}
          \begin{aligned}
            &\big(\ast e_{\eta}^{1}+dN_{\phi}(e_{\phi}^{1})\big)\wedge\big(\ast e_{\eta}^{2}+dN_{\phi}(e_{\phi}^{2})\big)\wedge(\ast d\eta)\\
            =&(-1)^{n}e_{\eta}^{2}\wedge\ast\Big(\big(\ast e_{\eta}^{1}+dN_{\phi}(e_{\phi}^{1})\big)\wedge(\ast d\eta)\Big)-dN_{\phi}(e_{\phi}^{2})\wedge\big(\ast e_{\eta}^{1}+dN_{\phi}(e_{\phi}^{1})\big)\wedge(\ast d\eta).
          \end{aligned}
        \end{equation}
        Finally, using (\ref{eq75}) and (\ref{eq84}), then (\ref{eq83}) can be transformed into 
        \begin{equation*}
          \begin{aligned}
            &\int_{\Omega}\Big(e_{\eta}^{2}\wedge\big(f_{\eta}^{1}+[\eta,dN_{\phi}(e_{\phi}^{1})]_{1}\big)+dN_{\phi}(e_{\phi}^{2})\wedge\ast\Big(d\big(\mathrm{li}_{\phi}(f_{\phi}^{1}\big))\\
            &-\ast\big(\big(\ast e_{\eta}^{1}+dN_{\phi}(e_{\phi}^{1})\big)\wedge(\ast d\eta)\big)-d\big(\mathrm{li}(\tilde{e}_{\Sigma}^{1})+(-1)^{n}\mathrm{li}(\langle dN_{\phi}(e_{\phi}^{1}),\eta\rangle_{\Lambda^{1}})\big)\\
            &-[\eta,dN_{\phi}(e_{\phi}^{1})]_{1}\Big)+\int_{\Sigma}\big(e_{\Sigma}^{2}\wedge f_{\Sigma}^{1}+e_{\Sigma}^{2}\wedge e_{\phi}^{1}\big)=0.
          \end{aligned}
        \end{equation*}
        Since $e_{\eta}^{2},\ e_{\phi}^{2}, e_{\Sigma}^{2}$ are arbitrary, the integral can only be zero if 
        \begin{equation}\label{eq80}
          \begin{cases}
            f_{\eta}^{1}&=-[\eta,dN_{\phi}(e_{\phi}^{1})]_{1},\\
            d\big(\mathrm{li}_{\phi}(f_{\phi}^{1})\big)&=\ast\Big((\ast e_{\eta}^{1}+dN_{\phi}(e_{\phi}^{1}))\wedge(\ast d\eta)\Big)+d\Big(\mathrm{li}(\tilde{e}_{\Sigma}^{1})\\
            &\quad +(-1)^{n}\mathrm{li}(\langle dN_{\phi}(e_{\phi}^{1}),\eta\rangle_{\Lambda^{1}})\Big)+[\eta,dN_{\phi}(e_{\phi}^{1})]_{1},\\
            f_{\Sigma}^{1}&=-e_{\phi}^{1}.
          \end{cases}
        \end{equation}
        Substituting (\ref{eq80}) into (\ref{eq79}), following the same steps as in Step (3), we obtain
        \begin{equation*}
          \int_{\Gamma}\big((-e_{b}^{1}+\tilde{e}_{\Sigma}^{1})\wedge e_{\phi}^{2}+\tilde{e}_{\Sigma}^{2}\wedge(f_{b}^{1}+e_{\phi}^{1}) \big)=0.
        \end{equation*}
        Hence,
        \begin{equation*}
          f_{b}^{1}=-e_{\phi}^{1}|_{\Gamma},\quad e_{b}^{1}=\tilde{e}_{\Sigma}^{1}|_{\Gamma}.  
        \end{equation*}

    \end{enumerate}
  \end{proof}

  \begin{corollary}
    Given an oriented and simply connected $n$-dimensional manifold $\Omega$ with Lipschitz continuous boundary $\partial\Omega$.
    Given the essential boundary condition (\ref{vor4}f) and the port variable $f_{b}=-g$.
    The distributed-parameter port-Hamiltonian system for the incompressible Euler equations with a free surface (\ref{vor4}) with respect to the variables $(\eta,\phi_{\partial},\Sigma)\in\mathring{\mathfrak{B}}^{\ast 1}\times H^{\frac{1}{2}}\Lambda^{0}(\partial\Omega)\times H^{-\frac{1}{2}}\Lambda^{n-1}(\partial\Omega)$,
    Dirac structure $D_{2}$ (\ref{dirac2}) and Hamiltonian (\ref{hami4}), is given by
    \begin{equation}\label{coro4}
      \begin{aligned}    
        &
      \begin{pmatrix}
        -\eta_{t} \\
        -d\phi_{t}\\
        -\Sigma_{t}
       \end{pmatrix} 
       =
       \begin{pmatrix}
        0 &  -[\eta,dN_{\phi}(\cdot)]_{1} & 0 \\
        \ast\big((\ast\cdot)\wedge\ast d\eta\big)& A_{22} & d\big(\mathrm{li}_{\Sigma}(E(\cdot))\big)\\
        0 & -1 & 0
        \end{pmatrix}
        \begin{pmatrix}
          \frac{\delta\tilde{H}}{\delta\eta} \\
          \frac{\delta\tilde{H}}{\delta\phi_{\partial}}\\
          \frac{\delta\tilde{H}}{\delta\Sigma}
         \end{pmatrix},
        \end{aligned}
      \end{equation}
      where $A_{22}=\ast\big(dN_{\phi}(\cdot)\wedge\ast d\eta \big)+(-1)^{n}d\big(\mathrm{li}(\langle dN_{\phi}(\cdot),\eta\rangle_{\Lambda^{1}})\big)+[\eta,dN_{\phi}(\cdot)]_{1}$, 
      with port variables $f_{b},e_{b}$ defined as
      \begin{equation}\label{coro5}
        \begin{aligned} 
        &
       \begin{pmatrix}
        f_{b}  \\
        e_{b}
       \end{pmatrix} 
       =
       \begin{pmatrix}
        -1\quad & 0 \\
        0\quad & E(\cdot)
       \end{pmatrix}
       \begin{pmatrix}
        \frac{\delta\tilde{H}}{\delta\phi_{\partial}} \\
        \frac{\delta\tilde{H}}{\delta\Sigma}
       \end{pmatrix}.
     \end{aligned}
    \end{equation}
  \end{corollary}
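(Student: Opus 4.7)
The plan is to verify the corollary directly by substituting the functional derivatives of $\tilde{H}$ into the Dirac structure relations of Theorem \ref{thm6}, choosing the flow variables $f_\eta = -\eta_t$, $f_\phi = -(\phi_\partial)_t$, $f_\Sigma = -\Sigma_t$, $f_b = -g$, and checking that each row recovers the corresponding equation in (\ref{vor4}). The proof follows the same pattern as the proof of the $(v,\Sigma)$ port-Hamiltonian corollary, so the structural steps are already laid out; what remains is an algebraic verification using the identities collected in Sections 2 and 4.

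The key preliminary simplifications come from (\ref{eq106})--(\ref{eq108}). Using $\ast\ast\eta = (-1)^{n-1}\eta$ I would first observe that $\ast\tfrac{\delta\tilde{H}}{\delta\eta} = \eta$, and using the defining property of $N_\phi$ in Lemma \ref{lemma6} together with $\tfrac{\delta\tilde{H}}{\delta\phi_\partial} = \ast\boldsymbol{n}(d\phi)$ I would deduce $dN_\phi(\tfrac{\delta\tilde{H}}{\delta\phi_\partial}) = d\phi$. These combine to give the clean identity $\ast\tfrac{\delta\tilde{H}}{\delta\eta} + dN_\phi(\tfrac{\delta\tilde{H}}{\delta\phi_\partial}) = \eta + d\phi = v$, which is what turns the abstract Dirac-structure formulas into the concrete Euler terms. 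The first row of (\ref{coro4}) then immediately reads $\eta_t = [\eta, d\phi]_1$, which is precisely (\ref{vor4}a) with the port-Hamiltonian choice $u' = d\phi$ (this is the natural extension of the Hamiltonian case, where $u' = dN_\phi(\Sigma_t)$ coincided with $d\phi$ only because $g=0$).

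For the second row I would use (\ref{rela4}) to rewrite $\ast(v\wedge\ast d\eta) = i_{v^\sharp}\omega$, and then invoke the essential boundary condition (\ref{vor4}f), $\mathrm{tr}(\tilde p/\rho) = \tau k/\rho$, to absorb the surface-tension term in $\tfrac{\delta\tilde{H}}{\delta\Sigma}$ into $\mathrm{tr}(h)$, whose interior lift is just $h$ itself so that $d\,\mathrm{li}_\Sigma(E(\mathrm{tr}(h))) = dh$. The crucial cancellation is between the $(-1)^n d\bigl(\mathrm{li}(\langle dN_\phi(\cdot),\eta\rangle_{\Lambda^1})\bigr)$ piece of $A_{22}$ acting on $\tfrac{\delta\tilde{H}}{\delta\phi_\partial}$ (which becomes $(-1)^n d\,\mathrm{li}(\langle d\phi,\eta\rangle_{\Lambda^1})$) and the $(-1)^{n-1}\langle d\phi,\eta\rangle_{\Lambda^1}$ contribution inside $\tfrac{\delta\tilde{H}}{\delta\Sigma}$ lifted by $d\,\mathrm{li}_\Sigma\!\circ\! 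E$; these two terms have opposite signs and the same lift (by uniqueness of the harmonic lifting, Lemma \ref{lemma15}), so they annihilate. Collecting the remaining pieces yields $-d\phi_t = i_{v^\sharp}\omega + [\eta,d\phi]_1 + dh$, which is (\ref{vor4}b). The third row is just $-\Sigma_t = -\ast\boldsymbol{n}(d\phi)$, giving (\ref{vor4}c), and the remaining constraints (\ref{vor4}d), the divergence-free condition, and $\boldsymbol{n}(\eta)=0$ are automatic from $\eta\in\mathring{\mathfrak{B}}^{\ast 1}$ and $\phi$ being a harmonic lift.

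Finally, for the port variables (\ref{coro5}), substituting into $f_b = -e_\phi|_\Gamma = -\ast\boldsymbol{n}(d\phi)|_\Gamma$ and imposing the data $f_b = -g$ recovers exactly the inhomogeneous boundary condition (\ref{vor4}e), while $e_b = \tilde{e}_\Sigma|_\Gamma = E(\tfrac{\delta\tilde{H}}{\delta\Sigma})|_\Gamma$ is read off directly from the Dirac structure. I expect the main obstacle to be the bookkeeping in the second row: keeping track of the signs $(-1)^n, (-1)^{n-1}$ across the various $\mathrm{li}$, $\mathrm{li}_\Sigma$, $E$, and $N_\phi$ maps, and justifying rigorously that the two $\langle d\phi,\eta\rangle_{\Lambda^1}$ liftings agree (so that the cancellation is exact) rather than only up to an additive harmonic function with the same boundary trace, which is the content hidden behind Lemma \ref{lemma15}.
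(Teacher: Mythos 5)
Your proposal follows essentially the same route as the paper's proof, which simply substitutes the functional derivatives (\ref{func7}) of $\tilde{H}$ into the Dirac structure relations (\ref{dirac2}) with the essential boundary condition (\ref{vor4}f) and the port variable $f_{b}=-g$, and reads off (\ref{vor4}) row by row. Your version supplies the computations the paper leaves implicit — $\ast\frac{\delta\tilde{H}}{\delta\eta}+dN_{\phi}(\frac{\delta\tilde{H}}{\delta\phi_{\partial}})=\eta+d\phi=v$, the cancellation of the $\langle d\phi,\eta\rangle_{\Lambda^{1}}$ contributions, and the identification of the lifted surface term with $dh$ via the well-definedness lemma — and is correct to the same standard as the paper's own argument.
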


  \begin{proof}
    Substituting the functional derivatives
    \begin{equation*}
      e_{\eta}=\frac{\delta\tilde{H}}{\delta\eta},\quad e_{\phi}=\frac{\delta\tilde{H}}{\delta\phi_{\partial}},\quad e_{\Sigma}=\frac{\delta\tilde{H}}{\delta\Sigma},
    \end{equation*}
    given in (\ref{func7}) into (\ref{coro4}), with the essential boundary condition (\ref{vor4}f) and the port variable $f_{b}=-g$, we immediately obtain (\ref{vor4}).
  \end{proof}

  From the bilinear form (\ref{bilinear4}), we can define the following bilinear form 
  \begin{equation*}
    [e_{1},e_{2}]_{D}=\int_{\Omega}\Big(e_{\eta}^{1}\wedge f_{\eta}^{2}+dN_{\phi}(e_{\phi}^{1})\wedge\ast d\big(\mathrm{li}_{\phi}(f_{\phi}^{2})\big)\Big)+\int_{\Sigma}e_{\Sigma}^{1}\wedge f_{\Sigma}^{2}+\int_{\Gamma}e_{b}^{1}\wedge f_{b}^{2}.
  \end{equation*}

  For $\frac{\delta\tilde{\mathcal{F}}}{\delta\eta},\frac{\delta\tilde{\mathcal{G}}}{\delta\eta}\in\mathring{\mathfrak{B}}^{(n-1)}$, $\frac{\delta\tilde{\mathcal{F}}}{\delta\phi_{\partial}},\frac{\delta\tilde{\mathcal{G}}}{\delta\phi_{\partial}}\in H^{-\frac{1}{2}}\Lambda^{n-1}(\partial\Omega)$, $\frac{\delta\tilde{\mathcal{F}}}{\delta\Sigma},\frac{\delta\tilde{\mathcal{G}}}{\delta\Sigma}\in H^{\frac{1}{2}}\Lambda^{0}(\Sigma)$, the Poisson bracket $\{\cdot,\cdot\}_{D}(\eta,\phi_{\partial},\Sigma)$ associated to the Dirac structure (\ref{dirac2}) is defined as
  \begin{equation}\label{bracket2}
    \begin{aligned}
      &\{\tilde{\mathcal{F}},\tilde{\mathcal{G}}\}_{D}(\eta,\phi_{\partial},\Sigma)\\
      =&-\int_{\Omega}\frac{\delta\tilde{\mathcal{G}}}{\delta\eta}\wedge[\eta,dN_{\phi}(\frac{\delta\tilde{\mathcal{F}}}{\delta\phi_{\partial}})]_{1}+\int_{\Omega}dN_{\phi}(\frac{\delta\tilde{\mathcal{G}}}{\delta\phi_{\partial}})\wedge\ast\Big(\ast\big((\ast\frac{\delta\tilde{\mathcal{F}}}{\delta\eta}+dN_{\phi}(\frac{\delta\tilde{\mathcal{F}}}{\delta\phi_{\partial}}))\wedge\ast d\eta\big)\\
      &+(-1)^{n}d\big(\mathrm{li}(\langle dN_{\phi}(\frac{\delta\tilde{\mathcal{F}}}{\delta\phi_{\partial}}),\eta\rangle_{\Lambda^{1}})\big)+[\eta,dN_{\phi}(\frac{\delta\tilde{\mathcal{F}}}{\delta\phi_{\partial}})]_{1}+d\big(\mathrm{li}(E(\frac{\delta\tilde{\mathcal{F}}}{\delta\Sigma})\big)\Big)\\
      &-\int_{\Sigma}\frac{\delta\tilde{\mathcal{G}}}{\delta\Sigma}\wedge\frac{\delta\tilde{\mathcal{F}}}{\delta\phi_{\partial}}-\int_{\Gamma}E(\frac{\delta\tilde{\mathcal{G}}}{\delta\Sigma})\wedge\frac{\delta\tilde{\mathcal{F}}}{\delta\phi_{\partial}},
    \end{aligned}
  \end{equation}
  with $\eta\in\mathring{\mathfrak{B}}^{\ast 1}$.

  \begin{remark}
    Using (\ref{eq75}), (\ref{eq76}) and (\ref{eq77}), we have following equalities
    \begin{enumerate}  
      \item[(1)]
      \begin{equation}\label{eq124}
      \begin{aligned}
        -\int_{\Omega}\frac{\delta\tilde{\mathcal{G}}}{\delta\eta}\wedge[\eta,dN_{\phi}(\frac{\delta\tilde{\mathcal{F}}}{\delta\phi_{\partial}})]_{1}=&\int_{\Omega}\frac{\delta\tilde{\mathcal{G}}}{\delta\eta}\wedge\ast\Big(\big(\ast\frac{\delta\tilde{\mathcal{F}}}{\delta\eta}+dN_{\phi}(\frac{\delta\tilde{\mathcal{F}}}{\delta\phi_{\partial}})\big)\wedge(\ast d\eta)\Big),
      \end{aligned}
    \end{equation}
    \item[(2)]
    \begin{equation}\label{eq125}
      \begin{aligned}
        &\int_{\Omega}dN_{\phi}(\frac{\delta\tilde{\mathcal{G}}}{\delta\phi_{\partial}})\wedge\ast\Big((-1)^{n}d\big(\mathrm{li}(\langle dN_{\phi}(\frac{\delta\tilde{\mathcal{F}}}{\delta\phi_{\partial}}),\eta\rangle_{\Lambda^{1}})\big)+d\big(\mathrm{li}(E(\frac{\delta\tilde{\mathcal{F}}}{\delta\Sigma}))\big)\Big)\\
        =&\int_{\partial\Omega}\Big(E(\frac{\delta\tilde{\mathcal{F}}}{\delta\Sigma})+(-1)^{n}\langle dN_{\phi}(\frac{\delta\tilde{\mathcal{F}}}{\delta\phi_{\partial}}),\eta\rangle_{\Lambda^{1}}\Big)\wedge\frac{\delta\tilde{\mathcal{G}}}{\delta\phi_{\partial}},
      \end{aligned}
    \end{equation}
    \item[(3)]
    \begin{equation}\label{eq126}
      \begin{aligned}
        \int_{\Omega}dN_{\phi}(\frac{\delta\tilde{\mathcal{G}}}{\delta\phi_{\partial}})\wedge\ast[\eta,dN_{\phi}(\frac{\delta\tilde{\mathcal{F}}}{\delta\phi_{\partial}})]_{1}=(-1)^{n-1}\int_{\partial\Omega}\langle dN_{\phi}(\frac{\delta\tilde{\mathcal{G}}}{\delta\phi_{\partial}}),\eta\rangle_{\Lambda^{1}}\wedge\frac{\delta\tilde{\mathcal{F}}}{\delta\phi_{\partial}}.
      \end{aligned}
    \end{equation}
    Substituting (\ref{eq124}), (\ref{eq125}) and (\ref{eq126}) into (\ref{bracket2}), we can rewritte the Poisson bracket as
    \begin{equation}\label{bracket4}
      \begin{aligned}
        \{\tilde{\mathcal{F}},\tilde{\mathcal{G}}\}_{D}(\eta,\phi_{\partial},\Sigma)=&(-1)^{n-1}\int_{\Omega}(\ast d\eta)\wedge\big(\ast\frac{\delta\tilde{\mathcal{G}}}{\delta\eta}+dN_{\phi}(\frac{\delta\tilde{\mathcal{G}}}{\delta\phi_{\partial}})\big)\wedge\big(\ast\frac{\delta\tilde{\mathcal{F}}}{\delta\eta}+dN_{\phi}(\frac{\delta\tilde{\mathcal{F}}}{\delta\phi_{\partial}})\big)\\
        &+\int_{\partial\Omega}\Big(E\big(\frac{\delta\tilde{\mathcal{F}}}{\delta\Sigma}\big)+(-1)^{n}\langle dN_{\phi}\big(\frac{\delta\tilde{\mathcal{F}}}{\delta\phi_{\partial}}\big),\eta\rangle_{\Lambda^{1}}\Big)\wedge\frac{\delta\tilde{\mathcal{G}}}{\delta\phi_{\partial}}\\
        &-\int_{\partial\Omega}\Big(E\big(\frac{\delta\tilde{\mathcal{G}}}{\delta\Sigma}\big)+(-1)^{n}\langle dN_{\phi}\big(\frac{\delta\tilde{\mathcal{G}}}{\delta\phi_{\partial}}\big),\eta\rangle_{\Lambda^{1}}\Big)\wedge\frac{\delta\tilde{\mathcal{F}}}{\delta\phi_{\partial}}.                 
      \end{aligned}
      \end{equation}
    \end{enumerate}
  \end{remark}

  \begin{remark}
    The bracket $\{\cdot,\cdot\}_{D}(\eta,\phi_{\partial},\Sigma)$ (\ref{bracket4}) is linear, anti-symmetric and satisfies the Jacobi identity, which follows directly from the fact that $\{\cdot,\cdot\}_{D}(\eta,\phi_{\partial},\Sigma)$ has the same structure as the Poisson bracket $\{\cdot,\cdot\}(\eta,\phi_{\partial},\Sigma)$ stated in Lemma \ref{lemma3}. The bracket $\{\cdot,\cdot\}_{D}(\eta,\phi_{\partial},\Sigma)$ is thus a Poisson bracket and the system generated by the Dirac structure $D_{2}$ is a Poisson structure.
  \end{remark}
 
\begin{theorem}\label{thm7}
  Let $\Omega\subset\mathbb{R}^{n},n\in\{1,2,3\}$ be an oriented $n$-dimensional connected manifold with Lipschitz continuous boundary $\partial\Omega=\Sigma\cup\Gamma$ with free surface $\Sigma$ and fixed boundary $\Gamma$, $\Sigma\cap\Gamma=\emptyset$. 
  For any functionals $\tilde{\mathcal{F}}(\eta,\phi_{\partial},\Sigma):\mathring{\mathfrak{B}}^{\ast 1}\times H^{\frac{1}{2}}\Lambda^{0}(\partial\Omega)\times H^{-\frac{1}{2}}\Lambda^{n-1}(\Sigma)\to\mathbb{R}$, the port-Hamiltonian formulation of the incompressible Euler equations with a free surface (\ref{coro4}),(\ref{coro5}) is equivalent to
  \begin{equation}\label{eq127}
    \dot{\tilde{\mathcal{F}}}(\eta,\phi_{\partial},\Sigma)=\{\tilde{\mathcal{F}},\tilde{H}\}_{D}(\eta,\phi_{\partial},\Sigma)-\int_{\Gamma}E(\frac{\delta\tilde{\mathcal{F}}}{\delta\Sigma})\wedge\ast\boldsymbol{n}(d\phi),
  \end{equation}
  with the Hamiltonian $\tilde{H}$ given by (\ref{hami4}).
\end{theorem}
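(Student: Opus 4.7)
The plan is to mirror the proof of Theorem \ref{thm5}, exploiting the structural similarity between the Poisson bracket $\{\cdot,\cdot\}_D(\eta,\phi_\partial,\Sigma)$ in (\ref{bracket4}) and the bracket $\{\cdot,\cdot\}(\eta,\phi_\partial,\Sigma)$ of Lemma \ref{lemma3}, together with the port–boundary relations obtained in Theorem \ref{thm6}. The forward direction consists in assuming the port-Hamiltonian equations (\ref{coro4})--(\ref{coro5}) and evaluating $\{\tilde{\mathcal{F}},\tilde{H}\}_D$ via (\ref{bracket4}) after substituting the functional derivatives (\ref{func7}) of $\tilde{H}$. The converse direction then follows by taking $\frac{\delta\tilde{\mathcal{F}}}{\delta\eta}$, $\frac{\delta\tilde{\mathcal{F}}}{\delta\phi_\partial}$, $\frac{\delta\tilde{\mathcal{F}}}{\delta\Sigma}$ arbitrary in the resulting identity.

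For the forward direction, I would split $\{\tilde{\mathcal{F}},\tilde{H}\}_D$ into three natural terms $T_1$, $T_2$, $T_3$ in exact parallel to the computation in Theorem \ref{thm1}. Substituting $\frac{\delta\tilde{H}}{\delta\eta}=(-1)^{n-1}\ast\eta$, $\frac{\delta\tilde{H}}{\delta\phi_\partial}=\ast\boldsymbol{n}(d\phi)$ yields $\ast\frac{\delta\tilde{H}}{\delta\eta}+dN_\phi(\frac{\delta\tilde{H}}{\delta\phi_\partial})=\eta+d\phi=v$, which collapses the interior contribution to a term involving $i_{v^\sharp}dv$ together with the transport term $[\eta,dN_\phi(\cdot)]_1$ after invoking (\ref{rela4}) and (\ref{lie}). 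The boundary terms then reassemble through Lemma \ref{lemma2} and the identification $\ast\boldsymbol{n}(dN_\phi(\mu))=\mu$, as in the passages (\ref{eq75})--(\ref{eq77}). Combining with the essential boundary condition (\ref{vor4}f) and the dynamic equations (\ref{vor4}a)--(\ref{vor4}d), and using the functional chain rule
\[
\dot{\tilde{\mathcal{F}}}(\eta,\phi_\partial,\Sigma)=\int_\Omega \frac{\delta\tilde{\mathcal{F}}}{\delta\eta}\wedge\eta_t+\int_{\partial\Omega}\frac{\delta\tilde{\mathcal{F}}}{\delta\phi_\partial}\wedge(\phi_\partial)_t+\int_\Sigma\frac{\delta\tilde{\mathcal{F}}}{\delta\Sigma}\wedge\Sigma_t,
\]
produces (\ref{eq127}) once the $\Gamma$-integral involving $f_b=-g=-\ast\boldsymbol{n}(d\phi)|_\Gamma$ is separated from the $\Sigma$-integral.

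The key technical step, and the expected main obstacle, is converting the interior integral $\int_\Omega dN_\phi(\frac{\delta\tilde{\mathcal{F}}}{\delta\phi_\partial})\wedge \ast d\phi_t$ into a pure boundary contribution $\int_{\partial\Omega}\frac{\delta\tilde{\mathcal{F}}}{\delta\phi_\partial}\wedge(\phi_\partial)_t$, which is the analogue of (\ref{eq117}). This relies on $\delta dN_\phi(\frac{\delta\tilde{\mathcal{F}}}{\delta\phi_\partial})=0$ and the weak formulation (\ref{eq82}) of the Neumann problem defining $N_\phi$, applied with test function $\phi_t$; the commutativity between $\mathrm{tr}$ and time differentiation at the fixed boundary $\Gamma$ is needed to keep the pairing well-defined in $H^{-1/2}$. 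A second delicate point is the term $(-1)^n\langle dN_\phi(\frac{\delta\tilde{\mathcal{F}}}{\delta\phi_\partial}),\eta\rangle_{\Lambda^1}$ in (\ref{bracket4}); using Lemma \ref{lemma2} together with $\boldsymbol{n}(\eta)=0$ reduces its contribution to the expected $[\eta,dN_\phi(\cdot)]_1$ transport term in the second row of (\ref{coro4}), ensuring consistency with the dynamic equation for $d\phi_t$.

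For the converse, I would assume (\ref{eq127}) holds for every admissible functional $\tilde{\mathcal{F}}$, rewrite both sides using the chain rule and (\ref{bracket4}) respectively, and add–subtract $(-1)^{n-1}\int_\Omega \ast dN_\phi(\frac{\delta\tilde{\mathcal{F}}}{\delta\phi_\partial})\wedge d(\frac{\tilde p}{\rho})$ together with an identity obtained from (\ref{eq82}) to recover the pressure term in the momentum equation. Arbitrariness of $\frac{\delta\tilde{\mathcal{F}}}{\delta\eta}\in\mathring{\mathfrak{B}}^{(n-1)}$, $\frac{\delta\tilde{\mathcal{F}}}{\delta\phi_\partial}\in H^{-\frac{1}{2}}\Lambda^{n-1}(\partial\Omega)$ and $\frac{\delta\tilde{\mathcal{F}}}{\delta\Sigma}\in H^{\frac{1}{2}}\Lambda^0(\Sigma)$ then yields (\ref{vor4}a)--(\ref{vor4}d), while (\ref{vor4}e) follows from the port variable assignment $f_b=-g$ in (\ref{coro5}). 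The relation (\ref{coro5}) for $e_b$ in turn is extracted by applying the identity to $\tilde{\mathcal{F}}=\tilde{H}$, exploiting skew-symmetry of $\{\cdot,\cdot\}_D$ to get $\dot{\tilde{H}}=-\int_\Gamma E(\frac{\delta\tilde{H}}{\delta\Sigma})\wedge\ast\boldsymbol{n}(d\phi)$, and comparing with the power-conservation identity (\ref{eq136}) applied to the bilinear form (\ref{bilinear4}) of the Dirac structure $D_2$.
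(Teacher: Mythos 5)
Your proposal is correct and follows essentially the same route as the paper, which simply declares the proof "identical to Theorem \ref{thm5}": expand $\{\tilde{\mathcal{F}},\tilde{H}\}_{D}$ from (\ref{bracket4}) with the derivatives (\ref{func7}), reuse the boundary identities of type (\ref{eq75})--(\ref{eq77}) and an (\ref{eq117})-type conversion, apply the chain rule, and obtain the converse and the port relations from arbitrariness of $\tilde{\mathcal{F}}$ together with skew-symmetry and power conservation. You in fact supply more detail than the paper does, but no new idea is needed and none of your steps deviates from the template of Theorems \ref{thm5}, \ref{thm4} and \ref{thm1}.
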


The proof to Theorem \ref{thm7} is identical to Theorem \ref{thm5}.
  
  \begin{corollary}
    Given the Hamiltonian functional $\tilde{H}:\mathring{\mathfrak{B}}^{\ast 1}\times H^{\frac{1}{2}}\Lambda^{0}(\partial\Omega)\times H^{-\frac{1}{2}}\Lambda^{n-1}(\Sigma)\to\mathbb{R}$ stated in (\ref{hami4}), 
    and the essential boundary conditions (\ref{vor4}e), (\ref{vor4}f), the rate of change of the Hamiltonian for the incompressible Euler equations with a free surface in the $(\eta,\phi_{\partial},\Sigma)$ variables (\ref{vor4}) is equal to
    \begin{equation*}
      \dot{\tilde{H}}=-\int_{\Gamma}\mathrm{tr}(h)\wedge g.
    \end{equation*}
  \end{corollary}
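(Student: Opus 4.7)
The plan is to specialize Theorem \ref{thm7} to the test functional $\tilde{\mathcal{F}} = \tilde{H}$. Since the bracket $\{\cdot,\cdot\}_{D}(\eta,\phi_{\partial},\Sigma)$ is skew-symmetric, the bulk term $\{\tilde{H},\tilde{H}\}_{D}$ vanishes, leaving only the boundary contribution
\begin{equation*}
\dot{\tilde{H}} = -\int_{\Gamma} E\!\left(\frac{\delta \tilde{H}}{\delta \Sigma}\right) \wedge \ast\boldsymbol{n}(d\phi).
\end{equation*}

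Next I would invoke the essential boundary conditions to simplify each factor. At $\Gamma$, condition (\ref{vor4}e) immediately gives $\ast\boldsymbol{n}(d\phi) = g$, so the integrand reduces to $E(\delta\tilde{H}/\delta\Sigma) \wedge g$. At $\Sigma$, condition (\ref{vor4}f) gives $\mathrm{tr}(\tilde{p}/\rho) = \tau k/\rho$, which when substituted into the expression (\ref{eq108}) for $\delta\tilde{H}/\delta\Sigma$ collapses the gravity-kinetic-curvature combination into $\mathrm{tr}(h)$, yielding
\begin{equation*}
\frac{\delta \tilde{H}}{\delta \Sigma}\bigg|_{\Sigma} = \mathrm{tr}(h) + (-1)^{n-1}\langle d\phi,\eta\rangle_{\Lambda^{1}},
\end{equation*}
with $h$ as in (\ref{h}).

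The main obstacle is that this expression carries the additional cross term $(-1)^{n-1}\langle d\phi,\eta\rangle_{\Lambda^{1}}$ coming from the Hodge decomposition of $v$, and one must control both its extension to $\Gamma$ and its pairing against $g$. The cleanest resolution is to exploit the invariance $\tilde{H}(\eta,\phi_{\partial},\Sigma) = H(v,\Sigma)$ under the decomposition $v = d\phi + \eta$: since the Hamiltonian takes the same numerical value in both variable sets, $\dot{\tilde{H}} = \dot{H}$, and the already-established corollary (\ref{eq94}) for the $(v,\Sigma)$ formulation directly delivers $\dot{H} = -\int_{\Gamma}\mathrm{tr}(h)\wedge g$. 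Alternatively, one may proceed intrinsically: choose the natural extension $E$ realized by lifting $h \in H^{1}\Lambda^{0}(\Omega)$, so that $E(\mathrm{tr}_{\Sigma}(h))|_{\Gamma} = \mathrm{tr}_{\Gamma}(h)$, and use $\boldsymbol{n}(\eta) = 0$ at $\partial\Omega$ (which holds since $\eta\in\mathring{\mathfrak{B}}^{\ast 1}$, as shown in (\ref{eta2})) together with an integration-by-parts identity analogous to Lemma \ref{lemma2} to verify that the $\Gamma$-integral of the extension of $\langle d\phi,\eta\rangle_{\Lambda^{1}}$ against $g=\ast\boldsymbol{n}(d\phi)$ cancels. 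Either route produces the claimed identity $\dot{\tilde{H}} = -\int_{\Gamma}\mathrm{tr}(h)\wedge g$.
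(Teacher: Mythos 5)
The paper states this corollary without an explicit proof; the intended argument is clearly the analogue of the proof of (\ref{eq94}): put $\tilde{\mathcal{F}}=\tilde{H}$ in (\ref{eq127}), use skew-symmetry of $\{\cdot,\cdot\}_{D}$, and read off the remaining $\Gamma$-term using (\ref{eq108}) together with the essential boundary conditions (\ref{vor4}e), (\ref{vor4}f). Your opening steps follow exactly that route, and you correctly isolate the point the paper passes over in silence: unlike the $(v,\Sigma)$ case, $\frac{\delta\tilde{H}}{\delta\Sigma}$ carries the extra term $(-1)^{n-1}\langle d\phi,\eta\rangle_{\Lambda^{1}}$, so the boundary term is not literally $-\int_{\Gamma}\mathrm{tr}(h)\wedge g$ without further argument. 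Your primary resolution -- observing that $\tilde{H}(\eta,\phi_{\partial},\Sigma)=H(v,\Sigma)$ along solutions (the cross term $\int_{\Omega}\langle d\phi,\eta\rangle_{\Lambda^{1}}v_{\Omega}$ vanishes because $\delta\eta=0$ and $\boldsymbol{n}(\eta)=0$, and $\delta d\phi=0$ turns the potential energy into the boundary integral in (\ref{hami4})), hence $\dot{\tilde{H}}=\dot{H}$, and then quoting (\ref{eq94}) -- is a correct, self-contained proof of the stated identity. It is genuinely different from the paper's implicit route: it works at the level of the Hamiltonian functionals and the already-established $(v,\Sigma)$ energy balance, and thereby bypasses the bookkeeping of the cross term at $\Gamma$ entirely, which is arguably more robust here.

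Your alternative ``intrinsic'' route, however, is not substantiated as sketched. The claim that the $\Gamma$-integral of (an extension of) $\langle d\phi,\eta\rangle_{\Lambda^{1}}$ against $g$ ``cancels'' via an identity analogous to Lemma \ref{lemma2} does not hold up: that lemma, and the identities built on it such as (\ref{eq76}) and (\ref{eq77}), control integrals over all of $\partial\Omega$, and even there $\int_{\partial\Omega}\langle d\phi,\eta\rangle_{\Lambda^{1}}\wedge\ast\boldsymbol{n}(d\phi)$ equals the volume term $\int_{\Omega}d\phi\wedge\ast\delta(\eta\wedge d\phi)$, which is in general nonzero; the condition $\boldsymbol{n}(\eta)=0$ makes $\langle d\phi,\eta\rangle_{\Lambda^{1}}$ a purely tangential pairing but does not make it vanish on $\Gamma$, nor does it kill its pairing with $g$. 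Reconciling the bracket-based expression for $\dot{\tilde{H}}$ with $-\int_{\Gamma}\mathrm{tr}(h)\wedge g$ when $g\neq 0$ actually requires revisiting the chain rule of Lemma \ref{lemma7}, whose derivation (through (\ref{eq56})) assumed $\ast\boldsymbol{n}(u)=0$ at $\Gamma$ and therefore acquires exactly such $\Gamma$-contributions in the inhomogeneous case; the cross term is compensated there, not by a standalone vanishing on $\Gamma$. So keep your first route as the proof and either drop the second or supply that additional computation.
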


\subsection{Dirac structure in terms of the $(\omega,\phi_{\partial},\Sigma)$ variables}

Using the results presented in Section 4.3, we will present in this section the Dirac structure and port-Hamiltonian formulation for the $n$-dimensional ($n\in\{2,3\}$) inviscid incompressible Euler equations with a free surface in terms of the $(\omega,\phi_{\partial},\Sigma)$ variables.
We will first define the following linear spaces for the Dirac structure
\begin{equation}\label{space9}
\begin{aligned} 
\mathcal{F}_{3}&:=\mathring{V}\Lambda^{2}(\Omega)\times H^{\frac{1}{2}}\Lambda^{0}(\partial\Omega)\times H^{-\frac{1}{2}}\Lambda^{n-1}(\Sigma)\times H^{-\frac{1}{2}}\Lambda^{n-1}(\Gamma),\\
\mathcal{E}_{3}&:=\mathring{V}^{\ast}\Lambda^{n-2}(\Omega)\times H^{-\frac{1}{2}}\Lambda^{n-1}(\partial\Omega)\times H^{\frac{1}{2}}\Lambda^{0}(\Sigma)\times H^{\frac{1}{2}}\Lambda^{0}(\Gamma),
\end{aligned}
\end{equation}
where the spaces $\mathcal{F}_{3}$ and $\mathcal{E}_{3}$ are dual to each other with a non-degenerate pairing using $L^{2}$ as pivot space.

The following theorem states the Dirac structure with respect to the $(\omega,\phi_{\partial},\Sigma)$ variables.

\begin{theorem}\label{thm2}
Let $\Omega\subset\mathbb{R}^{n}$, $n\in\{2,3\}$ be a $n$-dimensional oriented and simply connected manifold with boundary $\partial\Omega=\Sigma\cup\Gamma$ and $\Sigma\cap\Gamma=\emptyset$. 
Assume $\omega=dv\in\mathring{V}\Lambda^{2}(\Omega)$ is the solution of the incompressible Euler equations with a free surface in the $(\omega,\phi_{\partial},\Sigma)$ variables (\ref{vor1})--(\ref{vor3}).
Given the function spaces $\mathcal{F}_{3}$ and $\mathcal{E}_{3}$, defined in (\ref{space9}), together with the bilinear form
\begin{equation}\label{bilinear}
\begin{aligned}
\llangle(f^{1},e^{1}),(f^{2},e^{2})\rrangle=&\int_{\Omega}\big(e_{\omega}^{1}\wedge f_{\omega}^{2}+e_{\omega}^{2}\wedge f_{\omega}^{1}\big)+\int_{\partial\Omega}\big(e_{\phi}^{1}\wedge f_{\phi}^{2}+e_{\phi}^{2}\wedge f_{\phi}^{1}\big)\\
&+\int_{\Sigma}\big(e_{\Sigma}^{1}\wedge f_{\Sigma}^{2}+e_{\Sigma}^{2}\wedge f_{\Sigma}^{1}\big)+\int_{\Gamma}\big(e_{b}^{1}\wedge f_{b}^{2}+e_{b}^{2}\wedge f_{b}^{1}\big),
\end{aligned}
\end{equation}
where 
\begin{equation*}
f^{i}=(f_{\omega}^{i},f_{\phi}^{i},f_{\Sigma}^{i},f_{b}^{i})\in\mathcal{F}_{3},\ e^{i}=(e_{\omega}^{i},e_{\phi}^{i},e_{\Sigma}^{i},e_{b}^{i})\in\mathcal{E}_{3},\ i=1,2.
\end{equation*}
Then $D_{3}\subset\mathcal{F}_{3}\times\mathcal{E}_{3}$, defined as
\begin{equation}\label{dirac7}
\begin{aligned}
D_{3}:=\Bigg\{ & (f_{\omega},f_{\phi},f_{\Sigma},f_{b},e_{\omega},e_{\phi},e_{\Sigma},e_{b})\in \mathcal{F}_{3}\times\mathcal{E}_{3}\mid\\
&
  \begin{pmatrix}
   f_{\omega}  \\
   d\big(\mathrm{li}_{\phi}(f_{\phi})\big)\\
   \par \\
   f_{\Sigma}|_{\Sigma}
  \end{pmatrix} 
  =
  \begin{pmatrix}
   d\ast\Big(\big((-1)^{n-1}\ast de_{\omega}+dN_{\phi}(e_{\phi})\big)\wedge\ast\omega\Big) \\
   \ast\Big(\big((-1)^{n-1}\ast de_{\omega}+dN_{\phi}(e_{\phi})\big)\wedge\ast\omega\Big)+[\delta N_{\beta}(\omega),dN_{\phi}(e_{\phi})]_{1}\\
   +d\big(\mathrm{li}(\tilde{e}_{\Sigma})+(-1)^{n}\mathrm{li}(\langle dN_{\phi}(e_{\phi}),\delta N_{\beta}(\omega)\rangle_{\Lambda^{1}})\big)\\
   -e_{\phi}
  \end{pmatrix},
   \\
 &
  \begin{pmatrix}
   f_{b}  \\
   e_{b}
  \end{pmatrix} 
  =
  \begin{pmatrix}
   -e_{\phi}|_{\Gamma} \\
   \tilde{e}_{\Sigma}|_{\Gamma} \end{pmatrix}
 \Bigg\},
\end{aligned}
\end{equation}
with $\tilde{e}_{\Sigma}=E(e_{\Sigma})$, is a Dirac structure. 
\end{theorem}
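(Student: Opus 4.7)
The plan is to follow the same three-step pattern used in Theorems 5.1 and 5.4: first establish that the operators defining $D_3$ are well-defined (an analogue of Lemma 5.5), then prove $D_3 \subset D_3^{\bot}$, and finally prove the reverse inclusion $D_3^{\bot} \subset D_3$. The well-definedness of $d(\mathrm{li}_\phi(f_\phi))$ follows by the same argument as in Lemma 5.5: any two lifts differ by a harmonic function with zero trace, which must vanish by uniqueness of the Dirichlet problem. Similarly, the compatibility $dN_\phi(e_\phi) \in H\Lambda^{1}(\Omega)$ together with the Hodge decomposition ensures that $f_\omega$ takes values in $\mathring{V}\Lambda^{2}(\Omega)$, because $d f_\omega = 0$ automatically and the trace condition $\ast \boldsymbol{n}(f_\omega) = 0$ can be verified from the definition by noting that $d \ast(\cdots)$ produces a closed $2$-form whose normal trace vanishes on $\partial\Omega$ due to the Stokes-type identity applied with $\frac{\delta \bar{\mathcal{F}}}{\delta \omega} \in \mathring{V}^{\ast}\Lambda^{n-2}(\Omega)$.

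First, to simplify the bilinear form, I would apply (7.14) — the identity $\langle dN_\phi(e_\phi^i), d(\mathrm{li}_\phi(f_\phi^j))\rangle_{L^2\Lambda^1(\Omega)} = \int_{\partial\Omega} e_\phi^i \wedge f_\phi^j$ — to replace the boundary pairing involving $(e_\phi^i, f_\phi^j)$ by a volume integral. This mirrors the trick employed in the proof of Theorem 5.4 and produces terms of the form $dN_\phi(e_\phi^1) \wedge \ast d(\mathrm{li}_\phi(f_\phi^2))$. Next, I would use the Hodge decomposition identity $(-1)^{n-1}\ast d \frac{\delta \bar{\mathcal{F}}}{\delta \omega} + dN_\phi(\frac{\delta \bar{\mathcal{F}}}{\delta \phi_\partial})$ as a single ``effort'' object, since by Lemma 4.12 this combination equals (up to a sign) the $(v,\Sigma)$-functional derivative $\frac{\delta \mathcal{F}}{\delta v}$. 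This should convert the entire analysis into one structurally identical to the proof of Theorem 5.4, the difference being that $\eta$ is replaced by $\delta N_\beta(\omega)$ and the first slot of the effort is related to $(-1)^{n-1}\ast de_\omega$ instead of $\ast e_\eta$.

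For the inclusion $D_3 \subset D_3^{\bot}$, after applying the substitutions above, the key identities will be: (i) the triple-wedge skew-symmetry $\alpha \wedge \beta \wedge (\ast\omega) = -\beta \wedge \alpha \wedge (\ast \omega)$ since $\alpha, \beta$ are $1$-forms; (ii) Lemma 2.2 applied with $\mu = \delta N_\beta(\omega)$, which is valid because $\boldsymbol{n}(\delta N_\beta(\omega)) = \delta \boldsymbol{n}(\beta) = 0$ at $\partial\Omega$; and (iii) the identity $i_{\mathcal{N}} dN_\phi(e_\phi^j) v_\Sigma = e_\phi^j$ proved in Theorem 5.4. Combining these, the cross terms will cancel exactly as in Step (3) of Theorem 5.4, yielding $\llangle (f^1,e^1),(f^2,e^2)\rrangle = 0$. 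One new ingredient is the term $d\ast(\cdots) \wedge e_\omega^2$ which must be integrated by parts using $\mathrm{tr}(e_\omega^2) = 0$ (since $e_\omega^2 \in \mathring{V}^{\ast}\Lambda^{n-2}(\Omega)$); this converts the volume integral into one matching the nonlinear term from the $f_\phi$ component.

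For the reverse inclusion $D_3^{\bot} \subset D_3$, I would choose $(f^2, e^2) \in D_3$ with localized test functions: first take $\tilde{e}_\Sigma^2 \in H_{00}^{\frac{1}{2}}\Lambda^{0}(\Sigma)$ and $e_\phi^2|_\Gamma = 0$ to isolate the interior and $\Sigma$ relations, then vary $e_\omega^2$, $e_\phi^2$, $e_\Sigma^2$ independently to extract the three equations defining $f_\omega^1$, $d(\mathrm{li}_\phi(f_\phi^1))$, and $f_\Sigma^1$. Finally, plugging these back yields the port relations on $\Gamma$ from the remaining boundary integral. I expect the main obstacle to be the careful bookkeeping of signs and the treatment of the mixed nonlinear term $[\delta N_\beta(\omega), dN_\phi(e_\phi)]_1$ — in particular, verifying that the identity analogous to (7.17), i.e., $\int_\Omega e_\omega \wedge d\ast((\ast e_\omega')\wedge \ast \omega) = -\int_\Omega e_\omega \wedge [\delta N_\beta(\omega), \ldots]_1 + (\text{boundary})$, closes up properly given the zero-trace condition on $e_\omega$. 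Once this bracket identity is established, the rest of the argument is a direct transcription of Theorem 5.4.
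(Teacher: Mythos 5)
Your proposal follows essentially the same route as the paper's own proof: it rewrites the $\int_{\partial\Omega}e_{\phi}\wedge f_{\phi}$ pairing through the harmonic lifting identity (\ref{eq138}), uses the skew-symmetry of the $\ast\omega$ triple-wedge term together with integration by parts against $\mathrm{tr}(e_{\omega})=0$, invokes Lemma \ref{lemma2} and the relation $i_{\mathcal{N}}dN_{\phi}(e_{\phi})v_{\Sigma}=e_{\phi}$ with $\eta$ replaced by $\delta N_{\beta}(\omega)$ (the paper's (\ref{eq66}), (\ref{eq130}), (\ref{eq131})), and establishes $D_{3}^{\bot}\subset D_{3}$ by the same localized choices $\tilde{e}_{\Sigma}^{2}\in H_{00}^{\frac{1}{2}}\Lambda^{0}(\Sigma)$, $e_{\phi}^{2}|_{\Gamma}=0$ before recovering the port relations on $\Gamma$. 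The one anticipated "obstacle" you mention is not needed in the paper: no analogue of (\ref{eq75}) is required, since in the vorticity formulation the Lie-bracket term enters only the $f_{\phi}$ row and is handled directly by Lemma \ref{lemma2}, while the $f_{\omega}$ row is treated by the integration-by-parts identity (\ref{eq66}).
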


Before proving this theorem, we first need to check if $d\big(\mathrm{li}(f_{\phi})\big)$ is well-defined.
\begin{lemma}\label{lemma16}
  Given $e_{\omega}\in\mathring{V}^{\ast}\Lambda^{n-2}(\Omega)$, $e_{\phi}\in H^{-\frac{1}{2}}\Lambda^{n-1}(\partial\Omega)$ with $\omega\in\mathring{V}\Lambda^{2}(\Omega)$, then
  $d\big(\mathrm{li}_{\phi}(f_{\phi})\big)$ in (\ref{dirac7}) is well-defined.
\end{lemma}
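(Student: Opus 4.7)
The plan is to follow the proof of Lemma~\ref{lemma15} almost verbatim, exploiting the observation that $\delta N_{\beta}(\omega)\in\mathring{\mathfrak{B}}^{\ast 1}$ plays in the $(\omega,\phi_{\partial},\Sigma)$ formulation exactly the role played by $\eta$ in the $(\eta,\phi_{\partial},\Sigma)$ formulation. Indeed, $\beta=N_{\beta}(\omega)$ produced by Lemma~\ref{lemma11} satisfies $\mathrm{tr}(\ast\beta)=0$ on $\partial\Omega$, so (\ref{proper2}) gives $\boldsymbol{n}(\delta N_{\beta}(\omega))=\delta\boldsymbol{n}(\beta)=0$, the analogue of the tangential property of $\eta$. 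Moreover, Lemmas~\ref{lemma11} and~\ref{lemma6} guarantee that the operators $N_{\beta}$ and $N_{\phi}$ are themselves uniquely determined, so the interior expression $\ast\bigl(((-1)^{n-1}\ast de_{\omega}+dN_{\phi}(e_{\phi}))\wedge\ast\omega\bigr)+[\delta N_{\beta}(\omega),dN_{\phi}(e_{\phi})]_{1}$ in (\ref{dirac7}) depends only on the given effort variables $e_{\omega}\in\mathring{V}^{\ast}\Lambda^{n-2}(\Omega)$, $e_{\phi}$, and the state $\omega$, and carries no hidden dependence on any choice of lifting operator.

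Given this, I would suppose there exist two harmonic trace-lifting operators $\mathrm{li}_{\phi},\mathrm{li}'_{\phi}$ together with two ordinary lifting operators $\mathrm{li},\mathrm{li}'$ for which (\ref{dirac7}) yields candidates $\mathrm{li}_{\phi}(f_{\phi}),\mathrm{li}'_{\phi}(f_{\phi})\in H^{1}\Lambda^{0}(\Omega)$. Setting $\tilde{f}:=\mathrm{li}_{\phi}(f_{\phi})-\mathrm{li}'_{\phi}(f_{\phi})$, the defining property (\ref{tracelift2}) gives $\mathrm{tr}(\tilde{f})=0$, and harmonicity of both $\mathrm{li}_{\phi}(f_{\phi})$ and $\mathrm{li}'_{\phi}(f_{\phi})$ yields $\delta d\tilde{f}=0$ in $\Omega$, so uniqueness for the Dirichlet Laplace problem on the connected $\Omega$ forces $\tilde{f}=0$. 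Next, I would set
\[
\hat{f}:=\mathrm{li}(\tilde{e}_{\Sigma})-\mathrm{li}'(\tilde{e}_{\Sigma})+(-1)^{n}\bigl[\mathrm{li}(\langle dN_{\phi}(e_{\phi}),\delta N_{\beta}(\omega)\rangle_{\Lambda^{1}})-\mathrm{li}'(\langle dN_{\phi}(e_{\phi}),\delta N_{\beta}(\omega)\rangle_{\Lambda^{1}})\bigr],
\]
which by (\ref{tracelift}) satisfies $\mathrm{tr}(\hat{f})=0$. Subtracting the two instances of (\ref{dirac7}) and using $\tilde{f}=0$ together with the independence of the interior terms asserted above, one obtains $\delta d\hat{f}=0$, and a second application of uniqueness for the Dirichlet Laplace problem gives $\hat{f}=0$. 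Hence $d\bigl(\mathrm{li}_{\phi}(f_{\phi})\bigr)$ is independent of the choice of both $\mathrm{li}_{\phi}$ and $\mathrm{li}$, proving well-definedness.

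The main obstacle is not logical but rather the bookkeeping required to carefully identify which terms on the right-hand side of (\ref{dirac7}) depend on the chosen lifts and which are intrinsic. Once that inventory is completed, the argument reduces to two applications of uniqueness for the Dirichlet Laplace problem and structurally mirrors the proof of Lemma~\ref{lemma15} without introducing any new ideas.
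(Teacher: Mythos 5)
Your proposal is correct and follows essentially the same route as the paper, which proves Lemma \ref{lemma16} precisely by repeating the argument of Lemma \ref{lemma15} with $\eta$ replaced by $\delta N_{\beta}(\omega)$: the interior terms are intrinsic (determined by $e_{\omega}$, $e_{\phi}$, $\omega$ via the unique solution operators $N_{\phi}$, $N_{\beta}$), so the differences $\tilde{f}$ and $\hat{f}$ are harmonic with vanishing trace and hence zero by uniqueness of the Dirichlet problem. Your explicit verification that $\boldsymbol{n}(\delta N_{\beta}(\omega))=0$ is a helpful, but not essentially different, elaboration of what the paper leaves implicit.
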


\begin{proof}
  Following the same approach as we did in Lemma \ref{lemma15}, we can prove Lemma \ref{lemma16}.
\end{proof}

Next, we will give a proof of Theorem \ref{thm2}.
\begin{proof}
  
\begin{enumerate}[(1)]
  \item Using the definition of the trace-lifting operator $\mathrm{li}_{\phi}$ (\ref{tracelift2}), the Laplace solution solution operator $N_{\phi}$ (\ref{laplace}), and (\ref{eq138})
  we can rewrite (\ref{bilinear}) as
  \begin{equation}\label{bilinear2}
    \begin{aligned}
      \llangle(f^{1},e^{1}),(f^{2},e^{2})\rrangle=&\int_{\Omega}\Big(e_{\omega}^{1}\wedge f_{\omega}^{2}+e_{\omega}^{2}\wedge f_{\omega}^{1}+dN_{\phi}(e_{\phi}^{1})\wedge\ast d\big(\mathrm{li}_{\phi}(f_{\phi}^{2})\big)\\
      +&dN_{\phi}(e_{\phi}^{2})\wedge\ast d\big(\mathrm{li}_{\phi}(f_{\phi}^{1})\big)\Big)
      +\int_{\Sigma}\Big(e_{\Sigma}^{1}\wedge f_{\Sigma}^{2}+e_{\Sigma}^{2}\wedge f_{\Sigma}^{1}\Big)\\
      +&\int_{\Gamma}\Big(e_{b}^{1}\wedge f_{b}^{2}+e_{b}^{2}\wedge f_{b}^{1}\Big).
    \end{aligned}
  \end{equation}
  
  \item We check that $\int_{\Omega}e_{\omega}^{1}\wedge d\ast\big((\ast de_{\omega}^{2})\wedge\ast\omega\big)$ is skew-symmetric in $e_{\omega}^{1}$ and $e_{\omega}^{2}$. For arbitrary $e_{\omega}^{i}\in\mathring{V}^{\ast}\Lambda^{n-2}(\Omega),\ i=1,2$, which implies the boundary condition $\mathrm{tr}(e_{\omega}^{i})=0$, by using the integration by parts formula (\ref{part}), we obtain
  \begin{equation*}
    \begin{aligned}
      \int_{\Omega}e_{\omega}^{1}\wedge d\ast\big((\ast de_{\omega}^{2})\wedge(\ast\omega)\big)
      =&(-1)^{n-1}\int_{\Omega}\delta(\ast e_{\omega}^{1})\wedge(\ast de_{\omega}^{2})\wedge(\ast\omega)\\
      =&(-1)^{n}\langle de_{\omega}^{2},(\ast de_{\omega}^{1})\wedge(\ast\omega)\rangle_{L^{2}\Lambda^{n-1}(\Omega)}\\
      =&-\int_{\Omega}e_{\omega}^{2}\wedge d\ast\big((\ast de_{\omega}^{1})\wedge(\ast\omega)\big).
    \end{aligned}
  \end{equation*}

  \item We show that $D_{3}\subset D_{3}^{\bot}$. Let $(f^{1},e^{1})\in D_{3}$ be fixed, and consider any $(f^{2},e^{2})\in D_{3}$. From (\ref{dirac7}) and (\ref{bilinear2}), we have that
  \begin{equation}\label{eq99}
    \begin{aligned}
      &\llangle (f^{1},e^{1}),(f^{2},e^{2})\rrangle\\
      =&\int_{\Omega}\Big((-1)^{n-1}e_{\omega}^{1}\wedge d\ast\big((\ast de_{\omega}^{2})\wedge(\ast\omega)\big)+e_{\omega}^{1}\wedge d\ast\big(dN_{\phi}(e_{\phi}^{2})\wedge\ast\omega\big)\\
      &+(-1)^{n-1}e_{\omega}^{2}\wedge d\ast\big((\ast de_{\omega}^{1})\wedge(\ast\omega)\big)+e_{\omega}^{2}\wedge d\ast\big(dN_{\phi}(e_{\phi}^{1})\wedge\ast\omega\big)\\
      &+dN_{\phi}(e_{\phi}^{1})\wedge(\ast de_{\omega}^{2})\wedge(\ast\omega)+(-1)^{n-1}dN_{\phi}(e_{\phi}^{1})\wedge dN_{\phi}(e_{\phi}^{2})\wedge(\ast\omega)\\
      &+dN_{\phi}(e_{\phi}^{2})\wedge(\ast de_{\omega}^{1})\wedge(\ast\omega)+(-1)^{n-1}dN_{\phi}(e_{\phi}^{2})\wedge dN_{\phi}(e_{\phi}^{1})\wedge(\ast\omega)\\
      &+dN_{\phi}(e_{\phi}^{1})\wedge\ast[\delta N_{\beta}(\omega),dN_{\phi}(e_{\phi}^{2})]_{1}+dN_{\phi}(e_{\phi}^{2})\wedge\ast[\delta N_{\beta}(\omega),dN_{\phi}(e_{\phi}^{1})]_{1}\\
      &+dN_{\phi}(e_{\phi}^{1})\wedge\ast d\big(\mathrm{li}(\tilde{e}_{\Sigma}^{2})+(-1)^{n}\mathrm{li}(\langle dN_{\phi}(e_{\phi}^{2}),\delta N_{\beta}(\omega)\rangle_{\Lambda^{1}})\big)\\
      &+dN_{\phi}(e_{\phi}^{2})\wedge\ast d\big(\mathrm{li}(\tilde{e}_{\Sigma}^{1})+(-1)^{n}\mathrm{li}(\langle dN_{\phi}(e_{\phi}^{1}),\delta N_{\beta}(\omega)\rangle_{\Lambda^{2}})\big)\Big)\\
      &-\int_{\Sigma}\Big(e_{\Sigma}^{1}\wedge e_{\phi}^{2}+e_{\Sigma}^{2}\wedge e_{\phi}^{1}\Big)-\int_{\Gamma}\Big(\tilde{e}_{\Sigma}^{1}\wedge e_{\phi}^{2}+\tilde{e}_{\Sigma}^{2}\wedge e_{\phi}^{1}\Big).
    \end{aligned}
  \end{equation}
  Using the integration by parts formula (\ref{part}) and the condition $\mathrm{tr}(e_{\omega}^{i})=0,\ i=1,2$, since $e_{\omega}^{i}\in\mathring{V}^{\ast}\Lambda^{n-2}(\Omega)$, we have for $i,j\in\{1,2\}$ that
  \begin{equation}\label{eq66}
    \begin{aligned}
      \int_{\Omega}e_{\omega}^{i}\wedge d\ast\big(dN_{\phi}(e_{\phi}^{j})\wedge\ast\omega\big)=&(-1)^{n-1}\int_{\Omega}e_{\omega}^{i}\wedge\ast\delta\big(dN_{\phi}(e_{\phi}^{j})\wedge\ast\omega\big)\\
      =&(-1)^{n-1}\langle de_{\omega}^{i},dN_{\phi}(e_{\phi}^{j})\wedge\ast\omega)\rangle_{L^{2}\Lambda^{n-1}(\Omega)}\\
      =&-\int_{\Omega}dN_{\phi}(e_{\phi}^{j})\wedge(\ast de_{\omega}^{i})\wedge(\ast\omega).
    \end{aligned}
  \end{equation}
  Following (\ref{eq76}) and (\ref{eq77}), with $\eta=\delta N_{\beta}(\omega)$, we have
  Using (\ref{lie}) and Lemma \ref{lemma2}, we have for $i,j\in\{1,2\}$
  \begin{equation}\label{eq130}
    \begin{aligned}
      \int_{\Omega}dN_{\phi}(e_{\phi}^{i})\wedge\ast[\delta N_{\beta}(\omega),dN_{\phi}(e_{\phi}^{j})]_{1}=(-1)^{n-1}\int_{\partial\Omega}\langle dN_{\phi}(e_{\phi}^{i}),\delta N_{\beta}(\omega)\rangle_{\Lambda^{1}}\wedge e_{\phi}^{j},
    \end{aligned}
  \end{equation}
  and
  \begin{equation}\label{eq131}
    \begin{aligned}
      &\int_{\Omega}dN(e_{\phi}^{i})\wedge\ast d\Big(\mathrm{li}\big(\tilde{e}_{\Sigma}^{j}\big)+(-1)^{n}\mathrm{li}\big(\langle dN_{\phi}(e_{\phi}^{j}),\delta N_{\beta}(\omega)\rangle_{\Lambda^{1}}\big)\Big)\\
      =&\int_{\partial\Omega}\tilde{e}_{\Sigma}^{j}\wedge e_{\phi}^{i}+(-1)^{n}\int_{\partial\Omega}\langle dN_{\phi}(e_{\phi}^{j}),\delta N_{\beta}(\omega)\rangle_{\Lambda^{1}}\wedge e_{\phi}^{i}.
    \end{aligned}
  \end{equation}
  By the skew-symmetry of term $\int_{\Omega}e_{\omega}^{1}\wedge d\ast\big((\ast e_{\omega}^{2})\wedge\ast\omega\big)$, together with (\ref{eq66}), (\ref{eq130}), (\ref{eq131}),
  we obtain that (\ref{eq99}) is equal to
  \begin{equation*}
    \begin{aligned}
      \llangle (f^{1},e^{1}),(f^{2},e^{2})\rrangle
      =&\int_{\partial\Omega}\Big(e_{\phi}^{1}\wedge\tilde{e}_{\Sigma}^{2}+e_{\phi}^{2}\wedge\tilde{e}_{\Sigma}^{1}\Big)-\int_{\partial\Omega}\Big(e_{\phi}^{1}\wedge\tilde{e}_{\Sigma}^{2}+e_{\phi}^{2}\wedge\tilde{e}_{\Sigma}^{1}\Big)\\
      =&0.
    \end{aligned}
  \end{equation*}
  Therefore $(f^{1},e^{1})\in D_{3}^{\bot}$, which implies that $D_{3}\subset D_{3}^{\bot}$.

  \item Finally, we show that $D_{3}^{\bot}\subset D_{3}$. Let $(f^{1},e^{1})\in D_{3}^{\bot}\subset\mathcal{F}_{3}\times\mathcal{E}_{3}$. Then 
  \begin{equation*}
    \llangle(f^{1},e^{1}),(f^{2},e^{2})\rrangle=0,\quad\forall(f^{2},e^{2})\in D_{3},
  \end{equation*}
  which equals 
  \begin{equation}\label{eq48}
    \begin{aligned}
      &\int_{\Omega}\Big((-1)^{n-1}e_{\omega}^{1}\wedge d\ast\big((\ast de_{\omega}^{2})\wedge\ast\omega\big)+e_{\omega}^{1}\wedge d\ast\big(dN_{\phi}(e_{\phi}^{2})\wedge\ast\omega\big)+e_{\omega}^{2}\wedge f_{\omega}^{1}\\
      &+dN_{\phi}(e_{\phi}^{1})\wedge(\ast de_{\omega}^{2})\wedge(\ast\omega)+(-1)^{n-1}dN_{\phi}(e_{\phi}^{1})\wedge dN_{\phi}(e_{\phi}^{2})\wedge(\ast\omega)\\
      &+dN_{\phi}(e_{\phi}^{2})\wedge\ast d(\mathrm{li}_{\phi}(f_{\phi}^{1}))+dN_{\phi}(e_{\phi}^{1})\wedge\ast[\delta N_{\beta}(\omega),dN_{\phi}(e_{\phi}^{2})]_{1}\\
      &+dN_{\phi}(e_{\phi}^{1})\wedge\ast d\big(\mathrm{li}(\tilde{e}_{\Sigma}^{2})+(-1)^{n}\mathrm{li}(\langle dN_{\phi}(e_{\phi}^{2}),\delta N_{\beta}(\omega)\rangle_{\Lambda^{1}})\big)\Big)\\
      &+\int_{\Sigma}\Big(-e_{\Sigma}^{1}\wedge e_{\phi}^{2}+e_{\Sigma}^{2}\wedge f_{\Sigma}^{1}\Big)
      +\int_{\Gamma}\Big(-e_{b}^{1}\wedge e_{\phi}^{2}+\tilde{e}_{\Sigma}^{2}\wedge f_{b}^{1}\Big)=0.
    \end{aligned}
  \end{equation}
  Taking $\tilde{e}_{\Sigma}^{2}=E(e_{\Sigma}^{2})\in H_{00}^{\frac{1}{2}}\Lambda^{0}(\partial\Omega)$, which implies that $\tilde{e}_{\Sigma}^{2}=0$ at $\Gamma$, and $e_{\phi}^{2}\in H^{-\frac{1}{2}}\Lambda^{n-1}(\partial\Omega)$ with $e_{\phi}^{2}=0$ at $\Gamma$,
  then from the skew-symmetry of $\int_{\Omega}e_{\omega}^{1}\wedge d\ast\big((\ast e_{\omega}^{2})\wedge\ast\omega\big)$, together with (\ref{eq66}), (\ref{eq130}), and (\ref{eq131}), then (\ref{eq48}) can be rewritten as
  \begin{equation*}
    \begin{aligned}
      &\int_{\Omega}e_{\omega}^{2}\wedge\Big( f_{\omega}^{1}+(-1)^{n}d\ast\big((\ast de_{\omega}^{1})\wedge(\ast\omega)\big)-d\ast\big(dN_{\phi}(e_{\phi}^{1})\wedge\ast\omega\big)\Big)\\
      +&\int_{\Omega}dN_{\phi}(e_{\phi}^{2})\wedge\Big(\ast d\big(\mathrm{li}_{\phi}(f_{\phi}^{1})\big)-(\ast de_{\omega}^{1})\wedge(\ast\omega)+(-1)^{n}dN_{\phi}(e_{\phi}^{1})\wedge(\ast\omega)\Big)\\
      +&\int_{\partial\Omega}(-1)^{n-1}\langle dN_{\phi}(e_{\phi}^{1}),\delta N_{\beta}(\omega)\rangle_{\Lambda^{1}}\wedge e_{\phi}^{2}+\int_{\partial\Omega}\tilde{e}_{\Sigma}^{2}\wedge e_{\phi}^{1}\\
      +&\int_{\partial\Omega}(-1)^{n}\langle dN_{\phi}(e_{\phi}^{2}),\delta N_{\beta}(\omega)\rangle_{\Lambda^{1}}\wedge e_{\phi}^{1}+\int_{\Sigma}\Big(-e_{\Sigma}^{1}\wedge e_{\phi}^{2}+e_{\Sigma}^{2}\wedge f_{\Sigma}^{1}\Big)=0.
    \end{aligned}
  \end{equation*}
  Again applying (\ref{eq130}) and (\ref{eq131}), with $e_{\phi}^{2}=\tilde{e}_{\Sigma}^{2}=0$ at $\Gamma$, we further have
  \begin{equation*}
    \begin{aligned}
      &\int_{\Omega}e_{\omega}^{2}\wedge\Big( f_{\omega}^{1}+(-1)^{n}d\ast\big((\ast de_{\omega}^{1})\wedge(\ast\omega)\big)-d\ast\big(dN_{\phi}(e_{\phi}^{1})\wedge\ast\omega\big)\Big)\\
      +&\int_{\Omega}dN_{\phi}(e_{\phi}^{2})\wedge\Big(\ast d\big(\mathrm{li}_{\phi}(f_{\phi}^{1})\big)-(\ast de_{\omega}^{1})\wedge(\ast\omega)+(-1)^{n}dN_{\phi}(e_{\phi}^{1})\wedge(\ast\omega)\\
      -&\ast[\delta N_{\beta}(\omega),dN_{\phi}(e_{\phi}^{1})]_{1}-\ast d\big(\mathrm{li}(\tilde{e}_{\Sigma}^{1})+(-1)^{n}\mathrm{li}(\langle dN_{\phi}(e_{\phi}^{1}),\delta N_{\beta}(\omega)\rangle_{\Lambda^{1}})\big)\Big)\\
      +&\int_{\Sigma}\Big(e_{\Sigma}^{2}\wedge e_{\phi}^{1}+e_{\Sigma}^{2}\wedge f_{\Sigma}^{1}\Big)=0.
    \end{aligned}
  \end{equation*}

  This integral can be zero only if
  \begin{equation}\label{eq49}
    \begin{cases}
      &f_{\omega}^{1}=d\ast\Big(\big((-1)^{n-1}\ast de_{\omega}^{1}+dN_{\phi}(e_{\phi}^{1}) \big)\wedge\ast\omega\Big),\\
      &d\big(\mathrm{li}_{\phi}(f_{\phi}^{1})\big)= \ast\Big(\big((-1)^{n-1}\ast de_{\omega}^{1}+dN_{\phi}(e_{\phi}^{1})\big)\wedge\ast\omega\Big)+[\delta N_{\beta}(\omega),dN_{\phi}(e_{\phi}^{1})]_{1}\\
      &\qquad\qquad\qquad d\big(\mathrm{li}(\tilde{e}_{\Sigma}^{1})+(-1)^{n}\mathrm{li}(\langle dN_{\phi}(e_{\phi}^{1}),\delta N_{\beta}(\omega)\rangle_{\Lambda^{1}})\big)\\
      &f_{\Sigma}^{1}|_{\Sigma}=-e_{\phi}^{1}.
    \end{cases}
  \end{equation}

  Substituting (\ref{eq49}) into (\ref{eq48}) yields
  \begin{equation*}
    \begin{aligned}
      \int_{\partial\Omega}\Big(\tilde{e}_{\Sigma}^{1}\wedge e_{\phi}^{2}+\tilde{e}_{\Sigma}^{2}\wedge e_{\phi}^{1}\Big)-\int_{\Sigma}\Big(e_{\Sigma}^{1}\wedge e_{\phi}^{2}+e_{\Sigma}^{2}\wedge e_{\phi}^{1}\Big)+\int_{\Gamma}\Big(f_{b}^{1}\wedge \tilde{e}_{\Sigma}^{2}-e_{\phi}^{2}\wedge e_{b}^{1}\Big)=0.
    \end{aligned}
  \end{equation*}
  Hence,
  \begin{equation*}
    \begin{aligned}
    f_{b}^{1}=-e_{\phi}^{1}|_{\Gamma},\quad e_{b}^{1}=\tilde{e}_{\Sigma}^{1}|_{\Gamma},
    \end{aligned}
  \end{equation*}
  and $D_{3}^{\bot}\subset D_{3}$, which together with $D_{3}\subset D_{3}^{\bot}$ implies that $D_{3}$ is a Dirac structure.
\end{enumerate}

\end{proof}

\begin{corollary}
  Given an oriented and simply connected $n$-dimensional manifold $\Omega$ with Lipschitz continuous boundary $\partial\Omega$.
  Given the essential boundary condition (\ref{vor2}e) and the port variable $f_{b}=-g$. The distributed-parameter port-Hamiltonian system for the incompressible Euler equations with a free surface (\ref{vor1})--(\ref{vor3}) with respect to the variables
  $(\omega,\phi_{\partial},\Sigma)\in\mathring{V}\Lambda^{2}(\Omega)\times H^{\frac{1}{2}}\Lambda^{0}(\partial\Omega)\times H^{-\frac{1}{2}}\Lambda^{n-1}(\Sigma)$, Dirac structure (\ref{dirac7}), and Hamiltonian (\ref{hami5}), is given by
  \begin{equation}\label{coro2}
    \begin{aligned}    
      &
    \begin{pmatrix}
      -\omega_{t} \\
      -d\phi_{t}\\
      -\Sigma_{t}
     \end{pmatrix} 
     =
     \begin{pmatrix}
      (-1)^{n-1}d\ast\big(\ast d(\cdot)\wedge\ast\omega\big)& d\ast\big(dN_{\phi}(\cdot)\wedge\ast\omega\big)  & 0 \\
      (-1)^{n-1}\ast\big(\ast d(\cdot)\wedge\ast\omega\big)& B_{22} & d\big(\mathrm{li}(E(\cdot))\big)\\
      0 & -1 & 0
      \end{pmatrix}
      \begin{pmatrix}
        \frac{\delta\bar{H}}{\delta\omega} \\
        \frac{\delta\bar{H}}{\delta\phi_{\partial}}\\
        \frac{\delta\bar{H}}{\delta\Sigma}
       \end{pmatrix},
      \end{aligned}
    \end{equation}
    where $B_{22}=\ast\big(dN_{\phi}(\cdot)\wedge\ast\omega\big)+(-1)^{n}d\big(\mathrm{li}(\langle dN_{\phi}(\cdot),\delta N_{\beta}(\omega)\rangle_{\Lambda^{1}})\big)+[\delta N_{\beta}(\omega),dN_{\phi}(\cdot)]_{1}$,
    with port variables $f_{b},e_{b}$ defined as
    \begin{equation}\label{coro6}
      \begin{aligned} 
     \begin{pmatrix}
      f_{b}  \\
      e_{b}
     \end{pmatrix} 
     =
     \begin{pmatrix}
      -1\quad & 0 \\
      0\quad & E(\cdot)
     \end{pmatrix}
     \begin{pmatrix}
      \frac{\delta\bar{H}}{\delta\phi_{\partial}} \\
      \frac{\delta\bar{H}}{\delta\Sigma}
     \end{pmatrix}.
   \end{aligned}
  \end{equation}
\end{corollary}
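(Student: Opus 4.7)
The plan is to proceed in direct parallel to the two Corollaries at the ends of Sections 5.2 and 5.3: substitute the functional derivatives of $\bar{H}$ computed in Lemma \ref{lemma13} into the port-Hamiltonian system (\ref{coro2}) and match the resulting rows against (\ref{vor1})--(\ref{vor3}), with the essential boundary condition (\ref{vor2}e) absorbed into $\frac{\delta\bar{H}}{\delta\Sigma}$ and the prescribed port datum $f_{b}=-g$ identified with the inhomogeneous Neumann condition (\ref{vor2}d). The remaining differential and function-space constraints (\ref{vor1}b,c), (\ref{vor3}) are built into the choice $\omega\in\mathring{V}\Lambda^{2}(\Omega)$, $d\phi\in\mathfrak{B}^{1}$, $\eta=\delta N_{\beta}(\omega)\in\mathring{\mathfrak{B}}^{\ast 1}$.

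Two algebraic reductions drive the matching. First, from $\frac{\delta\bar{H}}{\delta\omega}=\ast\beta$, the coderivative identity $\delta=(-1)^{n(k+1)+1}\ast d\ast$ with $k=2$, and $\delta\beta=\eta$ given by (\ref{stream}), I obtain $(-1)^{n-1}\ast d\bigl(\frac{\delta\bar{H}}{\delta\omega}\bigr)=\eta$. Second, from $\frac{\delta\bar{H}}{\delta\phi_{\partial}}=\ast\boldsymbol{n}(d\phi)$ and Lemma \ref{lemma6}, $dN_{\phi}\bigl(\frac{\delta\bar{H}}{\delta\phi_{\partial}}\bigr)=d\phi$. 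Adding these and invoking the Hodge decomposition (\ref{vor3}) yields $(-1)^{n-1}\ast d\bigl(\frac{\delta\bar{H}}{\delta\omega}\bigr)+dN_{\phi}\bigl(\frac{\delta\bar{H}}{\delta\phi_{\partial}}\bigr)=v$. With this identification the first row of (\ref{coro2}) collapses to $-\omega_{t}=d\ast(v\wedge\ast\omega)$, which is (\ref{vor1}a); the third row gives $-\Sigma_{t}=-\frac{\delta\bar{H}}{\delta\phi_{\partial}}|_{\Sigma}=-\ast\boldsymbol{n}(d\phi)|_{\Sigma}$, which is (\ref{vor2}c); and the port relation $f_{b}=-\frac{\delta\bar{H}}{\delta\phi_{\partial}}|_{\Gamma}=-\ast\boldsymbol{n}(d\phi)|_{\Gamma}$ together with the prescribed $f_{b}=-g$ recovers (\ref{vor2}d).

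The main work, and the main obstacle, is the second row, where I must identify $B_{22}\frac{\delta\bar{H}}{\delta\phi_{\partial}}+d\bigl(\mathrm{li}(E(\frac{\delta\bar{H}}{\delta\Sigma}))\bigr)$ with $\ast(v\wedge\ast\omega)+d\bigl(\tfrac{1}{2}\llangle v^{\sharp},v^{\sharp}\rrangle+\tfrac{\tilde{p}}{\rho}+\Phi\bigr)+[\delta N_{\beta}(\omega),u']_{1}$, where $u'=dN_{\phi}(\Sigma_{t})$. Using the previous reductions, the first summand of $B_{22}$ combines with the $(-1)^{n-1}\ast(\ast d\frac{\delta\bar H}{\delta\omega}\wedge\ast\omega)$ term on the left to produce $\ast(v\wedge\ast\omega)$. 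The essential BC (\ref{vor2}e) converts the $\frac{\tau k}{\rho}$ summand of $\frac{\delta\bar{H}}{\delta\Sigma}$ into $\mathrm{tr}(\frac{\tilde{p}}{\rho})$, so the lifted exact differential produces the Bernoulli gradient $d\bigl(\tfrac{1}{2}\llangle v^{\sharp},v^{\sharp}\rrangle+\Phi+\tfrac{\tilde{p}}{\rho}\bigr)$. The $(-1)^{n-1}\langle d\phi,\eta\rangle_{\Lambda^{1}}$ summand of $\frac{\delta\bar{H}}{\delta\Sigma}$ cancels against the $(-1)^{n}d\bigl(\mathrm{li}(\langle d\phi,\eta\rangle_{\Lambda^{1}})\bigr)$ contribution of $B_{22}$; this cancellation has to be performed on the joint combination whose uniqueness was established in Lemma \ref{lemma16}, since the individual lifts $\mathrm{li}(\cdot)$ are not separately unique. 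Finally, the commutator $[\eta,d\phi]_{1}$ in $B_{22}$ matches $[\eta,u']_{1}$ once one recognizes that $d\phi=u'+dN_{\phi}(g\,\chi_{\Gamma})$ and the $g$-piece has already been accounted for through the port relation for $f_{b}$. The careful handling of these non-unique lifts, respecting only the combinations controlled by Lemma \ref{lemma16}, is the sole delicate point; everything else is a direct substitution mirroring the Section 5.3 Corollary.
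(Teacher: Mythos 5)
Your proposal is correct and takes essentially the same route as the paper: the paper's proof likewise sets $(f_{\omega},f_{\phi},f_{\Sigma})=(-\omega_{t},-(\phi_{\partial})_{t},-\Sigma_{t})$ and $(e_{\omega},e_{\phi},e_{\Sigma})=(\frac{\delta\bar{H}}{\delta\omega},\frac{\delta\bar{H}}{\delta\phi_{\partial}},\frac{\delta\bar{H}}{\delta\Sigma})$, substitutes the functional derivatives (\ref{eq120}) together with the essential boundary condition (\ref{vor2}e) and the port identification $f_{b}=-g$ to read off (\ref{vor1}a), (\ref{vor2}a), (\ref{vor2}c), (\ref{vor2}d), and obtains the remaining relations from the choice of function spaces and (\ref{vor3}). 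Your two reductions $(-1)^{n-1}\ast d\big(\frac{\delta\bar{H}}{\delta\omega}\big)=\delta N_{\beta}(\omega)$ and $dN_{\phi}\big(\frac{\delta\bar{H}}{\delta\phi_{\partial}}\big)=d\phi$ are precisely the identities the paper already used in the proof of Theorem \ref{thm1}, so your argument is the paper's argument written out in more detail (including the lift-uniqueness point via Lemma \ref{lemma16}, which the paper's proof leaves implicit).
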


\begin{proof}
  After defining the flow and energy variables in (\ref{dirac7}), respectively, as
  \begin{equation*}
    f_{\omega}=-\omega_{t},\quad f_{\phi}=-(\phi_{\partial})_{t},\quad f_{\Sigma}=-\Sigma_{t},
  \end{equation*}
  and
  \begin{equation*}
    e_{\omega}=\frac{\delta\bar{H}}{\delta\omega},\quad e_{\phi}=\frac{\delta\bar{H}}{\delta\phi_{\partial}},\quad e_{\Sigma}=\frac{\delta\tilde{H}}{\delta\Sigma},
  \end{equation*}
  we immediately obtain (\ref{coro2}). Using the functional derivatives of the Hamiltonian (\ref{eq120}), and the essential boundary condition (\ref{vor2}e) we obtain (\ref{vor1}a), (\ref{vor2}a), (\ref{vor2}c).
  The port variable $f_{b}=\frac{\delta\bar{H}}{\delta\phi_{\partial}}=-g$, gives the essential boundary condition (\ref{vor2}d) at $\Gamma$. Finally, (\ref{vor1}b), (\ref{vor1}c) follow from $\omega\in\mathring{V}\Lambda^{2}(\Omega)$ and (\ref{vor2}b) follows from $v\in P^{\ast}\Lambda^{1}(\Omega)$ together with (\ref{vor3}).
\end{proof}

From the bilinear form (\ref{bilinear2}), we can define the following bilinear form 
\begin{equation*}
  [e_{1},e_{2}]_{D}=\int_{\Omega}\Big(e_{\omega}^{1}\wedge f_{\omega}^{2}+dN(e_{\phi}^{1})\wedge\ast d\big(\mathrm{li}_{\phi}(f_{\phi}^{2})\big)\Big)+\int_{\Sigma}e_{\Sigma}^{1}\wedge f_{\Sigma}^{2}+\int_{\Gamma}e_{b}^{1}\wedge f_{b}^{2}.
\end{equation*}

For $\frac{\delta\bar{\mathcal{F}}}{\delta\omega},\frac{\delta\bar{\mathcal{G}}}{\delta\omega}\in\mathring{V}^{\ast}\Lambda^{n-2}(\Omega)$, $\frac{\delta\bar{\mathcal{F}}}{\delta\phi_{\partial}},\frac{\delta\bar{\mathcal{G}}}{\delta\phi_{\partial}}\in H^{-\frac{1}{2}}\Lambda^{n-1}(\partial\Omega)$ and $\frac{\delta\bar{\mathcal{F}}}{\delta\Sigma},\frac{\delta\bar{\mathcal{G}}}{\delta\Sigma}\in H^{\frac{1}{2}}\Lambda^{0}(\Sigma)$, the Poisson bracket $\{\cdot,\cdot\}_{D}(\omega,\phi_{\partial},\Sigma)$ associated to the Dirac structure (\ref{dirac7}) is defined as
\begin{equation}\label{bracket7}
  \begin{aligned}
    \{\bar{\mathcal{F}},\bar{\mathcal{G}}\}_{D}(\omega,\phi_{\partial},\Sigma)=&\int_{\Omega}\frac{\delta\bar{\mathcal{G}}}{\delta\omega}\wedge d\ast\Big(\big((-1)^{n-1}\ast d(\frac{\delta\bar{\mathcal{F}}}{\delta\omega})+dN_{\phi}(\frac{\delta\bar{\mathcal{F}}}{\delta\phi_{\partial}})\big)\wedge\ast\omega\Big)\\
    &+\int_{\Omega}dN_{\phi}(\frac{\delta\bar{\mathcal{G}}}{\delta\phi_{\partial}})\wedge\Big(\big(\ast d(\frac{\delta\bar{\mathcal{F}}}{\delta\omega})+(-1)^{n-1}dN_{\phi}(\frac{\delta\bar{\mathcal{F}}}{\delta\phi_{\partial}})\big)\wedge\ast\omega\\
    &+\ast[\delta N_{\beta}(\omega),dN_{\phi}(\frac{\delta\bar{\mathcal{F}}}{\delta\phi_{\partial}})]_{1}\\
    &+\ast d\big(\mathrm{li}(E(\frac{\delta\bar{\mathcal{F}}}{\delta\Sigma}))+(-1)^{n}\mathrm{li}(\langle dN_{\phi}(\frac{\delta\mathcal{F}}{\delta\phi_{\partial}}),\delta N_{\beta}(\omega)\rangle_{\Lambda^{1}})\big)\Big)\\
    &-\int_{\Sigma}\frac{\delta\bar{\mathcal{G}}}{\delta\Sigma}\wedge\frac{\delta\bar{\mathcal{F}}}{\delta\phi_{\partial}}-\int_{\Gamma}E(\frac{\delta\bar{\mathcal{G}}}{\delta\Sigma})\wedge\frac{\delta\bar{\mathcal{F}}}{\delta\phi_{\partial}}.
  \end{aligned}
\end{equation}

\begin{remark}
  Using (\ref{eq130}), (\ref{eq131}) and Stokes theorem, (\ref{bracket7}) can be represented as
  \begin{equation}
    \begin{aligned}
      &\{\bar{\mathcal{F}},\bar{\mathcal{G}}\}_{D}(\omega,\phi_{\partial},\Sigma)\\
      =&(-1)^{n-1}\int_{\Omega}(\ast\omega)\wedge\Big((-1)^{n-1}\ast d\frac{\delta\bar{\mathcal{G}}}{\delta\omega}+dN_{\phi}(\frac{\delta\bar{\mathcal{G}}}{\delta\phi_{\partial}})\big)
      \wedge\big((-1)^{n-1}\ast d\frac{\delta\bar{\mathcal{F}}}{\delta\omega}+dN_{\phi}(\frac{\delta\bar{\mathcal{F}}}{\delta\phi_{\partial}})\Big)\\
      &+\int_{\partial\Omega}\Big(\frac{\delta\bar{\mathcal{F}}}{\delta\Sigma}+(-1)^{n}\langle dN_{\phi}(\frac{\delta\bar{\mathcal{F}}}{\delta\phi_{\partial}}),\delta N_{\beta}(\omega)\rangle_{\Lambda^{1}} \Big)\wedge\frac{\delta\bar{\mathcal{G}}}{\delta\phi_{\partial}}\\
      &-\int_{\partial\Omega}\Big(\frac{\delta\bar{\mathcal{G}}}{\delta\Sigma}+(-1)^{n}\langle dN_{\phi}(\frac{\delta\bar{\mathcal{G}}}{\delta\phi_{\partial}}),\delta N_{\beta}(\omega)\rangle_{\Lambda^{1}} \Big)\wedge\frac{\delta\bar{\mathcal{F}}}{\delta\phi_{\partial}}.
    \end{aligned}
  \end{equation}
\end{remark}

\begin{remark}
  The bracket $\{\cdot,\cdot\}_{D}(\omega,\phi_{\partial},\Sigma)$ (\ref{bracket7}) is linear, anti-symmetric and satisfies the Jacobi identity, which follows directly from the fact that $\{\cdot,\cdot\}_{D}(\omega,\phi_{\partial},\Sigma)$ has the same structure as the Poisson bracket $\{\cdot,\cdot\}(\omega,\phi_{\partial},\Sigma)$ stated in Lemma \ref{lemma9}. The bracket $\{\cdot,\cdot\}_{D}(\omega,\phi_{\partial},\Sigma)$ is thus a Poisson bracket and the system generated by the Dirac structure $D_{3}$ is a Poisson structure.
\end{remark}

\begin{theorem}
Let $\Omega\subset\mathbb{R}^{n},n\in\{2,3\}$ be an $n$-dimensional oriented connected manifold with Lipschitz continuous boundary $\partial\Omega=\Sigma\cup\Gamma$ with free surface $\Sigma$ and fixed boundary $\Gamma$, $\Sigma\cap\Gamma=\emptyset$. 
For any functionals $\bar{\mathcal{F}}(\omega,\phi_{\partial},\Sigma):\mathring{V}\Lambda^{2}(\Omega)\times H^{\frac{1}{2}}\Lambda^{0}(\partial\Omega)\times H^{-\frac{1}{2}}\Lambda^{n-1}(\Sigma)\to\mathbb{R}$, the port-Hamiltonian formulation of the incompressible Euler equations with a free surface (\ref{coro4}) can be expressed as
\begin{equation}\label{eq132}
  \dot{\bar{\mathcal{F}}}(\omega,\phi_{\partial},\Sigma)=\{\bar{\mathcal{F}},\bar{H}\}_{D}(\omega,\phi_{\partial},\Sigma)-\int_{\Gamma}E(\frac{\delta\bar{\mathcal{F}}}{\delta\Sigma})\wedge\ast\boldsymbol{n}(d\phi),
\end{equation}
with the Hamiltonian $\bar{H}$ given by (\ref{hami7}).
\end{theorem}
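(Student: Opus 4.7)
The plan is to mirror the argument used for Theorem \ref{thm5} (equivalently Theorem \ref{thm7}), adapting the bookkeeping to the vorticity variables. First I would use skew-symmetry of the bracket to write $\{\bar{\mathcal{F}},\bar{H}\}_{D}(\omega,\phi_{\partial},\Sigma)=-\{\bar{H},\bar{\mathcal{F}}\}_{D}(\omega,\phi_{\partial},\Sigma)$ and substitute the functional derivatives of $\bar{H}$ from (\ref{eq120}) into (\ref{bracket7}). The key simplification is that $(-1)^{n-1}\ast d\big(\frac{\delta\bar{H}}{\delta\omega}\big)=(-1)^{n-1}\ast d(\ast\beta)=\delta\beta=\eta$ and $dN_{\phi}\big(\frac{\delta\bar{H}}{\delta\phi_{\partial}}\big)=dN_{\phi}(\ast\boldsymbol{n}(d\phi))=d\phi$, so their sum reconstructs the velocity $v=d\phi+\delta N_{\beta}(\omega)$ via the Hodge decomposition (\ref{vhode}). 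This lets me rewrite the bulk term in (\ref{bracket7}) as an integral involving $\ast(v\wedge\ast\omega)$, while the boundary contributions collapse using $\ast\boldsymbol{n}(d\phi)=\Sigma_{t}$ on $\Sigma$ and the expression for $\frac{\delta\bar{H}}{\delta\Sigma}$ on $\Sigma$.

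Next, paralleling the passage from (\ref{eq90}) to (\ref{eq91}), I would apply the functional chain rule
\begin{equation*}
  \dot{\bar{\mathcal{F}}}(\omega,\phi_{\partial},\Sigma)=\int_{\Omega}\frac{\delta\bar{\mathcal{F}}}{\delta\omega}\wedge\omega_{t}+\int_{\partial\Omega}\frac{\delta\bar{\mathcal{F}}}{\delta\phi_{\partial}}\wedge(\phi_{\partial})_{t}+\int_{\Sigma}\frac{\delta\bar{\mathcal{F}}}{\delta\Sigma}\wedge\Sigma_{t},
\end{equation*}
and match it against the rearranged bracket. The $\omega_{t}$-contribution is identified directly from $\omega_{t}=-d\ast(v\wedge\ast\omega)$, which is the first row of (\ref{coro2}). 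To recover the $(\phi_{\partial})_{t}$-contribution, I would invoke the identity (\ref{eq117}), which promotes $d\phi_{t}$ in the bulk to a boundary integral of $(\phi_{\partial})_{t}$ tested against $\frac{\delta\bar{\mathcal{F}}}{\delta\phi_{\partial}}$. This identity is available precisely because $\frac{\delta\bar{\mathcal{F}}}{\delta\omega}\in\mathring{V}^{\ast}\Lambda^{n-2}(\Omega)$ has vanishing trace, so the cross-term $\int_{\Omega}d\phi_{t}\wedge d\big(\frac{\delta\bar{\mathcal{F}}}{\delta\omega}\big)$ vanishes. Splitting the resulting $\partial\Omega$-integral as $\Sigma\cup\Gamma$, the $\Sigma$-piece combines with the $\Sigma_{t}$-term via (\ref{vor2}c), and the $\Gamma$-piece produces exactly the external port contribution $-\int_{\Gamma}E\big(\frac{\delta\bar{\mathcal{F}}}{\delta\Sigma}\big)\wedge\ast\boldsymbol{n}(d\phi)$ after invoking the port relations (\ref{coro6}) with $f_{b}=-g=-\ast\boldsymbol{n}(d\phi)|_{\Gamma}$ and $e_{b}=\tilde{e}_{\Sigma}|_{\Gamma}$.

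For the converse implication, I would assume (\ref{eq132}) holds and exploit the arbitrariness of $\frac{\delta\bar{\mathcal{F}}}{\delta\omega}$, $\frac{\delta\bar{\mathcal{F}}}{\delta\phi_{\partial}}$, and $\frac{\delta\bar{\mathcal{F}}}{\delta\Sigma}$ in their respective spaces, following the duality argument of the second half of Theorem \ref{thm1}. Testing against elements supported in the interior or having vanishing boundary traces recovers the bulk evolution equations (\ref{vor1}a), (\ref{vor2}a), (\ref{vor2}c), and the essential boundary condition (\ref{vor2}e), while testing with boundary-supported variations on $\Gamma$ yields the port relation $f_{b}=-\frac{\delta\bar{H}}{\delta\phi_{\partial}}=-\ast\boldsymbol{n}(d\phi)$ and $e_{b}=E\big(\frac{\delta\bar{H}}{\delta\Sigma}\big)|_{\Gamma}$. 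The remaining structural conditions $d\omega=0$, $\ast\boldsymbol{n}(\omega)=0$, $\delta d\phi=0$, and $\delta v=0$ are automatic from the choice of function spaces $\mathring{V}\Lambda^{2}(\Omega)$ and $P^{\ast}\Lambda^{1}(\Omega)$ together with the Hodge decomposition (\ref{vor3}).

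The main obstacle will be the same boundary/bulk coupling that arises in the $(\eta,\phi_{\partial},\Sigma)$ case: carefully moving the $\langle dN_{\phi}(\cdot),\delta N_{\beta}(\omega)\rangle_{\Lambda^{1}}$ terms between the volume $\Omega$ and the boundary $\partial\Omega$ via Lemma \ref{lemma2} together with the relations (\ref{eq130})--(\ref{eq131}), and simultaneously handling the $[\delta N_{\beta}(\omega),dN_{\phi}(\cdot)]_{1}$ Lie-bracket contributions through (\ref{lie}). A secondary subtlety is that the identity (\ref{eq117}) crucially requires $\text{tr}\big(\frac{\delta\bar{\mathcal{F}}}{\delta\omega}\big)=0$ and $\delta\ast d\phi_{t}=0$, both of which must be verified in the present setting before the boundary transfer can be justified. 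Once these technical manipulations are controlled, the final evaluation $\bar{\mathcal{F}}=\bar{H}$ in (\ref{eq132}) yields the energy balance $\dot{\bar{H}}=-\int_{\Gamma}\text{tr}(h)\wedge g$, confirming that the system exchanges energy with its environment solely through the fixed boundary $\Gamma$, as expected for a port-Hamiltonian formulation.
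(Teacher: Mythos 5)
Your proposal is correct and follows essentially the same route as the paper: the paper's own (very terse) proof also substitutes the functional derivatives of $\bar{H}$ from (\ref{eq120}) into (\ref{bracket7}), uses skew-symmetry $\{\bar{\mathcal{F}},\bar{H}\}_{D}=-\{\bar{H},\bar{\mathcal{F}}\}_{D}$, and then invokes the functional chain rule together with (\ref{coro2}), relying on the computations already carried out for Theorems \ref{thm1} and \ref{thm5} (including the boundary-transfer identity (\ref{eq117}) and the relations (\ref{eq130})--(\ref{eq131})) exactly as you outline. Your added converse direction and the explicit verification of the hypotheses of (\ref{eq117}) go slightly beyond what the paper writes down, but they are consistent with its argument.
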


\begin{proof}
  Substituting $\bar{H}$ (\ref{hami7}) into (\ref{bracket7}), we can obtain the representation of $\{\bar{H},\bar{\mathcal{F}}\}_{D}$.
  With the relation $\{\bar{\mathcal{F}},\bar{H}\}_{D}=-\{\bar{H},\bar{\mathcal{F}}\}_{D}$, together with the functional chain rule and (\ref{coro2}), (\ref{eq132}) holds.
\end{proof}

\begin{corollary}
  Given the Hamiltonian functional $\bar{H}:\mathring{V}\Lambda^{2}(\Omega)\times H^{\frac{1}{2}}\Lambda^{0}(\partial\Omega)\times H^{-\frac{1}{2}}\Lambda^{n-1}(\Sigma)$ $\to\mathbb{R}$ stated in (\ref{hami7}), 
  and the essential boundary conditions (\ref{vor2}d), (\ref{vor2}e), the rate of change the Hamiltonian for the incompressible Euler equations with a free surface in the $(\omega,\phi_{\partial},\Sigma)$ variables (\ref{vor1})--(\ref{vor3}) is equal to
  \begin{equation*}
    \dot{\bar{H}}=-\int_{\Gamma}\mathrm{tr}(h)\wedge g.
  \end{equation*}
\end{corollary}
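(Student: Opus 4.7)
The plan is to mirror the argument used in the $(v,\Sigma)$ corollary (\ref{eq94}) and in its counterpart for the $(\eta,\phi_{\partial},\Sigma)$ variables. The starting point is the identity (\ref{eq132}) applied to the functional $\bar{\mathcal{F}}=\bar{H}$. Since the bracket $\{\cdot,\cdot\}_{D}(\omega,\phi_{\partial},\Sigma)$ is skew-symmetric (see the Remark following (\ref{bracket7})), we have $\{\bar{H},\bar{H}\}_{D}=0$, so the only surviving contribution is the boundary integral:
\[
\dot{\bar{H}}=-\int_{\Gamma}E\!\left(\tfrac{\delta\bar{H}}{\delta\Sigma}\right)\wedge\ast\boldsymbol{n}(d\phi).
\]

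Two ingredients then close the argument. First, the essential boundary condition (\ref{vor2}d) gives $\ast\boldsymbol{n}(d\phi)=g$ at $\Gamma$. Second, formula (\ref{eq116}) combined with the essential boundary condition (\ref{vor2}e) at $\Sigma$ produces
\[
\tfrac{\delta\bar{H}}{\delta\Sigma}\bigg|_{\Sigma}=\mathrm{tr}(h)\big|_{\Sigma}+(-1)^{n-1}\langle d\phi,\eta\rangle_{\Lambda^{1}}\big|_{\Sigma},
\]
so that the boundary term naturally decomposes into a contribution from $\mathrm{tr}(h)$ and a correction term originating from the change of variables $v=d\phi+\delta N_{\beta}(\omega)$.

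The expected obstacle is to show that this correction does not contribute to the energy flux through $\Gamma$. The cleanest way to dispose of it is to use that $\bar{H}$, $\tilde{H}$ and $H$ are three different coordinate representations of the same total energy functional, evaluated along the same physical trajectory of (\ref{dieuler}); hence $\dot{\bar{H}}=\dot{\tilde{H}}=\dot{H}$. Since the $(v,\Sigma)$ corollary (\ref{eq94}) already establishes $\dot{H}=-\int_{\Gamma}\mathrm{tr}(h)\wedge g$, the identity claimed for $\bar{H}$ follows. An equivalent route that stays inside the $(\omega,\phi_{\partial},\Sigma)$ framework is to choose the extension operator $E$ consistently: extend $\mathrm{tr}(h)|_{\Sigma}$ by its global trace $\mathrm{tr}(h)|_{\partial\Omega}$ (which is possible since $h$ is already defined on all of $\Omega$), while extending the surface correction $\langle d\phi,\eta\rangle_{\Lambda^{1}}|_{\Sigma}$ in the complementary space $H_{00}^{\frac{1}{2}}\Lambda^{0}(\Sigma)\hookrightarrow H^{\frac{1}{2}}\Lambda^{0}(\partial\Omega)$, so that it vanishes on $\Gamma$. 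Consistency with the $(v,\Sigma)$ energy balance then justifies this as the physically meaningful choice of extension, and the desired identity $\dot{\bar{H}}=-\int_{\Gamma}\mathrm{tr}(h)\wedge g$ is obtained.
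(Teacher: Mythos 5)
Your proposal is correct, but it closes the argument by a different route than the paper intends. The paper states this corollary without proof; the intended argument (mirroring the proven $(v,\Sigma)$ corollary (\ref{eq94}) and the proof of (\ref{eq137})) is to stay inside the $(\omega,\phi_{\partial},\Sigma)$ port-Hamiltonian identity: set $\bar{\mathcal{F}}=\bar{H}$ in (\ref{eq132}), use skew-symmetry of $\{\cdot,\cdot\}_{D}$, and then identify $E(\frac{\delta\bar{H}}{\delta\Sigma})$ on $\Gamma$ with $\mathrm{tr}(h)$ using (\ref{eq116}) and the essential boundary conditions (\ref{vor2}d), (\ref{vor2}e). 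You begin exactly this way, and you correctly isolate the point the paper glosses over: by (\ref{eq116}) the effort is $\mathrm{tr}(h)+(-1)^{n-1}\langle d\phi,\eta\rangle_{\Lambda^{1}}$ on $\Sigma$, and one must explain why neither this correction nor the unspecified behaviour of the extension operator $E$ on $\Gamma$ pollutes the flux term. Your primary resolution — that $\bar{H}$, $\tilde{H}$ and $H$ are the same functional in different coordinates (the paper establishes (\ref{hami4}) from (\ref{total2}) and (\ref{hami7}) from (\ref{hami4})), so $\dot{\bar{H}}=\dot{H}$ along a solution and (\ref{eq94}) gives the claim — is sound and arguably cleaner, at the price of not verifying the energy balance intrinsically within the $(\omega,\phi_{\partial},\Sigma)$ Dirac-structure framework, which is what the corollary is meant to illustrate.

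A caution on your ``equivalent route'': as stated it is not an independent argument. Choosing $E$ so that the correction term is extended by zero and $\mathrm{tr}(h)|_{\Sigma}$ is extended by $\mathrm{tr}(h)|_{\partial\Omega}$, and then justifying that choice by ``consistency with the $(v,\Sigma)$ energy balance,'' presupposes the result you are proving; moreover, extending the correction $\langle d\phi,\eta\rangle_{\Lambda^{1}}|_{\Sigma}$ by zero requires it to lie in $H^{\frac{1}{2}}_{00}\Lambda^{0}(\Sigma)$, which you have not checked, and the Dirac structure $D_{3}$ in (\ref{dirac7}) is built with one fixed $E$ applied to the whole effort $e_{\Sigma}$, not piecewise. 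So keep the first argument as the proof and present the extension discussion only as an interpretation of how (\ref{eq132}) should be read at $\Gamma$.
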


\section{Acknowledgements}
The research of Xiaoyu Cheng was funded by a fellowship from China
Scholarship Council (No.201906340056). The research of J.J.W. van der Vegt was 
partially supported by the University of Science and Technology of China, Hefei, Anhui, China. The research of Yan Xu was supported by NSFC grant No. 12071455.

\bibliographystyle{plain}
\bibliography{Reference.bib } 

\end{document}